\patchcmd{\chapter}{\if@openright\cleardoublepage\else\clearpage\fi}{}{}{} 
\theoremstyle{break}
\newtheorem{theorem}{Theorem} [chapter]
\newtheorem{defi}[theorem]{Definition} 
\newtheorem{prop}[theorem]{Proposition}
\newtheorem{lemma}[theorem]{Lemma}
\newtheorem{remark}[theorem]{Remark}
\numberwithin{equation}{chapter}
\newcommand*{\toccontents}{\@starttoc{toc}}
\def\@makechapterhead#1{%
  \vspace*{50\p@}%
  {\parindent \z@ \raggedright \normalfont
    \interlinepenalty\@M
  \center  \LARGE\bfseries  \thechapter. #1\par\nobreak 
    \vskip 20\p@                                       
  }}
\newcommand{\subalign}[1]{%
  \vcenter{%
    \Let@ \restore@math@cr \default@tag
    \baselineskip\fontdimen10 \scriptfont\tw@
    \advance\baselineskip\fontdimen12 \scriptfont\tw@
    \lineskip\thr@@\fontdimen8 \scriptfont\thr@@
    \lineskiplimit\lineskip
    \ialign{\hfil$\m@th\scriptstyle##$&$\m@th\scriptstyle{}##$\hfil\crcr
      #1\crcr
    }%
  }%
}
\newenvironment{leftalign*}[1][\parindent]{\setlength\hangindent{#1}\start@align\tw@\st@rredtrue\m@ne}{\endalign}
\DeclareMathOperator{\divs}{div}
\DeclareMathOperator{\Id}{Id}
\DeclareMathOperator{\tr}{tr}
\DeclareMathOperator{\supp}{supp}
\newcommand{\axi}{\ensuremath{a_{(\xi)}} \xspace}
  \newcommand{\wpr}{\ensuremath{w_{q+1}^{(p)}} \xspace} 
  \newcommand{\wc}{\ensuremath{w_{q+1}^{(c)}} \xspace}
  \newcommand{\wt}{\ensuremath{w_{q+1}^{(t)}} \xspace}
\newcommand{\oRlin}{\ensuremath{\mathring{R}_\text{lin}} \xspace}  
\newcommand{\oRnonlin}{\ensuremath{\mathring{R}_\text{nonlin1}} \xspace} 
\newcommand{\oRnonlinn}{\ensuremath{\mathring{R}_\text{nonlin2}} \xspace} 
\newcommand{\oRcom}{\ensuremath{\mathring{R}_\text{com1}} \xspace} 
\newcommand{\oRcomm}{\ensuremath{\mathring{R}_\text{com2}} \xspace} 
\newcommand{\oRosc}{\ensuremath{\mathring{R}_\text{osc}} \xspace}  
\newcommand{\oRcor}{\ensuremath{\mathring{R}_\text{cor}} \xspace} 
\newcommand{\oRq}{\ensuremath{\mathring{R}_q} \xspace} 
\newcommand{\oRqq}{\ensuremath{\mathring{R}_{q+1}} \xspace}
 \newcommand{\phixi}{\ensuremath\phi_{(\xi)}} 
  \newcommand{\Phixi}{\ensuremath\Phi_{(\xi)}}
  \newcommand{\psixi}{\ensuremath\psi_{(\xi)}}
 \newcommand{\Wxi}{\ensuremath{W_{(\xi)}} \xspace}
 \newcommand{\Wcxi}{\ensuremath{W^{(c)}_{(\xi)}} \xspace}
 \newcommand{\Vxi}{\ensuremath{V_{(\xi)}} \xspace}
 \newcommand{\Il}{\ensuremath{{I_{\ell}}} \xspace}
\newcommand{\ootimes}{\ensuremath\mathring{\otimes}}
\newcommand{\invdiv}{\ensuremath\mathcal{R}}
\newcommand{\Pm}{\ensuremath\mathbb{P}_{\neq 0}}
\newcommand{\E}{\ensuremath\mathbf{E}}
\begin{document}

\begin{center}
\boldmath
\LARGE{\textbf{Existence and Non-Uniqueness of Ergodic Leray--Hopf Solutions to the Stochastic Power-Law Flows}}
\unboldmath
\end{center}
~\\
\begin{center}
\large{Stefanie Elisabeth Berkemeier}\\
\end{center}
\begin{center}
\today
\end{center}
\,\\
\begin{adjustwidth}{25pt}{25pt}
\textbf{Abstract.}
We study long time behavior of shear-thinning fluid flows in $d \geq 3$ dimensions, driven by additive stochastic forcing of trace class, with power-law indices ranging from $1$ to $  \frac{2d}{d+2}$. We particularly focus on Leray--Hopf solutions, i.e. on analytically weak solutions satisfying energy inequality.\\ Introducing a new kind of energy related functional into the technique of convex integration enables the construction of infinitely many such solutions that are probabilistically strong for a certain initial value. Furthermore, we provide global in time estimates which lead to the existence of infinitely many stationary and even ergodic Leray--Hopf solutions.\\
These results represent the first construction of Leray--Hopf solutions in the framework of stochastic shear-thinning fluids within this range of power-law indices.
\end{adjustwidth}
~\\ \, \\ \, \\
\textbf{Keywords.} 
convex integration, stochastic power-law equations, shear-thinning fluids, ergodic stationary solution, Leray--Hopf solution.
\tableofcontents

{\let\clearpage\relax

\chapter{Introduction}\label{Introduction}

\section{Motivation and Previous Works} 
The Navier--Stokes equations, a cornerstone of fluid dynamics, continue to captivate and challenge researchers with their intricate behavior and unresolved complexities. Despite their fundamental role in describing incompressible fluid flows, the existence and uniqueness of strong solutions to these equations remain one of the seven unsolved Millennium Prize Problems. Nevertheless, significant progress has been made in the study of weak solutions, yielding profound insights into fluid dynamics.\\
The most significant recent successes are undeniably the establishment of non-uniqueness of weak solutions to the incompressible Navier--Stokes equations by Buckmaster and Vicol \cite{BV19a, BV19b}, as well as non-uniqueness of Leray--Hopf solutions to the forced Navier--Stokes equations by Albritton, Brué and Colombo \cite{ABC22}.\\
 The proof of the former result relies on the method of convex integration, which is an iterative procedure for constructing solutions to various deterministic and stochastic PDEs, building upon the ideas of Nash and Kuiper \cite{Na54, Ku55a, Ku55b}. Subsequent developments by Müller and Šverák \cite{MS03} refined these insights, leading to new techniques and applications in fluid dynamics. Their work paved the way for the establishment of existence of infinitely many weak solutions to the incompressible Euler equations which dissipate the total kinetic energy and satisfy the global and local energy inequality \cite{DLS09, DLS10, DLS13}. In 2016, Isett achieved a major breakthrough by proving  the long-standing open Onsager's conjecture \cite{Is18}. Building on these advancements, Buckmaster, De Lellis, Székelyhidi, and Vicol made further strides by showing that these solutions dissipate kinetic energy \cite{BDLSV18}.\medskip\\
 In recent years, the technique of convex integration has been successfully adapted to the stochastic setting by Breit, Feireisl, Hofmanov\'a \cite{BFH20}, leading to a series of groundbreaking results in the study of fluid dynamics under randomness. Hofmanová, Zhu, and Zhu have been at the forefront of this development, pioneering the application of convex integration to stochastic partial differential equations. Among their significant contributions is the establishment of non-uniqueness in law for the $3$D stochastic Navier--Stokes equations \cite{HZZ24a}, a result with profound implications to the field of stochastic fluid dynamics.\\
Building on this foundation, they further advanced the field by establishing the existence of infinitely many dissipative martingale solutions to the $3$D stochastic Euler equations \cite{HZZ21}. This was followed by additional breakthroughs, including the global existence and non-uniqueness of solutions to the $3$D stochastic Navier--Stokes equations driven by space-time white noise and transport noise \cite{HZZ23b, HLP24}.\\
Moreover, the existence of infinitely many analytically weak yet probabilistically strong solutions to the $3$D Navier--Stokes equations with prescribed energy, under perturbations by additive and multiplicative noise was proven in \cite{HZZ23c, Be23}. Hofmanová, Zhu, and Zhu also provided rigorous proofs of non-uniqueness of ergodic solutions for both deterministic and stochastic Navier--Stokes equations as well as for Euler equations \cite{HZZ25}. Additionally, recent work has established non-uniqueness in law of $2$D Navier--Stokes equations with diffusion weaker than a full Laplacian \cite{Ya22} and of $3$D Navier--Stokes equations diffused via a fractional Laplacian with power less than one half \cite{Ya24}. \\
These developments have significantly advanced the understanding of the interplay between randomness and fluid dynamics, and they continue to shape the direction of current research in the field.\medskip \\
It was only a matter of time before the method of convex integration was extended to a wider range of PDEs in both deterministic and stochastic settings. Notable examples include its application to the primitive and Prandtl equations \cite{BMT24}, the stochastic hypodissipative Navier--Stokes equations \cite{RS23}, the stochastic transport equation with transport noise \cite{MS24} as well as the stochastic surface quasi-geostrophic equations with irregular spatial perturbations, generic additive noise, multiplicative noise and derivatives of space-time white noise \cite{HZZ23a,BLW24,Ya23,HLZZ24}.\medskip \\
These models, however, do not address non-Newtonian fluids, such as toothpaste, shampoo, or blood, whose viscosity varies with the shear rate. The power-law model, introduced by Waele, Ostwald, and Norton in 1928/1929, offers a framework for capturing such variable viscosity behavior \cite{No29, Wa23, Os29}.  While these considerations were initially of chemical interest, Ladyshenskaya and Lions rigorously investigated these phenomena from a mathematical perspective for the first time \cite{La68, La69, Li69}. Building on monotone operator theory and conventional compactness techniques, they established the existence of weak solutions to the power-law equations for power indices $\iota > \frac{3d+2}{d+2}$ that comply with an energy inequality. This result was subsequently improved by lowering the threshold through $L^\infty$-truncation to the case $\iota>\frac{2d+2}{d+2}$~\cite{Wo07}. Further advancements feature refinements through Lipschitz truncation by Diening, R\r{u}\v{z}i\v{c}ka, and Wolf, who extended the result for power indices greater than $\frac{2d}{d+2}$~\cite{DRW10}. While the aforementioned methods break down for indices below this range, convex integration, by contrast, offers an effective approach to overcome these difficulties. Employed by Burczak, Modena, and Székelyhidi, this technique enabled the establishment of weak solutions for power-law fluids with power indices $\iota < \frac{2d}{d+2}$~\cite{BMS21}. \medskip\\
From a physical perspective, the identification of weak solutions that satisfy such an energy inequality, commonly referred to as Leray--Hopf solutions, is a fundamental aspect of fluid dynamics. It ensures that energy dissipation is controlled over time, which prevents unbounded growth and guarantees the long-term stability of the system. Within the context of convex integration, this is particularly challenging for viscous equations and has been achieved only if the dissipation is not too strong. In addition to the power-law equations, this technique was also employed in the study of the hypodissipative Navier–Stokes equations, where Colombo, De Lellis, and De Rosa were able to demonstrate the ill-posedness of such solutions when the power of the Laplacian is less than $1/5$ \cite{CDLDR18}.
More recently, significant progress has been made in the study of the three-dimensional forced Navier--Stokes equations and the two-dimensional hypodissipative Navier--Stokes equations, with the non-uniqueness of Leray solutions rigorously demonstrated in both cases \cite{ABC22, AC22}. \medskip\\
The former work was further extended to the stochastic setting, where pathwise non-uniqueness and non-uniqueness in law were both established \cite{HZZ24b, BJYZ23}. First results concerning the existence of Leray--Hopf solutions within the context of stochastic power-law equations, were obtained by Breit \cite{Br15}. Extending the technique of $L^\infty$-truncation and harmonic pressure decomposition, he constructed such solutions even for indices $\iota > \frac{2d+2}{d+2}$. Similarly to the deterministic framework, Lü and Zhu followed the methodology of convex integration in the stochastic context to lower this threshold below~$\frac{3d+2}{d+2}$~\cite{LZ23}.
These solutions, however, do not satisfy the energy inequality, which is the problem addressed in our present study for power indices~$ \iota<\frac{2d}{d+2}$. The question of whether such solutions exist within the intermediate range $\left(\frac{2d}{d+2},\frac{2d+2}{d+2}\right)$ remains an open problem, with Lipschitz truncation offering a compelling and promising avenue for future exploration.

\pagebreak

\section{Main Result}
In the present work, we construct infinitely many ergodic Leray--Hopf solutions to the stochastic power-law system driven by additive noise. This constitutes the first construction of weak solutions to stochastic viscous fluid equations that satisfy the energy inequality within the range~$\iota <\frac{2d}{d+2}$.\\
Since conventional compactness techniques, the $L^\infty$-truncation, and Lipschitz truncation have proven inadequate in the deterministic setting for power indices $\iota<\frac{2d}{d+2}$, we follow the ideas of Burczak, Modena, and Székelyhidi, of Lü and Zhu as well as of Hofmanová, Zhu, and Zhu \cite{BMS21,LZ23,HZZ25}, and apply the method of convex integration to overcome these obstacles in the stochastic framework. Specifically, we iteratively construct a sequence $(u_q)_{q\in \mathbb{N}_0}$, solving the power-law equations at each level $q\in \mathbb{N}_0$ up to an error term~$\mathring{R}_q$. While the error term, referred to as the Reynolds stress, becomes infinitesimally small as $q \to \infty$, the sequence of velocities is expected to converge to the desired solution $u$.\medskip\\
One of the central challenges in this construction is avoiding the blow-up of iterative estimates, especially when trying to directly incorporate the energy into the scheme. To address this, we introduce an auxiliary energy functional, which is carefully designed based on a thorough analysis of its behavior during the convex integration process. The precise formulation of this auxiliary energy is crucial, as even minor adjustments could lead to divergence or a failure of the energy inequality. \\
Compared to the approach in \cite{BMS21}, we face difficulties related to the stochastic nature of the system as well as to the fact that we are aiming for the long time behavior. For the latter reason, we needed to introduce the auxiliary energy functional and could not simply prescribe the energy as it was done in \cite{BMS21}. Unlike the method in \cite{LZ23}, which only leads to solutions defined up to a stopping time, we follow the strategy employed in \cite{HZZ25}, incorporating expectations directly into the iterative estimates, allowing us to achieve global in time estimates. While the core calculations remain fundamentally similar, the presence of the non-Newtonian tensor in the power-law equations necessitates enhanced regularity of solutions to guarantee their ergodicity. This, however, requires controlling higher-order derivatives in the iterative estimates, distinguishing our approach from \cite{HZZ25}.\\
The introduction of the auxiliary energy functional not only enables the construction of ergodic solutions but also ensures that these solutions satisfy the energy inequality and are therefore Leray--Hopf, as desired.  \\
\bigskip\\
In the sequel, we present our result in more detail; the model of non-Newtonian flows perturbed by an additive noise in $d\geq 3$ dimensions reads rigorously as 
\begin{align}
\begin{split}
du+\divs(u \otimes u) \,dt-\divs(\mathcal{A}(Du))\,dt+\nabla P \,dt&= \,dB_t, \\
\divs u&=0, \label{PLF}
\end{split}
\end{align}
 on $\Omega \times [0,\infty)\times\mathbb{T}^d $, where $\mathbb{T}^d= [-\pi,\pi]^d $ denotes the $d$-dimensional torus.\\ 
The system governs the time evolution of the velocity $u\colon \Omega\times [0,\infty)\times \mathbb{T}^d   \to \mathbb{R}^d$ of a non-Newtonian fluid for some unknown pressure $P\colon \Omega \times  [0,\infty)\times \mathbb{T}^d \to \mathbb{R}$. If the power index $\iota\in(1,\infty)$ of the non-Newtonian tensor\footnote{See \thref{Lemma 2.4} for more information.}  
\begin{align}
\mathcal{A}(Q):=\Big(\nu_0+\nu_1\|Q\|_F\Big)^{\iota-2}Q \label{Non-Newtonian-Tensor}
\end{align}
with  $Q \in\mathbb{R}^{d\times d} $, is strictly smaller than $2$ for some $\nu_0,\, \nu_1 \geq 0$, it models the flow of shear-thinning fluids, whereas the flow of shear-thickening fluids are covered for $\iota>2$. In the special case $\iota=2$ the system reduces to the stochastic Navier--Stokes equations, describing the motion of incompressible Newtonian fluids. \\
The strain-rate tensor also known as symmetric gradient $D$ is for all $i,j=1,\ldots, d$ defined as
\begin{align*}
Du:= \frac{1}{2}\Big((\nabla u^T)^T+ \nabla u^T\Big)=\frac{1}{2}\Big(\partial_{x_j}u_i+\partial_{x_i}u_j \Big)_{ij},
\end{align*}
whereas $\left(B_t\right)_{t \geq 0}$ is a mean and divergence free $G$-Wiener process on an given probability space $\left( \Omega, \mathcal{F}, \mathcal{P}\right)$. $G$ is a positive, selfadjoint operator of trace-class on $L_{\rm{free}}^2\left(\mathbb{T}^d;\mathbb{R}^d\right)$\footnote{The space $L_{\rm{free}}^2\left(\mathbb{T}^d;\mathbb{R}^d\right)$ contains all functions in $L^2\left(\mathbb{T}^d;\mathbb{R}^d\right)$ that are mean and divergence free.}, satisfying $\tr\big((\Id-\Delta)^{2\sigma}G\big)<\infty$ for some $\sigma>0$. The required regularity of $G$ will be of important meaning in Section \ref{Factorization}. It will ensure well definedness of the decomposed PDE (cf. \thref{Proposition SHE}). Moreover, we denote by $\left(\mathcal{F}_t\right)_{t\geq 0}$ the normal filtration generated by~$\left(B_t\right)_{t \geq 0}$. \medskip \\
It is quite natural to consider an energy functional $\mathscr{E}$ on 
\begin{align*}
L^2\big(\Omega;L^\infty\big([0,\infty);L^2\big(\mathbb{T}^d;\mathbb{R}^d \big)\big)\big)\cap L^\iota\big(\Omega;
 L^\iota\big([0,\infty);W^{1,\iota}\big(\mathbb{T}^d;\mathbb{R}^d\big) \big)\big)
\end{align*}
that consists of the mean kinetic energy $\E\|u\|_{L_x^2}^2$, the mean internal energy\footnote{$\langle \cdot , \cdot  \rangle_{L^2}$ denotes the inner product in $L^2(\mathbb{T}^d; \mathbb{R}^d)$, i.e. $\langle f,g \rangle_{L^2}:= \int_{\mathbb{T}^d}f(x)g(x) \, dx$, for any $f,g \in L^2(\mathbb{T}^d; \mathbb{R}^d)$.} \linebreak $\E\int_0^\cdot\langle\divs\big(\mathcal{A}\big(Du(s)\big)\big),u(s)\rangle_{L^2(\mathbb{T}^d)}\,ds$\footnote{In view of \thref{Lemma 2.4} the expression $\E\int_0^\cdot\langle\divs\big(\mathcal{A}\big(Du(s)\big)\big),u(s)\rangle_{L^2(\mathbb{T}^d)}\,ds$ is well defined within the class of solutions obtained in \thref{Main Result}.} and a correction term. More precisely we define $\mathscr{E}$ as\footnote{$:$ denotes the Frobenius inner product, defined by $A : B=\sum_{i,j} A_{ij}B_{ij}$ for two matrices $A,\,B$.} 
 \begin{align}
\mathscr{E}\{u\}(t):=\E\|u(t)\|_{L_x^2}^2+2\E\bigg[\int_0^{t} \int_{\mathbb{T}^d} \mathcal{A}(Du(s,x)) : Du (s,x)\,dx \,ds\bigg]-\tr(G)t\label{Energy Functional}
\end{align}
 at every time $t\in [0,\infty)$.
 \bigskip

\noindent In this paper we are concerned with solutions in the following sense.

\begin{defi} \thlabel{Definition Weak Solution}
An $(\mathcal{F}_t)_{t \geq 0}$-adapted stochastic process $u$ on $\big((\Omega,\mathcal{F},\mathcal{P}),(\mathcal{F}_t)_{t \geq 0},B \big)$ is said to be an analytically weak solution to the power-law system \eqref{PLF} with the power-law index $\iota\in (1,\infty)$, if
\begin{itemize}
\item[i)] it belongs to $L^\infty\big([0,\infty);L^2\big(\mathbb{T}^d;\mathbb{R}^d \big)\big)\cap L^\iota\big([0,\infty);W^{1,\iota}\big(\mathbb{T}^d;\mathbb{R}^d\big)\big)\, \mathcal{P}$-a.s., 
\item[ii)] it is weakly divergence free, i.e. it obeys 
\begin{align*}
\int_{\mathbb{T}^d} \left( u( t,x)\cdot \nabla \right)\phi(x)\,dx=0 
\end{align*}
$\mathcal{P}$-a.s for all $t \geq 0$ and every test function $\phi\in C^\infty\left(\mathbb{T}^d;\mathbb{R}\right)$,
\item[iii)] it satisfies the weak formulation of the non-Newtonian flow equations 
\begin{align} \label{Weak Formulation}
\begin{split}
 \int_{\mathbb{T}^d}  \varphi(x) B_t(x) \,dx 
 &= \int_{\mathbb{T}^d}   \Big(u(t,x)-u(0,x)\Big)\cdot\varphi(x)\,dx\\
 &\hspace{0.7cm}-\int_0^t \int_{\mathbb{T}^d} (u \otimes u)( s,x): \nabla \varphi^T(x)\,dx \,ds\\
 &\hspace{.7cm}+ \int_0^t \int_{\mathbb{T}^d}  \mathcal{A}(Du(s,x)): \nabla \varphi^T(x)\,dx\,ds 
\end{split}
\end{align}
$\mathcal{P}$-a.s. for all divergence free $\varphi\in C^\infty\left(\mathbb{T}^d;\mathbb{R}^d\right)$ and any  $t \geq 0$.
\end{itemize} 
\end{defi}

\begin{defi} \thlabel{Definition Leray--Hopf}
An analytically weak solution $u$ to the power-law system on $\big((\Omega,\mathcal{F},\mathcal{P}),(\mathcal{F}_t)_{t \geq 0},B \big)$ is of Leray--Hopf class, whenever the energy functional $\mathscr{E}$ given by \eqref{Energy Functional} complies with \linebreak $\mathscr{E}\{u\}(t)\leq \mathscr{E}\{u\}(0)$ for all $t\in [0,\infty)$.
\end{defi}

\begin{remark} \thlabel{Remark Leray--Hopf}
Leray--Hopf solutions are often defined in a broader framework by requiring that the energy inequality $\mathscr{E}\{u\}(t) \leq \mathscr{E}\{u\}(s)$ is for all $t \in [0, \infty)$ and for almost every $s \leq t$ satisfied. However, the method of convex integration, especially applied in \thref{Theorem Weak Solution}, yields solutions that fail to satisfy the stronger form of the energy inequality (cf.~Section \ref{Energy Results}). 
\end{remark}
\noindent To introduce the notion of ergodic solutions we consider the right-shift operators, which are defined in the following.
\begin{defi}\thlabel{Definition Shift} 
 For any $s\geq 0$ we define the right-shift operators as
\begin{align*}
 \mathcal{S}^1_su:=u(\cdot+s) \qquad \text{and} \qquad
 \mathcal{S}_s^2B:=B_{\cdot+s}-B_s,
\end{align*}
on
\begin{align*}
\mathcal{T}_{1}&:=C\big([0,\infty);L_{\rm{free}}^2\big( \mathbb{T}^d;\mathbb{R}^d\big)\cap W^{1,\iota}\big( \mathbb{T}^d;\mathbb{R}^d\big)
 \big)
\intertext{and} 
  \mathcal{T}_2&:= C\big([0,\infty);L_{\rm{free}}^2\big( \mathbb{T}^d;\mathbb{R}^d\big)\big)
\end{align*}
respectively. Moreover, let
\begin{align*}
\mathcal{S}_s&:=\mathcal{S}_s^1\times \mathcal{S}_s^2
\end{align*}
be their product operator with domain $\mathcal{T}:=\mathcal{T}_1 \times \mathcal{T}_2$.
\end{defi}

\begin{defi} \thlabel{Definition Stationary}
An analytically weak solution $u$ to \eqref{PLF} on $\big((\Omega,\mathcal{F},\mathcal{P}),(\mathcal{F}_t)_{t \geq 0},B \big)$ is called stationary, whenever 
\begin{align*}
\mathcal{P}\big(\mathcal{S}_s(u,B)\in A \big)=\mathcal{P}\big((u,B)\in A \big)
\end{align*}
holds for any Borel set $A\subseteq \mathcal{T}$ and $s\geq 0$.
\end{defi}

\begin{defi} \thlabel{Definition Ergodic}
An analytically weak solution $u$ to \eqref{PLF} on $\big((\Omega,\mathcal{F},\mathcal{P}),(\mathcal{F}_t)_{t \geq 0},B \big)$ is called ergodic, provided it is stationary and either 
\begin{align*}
\mathcal{P}\big((u,B)\in A \big)=0 \qquad \text{or} \qquad \mathcal{P}\big((u,B)\in A\big)=1
\end{align*}
holds for every shift invariant Borel set $A\subseteq \mathcal{T}$.

\end{defi}

\medskip
 \noindent Now we  formulate the main result of this paper.

\begin{theorem} \thlabel{Main Result}
For a certain initial value, $\iota\in\Big(1,\frac{2d}{d+2}\Big)$ and sufficiently small $\gamma \in (0,1)$ there exist infinitely many ergodic Leray--Hopf solutions to \eqref{PLF}, which belong $\mathcal{P}$-a.s. to the class 
\begin{align} \label{Target Space Main Result}
 C\big([0,\infty);H^\gamma\big( \mathbb{T}^d;\mathbb{R}^d\big) \big)\cap C_{\rm{loc}}^{0,\gamma}\big([0,\infty);L^{2}\big( \mathbb{T}^d;\mathbb{R}^d\big)\cap W^{1+\gamma,\iota}\big( \mathbb{T}^d;\mathbb{R}^d\big)\big).
\end{align}
\end{theorem}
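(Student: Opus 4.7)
The plan is to adapt the stochastic convex integration scheme of Hofmanov\'a--Zhu--Zhu \cite{HZZ22} and the deterministic power-law construction of Burczak--Modena--Sz\'ekelyhidi \cite{BMS21}, with the novel ingredient being an auxiliary energy functional that makes the Leray--Hopf property and the uniform-in-time bounds required for ergodicity compatible with a single iteration. The whole argument splits into three stages: a stochastic decomposition, a pathwise iteration $(u_q,\oRq)_{q\geq 0}$ with energy control, and a Krylov--Bogoliubov/extremal-point argument that produces stationary and then ergodic solutions.

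First I would decompose the unknown as $u=v+z$, where $z$ is the stationary solution of a linear stochastic Stokes-type equation such as $dz+(\Id-\Delta)z\,dt=dB_t$ with $\divs z=0$. Under the assumption $\tr((\Id-\Delta)^{2\sigma}G)<\infty$ this $z$ enjoys enough spatial and H\"older-in-time regularity almost surely, the relevant decomposition being prepared in Section \ref{Factorization} via \thref{Proposition SHE}. The difference $v=u-z$ then satisfies a random, noise-free PDE, so convex integration can be performed pathwise while keeping $u=v+z$ adapted. I would build inductively a sequence of $(\mathcal{F}_t)$-adapted, divergence-free processes $(v_q,\oRq)_{q\geq 0}$ solving
\begin{align*}
\partial_t v_q + \divs\big((v_q+z)\otimes(v_q+z)\big) - \divs\big(\mathcal{A}(D(v_q+z))\big) + \nabla P_q = \divs\oRq ,
\end{align*}
with inductive estimates in expectation on $\E\|v_q\|_{H^1}^p$, $\E\|\oRq\|_{L^1}^p$, additional higher-order spatial norms, and on the auxiliary functional $\mathscr{E}\{u_q\}$ where $u_q:=v_q+z$. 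The perturbation $v_{q+1}-v_q=\wpr+\wc+\wt$ is assembled from intermittent Mikado-type building blocks $\Wxi$ tuned to the power-law nonlinearity as in \cite{BMS21}; the principal part $\wpr$ cancels $\oRq$ in $L^1$ while contributing a prescribed correction to the $L^2$-energy.

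The hardest part will be the design and inductive control of the auxiliary energy $\mathscr{E}$ of \eqref{Energy Functional}. Prescribing the energy exactly as in \cite{BMS21} produces constants in the iterative estimates that blow up on $[0,\infty)$, which is incompatible with stationarity. The strategy is to calibrate the amplitude of $\wpr$ so that the $L^2$-energy gained by its oscillation compensates, up to an error controllable by $\oRqq$, the change of the dissipation term $2\,\E\int_0^t\langle\mathcal{A}(Du_q),Du_q\rangle_{L^2}\,ds$ and of the correction $\tr(G)t$ appearing in \eqref{Energy Functional}. One then verifies inductively that $|\mathscr{E}\{u_{q+1}\}(t)-\mathscr{E}\{u_q\}(t)|$ is dominated by a summable sequence in $q$ uniformly in $t\geq 0$; the precise shape of \eqref{Energy Functional}, and in particular the subtraction of $\tr(G)t$, is exactly what makes this uniformity possible. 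The subcritical regime $\iota<\frac{2d}{d+2}$ is what permits the dissipation to be absorbed into $\oRqq$ via monotonicity and growth properties of $\mathcal{A}$ (cf.\ \thref{Lemma 2.4}); closing the scheme requires slightly more spatial regularity than in \cite{HZZ22}, which is reflected in the $W^{1+\gamma,\iota}$-norm appearing in \eqref{Target Space Main Result}.

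Finally, with the limit $u=\lim_q u_q$ satisfying $\sup_{t\geq 0}\E\|u(t)\|_{H^\gamma}^2<\infty$ and $\mathscr{E}\{u\}(t)\leq \mathscr{E}\{u\}(0)$, I would apply Krylov--Bogoliubov on the Polish space $\mathcal{T}$: tightness of the Ces\`aro averages $T^{-1}\int_0^T \mathrm{Law}(\mathcal{S}_s(u,B))\,ds$ follows from the uniform moment bounds via an Aubin--Lions compactness argument, and every weak cluster point is shift invariant. A joint stationary realisation of this invariant law on a possibly enlarged probability space, combined with the Leray--Hopf bound which is preserved under shifts, provides a stationary analytically weak and probabilistically strong Leray--Hopf solution, and an extremal-point decomposition of the set of invariant laws extracts ergodic ones. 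Infinitely many distinct ergodic solutions are produced by varying the target value of $\mathscr{E}$ at time $0$ over an interval of admissible values; different targets yield different stationary laws, which completes the proof of \thref{Main Result}.
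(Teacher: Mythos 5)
Your overall architecture (factorization $u=v+z$ via \thref{Proposition SHE}, an adapted convex integration iteration with expectations built into the estimates, Krylov--Bogoliubov for stationarity, and an extreme-point/Krein--Milman argument for ergodicity with non-uniqueness from varying the energy) matches the paper. However, the step you single out as the novel ingredient --- inductive control of the functional $\mathscr{E}$ of \eqref{Energy Functional} with $|\mathscr{E}\{u_{q+1}\}(t)-\mathscr{E}\{u_q\}(t)|$ summable in $q$ \emph{uniformly in $t\geq 0$} --- is precisely the step that does not close, and it is where the paper deviates from your plan. First, the terms $\tr(G)t$ cancel identically in the difference of consecutive iterates, so the subtraction of $\tr(G)t$ cannot be the source of uniformity. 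Second, the dissipation contribution to that difference is $2\,\E\int_0^t\int_{\mathbb{T}^d}\big[\mathcal{A}(Du_{q+1}){:}Du_{q+1}-\mathcal{A}(Du_q){:}Du_q\big]\,dx\,ds$, and the convex integration estimates only make the integrand small per unit time interval (uniformly over unit intervals, with no decay in $s$), so the cumulative error grows linearly in $t$; no bound summable in $q$ and uniform on $[0,\infty)$ is available, and calibrating the amplitude of $\wpr$ cannot help, since an instantaneous $L^2$-energy injection at time $t$ cannot offset an error accumulated over all of $[0,t]$. The paper avoids iterating on $\mathscr{E}$ altogether: it introduces the damped auxiliary functional $\mathscr{H}$ of \eqref{help energy}, whose dissipation term carries the factor $\tfrac{6}{t+3}$, imposes \eqref{approximation energy}, i.e. $\big|e(t)(1-4\delta_{q+1})-\mathscr{H}\{v_q+z_q\}(t)\big|\leq\delta_{q+1}\underline{e}$, and in the estimate of the dissipation difference exploits $\tfrac{|t|}{t+3}\leq 1$ together with suprema over unit intervals. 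The Leray--Hopf property is then recovered only a posteriori and only relative to $t=0$: under the extra hypothesis that $t\mapsto\tfrac{t+3}{3}\,e(t)$ is non-increasing and using $G\geq 0$ one gets $\mathscr{E}\{u\}(t)\leq\tfrac{t+3}{3}\mathscr{H}\{u\}(t)\leq\mathscr{H}\{u\}(0)=\mathscr{E}\{u\}(0)$, and the stronger inequality between general times $s\leq t$ is explicitly out of reach of this method (cf. \thref{Remark Leray--Hopf}). Your proposal, if it worked as stated, would prove something strictly stronger; as it stands, the uniform-in-time control of $\mathscr{E}$ is a genuine gap.

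A secondary technical point: you iterate with the full process $z$, whereas the paper works with the frequency truncations $z_q=\mathbb{P}_{\leq f(q)}z$ with $f(q)$ chosen as in \eqref{frequency z}. This is not cosmetic: for $d\geq 3$ the regularity $H^{1+\sigma}$ of $z$ does not give $L^\infty$ bounds, and the scheme needs quantitative $L^\infty$ control of the noise part (via \eqref{property za}) as well as smallness of $z_{q+1}-z_q$ in the nonlinear and commutator errors; with the un-truncated $z$ these estimates are unavailable. The remaining parts of your outline --- tightness of the ergodic averages from the uniform moment bounds, transfer of the energy inequality to the stationary limit by stationarity and Fatou, extraction of ergodic laws as extreme points, and non-uniqueness by choosing energy profiles with disjoint ranges --- agree with the paper's Chapter on stationary and ergodic solutions.
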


\bigskip 
\noindent The proof fundamentally relies on the method of convex integration and is elaborated across several sections. During the scheme the norm
\begin{align*}
\sup_{|I|=1}\|\cdot\|_{L_\Omega^jC^{N,\gamma}_I W_x^{s,p}},
\end{align*}
initially introduced in \cite{HZZ25} for all $j\in [1,\infty]$, $N\in \mathbb{N}_0\cup \{\infty\}$, $\gamma\in (0,1],\, s \in \mathbb{R}$ and $p\in(1, \infty)$, plays an essential role within the construction of solutions. It prevents blow-up over large times in spaces of the form
 \begin{align*}
L^j\big(\Omega;\,C_{\rm{loc}}^{N,\gamma}\big([0,\infty);\,W^{s,p}\big( \mathbb{T}^d;\mathbb{R}^d\big) \big)\big),
\end{align*}
enabling the derivation of global in time estimates. We will also use this norm for spaces of continuous functions rather than locally Hölder continuous functions, as well as Sobolev spaces in place of Bessel potential spaces.\\
A detailed overview of the underlying strategy is presented in the following section.\\
  \noindent 
\section{Strategy of the Proof} \label{Stragtegy of the Proof}
We initiate the proof by establishing the existence of analytically weak and probabilistically strong solutions to equation \eqref{PLF}, satisfying the energy inequality in \thref{Definition Leray--Hopf}. This result is stated in \thref{Theorem Weak Solution}, accomplished through the technique of convex integration. The core iterative process is presented in \thref{Proposition Main Iteration}. \\
Building on this, we demonstrate existence of stationary solutions, formulated in \thref{Theorem Stationary}, trough the classical Krylov--Bogoliubov procedure. These solutions are non-unique and satisfy the energy inequality as well. \\
Finally, to confirm the existence of infinitely many ergodic Leray--Hopf solutions, presented in \thref{Main Result}, we apply Krein--Milman's theorem.\\

\section{Organization of the Paper}
Section \ref{Preliminaries} introduces the foundational notations that are frequently utilized throughout this paper. In Section~\ref{Leray--Hopf Solutions}, we focus on the existence and non-uniqueness of Leray--Hopf solutions, presented in \thref{Theorem Weak Solution}. More precisely, Section \ref{Factorization} and Section \ref{Extension} incorporate crucial steps that are mandatory for the application of the convex integration scheme. A detailed explanation of this technique is postponed to Section \ref{Convex Integration Scheme}, compiled in the formulation of \thref{Proposition Main Iteration}. Leveraging this result we conclude the proof of \thref{Theorem Weak Solution} in Section \ref{End of the Proof}.\\
Section \ref{Existence and Non-Uniqueness of ergodic Leray--Hopf Solutions} is dedicated to proving the existence and non-uniqueness of ergodic Leray--Hopf solutions. Specifically, Section \ref{Existence and Non-Uniqueness of stationary Leray--Hopf Solutions} provides the proof of existence and non-uniqueness of stationary Leray--Hopf solutions, which is then used to derive even infinitely many ergodic Leray--Hopf solutions in Section \ref{Ergodicity of Solutions}.\\
While Appendix~\ref{Appendix A.1} provides a detailed examination of the parameter choices made in Section~\ref{Choice of Parameters}, Appendix~\ref{Appendix A.2} and Appendix~\ref{Appendix A.3} cover essential lemmata and additional calculations that appear in the preceding sections.

\addcontentsline{toc}{chapter}{Acknowledgments}
\section*{Acknowledgments}
The author wishes to express her sincere gratitude to Martina Hofmanová for her constant support and guidance throughout this work, which forms part of the author's doctoral dissertation.  \clearpage

\chapter{Preliminaries}\label{Preliminaries}
In the forthcoming section we will introduce various function spaces and operators, whose definition based on the Fourier transform on the $d$-dimensional torus. We define the Fourier transform of a function $u$ and the corresponding inversion on $\mathbb{T}^d$ as
\begin{align*}
(\mathcal{F}u)(n)&:=\hat{u}_n:=(2\pi)^{-d} \int_{\mathbb{T}^d} u(y)e^{-in\cdot y}\, dy \qquad
\text{and} \quad
 (\mathcal{F}^{-1}\hat{u}_n)(x)=u(x)=\sum_{n \in \mathbb{Z}^d}\hat{u}_n e^{in\cdot x}
\end{align*}
for all $n \in \mathbb{Z}^d$ and $x\in \mathbb{T}^d$.\\
We will also consider symmetric $d\times d$ - matrices  $A\in\mathbb{R}^{d\times d}_\text{symm}$, especially those of zero trace. To emphasize that they belong to the class of traceless matrices $\mathring{\mathbb{R}}^{d\times d}$ we will always write $\mathring{A}$ instead of $A$. \\
Moreover, for an interval $I:=[T,T+1]$ with $T\in \mathbb{R}$ we set 
\begin{align} \label{Interval}
I_\ell:=[T-\ell,T+1]
\end{align}
for some $\ell \in(0,1)$.

\section{Function Spaces} \label{Function Spaces}
During the convex integration scheme, it is crucial to restrict the velocity to an arbitrary but fixed interval $I$ of length $1$. Inspired by \cite{HZZ25} we therefore endow the space
\begin{align*}
 L^j\big(\Omega;\,C_{\rm{loc}}^{N,\gamma}\big([0,\infty);\,W^{s,p}\big( \mathbb{T}^d;\mathbb{R}^d\big) \big)\big)
\end{align*}
for all $j\in[1, \infty]$, $N\in \mathbb{N}_0\cup \{\infty\},\,\gamma\in (0,1],\, s \in \mathbb{R}$, $p \in (1,\infty)$ and $d\geq 3$ with the norm
\begin{align*}
\sup_{|I|=1}\|\cdot\|_{L_\Omega^jC^{N,\gamma}_I W_x^{s,p}}.
\end{align*}
We will also apply this norm to spaces of continuous functions instead of locally Hölder continuous ones, and use Sobolev spaces in place of Bessel potential spaces. In the sequel we introduce each part of this space in detail.
\subsection{Continuous Functions}
For an arbitrary $I \subseteq \mathbb{R}$, all $N\in \mathbb{N}_0\cup \{\infty\}$ and  $d\geq 3$ we denote the space of all $N$-times continuously differentiable functions on $I\times \mathbb{T}^d$ by $C^N_{I,x}\left(I\times \mathbb{T}^d; \mathbb{R}^d \right)$. A function on $I$ with values in an arbitrary Banach space $\left(X,\|\cdot\|_X\right)$ is said to belong to $C_I^{N,\gamma}\left(I; X\right)$, if it has continuous derivatives up to order $N$ and is additionally Hölder continuous of order $\gamma\in (0,1]$. Endowed with their corresponding norms
\begin{gather*}
\|u\|_{C_{I,x}^N}:= \sum_{\substack{0 \leq n+|\alpha| \leq N\\  \alpha \in \mathbb{N}_0^d}}\sup_{t\in I} \|\partial_t^n D^\alpha u(t) \|_{L_x^\infty} \intertext{and}
\|u\|_{C_{I}^{N,\gamma}}:=\sum_{0 \leq n \leq N}\sup_{t\in I}\| \partial_t^n u(t)\|_{X}+[ \partial_t^N u]_{C_{I}^{0,\gamma}}
\intertext{with}
[u]_{C_{I}^{0,\gamma}}:= \sup_{\substack{s,t \in I \\ s \neq t}} \frac{\|u(t)-u(s)\|_X}{|t-s|^\gamma},
\end{gather*}
these spaces are complete. The space $C_{\rm{loc}}^{N,\gamma}\left([0,\infty);X\right)$ contains all functions that belong to $C_I^{N,\gamma}\left(I; X\right)$ for every interval $I$. \\
If a $N$-times continuously differentiable function is bounded or compactly supported, we say that it lies in $C_b^N$ or $C_c^N$, respectively. \\
In general, we will omit writing the index $N$ whenever $N=0$.

\addtocontents{toc}{\par\noindent\rule{\textwidth}{0.4pt}
This project has received funding from the European Research Council (ERC) under the European Union's Horizon 2020 research and innovation programme (grant agreement No. 949981). \medskip\\
Faculty of mathematics, University of Bielefeld, D-33501 Bielefeld, Germany,\\
\Letter \, \href{mailto:sberkeme@math.uni-bielefeld.de}{sberkeme@math.uni-bielefeld.de~\\}
  }

\subsection{Bessel Potential Spaces}
For two Banach spaces $\left(X,\|\cdot\|_X\right),\, \left(Y, \|\cdot\|_Y \right)$ and  $1\leq p \leq \infty$ we will designate the space of Bochner-integrable functions from $X$ to $Y$ by $\left( L^p\left(X;Y\right), \|\cdot\|_{L_X^p}\right)$ and for  integer $k\in \mathbb{N}_0$ the usual Sobolev space by $\left( W^{k,p}\left(\mathbb{T}^d;\mathbb{R}^d\right), \|\cdot\|_{W_x^{k,p}}\right)$.\\
To extend the definition of Sobolev spaces for all real $k\in \mathbb{R}$ and $p,q\in (1,\infty)$ satisfying $\frac{1}{p}+\frac{1}{q}=1$, we define the Bessel potential space by using the Fourier transform as
\begin{align*}
W^{s,p}\left(\mathbb{T}^d;\mathbb{R}^d\right)&:=\bigg\{u \in L^p\left(\mathbb{T}^d;\mathbb{R}^d\right): \Big\|\mathcal{F}^{-1}\big[(1+|\cdot|^2)^{s/2}\mathcal{F}u\big] \Big\|_{L_x^p}<\infty\bigg\}
\intertext{for $s\geq 0$, and} 
W^{-s,p}\left(\mathbb{T}^d;\mathbb{R}^d\right)&:=\bigg\{u ^\prime\in \left(C^\infty_c\left(\mathbb{T}^d;\mathbb{R}^d\right)\right)^\prime: \Big\|\mathcal{F}^{-1}\big[(1+|\cdot|^2)^{-s/2}\mathcal{F}u^\prime\big]  \Big\|_{(L_x^q)^\prime}<\infty\bigg\} 
\end{align*}
whenever $s>0$.\\
It is natural to endow these spaces with 
\begin{align*}
\|u\|_{W_x^{s,p}}:= \Big\|\mathcal{F}^{-1}\big[(1+|\cdot|^2)^{s/2}\mathcal{F}u\big] \Big\|_{L_x^p}
 \quad \text{and} \quad
\|u^\prime\|_{W_x^{-s,p}}:= \Big\|\mathcal{F}^{-1}\big[(1+|\cdot|^2)^{-s/2}\mathcal{F}u^\prime\big] \Big\|_{(L_x^q)^\prime},
\end{align*}
respectively, so that they become complete. \\
Note that the Bessel potential space
$W^{-s,q}\left(\mathbb{T}^d;\mathbb{R}^d\right)$ is isomorphic to $\left(W_0^{s,p}\left(\mathbb{T}^d;\mathbb{R}^d\right) \right)^\prime$, with equivalent norms, where $W_0^{s,p}\left(\mathbb{T}^d;\mathbb{R}^d\right)$ indicates the closure of $C_c^\infty\left(\mathbb{T}^d;\mathbb{R}^d\right)$ in $W^{s,p}\left(\mathbb{T}^d;\mathbb{R}^d\right)$.\\
Another different approach to bridge the gap between the usual Sobolev space $W^{k,p}\left(\mathbb{T}^d;\mathbb{R}^d\right)$ and $W^{k+1,p}\left(\mathbb{T}^d;\mathbb{R}^d\right)$ with integer $k$ is the use of real interpolation spaces, known as Sobolev–\linebreak Slobodeckij spaces. In general they distinguish from the Bessel potential spaces, which are obtained via complex interpolation. They only coincide for $s\in (0,1)$ and $p=2$ (see Proposition 3.4 in \cite{DNPV12}). 
\medskip \\
In general $X^\prime$ represents the dual space of $X$.\\
If one of the above spaces $Z$ consists only of mean free functions, we will emphasize it by writing $Z_{\neq 0}$ and if they are additionally divergence free by writing $Z_{\rm{free}}$. However we will especially write $H^s$ instead of $W^{s,2}_{\rm{free}}$.

\section{Operators}
Throughout this paper we will also need different kinds of operators, presented in the following sections.

\subsection{Leray Projection and Fourier cut-offs}
In what follows we denote by $\mathbb{P}= \Id- \nabla \Delta^{-1} \divs$ the extended Leray projection on $W_{\neq 0}^{s,p}\left(\mathbb{T}^d;\mathbb{R}^d\right)$ with $s\geq 0$ and~$p\in (1,\infty)$. \\
Furthermore, the convex integration technique requires working with mean-free functions, as well as functions truncated in the sense that they are projected onto their Fourier modes with frequencies smaller or greater than some $\kappa > 0$. More precisely, for any  $u \in L^p\left(\mathbb{T}^d\right)$ with $1<p\leq \infty$ we define 
\begin{align*}
\left(\mathbb{P}_{\neq 0}u\right)(x)&:=\mathcal{F}^{-1}\Big[ \mathbbm{1}_{\{|\cdot|\neq 0 \}}\mathcal{F}u\Big](x)=u(x)-(\mathcal{F}u)(0)=u(x)-\fint_{ \mathbb{T}^d}u(x)\,dx,\\
\left(\mathbb{P}_{\leq \kappa}u\right)(x)&:= \mathcal{F}^{-1}\Big[ \chi_\kappa\, \mathcal{F}u\Big](x)=\sum_{n \in \mathbb{Z}^d} \chi_\kappa(n)\hat{u}_n e^{in\cdot x}\\
\intertext{and}
\left(\mathbb{P}_{\geq \kappa}u\right)(x)&:= \mathcal{F}^{-1}\Big[ \big(1-\chi_\kappa\big)\, \mathcal{F}u\Big](x)=\sum_{n \in \mathbb{Z}^d} \big(1-\chi_\kappa(n)\big)\hat{u}_ne^{in\cdot x},
\end{align*}
with $\chi_\kappa:= \chi \left( \frac{\cdot}{\kappa}\right)$ and $\chi \in C_c^\infty \left(\mathbb{R}^d;\mathbb{R}\right)$ given by
\begin{align*}
\chi(x):=\left\{\begin{array}{ll}
1,& |x|\leq \frac{1}{2}, \\ [1.3ex] 
\in (0,1),& \frac{1}{2}< |x|<1,\\[1.3ex]
0,& |x|\geq 1.\\  
\end{array}\right. 
\end{align*}
Here $\fint_{ \mathbb{T}^d}u(x)\,dx$ is nothing else than the averaged integral $(2\pi)^{-d}\int_{ \mathbb{T}^d}u(x)\,dx$. \\
The most important properties of these Fourier multipliers are listed below: 
\begin{subequations}
\begin{flalign}
& \bullet \, \mathbb{P}_{\neq 0}, \, \mathbb{P}_{\leq \kappa}, \, \mathbb{P}_{\geq \kappa} \text{ are bounded on $L^p\left(\mathbb{T}^d\right)$ for all $1<p\leq \infty$, where the}\label{OPa}&\\
&\hspace{.4cm} \text{implicit constant is independent of $\kappa$,}\notag& \\
& \bullet \,   \left(-\Delta\right)^{s/2},  \left(\Id-\Delta\right)^{s/2},\mathbb{P}_{\leq \kappa},  \mathbb{P}_{\geq \kappa} \text{ commute for all $s\in \mathbb{R}$ with each other,}\label{OPb} &\\
& \bullet \, \, \|\left(-\Delta\right)^{s/2} \mathbb{P}_{\leq \kappa}\|_{L_x^p\to L_x^p} \lesssim  \kappa^s \text{ hold for all $1<p<\infty$ and $s\geq 0$},\label{OPc}&\\
& \bullet \,  \|\left(-\Delta\right)^{-s/2} \mathbb{P}_{\geq \kappa}\|_{L_x^p\to L_x^p} \lesssim \frac{1}{\kappa^s} \text{ hold for all $1<p<\infty$ and $s\geq 0$}.\label{OPd}&\\
\intertext{If $u\in L^p(\mathbb{T}^d)$ is $\left(\frac{\mathbb{T}}{L}\right)^d$-periodic with $L \in \mathbb{N}$ and $1< p\leq \infty$ one has}
 &  \bullet \,\left(\mathbb{P}_{\leq \kappa}u\right)(x)= \sum_{n \in (L\mathbb{Z})^d} \chi_\kappa(n)\hat{u}_{n}e^{in\cdot x},  \label{OPe}& \\
& \bullet \,\left(\mathbb{P}_{\geq \kappa}u\right)(x)= \sum_{n \in (L\mathbb{Z})^d} \big(1-\chi_\kappa(n)\big)\hat{u}_{n}e^{in\cdot x}.  \label{OPf}&
\intertext{If, in particular, $L$ is strictly greater than $\kappa$, we have}
&\bullet \, \mathbb{P}_{\geq \kappa}u=\mathbb{P}_{\neq 0}u \label{OPg}.&
\end{flalign}
\end{subequations}  \vspace{0cm}

\noindent The operator $-\Delta$ in \eqref{OPc} might be replaced by $\Id-\Delta$, whenever $\kappa \geq 1$ and in \eqref{OPd} for any $\kappa> 0$.\\
These results follow basically from simple calculations and Mikhlin’s multiplier theorem.

\subsection{Right-Inverse of Divergence}
A right-inverse of the divergence operator will also be from important meaning. The following lemmata recalls the one from \cite{CL22} on p. 1048 and its bilinear version.

\begin{lemma} \thlabel{Lemma 2.1} 
The operator $\mathcal{R}$ defined by 
\begin{gather*}
\mathcal{R}\colon \left(C_{\neq 0}^\infty\left(\mathbb{T}^d; \mathbb{R}^d\right),\|\cdot\|_{L_x^p}\right) \to \left(L^p\left(\mathbb{T}^d; \mathring{\mathbb{R}}_{\text{sym}}^{d\times d}\right), \|\cdot\|_{L_x^p}\right),\\
(\mathcal{R}u)_{ij}=\partial_{x_i} \Delta^{-1}u_j+ \partial_{x_j}\Delta^{-1}u_i-\frac{1}{d-1} \Big((d-2)\partial_{ x_i} \partial_{x_j} \Delta^{-1} +\delta_{ij} \Big)\divs(\Delta^{-1}u)
\end{gather*}
is a right inverse of the divergence operator and is particularly bounded for $1\leq p \leq \infty$.
\end{lemma}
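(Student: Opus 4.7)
The statement divides cleanly into three algebraic identities that $\mathcal{R}u$ must satisfy — symmetry $(\mathcal{R}u)_{ij} = (\mathcal{R}u)_{ji}$, tracelessness $\sum_i (\mathcal{R}u)_{ii} = 0$, and the right-inverse property $\sum_j \partial_{x_j}(\mathcal{R}u)_{ij} = u_i$ — together with the analytic statement that $\mathcal{R}$ extends to a bounded operator $L^p \to L^p$ for every $1 \leq p \leq \infty$. Since $u$ is smooth and mean-free, $\Delta^{-1}$ is unambiguous as the Fourier multiplier by $-1/|n|^2$ on the non-zero modes, and all higher-order compositions below make rigorous pointwise sense.

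\textbf{Algebraic identities.} Symmetry is immediate from the formula. For tracelessness, summing $(\mathcal{R}u)_{ii}$ over $i$ and using $\sum_i \partial_{x_i}^2 \Delta^{-1} = \Id$ on mean-free functions gives
\begin{align*}
\sum_{i=1}^d (\mathcal{R}u)_{ii}
= 2\,\divs(\Delta^{-1} u) - \frac{(d-2) + d}{d-1}\,\divs(\Delta^{-1} u) = 0.
\end{align*}
For the right-inverse property I would apply $\partial_{x_j}$ to $(\mathcal{R}u)_{ij}$ and sum over $j$: the two gradient summands in the definition contribute $\partial_{x_i}\divs(\Delta^{-1} u) + u_i$, while the bracket contributes $-\tfrac{(d-2)+1}{d-1}\partial_{x_i}\divs(\Delta^{-1} u) = -\partial_{x_i}\divs(\Delta^{-1} u)$. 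The gradient-of-divergence pieces cancel, leaving $u_i$.

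\textbf{Boundedness.} Unfolding the formula, each scalar entry of $\mathcal{R}u$ is a finite sum of Fourier multipliers acting on components of $u$ whose symbols are rational functions of $n \neq 0$ that are homogeneous of degree $-1$: indeed $\partial_{x_i}\Delta^{-1}$ has symbol $-in_i/|n|^2$, while $\partial_{x_i}\partial_{x_j}\Delta^{-1}\circ\divs(\Delta^{-1}\,\cdot\,)$ and $\delta_{ij}\divs(\Delta^{-1}\,\cdot\,)$ collapse into symbols proportional to $n_i n_j n_k/|n|^4$ and $n_k/|n|^2$ respectively. On the torus, each such order $-1$ multiplier is convolution with a kernel which, away from global smooth corrections, inherits the $|x|^{1-d}$ singularity of its $\mathbb{R}^d$ counterpart at the origin and is smooth elsewhere; compactness of $\mathbb{T}^d$ then makes it globally integrable. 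Young's convolution inequality therefore yields boundedness on $L^p$ for every $1 \leq p \leq \infty$.

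\textbf{Main obstacle.} The genuinely delicate point is including the endpoints $p \in \{1, \infty\}$, because individually the factors $\partial_{x_i}\partial_{x_j}\Delta^{-1}$ are Calderón--Zygmund operators that fail precisely there. One must therefore resist the temptation to estimate the composition $\partial_{x_i}\partial_{x_j}\Delta^{-1} \circ \divs(\Delta^{-1}\,\cdot\,)$ factor by factor; instead each such term has to be treated as a single multiplier of order $-1$, where the extra $\Delta^{-1}$ hidden in $\divs(\Delta^{-1}u)$ compensates the two derivatives and restores the integrable kernel required at the endpoints. With this grouping performed, Mikhlin's theorem handles the intermediate range $1 < p < \infty$ and the $L^1$-kernel bound handles the endpoints, giving the uniform estimate claimed.
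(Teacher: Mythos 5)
Your proposal is correct, and it is worth noting that the paper itself offers no argument here: \thref{Lemma 2.1} is recalled verbatim from Cheskidov--Luo \cite{CL22}, so your write-up supplies exactly what the paper delegates to the citation. The algebra is right: symmetry is immediate, the trace vanishes because $2-\frac{(d-2)+d}{d-1}=0$, and in the divergence computation the two $\partial_{x_i}\divs(\Delta^{-1}u)$ contributions cancel, using throughout that $\Delta^{-1}$ acts on the non-zero Fourier modes so that $\Delta\Delta^{-1}=\Id$ on mean-free functions (both $u$ and $\divs(\Delta^{-1}u)$ are mean-free). Your treatment of the boundedness is also the standard one, and you correctly isolate the only delicate point: each of the three blocks must be read as a single multiplier with symbol homogeneous of degree $-1$ (proportional to $n_i/|n|^2$, $n_in_jn_k/|n|^4$, $n_k/|n|^2$), since estimating the Calder\'on--Zygmund factor $\partial_{x_i}\partial_{x_j}\Delta^{-1}$ separately would lose the endpoints $p\in\{1,\infty\}$. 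Two minor remarks. First, once the periodic kernel satisfies $|K(x)|\lesssim |x|^{1-d}$ and hence lies in $L^1(\mathbb{T}^d)$, Young's inequality already yields all $1\leq p\leq \infty$ at once, so your closing appeal to Mikhlin for the intermediate range is redundant. Second, the transfer of the kernel bound from $\mathbb{R}^d$ to $\mathbb{T}^d$ is the one place a complete write-up should spend a line: e.g. split the symbol dyadically and sum by parts to get $\big|\sum_{|n|\sim 2^j}m(n)e^{in\cdot x}\big|\lesssim 2^{j(d-1)}\min\{1,(2^j|x|)^{-N}\}$, and then sum over $j$ to obtain the $|x|^{1-d}$ bound on the fundamental domain; this is entirely standard, but it is the step your phrase ``away from global smooth corrections'' glosses over.
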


\begin{lemma}\thlabel{Lemma 2.2}
 The operator
\begin{gather*}
\mathcal{B}\colon C_{\neq 0}^\infty\left(\mathbb{T}^d; \mathbb{R}^{d\times d}\right)\times C^\infty\left(\mathbb{T}^d; \mathbb{R}^d\right) \to C^\infty\left(\mathbb{T}^d; \mathring{\mathbb{R}}_{\text{sym}}^{d\times d}\right),\\
\mathcal{B}(A,u):=\Pm\left( \sum\limits_{k,\ell=1}^d u_\ell \mathcal{B}_{ijk}A_{\ell k}\right)_{ij}- \mathcal{R}\Pm\left(\sum\limits_{i,k,\ell=1}^d\partial_{x_i}u_\ell \mathcal{B}_{ijk }A_{ \ell k }\right)_j ,
\end{gather*}
with
\begin{align*}
\mathcal{B}_{ijk}:=\partial_{x_i}\Delta^{-1}\delta_{jk}+\partial_{x_j}\Delta^{-1}\delta_{ik}-\frac{1}{d-1}\Big( (d-2) \partial_{x_i}\partial_{x_j}\Delta^{-1}+\delta_{ij}\Big)\partial_{x_k}\Delta^{-1}
\end{align*}
is a bilinear version of the antidivergence in the sense that
\begin{align*}
\divs \Big(\mathcal{B}\left(A,u \right)\Big)= \Pm (A^Tu)
\end{align*}
holds. Moreover we have
\begin{align*}
\|\mathcal{B}(\mathbb{P}_{\geq \kappa} A,u)\|_{L_x^p}\lesssim \frac{1}{\kappa}\|A\|_{L_x^p} \|u\|_{C_x^1}
\end{align*}
for all $ p \in(1, \infty).$
\end{lemma}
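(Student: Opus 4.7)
The plan is to verify the two assertions separately: first the algebraic divergence identity $\divs\mathcal{B}(A,u) = \Pm(A^T u)$, and then the quantitative $L^p$-bound. For the identity, I would compute $\partial_j \mathcal{B}(A,u)_{ij}$ componentwise, commuting $\partial_j$ past $\Pm$ and $\invdiv$. The first summand of $\mathcal{B}$ splits by the product rule into a piece where $\partial_j$ hits $u_\ell$ and a piece where $\partial_j$ hits $\mathcal{B}_{ijk}A_{\ell k}$. The latter is precisely $\divs\invdiv$ applied to the vector field $v_k = A_{\ell k}$ (for each fixed $\ell$), since $\mathcal{B}_{ijk}$ is exactly the kernel of $\invdiv$ from \thref{Lemma 2.1}; hence $\sum_j \partial_j \mathcal{B}_{ijk} A_{\ell k} = \Pm A_{\ell i} = A_{\ell i}$, because $A$ is mean-free. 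Summing over $\ell$ and multiplying by $u_\ell$ then yields exactly $\Pm(A^T u)_i$.

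It remains to show that the first-derivative remainder cancels against the second summand. Applying $\divs$ to $\invdiv\Pm(\,\cdot\,)_j$ undoes $\invdiv$ up to $\Pm$, returning $\Pm\sum_{j,k,\ell}(\partial_j u_\ell)\mathcal{B}_{jik}A_{\ell k}$, whereas the leftover from the first summand is $\Pm\sum_{j,k,\ell}(\partial_j u_\ell)\mathcal{B}_{ijk}A_{\ell k}$. These agree thanks to the symmetry $\mathcal{B}_{ijk}=\mathcal{B}_{jik}$, which is immediate from its explicit formula, so the two contributions cancel and the identity is established. As a byproduct, the same symmetry forces the first summand of $\mathcal{B}(A,u)$ to be symmetric in $(i,j)$, while the short identity $\sum_i \mathcal{B}_{iik}=0$ on mean-free inputs (a direct calculation using $\Delta\Delta^{-1}=\Pm$) ensures tracelessness, matching the codomain $\mathring{\mathbb{R}}_{\text{sym}}^{d\times d}$.

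For the $L^p$-estimate I would exploit that $\mathcal{B}_{ijk}$ is a Fourier multiplier of order $-1$. Writing $\mathcal{B}_{ijk}\Pm_{\geq\kappa}$ as the composition of the bounded Riesz-type operator $\mathcal{B}_{ijk}(-\Delta)^{1/2}$ with $(-\Delta)^{-1/2}\Pm_{\geq\kappa}$ and invoking \eqref{OPd} gives $\|\mathcal{B}_{ijk}\Pm_{\geq\kappa}\|_{L^p\to L^p}\lesssim \kappa^{-1}$. The first summand of $\mathcal{B}(\Pm_{\geq\kappa}A,u)$ is therefore controlled by $\|u\|_{L^\infty}\|A\|_{L^p}/\kappa$ via \eqref{OPa} and Hölder, and the second summand by $\|\nabla u\|_{L^\infty}\|A\|_{L^p}/\kappa$ via the additional boundedness of $\invdiv$ on $L^p$ from \thref{Lemma 2.1}. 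Summing yields the claimed bound with $\|u\|_{C^1}$. The main obstacle I anticipate is bookkeeping rather than ideas: consistently interpreting $\Delta^{-1}$ as $\Delta^{-1}\Pm$, tracking which quantities carry $\Pm$, and making the cancellation between the two summands fully rigorous despite $u$ not being mean-free; all remaining ingredients are standard.
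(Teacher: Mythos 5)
Your argument is correct. Note that the paper itself offers no proof of this lemma — it is recalled from \cite{CL22} (together with \thref{Lemma 2.1}) — so there is nothing internal to compare against; what you have written is a sound self-contained verification that stays entirely within the paper's toolbox. Your divergence computation is the right one: using $\sum_j\partial_{x_j}\mathcal{B}_{ijk}=\delta_{ik}\Pm$ (the componentwise form of $\divs\mathcal{R}=\Id$ on mean-free fields) for the term where $\partial_j$ hits the operator, and cancelling the term where $\partial_j$ hits $u_\ell$ against the divergence of the $\mathcal{R}\Pm(\cdot)$ correction via the manifest symmetry $\mathcal{B}_{ijk}=\mathcal{B}_{jik}$, which indeed yields exactly $\Pm(A^Tu)$; your side remarks on symmetry and on $\sum_i\mathcal{B}_{iik}=0$ (using $\Delta\Delta^{-1}=\Pm$ and the coefficient identity $2-\frac{(d-2)+d}{d-1}=0$) correctly justify the codomain $\mathring{\mathbb{R}}^{d\times d}_{\mathrm{sym}}$. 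For the quantitative bound, your factorization $\mathcal{B}_{ijk}\mathbb{P}_{\geq\kappa}=\bigl[\mathcal{B}_{ijk}(-\Delta)^{1/2}\bigr]\bigl[(-\Delta)^{-1/2}\mathbb{P}_{\geq\kappa}\bigr]$ combined with Mikhlin and \eqref{OPd} is exactly the mechanism behind the paper's own statement $\|\mathcal{R}\mathbb{P}_{\geq\kappa}\|_{L^p\to L^p}\lesssim\kappa^{-1}$ in \thref{Lemma 2.3}; equivalently you could quote that lemma directly, since the first summand is $u_\ell$ times rows of $\mathcal{R}\mathbb{P}_{\geq\kappa}A_{\ell\cdot}$ and the second is $\mathcal{R}\Pm$ (bounded by \thref{Lemma 2.1}) applied to $\partial_{x_i}u_\ell$ times such rows, giving the factors $\|u\|_{L^\infty}$ and $\|\nabla u\|_{L^\infty}$ respectively and hence $\kappa^{-1}\|A\|_{L^p}\|u\|_{C^1}$. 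The only hygiene points — interpreting $\Delta^{-1}$ as acting on mean-free functions, noting that $\mathbb{P}_{\geq\kappa}A$ has zero mean so $\mathcal{R}$ is legitimately applied, and restricting the multiplier bounds to $1<p<\infty$ — are ones you already flagged, and they do not affect the argument.
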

\noindent It follows from Calder\'on--Zygmund's inequality and Mikhlin's multiplier theorem that even the composition of $\mathcal{R}$ with the differential operators $\Delta$, $\divs$ and with the Fourier cut-off operator $\mathbb{P}_{\geq \kappa}$ are bounded operators as well. 
\begin{lemma} \thlabel{Lemma 2.3} 
The composition of operators
\begin{itemize}
\item[i)] $
\mathcal{R}\divs \colon \left(C^\infty\left(\mathbb{T}^d;\mathbb{R}^{d\times d}\right),\|\cdot\|_{L_x^p}\right) \to \left(L^p\left(\mathbb{T}^d; \mathring{\mathbb{R}}_{\text{sym}}^{d\times d}\right), \|\cdot\|_{L_x^p}\right),
$
 \item[ii)]$
\mathcal{R}\Delta \colon \left(C^\infty\left(\mathbb{T}^d;\mathbb{R}^d\right),\|\cdot\|_{W_x^{1,p}}\right) \to \left(L^p\left(\mathbb{T}^d; \mathring{\mathbb{R}}_{\text{sym}}^{d\times d}\right), \|\cdot\|_{L_x^p}\right),
$
\item[iii)]$
\mathcal{R}\mathbb{P}_{\geq\kappa}\colon \left( C^\infty\left(\mathbb{T}^d;\mathbb{R}^d\right) , \|\cdot\|_{L_x^p}\right) \to \left(L^p\left(\mathbb{T}^d; \mathring{\mathbb{R}}_{\text{sym}}^{d\times d}\right), \|\cdot\|_{L_x^p}\right) 
$
\end{itemize}
are for $p \in (1,\infty)$ continuous. Especially we find
\begin{align*}
\|\mathcal{R}\mathbb{P}_{\geq\kappa}\|_{L_x^p\to L_x^p} \lesssim \frac{1}{\kappa}.
\end{align*}
\end{lemma}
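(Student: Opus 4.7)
The plan is to analyze each composition on the Fourier side, identify the resulting multiplier, and invoke either the Calder\'on--Zygmund inequality or Mikhlin's multiplier theorem to obtain the desired $L^p$-bounds. Recall that the building blocks of $\mathcal{R}$ in \thref{Lemma 2.1} are constant-coefficient pseudo-differential operators of order $-1$, namely $\partial_{x_i}\Delta^{-1}$ and $\partial_{x_i}\partial_{x_j}\Delta^{-1}\divs(\Delta^{-1}\cdot)$, whose Fourier symbols are $in_i/|n|^2$ and $-n_in_jn_k/|n|^4$, respectively, acting on mean-free functions (so that $|n|\geq 1$ everywhere on their support).

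For (i), I would compose on the Fourier side and observe that every entry of $\mathcal{R}\divs$ has a symbol built from products such as $n_in_k/|n|^2$ or $n_in_jn_kn_\ell/|n|^4$. These are homogeneous of degree $0$, smooth away from the origin, and satisfy Mikhlin's condition, hence define bounded operators on $L^p(\mathbb{T}^d)$ for every $p\in(1,\infty)$. Equivalently, they are compositions of Riesz transforms, so Calder\'on--Zygmund theory gives the bound directly. For (ii), I would rewrite $\mathcal{R}\Delta$ by pulling one derivative off via the identity $\partial_{x_i}\Delta^{-1}\Delta=\partial_{x_i}$, yielding for each component an operator of the form (Calder\'on--Zygmund operator of order $0$) $\circ\,\partial_x$. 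Applied to $u\in C^\infty$ this produces terms of the type $T\partial_{x_i}u$, which are controlled in $L^p$ by $\|\partial_{x_i}u\|_{L^p}\leq \|u\|_{W^{1,p}}$, again via Mikhlin's theorem.

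For (iii), which carries the quantitative gain $1/\kappa$, I would write
\begin{align*}
\mathcal{R}\mathbb{P}_{\geq\kappa}u \;=\; \mathcal{F}^{-1}\Bigl[\,m(n)\bigl(1-\chi_\kappa(n)\bigr)\widehat{u}_n\Bigr],
\end{align*}
where $m(n)$ denotes any of the symbols of the components of $\mathcal{R}$, each homogeneous of degree $-1$. On the support of $1-\chi_\kappa$ one has $|n|\geq\kappa/2$, so $|m(n)|\lesssim 1/\kappa$. Factoring out this scale by writing $m(n)(1-\chi_\kappa(n))=\kappa^{-1}\widetilde m(n/\kappa)$ with $\widetilde m$ satisfying Mikhlin's derivative bounds uniformly in $\kappa$, the rescaled multiplier is bounded on $L^p$ with constants independent of $\kappa$. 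This gives precisely $\|\mathcal{R}\mathbb{P}_{\geq\kappa}\|_{L^p\to L^p}\lesssim \kappa^{-1}$.

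The only mild obstacle I foresee is bookkeeping the explicit symbols coming from the three summands in the definition of $\mathcal{R}$ and verifying Mikhlin's derivative conditions (or checking that the rescaling argument in (iii) is uniform in $\kappa$); both are routine once the symbols have been written down, and neither requires more than scaling together with the smoothness and homogeneity of $n\mapsto n_i/|n|^2$ off the origin. Finally, density of $C^\infty$ (or $C^\infty_{\neq 0}$) in the relevant Sobolev spaces allows the bounds to be extended from smooth functions to the full domains used later in the paper.
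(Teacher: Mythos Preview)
Your proposal is correct and follows precisely the route the paper indicates: immediately before \thref{Lemma 2.3} the paper states only that the result ``follows from Calder\'on--Zygmund's inequality and Mikhlin's multiplier theorem'' without further elaboration, and your argument supplies exactly those details by writing out the symbols, checking their homogeneity/Mikhlin bounds, and extracting the $\kappa^{-1}$ factor via rescaling.
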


\subsection{Non-Newtonian Tensor}
To control the nonlinear term of the power-law flows during the convex integration technique, it is indispensable to find some suitable bounds for the non-Newtonian tensor. The following growth estimates from \cite{BMS21} are recalled in the subsequent lemma.

\begin{lemma}\thlabel{Lemma 2.4}
The non-Newtonian operator $\mathcal{A}$ given by \eqref{Non-Newtonian-Tensor} admits the following growth estimates

\begin{align*}
\|\mathcal{A}(Q)-\mathcal{A}(R)\|_F\leq \left\{\begin{array}{ll}
C_{\nu_1}\|Q-R\|^{\iota-1}_F, & \nu_0=0,\,  \iota \leq 2, \\ 
C_{\nu_0}\|Q-R\|_F, & \nu_0>0,\, \iota \leq 2, \\
    C_{\iota,\nu_0,\nu_1}\|Q-R\|_F\big( 1+\|Q\|_F^{\iota -2}+\|R\|_F^{\iota -2}\big), &  \iota \geq 2, \end{array}\right.
\end{align*}
    and
\begin{align*}
    |\mathcal{A}(Q): Q-\mathcal{A}(R): R|\leq  C_{\iota ,\nu_0,\nu_1}\|Q-R\|_F\big( 1+\|Q\|_F^{\iota -1}+\|R\|_F^{\iota -1}\big) 
\end{align*} 
for all $Q,\, R\in L^\iota\big(\Omega;L^\iota\big([0,\infty);L^\iota \big( \mathbb{T}^d;\mathbb{R}^{d\times d}\big)\big)\big) $ and some $C_{\nu_0}, C_{\nu_1}, C_{\iota,\nu_0,\nu_1}\geq 1$.
\end{lemma}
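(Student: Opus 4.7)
\textbf{Proof plan for \thref{Lemma 2.4}.} The underlying strategy is the fundamental theorem of calculus applied to $\mathcal{A}$ along the segment $S_t := R + t(Q-R)$, combined with explicit bounds on the differential $D\mathcal{A}$. A direct computation gives, for an arbitrary increment $H \in \mathbb{R}^{d\times d}$,
\begin{align*}
D\mathcal{A}(Q)[H]
 \;=\;(\nu_0+\nu_1\|Q\|_F)^{\iota-2}H
   \;+\;(\iota-2)\,\nu_1\,(\nu_0+\nu_1\|Q\|_F)^{\iota-3}\,\frac{Q:H}{\|Q\|_F}\,Q ,
\end{align*}
and since $\nu_1\|Q\|_F \le \nu_0+\nu_1\|Q\|_F$, both summands are controlled pointwise by
\begin{align*}
\|D\mathcal{A}(Q)[H]\|_F \;\le\; \bigl(1+|\iota-2|\bigr)\,(\nu_0+\nu_1\|Q\|_F)^{\iota-2}\,\|H\|_F .
\end{align*}
This is the workhorse of the argument.

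For the first inequality in the case $\iota \geq 2$, I would insert the bound above into
$\mathcal{A}(Q)-\mathcal{A}(R) = \int_0^1 D\mathcal{A}(S_t)[Q-R]\,dt$
and then apply the elementary inequality $(a+b)^{\iota-2}\lesssim_\iota a^{\iota-2}+b^{\iota-2}$ together with $\|S_t\|_F \le \|R\|_F + \|Q\|_F$ to split the $(\nu_0+\nu_1\|S_t\|_F)^{\iota-2}$ factor into a constant part and parts controlled by $\|Q\|_F^{\iota-2}$ and $\|R\|_F^{\iota-2}$. The case $\nu_0 > 0$, $\iota\le 2$ is even simpler: the weight $(\nu_0+\nu_1\|S_t\|_F)^{\iota-2}$ is uniformly bounded by $\nu_0^{\iota-2}$, so the FTC directly yields the Lipschitz bound with constant $C_{\nu_0}$.

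The subcase $\nu_0 = 0$, $\iota \le 2$ is where the FTC approach is most delicate, because $(\nu_1\|S_t\|_F)^{\iota-2}$ becomes singular whenever the segment passes near $0$; this is the main obstacle. I would bypass it by invoking the classical vector-valued inequality
\begin{align*}
\bigl\||a|^{\iota-2}a - |b|^{\iota-2}b \bigr\| \;\le\; C_\iota\, |a-b|^{\iota-1}, \qquad 1<\iota\le 2 ,
\end{align*}
proved by splitting into the regions $|a-b|\ge \tfrac{1}{2}\max(|a|,|b|)$ (where the left-hand side is bounded by $|a|^{\iota-1}+|b|^{\iota-1}\lesssim |a-b|^{\iota-1}$) and $|a-b|<\tfrac{1}{2}\max(|a|,|b|)$ (where the segment stays away from $0$, so the FTC of the previous paragraph applies and the resulting bound is reabsorbed by the Hölder power). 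Since $\mathcal{A}(Q)=\nu_1^{\iota-2}\|Q\|_F^{\iota-2}Q$ in this regime, this yields exactly the claimed Hölder estimate with constant $C_{\nu_1}$.

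Finally, for the second inequality I would expand
\begin{align*}
\mathcal{A}(Q):Q - \mathcal{A}(R):R \;=\; \int_0^1 \!\Bigl( D\mathcal{A}(S_t)[Q-R]:S_t + \mathcal{A}(S_t):(Q-R)\Bigr)\, dt ,
\end{align*}
apply the derivative bound together with $\|\mathcal{A}(S_t)\|_F \le (\nu_0+\nu_1\|S_t\|_F)^{\iota-2}\|S_t\|_F$, and note that the factor $\|S_t\|_F$ absorbs one power of $(\nu_0+\nu_1\|S_t\|_F)$, upgrading the weight to $(\nu_0+\nu_1\|S_t\|_F)^{\iota-1}$. Splitting this weight via $(a+b)^{\iota-1}\lesssim_\iota 1 + a^{\iota-1}+b^{\iota-1}$ and using $\|S_t\|_F \le \|Q\|_F+\|R\|_F$ together with the convexity/subadditivity of $s\mapsto s^{\iota-1}$ yields the required bound with constant $C_{\iota,\nu_0,\nu_1}$, uniformly in all three regimes of $\iota$ and $\nu_0$.
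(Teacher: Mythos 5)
Your plan is sound, but note that the paper itself contains no proof of \thref{Lemma 2.4}: it is simply recalled from the reference [BMS21], so there is no in-paper argument to match. What you supply is a legitimate self-contained proof along the standard lines one finds in the power-law/$p$-Laplacian literature: the derivative bound $\|D\mathcal{A}(Q)[H]\|_F\leq (1+|\iota-2|)(\nu_0+\nu_1\|Q\|_F)^{\iota-2}\|H\|_F$ is correct, the fundamental-theorem-of-calculus argument along $S_t=R+t(Q-R)$ handles the cases $\iota\geq 2$ and $\nu_0>0,\ \iota\leq 2$ exactly as you describe, and your treatment of the degenerate case $\nu_0=0,\ \iota\leq 2$ via the classical H\"older estimate $\bigl\| |a|^{\iota-2}a-|b|^{\iota-2}b\bigr\|\leq C_\iota |a-b|^{\iota-1}$ (with the two-region splitting) is the standard and correct way around the singularity of the segment integrand. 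The argument for the second estimate also works, since the extra factor $\|S_t\|_F$ removes the singularity of the weight even when $\nu_0=0$.

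Two small points you should make explicit if you write this up. First, your absorption step $\|S_t\|_F\leq \nu_1^{-1}(\nu_0+\nu_1\|S_t\|_F)$ requires $\nu_1>0$, and this is not cosmetic: for $\nu_1=0$ and $\iota<2$ the second inequality is actually false (take $R=0$; the left-hand side grows like $\|Q\|_F^2$ while the right-hand side grows like $\|Q\|_F^{\iota}$), so the constant $C_{\iota,\nu_0,\nu_1}$ genuinely degenerates as $\nu_1\to 0$ and the lemma is implicitly read with $\nu_1>0$; alternatively, for $\iota\geq 2$ you can avoid dividing by $\nu_1$ by bounding $(\nu_0+\nu_1 s)^{\iota-2}s\lesssim_{\iota,\nu_0,\nu_1} 1+s^{\iota-1}$ directly. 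Second, when the segment passes through the origin (relevant for $\nu_0=0$), $\mathcal{A}$ is not differentiable there, so the FTC should be justified by noting that $t\mapsto \mathcal{A}(S_t)$ and $t\mapsto\mathcal{A}(S_t):S_t$ are absolutely continuous with the stated derivative bounds holding almost everywhere; this is routine but worth a sentence. With these remarks, the proposal is a complete and correct substitute for the citation.
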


\noindent
The constants $C_{\nu_0}, C_{\nu_1}, C_{\iota,\nu_0,\nu_1}$ are not required to be greater than or equal to $1$; it suffices that they are positive. Nevertheless, for the sake of simplicity in our computations, we retain the assumption that these constants satisfy the aforementioned condition. Moreover, it is worth mentioning that this result can be extended to mappings of the form $ Q \mapsto \left( \nu_0 + \nu_1 \|Q\|_F^2 \right)^{\frac{\iota-2}{2}} Q $, as well as to other tensor-valued functions defined via appropriate N-functions.

 \chapter{Existence \& Non-Uniqueness of Leray--Hopf Solutions} \label{Leray--Hopf Solutions}
In this section we focus on the existence of infinitely many probabilistically strong Leray--Hopf solutions to~\eqref{PLF}, outlined in the following theorem.
 
\begin{theorem} \thlabel{Theorem Weak Solution}
There exist infinitely many probabilistically strong Leray--Hopf solutions to the power-law system of shear-thinning fluids \eqref{PLF} with power index $\iota \in \left(1,\frac{2d}{d+2} \right)$ and certain deterministic initial data.\\ 
More precisely, for every non-negative energy $e\in C^\infty\left([0,\infty);[\underline{e},\bar{e}]\right)$, satisfying 
\begin{align*}
\underline{e}\leq e(t) \leq \bar{e} \qquad \text{and} \qquad \Big|\frac{d}{dt} e (t) \Big|\leq \widetilde{e}
\end{align*}
 for all $t\in [0,\infty)$, $\bar{e}>\underline{e}>0$ sufficiently large and some $\widetilde{e}>0$,  there exists an analytically weak and probabilistically strong solution $u$ on $\big((\Omega,\mathcal{F},\mathcal{P}),(\mathcal{F}_t)_{t \geq 0},B \big)$ to the power-law system \eqref{PLF}. This solution depends explicitly on the given energy $e$ through the relation
 \begin{align} \label{Energy Equation}
\E\|u(t)\|_{L_x^2}^2+\frac{2H}{t+H}\E\bigg[\int_0^{t} \int_{\mathbb{T}^d} \mathcal{A}(Du(s,x)) \colon Du (s,x)\,dx \,ds\bigg]=e(t)
\end{align}
for some $H>0$ and all $t \in [0,\infty)$ and belongs $\mathcal{P}$-a.s. to the class
\begin{align*} 
C\big([0,\infty);H^\gamma\big( \mathbb{T}^d;\mathbb{R}^d\big) \big)\cap C_{\rm{loc}}^{0,\gamma}\big([0,\infty);L^{2}\big( \mathbb{T}^d;\mathbb{R}^d\big)\cap W^{1+\gamma,\iota}\big( \mathbb{T}^d;\mathbb{R}^d\big)\big)
\end{align*}
for some $\gamma \in \left(0,1-\sqrt{\frac{2d+6}{2d+7}}\,\right)$.\\
Moreover the following consistency result holds:\\
If two distinct energies with the same bounds $\underline{e}, \bar{e}$ and $\widetilde{e}$ coincide on some bounded interval $[0,T]$ with $T\geq 0$ arbitrarily large, then so do the corresponding solutions. For $T=0$ the two solutions distinguish but share the same initial data.\\
In particular, if $e(t)=H\tr(G)$, the solution is additionally of Leray--Hopf class.
\end{theorem}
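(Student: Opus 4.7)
The plan is to prove \thref{Theorem Weak Solution} via a stochastic convex integration scheme centred on a carefully designed auxiliary energy functional. The first step is the factorisation of Section~\ref{Factorization}: decompose $u = v + z$, where $z$ solves a linear stochastic (heat-type) equation driven by $dB_t$ and inherits the regularity afforded by $\tr((\Id - \Delta)^{2\sigma}G) < \infty$. This reduces \eqref{PLF} to a random pathwise PDE for the remainder $v$, to which deterministic convex integration tools apply $\omega$-wise. The second step, the extension of Section~\ref{Extension}, rewrites this random PDE in Reynolds-defect form
\begin{equation*}
\partial_t v + \divs(v \otimes v) - \divs\mathcal{A}(Dv) + \nabla P = \divs \mathring{R}, \qquad \divs v = 0,
\end{equation*}
absorbing all cross terms involving $z$ and the linearised viscosity into the initial stress $\mathring{R}_0$.

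With this setup, the heart of the proof is the iteration of \thref{Proposition Main Iteration}, producing a sequence of $(\mathcal{F}_t)$-adapted pairs $(u_q, \mathring{R}_q)$ with $\mathring{R}_q \to 0$ and $u_q$ converging in the target class. At each level I would add a perturbation $w_{q+1}$ built from intermittent (Mikado-type) shear flows, with amplitudes calibrated not to force $\E\|u_{q+1}\|_{L^2}^2 = e$ directly --- which would blow up over long time intervals --- but rather to enforce the composite identity \eqref{Energy Equation} at each level up to a vanishing error. The growth bounds of \thref{Lemma 2.4} together with the antidivergence estimates of Lemmas~\ref{Lemma 2.1}--\ref{Lemma 2.3} would control the new Reynolds stress in the norm $\sup_{|I|=1}\|\cdot\|_{L^jC^{N,\gamma}_I W^{s,p}}$, which is crucial for obtaining estimates uniform across unit intervals and therefore global in time. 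The main obstacle I expect lies precisely in pinning down the auxiliary energy and the parameter cascade of Appendix~\ref{Appendix A.1}: small variations either break the inductive closure or fail to produce \eqref{Energy Equation} in the limit, and the Hölder-in-time regularity $C^{0,\gamma}_{\rm loc}$ claimed here forces one to iterate also on additional time derivatives beyond the scheme of \cite{HZZ22}.

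Having completed the iteration, $u := v + z$ is $(\mathcal{F}_t)$-adapted, analytically weak, probabilistically strong, and lies in the stated regularity class; identity \eqref{Energy Equation} then follows by passing $q\to\infty$, using the $L^1$-convergence $\mathring{R}_q \to 0$ together with the continuity bounds of $\mathcal{A}$ on $v_q$ from \thref{Lemma 2.4}. Non-uniqueness stems from the freedom in choosing the starting datum $u_0$ and the first Mikado frequency, producing infinitely many distinct solutions sharing the same energy profile $e$. For the consistency claim, the construction is causal in $e$: each restriction $u_q|_{[0,T]}$ depends only on $e|_{[0,T]}$ and the common data, so two admissible energies agreeing on $[0,T]$ give the same $u_q$ on $[0,T]$ for every $q$, and hence the same limit; taking $T=0$ recovers only the shared initial datum, as claimed.

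Finally, to obtain the Leray--Hopf property under the extra hypothesis that $t\mapsto\frac{t+3}{3}e(t)$ is non-increasing, I would rewrite \eqref{Energy Equation} in the equivalent form
\begin{equation*}
\tfrac{t+3}{3}\E\|u(t)\|_{L^2}^2 + 2\,\E\int_0^t\!\!\int_{\mathbb{T}^d}\mathcal{A}(Du(s,x)):Du(s,x)\,dx\,ds = \tfrac{t+3}{3}\,e(t),
\end{equation*}
and then differentiate (or pass to a difference quotient) in $t$. Combining the monotonicity hypothesis with the $\tr(G)\,t$ contribution arising from the It\^o correction of the decomposition $u = v + z$, I expect the bound
\begin{equation*}
\E\|u(t)\|_{L^2}^2 + 2\,\E\int_0^t\!\!\int_{\mathbb{T}^d}\mathcal{A}(Du):Du\,dx\,ds - \tr(G)\,t \;\leq\; \E\|u(0)\|_{L^2}^2,
\end{equation*}
which is exactly $\mathscr{E}\{u\}(t)\leq \mathscr{E}\{u\}(0)$ required by \thref{Definition Leray--Hopf}.
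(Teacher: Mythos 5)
Your overall architecture (factorisation $u=v+z$, defect iteration with intermittent building blocks, amplitudes calibrated to an auxiliary energy so that \eqref{Energy Equation} holds up to a vanishing error, global-in-time estimates in $\sup_{|I|=1}\|\cdot\|_{L^jC_I^{N,\gamma}W^{s,p}}$, and a causality argument for consistency) matches the paper. However, two steps as you describe them contain genuine gaps. First, your non-uniqueness mechanism is wrong for the statement at hand: the theorem asks for infinitely many solutions with the \emph{same} (``certain'') deterministic initial data, so ``freedom in choosing the starting datum $u_0$'' cannot be the source of non-uniqueness (and in the scheme the datum is not free: $v_0=0$ in \eqref{starting point}, and $u(0)=v(0)$ is determined by the construction), while ``infinitely many distinct solutions sharing the same energy profile $e$'' obtained by varying the first frequency is neither substantiated nor needed. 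In the paper the infinitude comes from varying the energy profile itself: each admissible $e$ produces a solution satisfying \eqref{Energy Equation}, distinct profiles (with the common bounds $\underline{e},\bar e,\widetilde e$) which merely agree at $t=0$ produce distinct solutions, and the consistency argument with $T=0$ shows these solutions nevertheless share the same deterministic initial value. Second, you cannot ``absorb all cross terms involving $z$ \ldots into the initial stress $\mathring R_0$'' and then iterate on the pure power-law defect equation $\partial_t v+\divs(v\otimes v)-\divs\mathcal A(Dv)+\nabla P=\divs\mathring R$: since $\mathring R_q\to 0$, whatever is placed in the stress disappears in the limit, and your limit $v$ would solve the unforced power-law system rather than \eqref{Convex Integration Equation}, so $u=v+z$ would not solve \eqref{PLF}. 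In the paper the coupling terms $\divs((v_q+z_q)\otimes(v_q+z_q))$, $\divs\mathcal A(Dv_q+Dz_q)$ and $\Delta z_q-z_q$ remain on the left-hand side of \eqref{Iterative Equation} at every level, with the frequency-truncated $z_q=\mathbb P_{\le f(q)}z$ of \eqref{iteration z} converging to $z$; only at $q=0$, where $v_0=0$, does the starting stress consist of the pure $z_0$ contributions. (Incidentally, the ``Extension'' section only extends $e$, $B$, $z$ to negative times to make the mollification near $t=0$ meaningful; it is not where the defect form is set up.)

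On the Leray--Hopf step, your final displayed inequality is the right one, but the route via differentiating the identity in $t$ is both unnecessary and problematic, since $t\mapsto\E\|u(t)\|_{L^2}^2$ need not be differentiable, and no It\^o energy balance is available at this regularity (this is precisely why the constructed solutions fail the stronger form of the energy inequality, cf.\ \thref{Remark Leray--Hopf}). The paper's argument is a pointwise comparison: since $\frac{t+3}{3}\ge 1$ and $\tr(G)\ge 0$ one has $\mathscr E\{u\}(t)\le\frac{t+3}{3}\mathscr H\{u\}(t)=\frac{t+3}{3}e(t)$, and the monotonicity hypothesis gives $\frac{t+3}{3}e(t)\le e(0)=\mathscr H\{u\}(0)=\mathscr E\{u\}(0)$. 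If you repair the two points above (non-uniqueness via distinct energy profiles with the consistency argument, and retention of the $z_q$-coupling in the iterative equation), the rest of your outline is consistent with the paper's proof.
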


\begin{remark}
The novel relation \eqref{Energy Equation} guarantees the validity of the energy inequality in the sense of \thref{Definition Leray--Hopf}. Especially, the factor $\frac{H}{t+H}$ multiplying the nonlinear term provides global in time bounds during the convex integration scheme, which not only ensures the construction of Leray--Hopf solutions but also underlies their ergodicity.
\end{remark}

\noindent A direct consequence of its construction is mentioned in the subsequent remark (cf. Section~\ref{Regularity and Bound}).

\begin{remark}\thlabel{Remark Weak Solution}
 For some $\epsilon>0$ arbitrary small and $J\geq 1$ arbitrary large the solution $u$, constructed in \thref{Theorem Weak Solution}, even obeys
\begin{align*}
\sup_{|I|=1}\|u\|_{L_\Omega^{2J} C_I H_x^\gamma}+\sup_{|I|=1}\|u\|_{L_\Omega^{2J} C^{0,\gamma}_I L_x^2}+\sup_{|I|=1}\|u\|_{L_\Omega^{2J} C^{0,\gamma}_I W_x^{1+\gamma,\iota}}<\infty
\end{align*}
as well as
\begin{align*}
\sup_{|I|=1}\left\|u-\int_0^\cdot e^{-(\Id-\Delta) (\cdot-r)}dB_r\right\|_{L_\Omega^{2J}C_I W_x^{1,\iota}} \leq \epsilon.
\end{align*}
\end{remark}

\noindent The proof of Theorem \ref{Theorem Weak Solution} relies on the method of convex integration, emerged from the classical PDE theory. That means we construct a solution $u_q$ to \eqref{PLF}, perturbed by an error term $\mathring{R}_q$, at level $q \in \mathbb{N}_0$, in such a way that the Reynolds stress $\mathring{R}_q$ tends to zero, while the velocity $u_q$ approximates the desired solution $u$.\\
 The primary challenge in the stochastic setting is effectively managing the stochastic noise, with the objective of mitigating its influence as much as possible. The subsequent section is dedicated to addressing this issue.

\section{Factorization} \label{Factorization}
In the field of SPDE's, a widely used strategy is to eliminate the stochastic integral from the equation, a method commonly referred to as the Da Prato--Debusche trick.\\
In our case as well, we aim to leverage this technique by decomposing the solution into the sum of a process $z$, solving the linear SPDE 
\begin{align} \label{Stochastic Heat Equation}
\begin{split}
dz+(\Id-\Delta) z \, dt&=  dB_t,\\
z_{|\,t=0}&=0
\end{split}
\end{align}
and a process $v$, which is a solution to the nonlinear but random system
\begin{align}\label{Convex Integration Equation}
\begin{split}
\partial_t v+\divs\big((v+z)\otimes (v+z) \big)-\divs\left(\mathcal{A}(Dv+Dz) \right)+\Delta z-z+ \nabla P&=0,\\
\divs(v)&=0.
\end{split}
\end{align}
\noindent
The existence of a mild solution to the modified stochastic heat equation \eqref{Stochastic Heat Equation} is a classical result in stochastic analysis. As we will see in the subsequent proposition, this solution especially inherits the incompressibility condition of the underlying Brownian motion. This is also where the required regularity of $G$ comes into play. It ensures the weak differentiability of $z$ with respect to spatial variables, thereby placing $Dz$ within the domain of the non-Newtonian tensor $\mathcal{A}$, making the decomposition of $u$ permissible.\\ We formalize this result in the following proposition, which is a refinement of Theorem~5.4 in \cite{DPZ14}, supplemented by an application of Kolmogorov’s continuity theorem.

\begin{prop}\thlabel{Proposition SHE}
Under the assumption $\tr\big((\Id-\Delta)^{2\sigma} G\big)<\infty$ for some $\sigma\in(0,1)$ there exists a uniquely determined mild solution 
\begin{align} \label{SHE Solution}
z(\omega,t,x)=\int_0^t e^{-(\Id-\Delta) (t-r)}dB_r(\omega,x)
\end{align}
to \eqref{Stochastic Heat Equation}, belonging $\mathcal{P}$-a.s. to the class 
\begin{align*}
  C_{\rm{loc}}^{0,\frac{1}{2}-\varepsilon}\big([0,\infty);L^2\big( \mathbb{T}^d;\mathbb{R}^d\big)\big) \cap C_{\rm{loc}}^{0,\frac{\sigma}{2}-\varepsilon}\big([0,\infty);H^{1+\sigma}\big( \mathbb{T}^d;\mathbb{R}^d\big)\big) 
\end{align*}
 for some $\varepsilon \in \left(0, \frac{\sigma}{2}\right)$. Particularly, one has
\begin{align*}
\|z\|_{L_\Omega^j C^{0,\frac{1}{2}-\varepsilon}_{I}L_x^{2}}+\|z\|_{L_\Omega^j C^{0,\frac{\sigma}{2}-\varepsilon}_{I}H_x^{1+\sigma}}\leq  K_G(j-1)^{1/2}
\end{align*}
for every bounded interval $I \subset [0,\infty)$ of length $1$, all $j\geq 2$ and some constant $K_G\geq 1$.
\end{prop}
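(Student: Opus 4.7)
The plan is to follow the classical Da Prato--Zabczyk theory for stochastic convolutions (Theorem 5.4 in \cite{DPZ14}) and then upgrade the moment bounds to pathwise Hölder estimates via Kolmogorov's continuity theorem, exploiting the Gaussian nature of $z$ to pull out the explicit $(j-1)^{1/2}$ factor.

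First, the operator $A := \Id - \Delta$ is selfadjoint and strictly positive on $L^2_{\rm{free}}(\mathbb{T}^d;\mathbb{R}^d)$ and generates an analytic contraction semigroup $(e^{-tA})_{t\geq 0}$ commuting with $\mathbb{P}$ (so it preserves both mean-free and divergence-free conditions). Since $G$ is trace class, the stochastic convolution $z(t)=\int_0^t e^{-A(t-r)}\,dB_r$ is well defined, $(\mathcal{F}_t)$-adapted, divergence and mean free, and is the unique mild solution of \eqref{Stochastic Heat Equation}. This disposes of existence, uniqueness and the incompressibility of $z$.

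Next, I would derive the space-time regularity through Itô's isometry. For any $s\in\mathbb{R}$,
\begin{align*}
\E\|z(t)\|_{H^{s}}^2 = \int_0^t \tr\bigl(A^{s}\,e^{-2A(t-r)}\,G\bigr)\,dr.
\end{align*}
Writing $A^{1+\sigma} = A^{1-\sigma}\cdot A^{2\sigma}$ and using the analytic-semigroup bound $\|A^{1-\sigma}e^{-2A\tau}\|_{L^2\to L^2}\lesssim \tau^{-(1-\sigma)}$, the cyclicity of the trace gives
\begin{align*}
\E\|z(t)\|_{H^{1+\sigma}}^2 \lesssim \tr\bigl((\Id-\Delta)^{2\sigma}G\bigr)\int_0^t(t-r)^{-(1-\sigma)}\,dr \lesssim t^{\sigma},
\end{align*}
which is finite by hypothesis. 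An analogous computation bounds $\E\|z(t)\|_{L^2}^2$ uniformly on bounded intervals. For temporal regularity I would split the increment $z(t)-z(s)$ into the ``new noise'' part $\int_s^t e^{-A(t-r)}\,dB_r$ and the ``old noise'' part $\int_0^s (e^{-A(t-r)}-e^{-A(s-r)})\,dB_r$, bound each via Itô's isometry together with the elementary estimate $\|e^{-A\tau}-\Id\|_{D(A^\alpha)\to L^2}\lesssim \tau^{\alpha}$, and obtain
\begin{align*}
\E\|z(t)-z(s)\|_{L^2}^2 \lesssim |t-s|,\qquad \E\|z(t)-z(s)\|_{H^{1+\sigma}}^2 \lesssim |t-s|^{\sigma}
\end{align*}
uniformly on any bounded time interval.

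Finally, because $z(t)-z(s)$ is a Gaussian vector in each of $L^2$ and $H^{1+\sigma}$, Gaussian hypercontractivity (equivalence of Gaussian moments) gives, for every $j\geq 2$,
\begin{align*}
\bigl(\E\|z(t)-z(s)\|_{L^2}^{2j}\bigr)^{1/(2j)} \leq C(j-1)^{1/2}|t-s|^{1/2},
\end{align*}
and similarly in $H^{1+\sigma}$ with exponent $\sigma/2$. Applying Kolmogorov's continuity theorem (with moment order taken large enough relative to the target Hölder index so that $\varepsilon$ can be made arbitrarily small) then produces a continuous modification satisfying the pathwise Hölder estimates on every $I$ of length one, and yields the claimed uniform bound
\begin{align*}
\|z\|_{L^{j}C^{0,\frac{1}{2}-\varepsilon}_I L^{2}}+\|z\|_{L^{j}C^{0,\frac{\sigma}{2}-\varepsilon}_I H^{1+\sigma}}\leq K_G(j-1)^{1/2},
\end{align*}
with $K_G$ depending only on $\tr((\Id-\Delta)^{2\sigma}G)$ and $\varepsilon$. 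The main technical point I expect will be tracking how the Kolmogorov step consumes the $\varepsilon$ in the Hölder exponents while preserving the sharp $(j-1)^{1/2}$ dependence on $j$; this is where the Gaussianity is essential, since otherwise one would lose a factor of $j$ through Kolmogorov's theorem alone.
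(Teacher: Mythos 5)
Your proposal follows essentially the same route as the paper's proof: existence and uniqueness from Theorem 5.4 in \cite{DPZ14}, second-moment increment bounds via the It\^o isometry and analytic-semigroup smoothing with the increment split into a new-noise and an old-noise part, Gaussian moment equivalence (the paper cites Isserlis' theorem and Lemma 2.19 in \cite{DPZ14}) to produce the $(j-1)^{1/2}$ factor, and finally Kolmogorov's continuity theorem. The only minor quantitative difference is that the paper's old-noise estimates give exponents $2\gamma<1$ and $-1+\sigma+\xi<\sigma$ rather than your claimed exact $|t-s|$ and $|t-s|^{\sigma}$, which is harmless since the Kolmogorov step consumes an $\varepsilon$ in the H\"older exponents anyway.
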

\noindent \medskip \\
This result allows to find a solution $u= v+z$ to \eqref{PLF}, whenever $v$ solves \eqref{Convex Integration Equation}.

\section{Extension} \label{Extension}
In the further proceeding we define the Reynolds stress at the level $q+1$ by incorporating the mollification of several processes. To give meaning to the mollification around time $t=0$, we need to extend the energy $e$, the Brownian motion $B$ and the process $z$ to negative times by assigning them equal to their value at time $t=0$.

\section{Main Iteration} 
Before we start with determining all the parameters, which are mandatory for the convex integration scheme, we want to remember that the space 
\begin{align*}
L^j\big(\Omega;\,C_{\rm{loc}}^{N,\gamma}\big([0,\infty);\,W^{s,p}\big( \mathbb{T}^d;\mathbb{R}^d\big)\big)\big)
\end{align*}
is for all  $j\in[1,\infty],\, N\in \mathbb{N}_0\cup \{\infty\},\, \gamma\in (0,1],\, s \in \mathbb{R}$ and $p\in[1, \infty]$ equipped with the norm
\begin{align*}
\sup_{|I|=1}\|\cdot\|_{L_\Omega^jC^{N,\gamma}_I W_x^{s,p}}
\end{align*}
(see Section \ref{Function Spaces}). This will become even more important in the course of the work.

\subsection{Choice of Parameters}\label{Choice of Parameters} 
Let us first mention that the function $f_A \colon [0,\infty) \to \mathbb{R}$, $f_A(x):=x\left(\frac{A}{A+1}\right)^x$ attains its maximum for all $A>0$ at $ x_{\text{max}}=\frac{1}{\ln(A+1)-\ln(A)}$. This allows to state the following inequality  
\begin{align} \label{parameter bound}
 q \left( \frac{A}{A+1}\right)^q \leq c_A
\end{align}
for some $c_A\geq f_A(x_{max})$.\\
 For sufficiently large $a \in \mathbb{N},\, b\in 2\varsigma_d \mathbb{N}$ and sufficiently small $\alpha,\beta \in (0,1)$ we require \pagebreak\\
\begin{align*}
\bullet \, & (90d+208)\alpha\leq \sigma-2\varepsilon, && \bullet \, a^{\beta b^2}\geq 3,\\
\bullet \, & \beta \leq \frac{(2d+6)(\sigma -2\varepsilon)(4-\sigma+2\varepsilon)}{(2-\sigma+2\varepsilon)^2}, && \bullet \, (\iota-1) \sigma \alpha >2\beta b^3,  \\
\bullet \, & 10\leq   A\leq b-1, && \bullet \, \alpha b>2d+4N^\ast,\\
\bullet \, & a \geq \Big((8N^\ast+32) JLA\Big)^c.
\end{align*}
with  $\varsigma_d \in \mathbb{N}$ as in \eqref{rational number}, $c:=\max\limits_{A\in [10,b-1]}c_A$ satisfying \eqref{parameter bound}, $\sigma\in [\iota-1,1)$ and $\varepsilon\in \left( 0, \frac{\sigma}{2}\right)$ as in \thref{Proposition SHE} and $N^\ast\geq 2$ as in \eqref{Nast}. Moreover, for any $q \in \mathbb{N}_0$ let us define \\

\begin{align*}
&\lambda_q := a^{(b^q)}, & &\ell^{\sigma-2\varepsilon}_q := \lambda_{q+1}^{-2\alpha} \lambda_q^{-2d-10}, \\
&\delta_q := \left\{\begin{array}{ll} 
    \frac{1}{4}, & q\in \{0,1\}, \\
    \frac{1}{16}\lambda_2^{2\beta}\lambda_q^{-2\beta}, & q \geq 2,
\end{array}\right. & &o_q := -\sum\limits_{r=q}^{\infty} \ell_r.
\end{align*}
 To summarize the above considerations we have constructed non-increasing null-sequences \linebreak $\left(\delta_q\right)_{q \in \mathbb{N}_0},\, \left(\ell_q\right)_{q \in \mathbb{N}_0}\subseteq (0,1)$ and an increasing sequence $\left(\lambda_q\right)_{q \in \mathbb{N}_0}\subseteq \mathbb{N}$, which diverges to $\infty$.
 It is also worth mentioning that the series $\sum_{r=1}^\infty\delta_r^{1/2}$ will not exceed~$1$. It relies on the required $a^{\beta b^2} \geq 3$. Specifically, we have
\begin{align} \label{delta series bound}
4\sum_{r=1}^\infty \delta_r^{1/2}&= 2+ \sum_{r=2}^\infty  a^{\beta b(b-b^{r-1})} \leq 2+ \sum_{r=2}^\infty  a^{\beta b(b-(r-1)b)}=2+ \sum_{r=0}^\infty  \left(a^{-\beta b^2}\right)^r\\
&=2+ \frac{1}{1-a^{-\beta b^2}}\leq 4 \notag.
\end{align}
So as one of the consequences the sequence $(o_q)_{q\in\mathbb{N}_0}$ is bounded from below by $-1$, because of
\begin{align} \label{l series bound}
\sum_{r=0}^\infty  \ell_r \leq \sum_{r=0}^\infty \lambda_{r+1}^{-2\alpha} \leq \lambda_1^{-2\alpha}+\sum_{r=2}^\infty \lambda_r^{-\beta} \leq  a^{-2\alpha b}+4 \lambda_2^{-\beta} \sum_{r=1}^\infty \delta_r^{1/2} -4\lambda_2^{-\beta} \delta_1^{1/2}\leq 1.
\end{align}
Especially, it converges to zero as $q \to \infty$.\\
Additionally we want to emphasize that the choice of parameters ensure the relations 
\begin{align*}
\ell_q &\leq  \lambda_{q+1}^{-2\alpha}, \\
\ell_q^{-1} &\leq \lambda_{q+1}^{4\frac{\alpha}{\sigma-2\varepsilon}},
\end{align*}
which will frequently used in the upcoming computations. For simplicity we will often write $\ell$ instead of $\ell_q$. \\
Choosing $a,\,b$ sufficiently large and $\alpha,\,\beta$ sufficiently small enough, permits to absorb several implicit constants, appearing throughout this work. Further indispensable bounds are 
\begin{subequations}
\begin{align}
r_\perp^{-\frac{1}{2}}\lambda_{q+1}^{-\frac{1}{2}}&\leq \lambda_{q+1}^{-(10d+24)\frac{\alpha}{\sigma-2\varepsilon} }, \label{essential bound a}\\
r_\perp^{\frac{d-1}{\iota}-\frac{d-1}{2}} r_\parallel^{\frac{1}{\iota}-\frac{1}{2}} \lambda_{q+1} &\leq \lambda_{q+1}^{-(14d+36)\frac{\alpha}{\sigma-2\varepsilon} }\label{essential bound b},\\
r_\perp^{\frac{d-1}{p}-\frac{d-3}{2}}r_\parallel^{\frac{1}{p}-\frac{3}{2}}\mu &\leq \lambda_{q+1}^{-(10d+24)\frac{\alpha}{\sigma-2\varepsilon} }\label{essential bound c},\\
f^{\frac{d-2}{2}}(q+1)r_\perp^{\frac{d-1}{p}-\frac{d-1}{2}}r_\parallel^{\frac{1}{p}-\frac{1}{2}} &\leq \lambda_{q+1}^{-(10d+24)\frac{\alpha}{\sigma-2\varepsilon} } \label{essential bound d}, \\
 f^{\frac{d-2}{2}}(q)r_\perp^{\frac{d-1}{p}-(d-1)}r_\parallel^{\frac{1}{p}-1} \mu^{-1}&\leq \lambda_{q+1}^{-(12d+26)\frac{\alpha}{\sigma-2\varepsilon}}\label{essential bound e},\\ 
r_\perp^{\frac{d-1}{p}-d}r_\parallel^{\frac{1}{p}-1}\lambda_{q+1}^{-1}&\leq \lambda_{q+1}^{-(20d+46)\frac{\alpha}{\sigma-2\varepsilon} } \label{essential bound f},\\
r_\perp r_\parallel^{-1}&\leq \lambda_{q+1}^{-(20d+46)\frac{\alpha}{\sigma-2\varepsilon} } \label{essential bound g},\\
r_\perp^{-\frac{d-1}{2}}r_\parallel^{-\frac{1}{2}}\mu^{-1} &\leq \lambda_{q+1}^{-(20d+46)\frac{\alpha}{\sigma-2\varepsilon} }, \label{essential bound h}
\end{align}
\end{subequations}
where
\begin{align*}
\lambda:=\lambda_{q+1}, \qquad r_\perp:=\lambda_{q+1}^{-2\varsigma}, \qquad r_\parallel:=\lambda_{q+1}^{-\tau}, \qquad \mu:=\lambda_{q+1}^{d\varsigma}
\end{align*}
appear in the construction of the intermittent jets in Section \ref{Perturbation} and $f(q)$ is given as in \eqref{frequency z}. This conditions
are satisfied, whenever
\begin{gather*}
\varsigma \in \left[\frac{1}{3}+ \frac{2}{3}\frac{\alpha}{\sigma-2\varepsilon},\frac{1}{2}-(15d+34) \frac{\alpha}{\sigma-2\varepsilon} \right]\cap \mathbb{Q}, \\
 \tau \in \left[(20d+48) \frac{\alpha}{\sigma-2\varepsilon}, 6\varsigma-2+(20d+44)\frac{\alpha}{\sigma-2\varepsilon} \right] \\
\intertext{and}
p>1,\\
 p\leq \frac{2(d-1)\varsigma+\tau}{2d\varsigma+\tau-1+(20d+46)\frac{\alpha}{\sigma-2\varepsilon}} \wedge \frac{4(d-1)\varsigma +2\tau}{2(d-1)\varsigma+\tau+(d-2)\frac{\ln(f(q+1))}{\ln(\lambda_{q+1})}+(20d+48) \frac{\alpha}{\sigma-2\varepsilon}}
\end{gather*}
hold, which lead finally to the choice of the above parameters. More details are provided in Appendix \ref{Appendix A.1}. \\ 
 We furthermore require 
\begin{align} \label{rational number}
\varsigma:=\frac{\varsigma_n}{\varsigma_d}
\end{align}
to be a rational number with $\varsigma_n,\, \varsigma_d \in \mathbb{N}$. This ensures $\lambda r_\perp \in \mathbb{N}$, which is needed to construct the intermittent jets in Section \ref{Perturbation}.\\
Keeping this in mind allows to formulate the centerpiece of the convex integration scheme by fixing the iterative assumptions in the following section.

  \subsection{Iterative Assumptions}
All necessary parameters to run the convex integration scheme on the decomposed power-law equations \eqref{Convex Integration Equation} are settled. That is, by induction on $q$ we intend to find a process $(v_q,\mathring{R}_q)$, which solves
\begin{align} \label{Iterative Equation}
\begin{split}
\partial_t v_q+\divs\big((v_q+z_q)\otimes (v_q+z_q) \big)-\divs\left(\mathcal{A}(Dv_q+Dz_q) \right)+\Delta z_q-z_q+ \nabla p_q&=\divs(\oRq),\\
\divs(v_q)&=0
\end{split}
\end{align}
on $\Omega \times [o_q,\infty)\times \mathbb{T}^d $, where $z_q$ is defined as
\begin{align} \label{iteration z}
z_q:=\mathbb{P}_{\leq f(q)}z
\end{align}
on $\Omega \times [o_q,\infty)\times \mathbb{T}^d $ with frequency
\begin{align} \label{frequency z}
f(q)\in \Big( \lambda_q^{\frac{2\beta b^3}{(\iota-1)\sigma}},\lambda_q^{\frac{2\alpha b (\iota -1 )-2\beta b^3}{(\iota -1) (1-\sigma)}} \Big)\cap \Big( \lambda_q^{\frac{2\beta b^3}{(\iota-1)\sigma}},\lambda_q^{2\varsigma} \Big).
\end{align}
Note that the condition $(\iota-1)\sigma\alpha> 2\beta b^3 $ bewares the set on the right hand side of \eqref{frequency z} to be empty. The choice of $f(q)$ is crucial for controlling the energy, as well as the Linear and Nonlinear Error in Section \ref{Control of the Energy} and Section \ref{Inductive Estimates for the Reynolds Stress}, respectively (cf. Appendix \ref{Appendix A.1}).\\
Throughout the paper we will often need the following bounds
\begin{lemma}\thlabel{Lemma Properties z}
The process $z_q$ and its mollification $z_\ell$, defined in \eqref{iteration z} and Section~\ref{Mollification}, respectively, enjoy
\begin{subequations}
\begin{align}
&\|z_q\|_{L_\Omega^j C_I L_x^\infty}\lesssim  f^{\frac{d-2}{2}}(q)\|z\|_{L_\Omega^jC_IH_x^{1+\sigma}},\label{property za} \\ 
&\|z_{q+1}-z_q\|_{L_\Omega^j C_I H_x^s}\lesssim  \left\{\begin{array}{ll}
f^{s-(1+\sigma)}(q) \|z\|_{L_\Omega^jC_I H_x^{1+\sigma}}, & s\in [0,1+\sigma),\\
f^{s-(1+\sigma)}(q+1) \|z\|_{L_\Omega^jC_I H_x^{1+\sigma}}, & s\geq 1+\sigma ,\\
\end{array} \right.\label{property zb}\\
&\|z_q-z_\ell\|_{L_\Omega^jC_IW_x^{k,p}} \lesssim \Big( \ell^{\sigma/2-\varepsilon}+\ell f^{1-\sigma}(q) \Big)\| z\|_{L_\Omega^jC_\Il^{0,\sigma/2-\varepsilon} H_x^{k+\sigma}}\label{property zc}
\end{align}
\end{subequations}
for all $q \in \mathbb{N}_0$, $j\geq 1$, $k=0,1,\, p\in[1,2]$, every interval $I\subseteq \mathbb{R}$ of length $1$ and $I_\ell$ as in~\eqref{Interval}.
\end{lemma}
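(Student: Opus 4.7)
The plan is to prove all three estimates by standard Fourier analysis: Bernstein-type inequalities for frequency-localized functions, together with mollification error estimates. Throughout, taking $L^j(\Omega)$ and $\sup_{|I|=1}$ at the end is routine once the pointwise (in $\omega,t$) bounds are in place.

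For (a), since $z_q = \mathbb{P}_{\leq f(q)} z$ has Fourier support in $\{|n| \leq f(q)\}$, we may write $z_q(x) = \sum_{|n|\leq f(q)} \hat{z}_n e^{in\cdot x}$ and apply Cauchy--Schwarz with weight $(1+|n|^2)^{-1}$ to obtain
\begin{equation*}
\|z_q\|_{L^\infty} \leq \Bigl(\sum_{|n|\leq f(q)} (1+|n|^2)^{-1}\Bigr)^{1/2} \|z\|_{H^1} \lesssim f(q)^{(d-2)/2} \|z\|_{H^{1+\sigma}},
\end{equation*}
using $d\geq 3$ to ensure the weighted counting sum is of order $f(q)^{d-2}$, together with the trivial embedding $H^{1+\sigma}\hookrightarrow H^1$.

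For (b), observe that the symbol $\chi_{f(q+1)} - \chi_{f(q)}$ vanishes outside the annulus $\{f(q)/2 \leq |n|\leq f(q+1)\}$. For $s\in[0,1+\sigma)$ the exponent $s-(1+\sigma)$ is negative, so I use the lower frequency bound: factoring $(I-\Delta)^{(s-(1+\sigma))/2}$ through the high-frequency projector $\mathbb{P}_{\geq f(q)/2}$ gives an operator of norm $\lesssim f(q)^{s-(1+\sigma)}$ on $L^2$, and pairing this with $(I-\Delta)^{(1+\sigma)/2} z$ produces the claimed bound. For $s\geq 1+\sigma$ the exponent is non-negative, so I instead factor through the low-frequency projector $\mathbb{P}_{\leq f(q+1)}$, incurring the cost $f(q+1)^{s-(1+\sigma)}$ via the elementary estimate \eqref{OPc}.

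For (c), interpreting $z_\ell$ as the space-time mollification of $z_q$ at scale $\ell$ (the convex-integration mollification that is introduced when defining the Reynolds stress at the next level), I split
\begin{equation*}
z_q - z_\ell = \bigl(z_q - z_q *_t \varphi_\ell\bigr) + \bigl(z_q *_t \varphi_\ell\bigr)\bigl(\operatorname{Id} - *_x\psi_\ell\bigr).
\end{equation*}
The first summand is the classical time-mollification error, controlled by the Hölder regularity in $t$: since $\mathbb{P}_{\leq f(q)}$ commutes with translation in $t$, the seminorm $[z_q]_{C^{0,\sigma/2-\varepsilon}_{I_\ell} W^{k,p}}$ is bounded by $\|z\|_{C^{0,\sigma/2-\varepsilon}_{I_\ell} H^{k+\sigma}}$ (using $p\leq 2$ and the boundedness of the projector), producing the term $\ell^{\sigma/2-\varepsilon}\|z\|_{\cdots}$. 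The second summand is a purely spatial mollification error applied to a frequency-localized function; standard estimates yield $\|u-u*_x\psi_\ell\|_{W^{k,p}}\lesssim \ell\|u\|_{W^{k+1,p}}$, after which Bernstein on the support $\{|n|\leq f(q)\}$ gives $\|z_q\|_{W^{k+1,p}}\lesssim f(q)^{1-\sigma}\|z_q\|_{W^{k+\sigma,p}} \lesssim f(q)^{1-\sigma}\|z\|_{H^{k+\sigma}}$, producing the $\ell f(q)^{1-\sigma}$ contribution.

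No step here presents a genuine obstacle; the lemma is essentially a bookkeeping exercise distilling the interaction between the spatial Fourier cut-off at frequency $f(q)$ and the space-time mollification at scale $\ell$. The only point requiring care is that all three estimates be stated uniformly in $q$ and $j$, so one must check that the implicit constants from Bernstein and from $\mathbb{P}_{\leq f(q)}$ are independent of $q$, which they are by \eqref{OPa}--\eqref{OPc} and Mikhlin's theorem.
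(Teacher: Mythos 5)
Your proof is correct and takes essentially the route the paper intends: the paper omits the argument, noting only that \eqref{property za} is immediate from the Sobolev embedding $H^{\frac{d+2\sigma}{2}}(\mathbb{T}^d)\subseteq L^\infty(\mathbb{T}^d)$ (your Cauchy--Schwarz computation over the modes $|n|\leq f(q)$ is precisely this embedding for band-limited functions combined with the Bernstein gain), and that \eqref{property zb}--\eqref{property zc} follow by standard frequency-localization and mollification estimates, which is exactly what you supply. The only cosmetic point is that for $p\in[1,2]$ (in particular $p=1$, where \eqref{OPa}--\eqref{OPc} are not stated) the $W^{k,p}$ bounds should be routed through $H^{k}$ via the finite-measure embedding $H^k(\mathbb{T}^d)\subseteq W^{k,p}(\mathbb{T}^d)$ rather than applying Bernstein directly in $L^p$, which is what your final inequalities effectively do anyway.
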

\noindent We refrain from providing a detailed proof, as the first assertion is an immediate consequence of the Sobolev embedding $H^{\frac{d+2\sigma}{2}}(\mathbb{T}^d)\subseteq L^\infty(\mathbb{T}^d)$. The derivation of the subsequent two bounds follow by standard arguments. \\
Now let us adopt the convention $\sum\limits_{r=1}^0:=0$. The solution $(v_q,\mathring{R}_q)$ shall then additionally comply with 
\begin{subequations}
\begin{align} 
&\|  v_q\|_{L_\Omega^{2J}C_IL_x^2}\leq M_0 \bar{e}^{1/2}\sum\limits_{r=1}^q \delta_{r}^{1/2}\label{key bound a},\\
&\|  v_q\|_{L_\Omega^{j}C_IL_x^2}\leq M_0(5 jLA^{q-1} )^{5A^{q-1}}+M_0 \bar{e}^{1/2}\sum\limits_{r=1}^q\delta_{r}^{1/2} \label{key bound b},\\
&\|  v_q\|_{L_\Omega^jC_{I,x}^{N} }\leq \Big[2\big(N+4\big)jLA^{q-1} \Big]^{2(N+4) A^{q-1}}\lambda_q^{\frac{N+1}{2}d+N+2} \label{key bound c},\\ 
&\|  v_q\|_{L_\Omega^{2J} C_I W_x^{1,\iota}}\leq  \sum\limits_{r=1}^q\delta_{r}^{1/2}\label{key bound d},\\ 
&\| \mathring{R}_q\|_{L_\Omega^{J}C_{I}L_x^1 }\leq 
 \frac{1}{144}\underline{e}  \,\delta_{q+2} \label{key bound e},\\ 
&\| \mathring{R}_q\|_{L_\Omega^jC_{I} L_x^1}\leq (2 jLA^q )^{2A^q}\label{key bound f}
\end{align}
\end{subequations}
\noindent for every interval $I\subseteq [o_q,\infty)$ of length $1$, every $q\in \mathbb{N}_0$, all $j \geq 1$, any $N=1,2,3,4$ and  fixed $J\geq 1$. $M_0$ and $L$ are universal constants, chosen in such a fashion that they satisfy the constraints \eqref{M_0} and \eqref{L}, respectively.\footnote{ The specific value of $\frac{1}{144}$ in \eqref{key bound e} originates from the estimates in \eqref{specific value of the key bound}.}  \\
To obtain the energy equality \eqref{Energy Equation} and as a byproduct also non-uniqueness of these solutions we have to introduce the auxiliary energy
 \begin{align} 
\mathscr{H}\{u\}(t):=\E\|u(t)\|_{L_x^2}^2+\frac{2H}{t+H}\E\bigg[\int_0^{t} \int_{\mathbb{T}^d} \mathcal{A}(Du(s,x)) \colon Du (s,x)\,dx \,ds\bigg]\label{help energy}
\end{align}
for all $t\in \mathbb{R}$ and $H>0$ sufficiently large. We force the iterations $\mathscr{H}\{v_q+z_q\}$ to gradually approximate $e$. Strictly speaking for all $q \in \mathbb{N}_0$, we additionally require\footnote{Here it becomes apparent that the prescribed energy $e$ must be bounded from below (cf. Section~\ref{Start of the Iteration}).}
\begin{align}\label{approximation energy}
\Big| e(t)\big(1-4\delta_{q+1}\big)- \mathscr{H}\{v_q+z_q\}(t)\Big|\leq  \delta_{q+1}\underline{e}
\end{align}
at every time $t\in [o_q,\infty)$.

\begin{remark}
In contrast to previous works, the auxiliary energy $\mathscr{H}$ is newly introduced and designed to eliminate any explicit dependence of the bounds on $t$. The inclusion of the additional factor 
$\tfrac{H}{t+H}$ in front of the nonlinear term is crucial for establishing global in time bounds for $I$, in particular for \eqref{Reason of H}, and for VIII on p.~35. This refinement not only preserves the energy inequality but also allows us to deduce the ergodicity of the solutions. \\
The constant $H$ must be chosen carefully to respect all requirements during the convex integration scheme, in particular within the Leray--Hopf framework when we show $\mathcal{H}\{u\}\equiv e$ for $e(t)=H \tr(G)$. More precisely, to maintain consistency with \eqref{Consistent}, we choose $H$ in a way that $H \geq \frac{2304 L^2 J + 4 \hbar C_{\iota, \nu_0, \nu_1} K_G^2}{3 \tr(G)}$ holds. At the same time, $H$ must prevent the factor $\frac{H}{t-u+H}$ in \eqref{Reason of H} from blowing up at $t=o_q$ and $u=\ell$, which is the reason why we additionally require $H>2$.
\end{remark}

\noindent The above considerations are summarized as follows. 
 
\begin{prop}[Main Iteration] \thlabel{Proposition Main Iteration}
There exist an $(\mathcal{F}_t)_{t\in [o_q,\infty)}$-adapted solution $\big(v_q,\mathring{R}_q\big)$ to \eqref{Iterative Equation} on $\Omega \times [o_q,\infty)\times \mathbb{T}^d$ satisfying \eqref{key bound a} -- \eqref{key bound f} as well as \eqref{approximation energy} at each level $q \in \mathbb{N}_0$. Moreover, $\big(\mathring{R}_q\big)_{q\in \mathbb{N}_0}$ is a vanishing Cauchy sequence in
 \begin{align*}
\left( L^{J}\big(\Omega;C\big([0,\infty);L^1\big( \mathbb{T}^d;\mathbb{R}^{d\times d}\big)\big)\big),\sup_{|I|=1}\|\cdot\|_{L_\Omega^JC_I L_x^1} \right),
\end{align*}
while also $\left(v_q\right)_{q\in \mathbb{N}_0}$  is Cauchy in
\begin{align*}
\left( L^{2J}\big(\Omega;C\big([0,\infty);L^2\big( \mathbb{T}^d;\mathbb{R}^d\big)\cap W^{1,\iota}\big( \mathbb{T}^d;\mathbb{R}^d\big)\big)\big),\sup_{|I|=1}\|\cdot\|_{L_\Omega^{2J}C_I L_x^2}+\sup_{|I|=1}\|\cdot\|_{L_\Omega^{2J}C_I W_x^{1,\iota}} \right),
\end{align*}
converging to some limit which differs from $0$, but has deterministic initial data.
\end{prop}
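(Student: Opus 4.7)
The plan is to proceed by induction on $q$, following the convex integration paradigm adapted to the power-law setting along the lines of \cite{BMS21,HZZ22,LZ23}. For the base case $q=0$, I would set $v_0 \equiv 0$ and then define $\mathring{R}_0$ via the antidivergence $\mathcal{R}$ applied to the residual of \eqref{Iterative Equation}, so that $(v_0,\mathring{R}_0)$ solves the iterative equation trivially. The estimates \eqref{key bound a}--\eqref{key bound f} at level $0$ reduce to standard bounds on $\mathcal{A}(Dz_0)$ and $z_0 \otimes z_0$, which can be controlled via \thref{Lemma 2.4} and \thref{Proposition SHE}. The initial energy condition \eqref{approximation energy} dictates the lower bound on $\underline{e}$, since at $q=0$ only the contribution from $z_0$ is present in $\mathscr{H}\{v_0+z_0\}$.

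For the inductive step $q \to q+1$, I would first mollify $v_q$ and $\mathring{R}_q$ in space and time at scale $\ell=\ell_q$ to obtain $(v_\ell,\mathring{R}_\ell)$; mollification is essential so that later estimates at the higher frequency $\lambda_{q+1}$ do not see the possibly rough behaviour of $\mathring{R}_q$. Next, following the intermittent-jet construction indicated in Section \ref{Perturbation}, define the perturbation $w_{q+1} = \wpr + \wc + \wt$, where the principal part $\wpr$ is built from the geometric decomposition of $\mathring{R}_\ell$ shifted by a suitable amplitude that encodes the energy defect $e(t)(1-4\delta_{q+2}) - \mathscr{H}\{v_\ell+z_{q+1}\}(t)$; this is the crucial mechanism that will force \eqref{approximation energy} at level $q+1$. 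The corrector $\wc$ restores divergence-freeness, and the temporal corrector $\wt$ absorbs the high temporal frequencies produced by the principal part. Setting $v_{q+1} := v_\ell + w_{q+1}$, the new Reynolds stress $\mathring{R}_{q+1}$ is obtained by applying $\mathcal{R}$ to the terms arising when inserting $v_{q+1}$ into \eqref{Iterative Equation}, decomposed into the oscillation, linear, corrector, Nash, and non-Newtonian errors (plus two commutator pieces coming from mollification and from $z_{q+1}-z_q$).

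The estimates \eqref{key bound a}--\eqref{key bound d} for $v_{q+1}$ would follow by bounding $w_{q+1}$ in the relevant spaces using the $L^p$--bounds on intermittent jets together with \eqref{essential bound a}--\eqref{essential bound h}; the $L^{2J}C_I L^2$ bound gets contribution $M_0\bar{e}^{1/2}\delta_{q+1}^{1/2}$ from the principal part, while the $L^{2J}C_I W^{1,\iota}$ bound is where the condition $\iota<\frac{2d}{d+2}$ enters through \eqref{essential bound b}. Estimate \eqref{key bound f} is a brute-force estimate for any $j$, while \eqref{key bound e} requires sharper control: each error piece in $\mathring{R}_{q+1}$ must be shown to be bounded by $\frac{1}{144}\underline{e}\,\delta_{q+3}$ in $L^J C_I L^1$, with the tightest piece being the oscillation error, which is tamed by the specific choice of $r_\perp, r_\parallel, \mu, \lambda$ in Section \ref{Choice of Parameters}. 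The non-Newtonian error, unique to our setting, is estimated via \thref{Lemma 2.4} applied to $Dv_\ell+Dz_\ell$ and $Dv_{q+1}+Dz_{q+1}$; the shear-thinning range $\iota < \frac{2d}{d+2}$ together with the frequency choice \eqref{frequency z} for $f(q)$ is exactly what makes these terms summable.

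The main obstacle I expect is verifying the energy identity \eqref{approximation energy} at level $q+1$. Heuristically, $\|w_{q+1}\|_{L^2}^2$ gives essentially the correct principal contribution $e(t)(1-4\delta_{q+2}) - \mathscr{H}\{v_\ell+z_{q+1}\}(t)$, but the dissipative term $\mathcal{A}(Dv_{q+1}+Dz_{q+1}):D(v_{q+1}+z_{q+1})$ and the time-dependent weight $\frac{6}{t+3}$ produce subtle correction terms that must all be controlled by $\delta_{q+2}\underline{e}$ uniformly in $t\in [o_{q+1},\infty)$ --- this is exactly where the precise form of the auxiliary energy $\mathscr{H}$ and the choice of $f(q)$ from Appendix \ref{Appendix A.1} is needed. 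Once \eqref{key bound e} and \eqref{approximation energy} are established, the Cauchy property in $L^J C_I L^1$ for $\mathring{R}_q$ is immediate from $\delta_{q+2}\to 0$, and the Cauchy property of $v_q$ in $L^{2J}C_I(L^2\cap W^{1,\iota})$ follows by telescoping $v_{q+1}-v_q = w_{q+1} + (v_\ell-v_q)$ and summing the geometric series $\sum\delta_{r}^{1/2}<\infty$ from \eqref{delta series bound}. The non-triviality of the limit comes from the fact that the energy equality \eqref{approximation energy} passes to the limit giving $\mathscr{H}\{u\}(t)=e(t) > 0$, which forces the limit $u=v+z$ to be nonzero, while the deterministic initial data property is preserved since $v_q(o_q,\cdot)$ is constructed to be deterministic at each step (using the adapted Reynolds-stress ansatz at the leftmost point $o_q\to -\infty$).
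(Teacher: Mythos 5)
Your sketch follows essentially the same route as the paper: the same base case $(v_0,\mathring{R}_0)$ built from $z_0$, mollification at scale $\ell_q$, intermittent jets whose amplitudes pump in the defect $e(t)(1-4\delta_{q+2})-\mathscr{H}\{v_q+z_q\}(t)$, a seven-part decomposition of $\mathring{R}_{q+1}$ (your ``Nash'' term is folded into the paper's linear error, and the non-Newtonian contribution is split there into two nonlinear errors), and the same mechanism for the key bounds, the energy control, the Cauchy properties, non-triviality and deterministic initial data. Only minor slips: the individual error pieces are bounded by fractions that sum to $\tfrac{1}{144}\underline{e}\,\delta_{q+3}$ rather than each by that amount, and $o_q=-\sum_{r\geq q}\ell_r\in[-1,0)$ tends to $0$, not $-\infty$ --- the deterministic value at $t=0$ instead comes from the forward-supported time mollifier combined with the extension of $z$, $B$ and $e$ to negative times by their values at $t=0$.
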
  
  
\begin{remark}
In contrast to \cite{HZZ25} and \cite{LZ23} it is indispensable to include higher order derivatives in \eqref{key bound c} to establish $C_{\rm{loc}}^{0,\gamma}W^{1+\gamma,\iota}$-regularity and thus to prove stationarity of the final solutions (cf. \eqref{target series} and Section~\ref{Tightness u}).\\ 
Moreover, it is worth mentioning that the extension of $B$ and $e$ to negative times, permits to consider the system \eqref{Iterative Equation} on $[o_q,\infty)$. 
\end{remark}

\section{End of the Proof of Theorem \ref{Theorem Weak Solution}} \label{End of the Proof}
Building upon the preceding result, we present the proof of \thref{Theorem Weak Solution}.
 \begin{proof}[Proof of \thref{Theorem Weak Solution}]~\\
 \vspace{-.5cm}
 \subsubsection{Existence} \label{Existence}
 Let $(v_q,\oRq)_{q \in \mathbb{N}_0}$ be a sequence, constructed as in \thref{Proposition Main Iteration}, so that it converges in
\begin{align*}
L^{2J}\big(\Omega;C\big([0,\infty);L^2\big( \mathbb{T}^d;\mathbb{R}^d\big)\cap W^{1,\iota}\big( \mathbb{T}^d;\mathbb{R}^d\big)\big)\big) \otimes L^{J}\big(\Omega;C\big([0,\infty);L^1\big( \mathbb{T}^d;\mathbb{R}^{d\times d}\big)\big)\big)
\end{align*}
to some limit $(v,0)$, having deterministic initial data. Particularly, let us select a subsequence $(v_q,\oRq)_{q \in Q}$ of $(v_q,\oRq)_{q \in \mathbb{N}_0}$, so that the convergence even takes place in
\begin{align*}
C\big([0,\infty);L^2\big( \mathbb{T}^d;\mathbb{R}^d\big)\cap W^{1,\iota}\big( \mathbb{T}^d;\mathbb{R}^d\big)\big)  \otimes C\big([0,\infty);L^1\big( \mathbb{T}^d;\mathbb{R}^{d\times d}\big)\big)
\end{align*}
 $\mathcal{P}$-almost surely. Moreover, let $z_q$ be the process defined in \eqref{iteration z} with $z$ satisfying \thref{Proposition SHE}, which converges by definition  $\mathcal{P}$-a.s. uniformly in space and time to~$z$.\\
Then $(v,z)$ is an analytically weak and probabilistically strong solution to \eqref{Convex Integration Equation}. That means
\begin{align} 
 \label{Limit Iterative Equation}
&\underbrace{\int_{\mathbb{T}^d}\Big(v_q(t,x)-v_q(0,x)\Big)\cdot\varphi(x) \,dx}_{=:\text{I}}-\underbrace{\int_0^t \int_{\mathbb{
T}^d}\big((v_q+z_q) \otimes ( v_q+z_q) \big)(s,x):\nabla \varphi^T(x)\,dx \,ds}_{=: \text{II}}\\
&\hspace{.3cm}+\underbrace{\int_0^t \int_{\mathbb{
T}^d}\mathcal{A}\big(D(v_q +z_q)\big)(s,x):\nabla \varphi^T(x)\,dx \,ds}_{=: \text{III}}+\underbrace{\int_0^t \int_{\mathbb{T}^d} z_q(s,x)\cdot \Delta\varphi(x)\,dx\,ds}_{=:\text{IV}} \notag\\
&\hspace{.3cm}-\underbrace{\int_0^t \int_{\mathbb{T}^d} z_q(s,x)\cdot \varphi(x)\,dx\,ds}_{=:\text{V}}-\underbrace{\int_0^t\int_{\mathbb{T}^d}  p_q(s,x)\cdot\divs(\varphi(x))\, dx \,ds}_{=:\text{VI}}\notag   \\
&= -\underbrace{\int_0^t \int_{\mathbb{T}^d} \mathring{R}_q(s,x): \nabla \varphi^T(x) \,dx \,ds}_{=:\text{VII}}\notag 
\end{align}
converges $\mathcal{P}$-a.s. at every fixed time $t \in [0,\infty)$ and for any test function $\varphi \in C^\infty\left( \mathbb{T}^d;\mathbb{R}^d\right)$. To be more precise, for any $q\in Q$ we apply Cauchy--Schwarz's inequality and \thref{Lemma 2.4} to accomplish\\
\noindent \begin{tikzpicture}[baseline=(char.base)]
\node(char)[draw,fill=white,
  shape=rounded rectangle,
  drop shadow={opacity=.5,shadow xshift=0pt},
  minimum width=.8cm]
  {\Large I};
\end{tikzpicture} 
\noindent \begin{align*}
&\bigg|\int_{\mathbb{T}^d}\Big( v_q(t,x)-v_q(0,x)\Big) \cdot\varphi(x) \,dx-\int_{\mathbb{T}^d}\Big(v(t,x)-v(0,x)\Big)\cdot\varphi(x) \,dx \bigg|\\ &\hspace{.5cm}\lesssim \sup_{|I|=1}\|v_q-v\|_{C_IL_x^2}\|\varphi\|_{C_x}  \overset{q \to \infty}{\longrightarrow} 0,
\end{align*}
\noindent \begin{tikzpicture}[baseline=(char.base)]
\node(char)[draw,fill=white,
  shape=rounded rectangle,
  drop shadow={opacity=.5,shadow xshift=0pt},
  minimum width=.8cm]
  {\Large II};
\end{tikzpicture}

\begin{align*}
 &\bigg|\int_0^t \int_{\mathbb{
T}^d}\big((v_q+z_q) \otimes (v_q+z_q)\big)(s,x):\nabla \varphi^T(x)\,dx \,ds\\
&\hspace{6.5cm}-\int_0^t \int_{\mathbb{
T}^d}\big((v+z) \otimes (v+z)\big)(s,x):\nabla \varphi^T(x)\,dx \,ds\bigg|\\
&\lesssim \|\varphi\|_{C_x^1} \, \int_0^t \int_{\mathbb{T}^d} \big\|(v_q+z_q) \otimes  (v_q+z_q-v-z)-(v+z-v_q-z_q)\otimes (v+z)\big\|_F(s,x)\,dx \, ds  \\
&\lesssim \|\varphi\|_{C_x^1} \int_0^t \int_{\mathbb{T}^d} \Big( \big|(v_q+z_q) (s,x)\big|+\big| (v+z)(s,x)\big|\Big) \, \big|(v_q-v)(s,x)+(z_q-z)(s,x)\big|\,dx \, ds\\
& \lesssim t\|\varphi\|_{C_x^1}\, \Bigg( \sup_{|I|=1}\|v_q+z_q\|_{C_{I}L_x^2}+\sup_{|I|=1}\|v+z\|_{C_I L_x^2}\Bigg)\\
&\hspace{7.6cm}\cdot\Bigg( \sup_{|I|=1}\|v_q-v\|_{C_IL_x^2}+\sup_{|I|=1}\|z_q-z\|_{C_I L_x^2}\Bigg)\\
&\hspace{.5cm}\overset{q \to \infty}{\longrightarrow} 0,
\end{align*}
\pagebreak\\ 
\noindent \begin{tikzpicture}[baseline=(char.base)]
\node(char)[draw,fill=white,
  shape=rounded rectangle,
  drop shadow={opacity=.5,shadow xshift=0pt},
  minimum width=.8cm]
  {\Large III};
\end{tikzpicture}

\begin{align*}
 &\bigg|\int_0^t \int_{\mathbb{
T}^d}\mathcal{A}\big(D(v_q + z_q)(s,x)\big):\nabla \varphi^T(x)\,dx \,ds-\int_0^t \int_{\mathbb{
T}^d}\mathcal{A}\big( D(v +z)(s,x)\big):\nabla \varphi^T(x)\,dx \,ds\bigg|\\
&\lesssim \| \varphi\|_{C_x^1} \int_0^t \int_{\mathbb{
T}^d}\|\mathcal{A}\big(D(v_q + z_q)(s,x)\big)-\mathcal{A}\big( D(v +z)(s,x)\big)\|_F \,dx \,ds\\
&\lesssim \| \varphi\|_{C_x^1}\int_0^t \int_{\mathbb{
T}^d} \max_{k=1,\iota-1}\|D(v_q + z_q-v -z)(s,x)\big)\|_F^k \,dx \,ds\\
&\lesssim t\| \varphi\|_{C_x^1}\max_{k=1,\iota-1}  \Big(\sup_{|I|=1}\|v_q -v\|_{C_IW_x^{1,\iota}}+\sup_{|I|=1}\|z_q -z\|_{C_IW_x^{1,\iota}}\Big)^k\\
& \hspace{.5cm}\overset{q \to \infty}{\longrightarrow} 0,
\end{align*}

\noindent \begin{tikzpicture}[baseline=(char.base)]
\node(char)[draw,fill=white,
  shape=rounded rectangle,
  drop shadow={opacity=.5,shadow xshift=0pt},
  minimum width=.8cm]
  {\Large IV};
\end{tikzpicture} 

\begin{align*}
&\bigg|\int_0^t \int_{\mathbb{T}^d} z_q(s,x)\cdot \Delta\varphi(x)\,dx\,ds-\int_0^t \int_{\mathbb{T}^d}  z(s,x)\cdot\Delta\varphi(x)\,dx\,ds \bigg|\\
&\hspace{.5cm}\lesssim t \|\varphi\|_{C_x^2} \sup_{|I|=1}\|z_q-z\|_{C_IL_x^1}\overset{q \to \infty}{\longrightarrow} 0
\end{align*}
as well as \\
\noindent \begin{tikzpicture}[baseline=(char.base)]
\node(char)[draw,fill=white,
  shape=rounded rectangle,
  drop shadow={opacity=.5,shadow xshift=0pt},
  minimum width=.8cm]
  {\Large V};
\end{tikzpicture} 

\begin{align*}
&\bigg|\int_0^t \int_{\mathbb{T}^d} z_q(s,x)\cdot \varphi(x)\,dx\,ds-\int_0^t \int_{\mathbb{T}^d}  z(s,x)\cdot \varphi(x)\,dx\,ds \bigg|\\
&\hspace{.5cm}\lesssim t \|\varphi\|_{C_x}\sup_{|I|=1} \|z_q-z\|_{C_IL_x^1}\overset{q \to \infty}{\longrightarrow} 0
\end{align*}
$\mathcal{P}$-almost surely.\\
\noindent \begin{tikzpicture}[baseline=(char.base)]
\node(char)[draw,fill=white,
  shape=rounded rectangle,
  drop shadow={opacity=.5,shadow xshift=0pt},
  minimum width=.8cm]
  {\Large VI};
\end{tikzpicture} 
The sixth term vanishes because the test function $\varphi$ is chosen to be divergence free and moreover we have\\
\noindent \begin{tikzpicture}[baseline=(char.base)]
\node(char)[draw,fill=white,
  shape=rounded rectangle,
  drop shadow={opacity=.5,shadow xshift=0pt},
  minimum width=.8cm]
  {\Large VII};
\end{tikzpicture} 

\begin{align*}
&\bigg|\int_0^t \int_{\mathbb{T}^d} \mathring{R}_q(s,x): \nabla \varphi^T(x) \,dx \,ds \bigg|\lesssim  t \|\varphi\|_{C_x^1} \sup_{|I|=1} \|\mathring{R}_q\|_{C_IL_x^1}  \overset{q \to \infty}{\longrightarrow} 0
\end{align*}
$\mathcal{P}$-almost surely.\\
Passing to the limit on both sides of \eqref{Limit Iterative Equation} confirms that $(v,z)$ actually solves \eqref{Convex Integration Equation} in an analytically weak and probabilistically strong sense. Taking into account that $z$ is an analytically weak and probabilistically strong solution to \eqref{Stochastic Heat Equation}, so is $u:=v+z$ to \eqref{PLF} by superposition. 

\subsubsection{Regularity \& Bounds} \label{Regularity and Bound}
As we will see now this solution $u$ belongs $\mathcal{P}$-a.s. to the space\footnote{The final discussion in this section shows that it is crucial to select $\gamma \leq \frac{\sigma}{2}-\varepsilon$ sufficiently small in order to ensure that $z$ remains within the target space \eqref{target space}. This leads at last to the precise choice of $\beta$ in Section \ref{Choice of Parameters}.}
\begin{align}\label{target space} 
C\big([0,\infty);H^\gamma\big( \mathbb{T}^d;\mathbb{R}^d\big)\big)\cap C_{\rm{loc}}^{0,\gamma}\big([0,\infty);L^{2}\big( \mathbb{T}^d;\mathbb{R}^d\big)\cap W^{1+\gamma,\iota}\big( \mathbb{T}^d;\mathbb{R}^d\big) \big)
\end{align}
for some $\gamma \in \left(0, 1- \sqrt{\frac{2d+6}{2d+6+\beta}}\, \right)$. \\
Remembering that $H^\gamma$ is the real interpolation space between $L^2$ and $H^1$ with exponent $\gamma$ we invoke \eqref{property va} to infer 
\begin{align*}
\sum_{q\geq 0} \|v_{q+1}-v_q\|_{C_IH_x^\gamma}
& \lesssim \sum_{q\geq 0} \|v_{q+1}-v_q\|_{C_IH_x^1}^\gamma \|v_{q+1}-v_q\|^{1-\gamma}_{C_IL_x^2}\\
&\lesssim \sum_{q\geq 0} \left(\ell \|v_q\|_{C_{\Il,x}^2}+\|w_{q+1}\|_{C_{I,x}^1}\right)^\gamma \left(\ell \|v_q\|_{C_{\Il,x}^1}+\|w_{q+1}\|_{C_IL_x^2}\right)^{1-\gamma},
\end{align*}
whereas the mean value theorem and \eqref{property va} again yield 
\begin{align*}
&\sum_{q\geq 0} \|v_{q+1}-v_q\|_{C^{0,\gamma}_I L_x^2}\\
&\lesssim \sum_{q\geq 0}\bigg\{\|v_{q+1}-v_q\|_{C_IL_x^2}+\sup_{\substack{s,t \in I\\s\neq t }} \left(\frac{\|(v_{q+1}-v_q)(t)-(v_{q+1}-v_q)(s)\|_{L_x^2}}{|t-s|}\right)^{\gamma}\|v_{q+1}-v_q\|_{C_IL_x^2}^{1-\gamma}\bigg\}\\
&\lesssim \sum_{q\geq 0}\|v_{q+1}-v_q\|_{C_I L_x^2}+\sum_{q\geq 0} \|v_{q+1}-v_q\|_{C_{I}^1 L_x^2}^\gamma \|v_{q+1}-v_q\|^{1-\gamma}_{ C_I L_x^2}\\
&\lesssim \sum_{q\geq 0}\left( \ell \|v_q\|_{C_{\Il,x}^1}+\|w_{q+1}\|_{C_IL_x^2}\right)\\
&\hspace{.5cm}+\sum_{q\geq 0}\left(\ell \|v_q\|_{C_{\Il,x}^2}+\|w_{q+1}\|_{C_{I,x}^1}\right)^\gamma \left(\ell \|v_q\|_{C_{\Il,x}^1}+\|w_{q+1}\|_{C_IL_x^2}\right)^{1-\gamma}
\end{align*}
for any interval $I\subseteq [0,\infty)$ of length $1$.\\
Taking additionally into account that $W^{1+\gamma,\iota}$ is the real interpolation space between $W^{1,\iota}$ and $W^{2,\iota}$ with exponent $\gamma$, we deduce in a similar manner
\begin{align} \label{target series}
\begin{split}
&\sum_{q\geq 0} \|v_{q+1}-v_q\|_{C^{0,\gamma}_I W_x^{1+\gamma,\iota}}\\ &\hspace{.5cm}\lesssim  \sum_{q\geq 0} \left( \ell \|v_q\|_{C_{\Il,x}^3}+\|w_{q+1}\|_{C_IW_x^{1,\iota}}^{1-\gamma}\|w_{q+1}\|_{C_IW_x^{2,\iota}}^\gamma\right)\\
&\hspace{.5cm}+\sum_{q\geq 0}\left(\ell \|v_q\|_{C_{\Il,x}^4}+\|w_{q+1}\|_{C_{I,x}^3}\right)^\gamma \left(\ell \|v_q\|_{C_{\Il,x}^3}+\|w_{q+1}\|_{C_IW_x^{1,\iota}}^{1-\gamma}\|w_{q+1}\|_{C_IW_x^{2,\iota}}^\gamma\right)^{1-\gamma}.
\end{split}
\end{align}
Consequently Hölder's inequality with exponents $\frac{1}{\gamma}$ and $\frac{1}{1-\gamma}$, \eqref{key bound c}, \eqref{wC}, \eqref{wWiota} and \eqref{wL2a} give 
\begin{align*} 
&\sum_{q\geq 0} \left( \|v_{q+1}-v_q\|_{L_\Omega^{2J} C_I H_x^\gamma}+  \|v_{q+1}-v_q\|_{L_\Omega^{2J} C^{0,\gamma}_I L_x^2}+ \|v_{q+1}-v_q\|_{L_\Omega^{2J} C^{0,\gamma}_I W_x^{1+\gamma,\iota}} \right)\\
&\lesssim  \sum_{q\geq 0} \left(1+\ell \|v_q\|_{L_\Omega^{2J}C_{\Il,x}^4}+\|w_{q+1}\|_{L_\Omega^{2J}C_{I,x}^3}\right)^\gamma \\
&\hspace{.5cm}\cdot\max_{k=1,1-\gamma}\left(\ell \|v_q\|_{L_\Omega^{2J}C_{\Il,x}^4}+\|w_{q+1}\|_{L_\Omega^{2J}C_IW_x^{1,\iota}}^{1-\gamma}\|w_{q+1}\|_{L_\Omega^{2J}C_{I,x}^3}^\gamma+\|w_{q+1}\|_{L_\Omega^{2J}C_IL_x^2}\right)^k\\
&\lesssim \sum_{q\geq 0} \left(1+\ell(32JLA^{q-1})^{16A^{q-1}}\lambda_q^{\frac{5}{2} d+6}+  (28JLA^q)^{14A^q}\lambda_{q+1}^{2 d+5}\right)^\gamma \\
& \cdot \max_{k=1,1-\gamma}\left( \ell (32JLA^{q-1})^{16A^{q-1}}\lambda_q^{\frac{5}{2}d+6}+\big[(14JLA^q)^{7A^q}\lambda_{q+1}^{-2\alpha}\big]^{1-\gamma}\big[(28JLA^q)^{14A^q}\lambda_{q+1}^{2 d+5}\big]^\gamma\right.\\
&\hspace{1.5cm}+\left.\delta_{q+1}^{1/2}+(10JLA^q)^{5A^q}\lambda_{q+1}^{-2\alpha}\right)^k\\
&\lesssim \sum_{q\geq 0} \left(1+\lambda_{q-1}^{16}\lambda_q^{\frac{d}{2}-4} \lambda_{q+1}^{-2\alpha }+  \lambda_q^{14}\lambda_{q+1}^{2d+5}\right)^\gamma \\
&\hspace{.5cm}\cdot \max_{k=1,1-\gamma}\left(  \lambda_{q-1}^{16}\lambda_q^{\frac{d}{2}-4}\lambda_{q+1}^{-2\alpha}+\big[\lambda_q^7 \lambda_{q+1}^{-\alpha}\lambda_{q+1}^{-\beta}\big]^{1-\gamma}\big[ \lambda_q^{14} \lambda_{q+1}^{2d+5}\big]^\gamma+\lambda_{q+1}^{-\beta}+\lambda_q^5 \lambda_{q+1}^{-2\alpha}\right)^k\\
&\lesssim \sum_{q\geq 0}  \lambda_{q+1}^{(2 d+6)\gamma} \lambda_{q+1}^{[(2d+6+\beta)\gamma-\beta](1-\gamma)}\\
&\lesssim  \sum_{q\geq 0} \lambda_{q+1}^{-(2d+6+\beta)\left\{(\gamma-1)^2-\frac{2d+6}{2d+6+\beta}\right\}}\\
&<\infty.
\end{align*}
That means 
\begin{align*}
\left(\sum\limits_{q=0}^n \left( \|v_{q+1}-v_q\|_{L_\Omega^{2J}C_{I}H_x^{\gamma}} +\|v_{q+1}-v_q\|_{L_\Omega^{2J}C^{0,\gamma}_{I}L_x^2}+\|v_{q+1}-v_q\|_{L_\Omega^{2J}C^{0,\gamma}_{I}W_x^{1+\gamma,\iota}} \right)\right)_{n \in \mathbb{N}_0}
\end{align*}
converges in $\mathbb{R}$ along a subsequence, is consequently Cauchy and hence we find a subsequence of $(v_q)_{q\in Q}$ that converges $\mathcal{P}$-a.s. to $v$ in \eqref{target space}. More precisely, for every $\widetilde{\varepsilon}>0$ we find some $N \in \mathbb{N}_0$, so that 
\begin{align*}
&\|v_n-v_k\|_{L_\Omega^{2J}C_{I}H_x^{\gamma}} +\|v_n-v_k\|_{L_\Omega^{2J}C^{0,\gamma}_{I}L_x^2} +\|v_n-v_k\|_{L_\Omega^{2J}C^{0,\gamma}_{I}W_x^{1+\gamma,\iota}}\\
 &\hspace{.5cm}\leq \sum_{q=k}^{n-1} \left( \|v_{q+1}-v_q\|_{L_\Omega^{2J} C_I H_x^\gamma}+  \|v_{q+1}-v_q\|_{L_\Omega^{2J} C^{0,\gamma}_I L_x^2}+\|v_{q+1}-v_q\|_{L_\Omega^{2J}C^{0,\gamma}_{I}W_x^{1+\gamma,\iota}}  \right)\leq \widetilde{\varepsilon}
\end{align*}
holds for all $N \leq k < n$. \\
Recalling that $z$ is $\mathcal{P}$-a.s. of class $C_{\rm{loc}}^{0,\frac{\sigma}{2}-\varepsilon}\big([0,\infty);H^{1+\sigma}\big( \mathbb{T}^d;\mathbb{R}^d\big)\big)$ (see \thref{Proposition SHE}) and noting that our choice of $\beta \leq \frac{(2d+6)(\sigma-2\varepsilon)(4-\sigma +2\varepsilon)}{(2-\sigma+2\varepsilon)^2}$ ensures $ \gamma \leq \frac{\sigma}{2}-\varepsilon$, we conclude that also $u=v+z$ belongs $\mathcal{P}$-a.s. to \eqref{target space}. \\
Moreover, \eqref{vWiota} yields 
\begin{align*}
\|v\|_{L_\Omega^{2J} C_I W_x^{1,\iota}}&\lesssim  \|v-v_{q+1}\|_{L_\Omega^{2J} C_I W_x^{1,\iota}} + \|v_{q+1}\|_{L_\Omega^{2J} C_I W_x^{1,\iota}}\\ 
&\lesssim \widetilde{\varepsilon} + \sum_{r=0}^{q} \Big( \lambda_{r}^{\frac{3}{2}d+5-\alpha b}+\lambda_{r}^{7-\alpha b} \Big) \lambda_{r+1}^{-\beta }\\
&\lesssim  \widetilde{\varepsilon} + \sum_{r=0}^{q+1}(a^{-\beta b})^r -1\\
&\lesssim \widetilde{\varepsilon}+ \frac{1}{a^{\beta b}-1}
\end{align*}
for sufficiently large $q\in \mathbb{N}_0$, which confirms \thref{Remark Weak Solution}.
 \subsubsection{Energy Results} \label{Energy Results}
 In order to show that $u$ is even of Leray--Hopf class, we employ \thref{Lemma 2.4}, Hölder's inequality with $\iota,\, \frac{\iota}{\iota-1}$, \eqref{OPa} and \eqref{OPb} to accomplish \pagebreak
 \begin{align*} 
&|\mathscr{H}\{v_q+z_q\}(t)-\mathscr{H}\{v+z\}(t)|\\
&\lesssim \sup_{|I|=1} \|v_q+z_q-v-z\|_{L_\Omega^{2J} C_I L_x^2}\sup_{|I|=1} \|v_q+z_q+v+z\|_{L_\Omega^{2J} C_I L_x^2}   \\
&\hspace{.5cm}+\frac{H}{t+H}\E\bigg[\int_0^{t}  \|(v_q+z_q-v-z)(s) \|_{W_x^{1,\iota}} \left(1+\|(v_q+z_q)(s)\|^{\iota-1}_{W_x^{1,\iota}}\right)\,ds\bigg]\\
&\hspace{.5cm}+\frac{H}{t+H}\E\bigg[\int_0^{t}  \|(v_q+z_q-v-z)(s) \|_{W_x^{1,\iota}} \|(v+z)(s)\|^{\iota-1}_{W_x^{1,\iota}}\,ds\bigg]\\
&\lesssim  \Big(\sup_{|I|=1} \|v_q-v\|_{L_\Omega^{2J} C_I L_x^2}+\sup_{|I|=1} \|z_q-z\|_{L_\Omega^{2J} C_I L_x^2}\Big)\\
&\hspace{1cm} \cdot \Big( \sup_{|I|=1} \|v_q\|_{L_\Omega^{2J} C_I L_x^2}+\sup_{|I|=1} \|z\|_{L_\Omega^{2J} C_I L_x^2}+\sup_{|I|=1} \|v\|_{L_\Omega^{2J} C_I L_x^2}\Big)\\
&\hspace{.5cm}+ \frac{H}{t+H}\Big(\sup_{|I|=1} \|v_q-v \|_{L_\Omega^{2J}C_I W_x^{1,\iota}}+\sup_{|I|=1}  \|z_q-z \|_{L_\Omega^{2J}C_IH_x^{1+\sigma}} \Big)\\
&\hspace{1cm}\cdot \left(1+\sup_{|I|=1} \|v_q\|^{\iota-1}_{L_\Omega^{2J}C_I W_x^{1,\iota}}+\sup_{|I|=1} \|z\|^{\iota-1}_{L_\Omega^{2J}C_I H_x^{1+\sigma}}+\sup_{|I|=1} \|v\|^{\iota-1}_{L_\Omega^{2J}C_I W_x^{1,\iota}}\right)\\
&\overset{q \to \infty}{\longrightarrow}0
\end{align*}
at every time $t \in [0,\infty)$. In view of \eqref{approximation energy} we conclude $e\equiv\mathscr{H}\{u\}$. If we set $e(t)=H\tr(G)$, we actually get
\begin{align*}
\mathscr{E}\{u\}(t)&\leq\frac{t+H}{H}\mathscr{H}\{u\}(t)-\tr(G)t=\mathscr{H}\{u\}(0)=\mathscr{E}\{u\}(0)
 \end{align*}
\noindent for all $t \in [0,\infty)$.\\
Now it is easy to see that the additional factor $\frac{H}{t+H}$ in the definition of the auxiliary energy prevents to verify the energy inequality for more general times $s\leq t$ instead of $0$ (cf. Remark~\ref{Remark Leray--Hopf}). 
\subsubsection{Consistency}
We will now prove non-uniqueness of solutions. For this cause let us take two distinct energy profiles $e_1,\, e_2$, which satisfy all the requirements of \thref{Theorem Weak Solution} and in such a fashion that they coincide on $[0,T]$ for an arbitrary $T\geq 0$. Let $v_q^1,\, v_q^2$ the corresponding sequences, constructed via the convex integration scheme in the same way, but once depending on the energy $e_1$ and once on $e_2$. That means the intermittent jets and all the chosen parameters appearing in this scheme are identical for both sequences; only the underlying energy profiles differ from each other.\\
We will proceed inductively. So let us first mention that by our choice of the starting point \eqref{starting point}, we obviously know that $v_0^1=0$ and $v_0^2=0$ and hence $u_0^1=z_0$ and $u_0^2=z_0$ coincide on $\big[-\sum_{r=0}^\infty \ell_r,T\big]$. The series $\sum_{r=0}^\infty \ell_r$ is because of \eqref{l series bound} actually bounded by $1$, which gives rise to assume that $v_q^1$ equals to $v_q^2$ on $\big[ -\sum_{r=q}^\infty \ell_r, T\big]$ for some $q \in \mathbb{N}_0$.\\
In view of Figure \ref{Implication scheme} it is easy to see that the next iteration step $v_{q+1}^i$ depends on $v^i_{\ell}$ for $i=1,2$. Respecting the mollification of the velocity fields, implies together with the inductive assumption, that $v_{q+1}^1$ and $v_{q+1}^2$ coincide on $\big[- \sum_{r=q+1}^\infty \ell_r, T\big]$.\\
Since this is true for every $q\in \mathbb{N}_0$, we conclude $v^1(t)=v^2(t)$ for any $t \in [0, T]$ and therefore also $u^1\equiv u^2$ on $[0,T]$, completing the proof of \thref{Theorem Weak Solution}.
\begin{figure}[H]
\begin{center}
\includegraphics[scale=0.089]{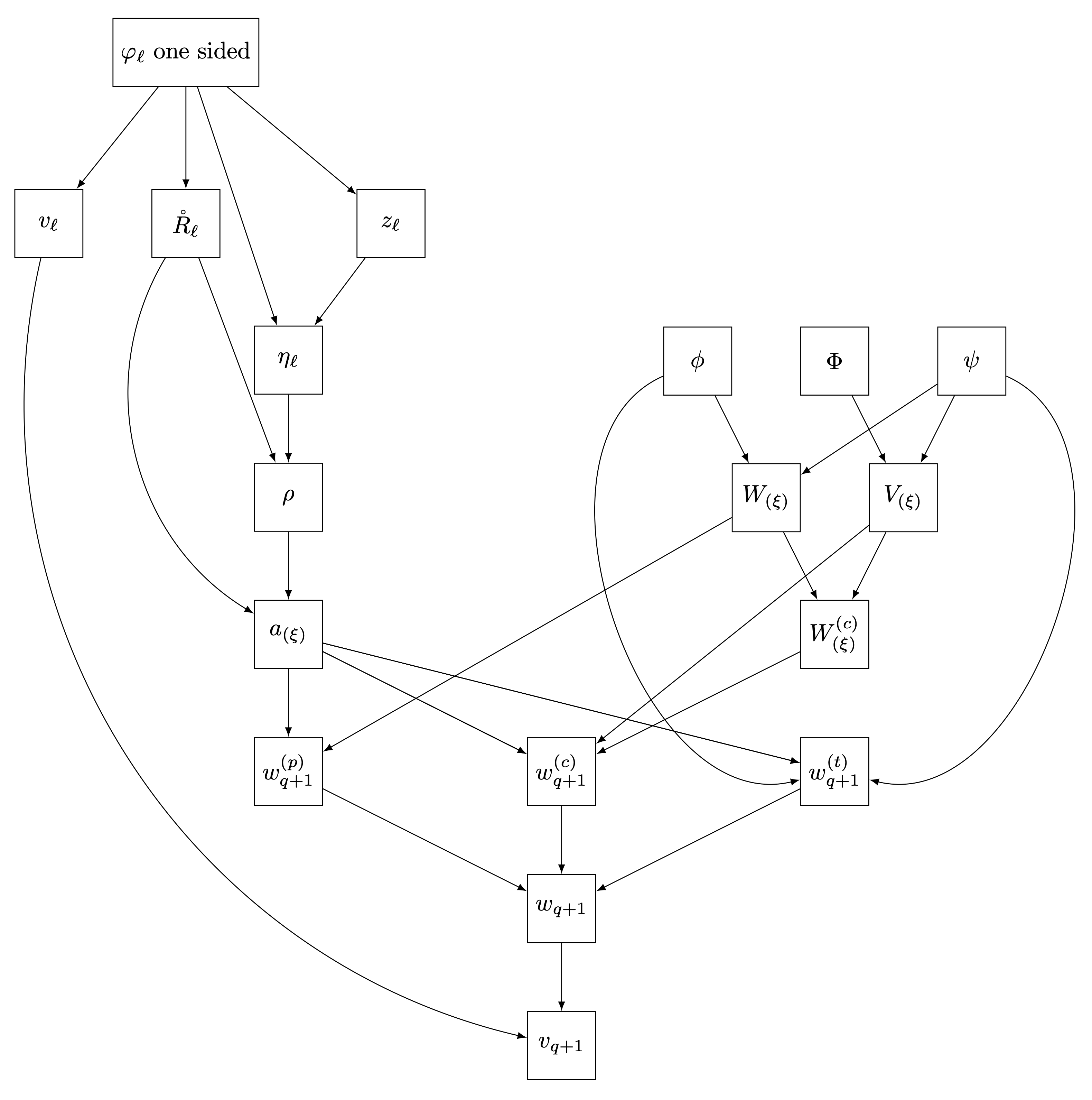}
\caption{Relationships among the components of the next iteration step $v_{q+1}$.}
\label{Implication scheme}
\end{center}
\end{figure}
 \end{proof}

\chapter{Convex Integration Scheme} \label{Convex Integration Scheme}
This section is dedicated to the proof of \thref{Proposition Main Iteration}.

\begin{proof}[Proof of \thref{Proposition Main Iteration}]~\\
The result will be established through mathematical induction. Initially, the focus will therefore be on determining an appropriate base case to initiate the process.
\section{Start of the Iteration}\label{Start of the Iteration}
Taking \thref{Lemma 2.1} into account permits to deduce 
\begin{align*}
\mathcal{R}\Delta v= \nabla v^T+(\nabla v^T)^T=2Dv
\end{align*}
for any divergence free vector field $v\in C^\infty\left( \mathbb{T}^d;\mathbb{R}^d\right)$, so that the pair 
\begin{align} \label{starting point}
(v_0,\mathring{R}_0):=(0,z_0\ootimes z_0-\mathcal{A}(Dz_0)+2D z_0-\mathcal{R}z_0)
\end{align}
 seems to be a suitable starting point, provided $p_0:=-\frac{1}{d}|z_0|^2$.\\
 While the velocity $v_0$ obviously respects the key bounds \eqref{key bound a} -- \eqref{key bound d}, \eqref{key bound e} and \eqref{key bound f} follow from the previous considerations together with \thref{Lemma 2.4} and \thref{Lemma 2.1}. More precisely, we have
\begin{align*}
\|\mathring{R}_0\|_{L_x^1}\lesssim \|z_0\|_{L_x^2}^2+C_{\nu_0}\|Dz_0\|_{L_x^2}+C_{\nu_1}\|Dz_0\|_{L_x^2}^{\iota-1}+\|Dz_0\|_{L_x^2}+\|z_0\|_{L_x^2},
\end{align*}
resulting, thanks to \eqref{OPa}, \eqref{OPb} and \thref{Proposition SHE}, in
\begin{align*}
\|\mathring{R}_0\|_{L_\Omega^{j}C_IL_x^1}\leq S\left[\|z\|_{L_\Omega^{2j}C_IL_x^2}^2+C_{\nu_0}\|z\|_{L_\Omega^jC_I H_x^1}+C_{\nu_1}\|z\|_{L_\Omega^{j(\iota-1)}C_I H_x^1}^{\iota-1}\right]\leq 2S(C_{\nu_0}+C_{\nu_1}) K^2_Gj
\end{align*}
 for any interval $I\subseteq [o_0,\infty)$ of length $1$, all $j\geq 1$ and some $S\geq 0$.\\
So choosing $L$ as 
\begin{align}\label{L}
L:=L_{G,\nu_0,\nu_1}:=\sqrt{2S(C_{\nu_0}+C_{\nu_1})} K_G
\end{align}
and $\underline{e}$ in a way that it enjoys $\underline{e} \geq 2304L^2J$ confirms \eqref{key bound e} and \eqref{key bound f}. \\
To verify \eqref{approximation energy} at level $q=0$ we invoke \thref{Lemma 2.4}, \eqref{OPa}, \eqref{OPb} and \thref{Proposition SHE} to derive
\begin{align*}
&\mathscr{H}\{z_0\} (t)\\
&\hspace{.3cm}\lesssim \sup_{|I|=1} \|z_0\|^2_{L_\Omega^2C_I L_x^2}+\frac{H}{t+H}\E \Bigg[\int_0^t \int_{\mathbb{T}^d} C_{\iota, \nu_0,\nu_1}\Big(\|Dz_0(s,x)\|_F+\|Dz_0(s,x)\|_F^\iota \Big) \,dx \,ds \Bigg]\\
&\hspace{.3cm}\lesssim \sup_{|I|=1} \|z_0\|^2_{L_\Omega^2C_I L_x^2}+C_{\iota, \nu_0,\nu_1}\Big(\sup_{|I|=1}\|z_0\|_{L_\Omega^1C_IW_x^{1,1}}+\sup_{|I|=1}\|z_0\|_{L_\Omega^\iota C_IW_x^{1,\iota}}^\iota \Big) \\
&\hspace{.3cm}\lesssim K_G^2+ C_{\iota, \nu_0,\nu_1}( K_G+K_G^\iota)\\
&\hspace{.3cm}\leq \hbar C_{\iota, \nu_0,\nu_1} K_G^2
\end{align*}
for all $t\in[o_0,\infty)$ and some constant $\hbar>0$. Consequently \eqref{approximation energy} remains true, whenever the given energy $e$ is bounded from below by $4\hbar C_{\iota, \nu_0,\nu_1} K_G^2 $.\\
That means, each of the iterative assumptions \eqref{key bound a} -- \eqref{key bound f} and \eqref{approximation energy} is fulfilled on the level $q=0$, if the underlying energy profile satisfies 
\begin{align}\label{Consistent}
\underline{e}\geq 2304L^2J+ 4 \hbar C_{\iota, \nu_0,\nu_1} K_G^2.
\end{align} 
Additionally, if we set $\mathcal{F}_t=\mathcal{F}_0$ for any $t\in [o_0,0]$, it is clear that the velocity $v_0(t)$ is $\left(\mathcal{F}_t\right)_{t\in [o_0,\infty)}$-adapted at any time $t \in [o_0,\infty)$.

\section{Construction of $v_{q+1}$}
For the induction step, we will construct the velocity at level $q+1$ in dependence of the previous mollified iteration step $v_\ell$, perturbed by a highly oscillatory error term $w_{q+1}$. 
For short we set 
\begin{align*}
v_{q+1}:=v_\ell+w_{q+1}.
\end{align*}
The building block of the perturbation will be chosen to be intermittent in the sense of intermittent jets, which are originally introduced in \cite{BCV21}. However, we will need the generalizations in $d$-dimensions, worked out in \cite{LZ23}. The idea to define the velocity to be intermittent stems from the famous K41 theory, roughly speaking, from the experimental deviation of the K41-laws (cf. \cite{Fr95}). A detailed summary of the physical motivation behind intermittency can be found in \cite{BV19a}.

\subsection{Mollification} \label{Mollification}

Let us first have a closer look to the mollified velocity field $v_\ell$ in space and time. We introduce a spatial mollifier whose support is contained within the unit ball, and, to preserve the $\left( \mathcal{F}_t\right)_{t \in [o_{q+1}, \infty)}$-adaptedness, we introduce a temporal mollifier whose support is restricted to the positive interval $(0,1)$ (cf. \cite{Be23}). Strictly speaking, we consider $\phi \in C^\infty\big(\mathbb{R}^d;[0,\infty)\big)$ and $\varphi \in C^\infty\big(\mathbb{R};[0,\infty)\big)$, complying with $ \supp \phi \subseteq B_1^{\mathbb{R}^{d}}(0),\, \supp \varphi \subseteq (0,1)$ and normalize them in a way that $\int_{\mathbb{R}^d}\phi(x)\, dx=1$ and $\int_{\mathbb{R}}\varphi(t)\, dt=1$ hold. Finally, we set $\phi_\ell:=\frac{1}{\ell^d}\phi(\frac{\cdot}{\ell})$ and $\varphi_\ell:=\frac{1}{\ell}\varphi(\frac{\cdot}{\ell})$ with $\ell=\ell_q$. With these at hand we mollify the velocity field as well as the Reynolds stress and $z_q$ as follows
\begin{align*}
v_\ell:=(v_q \ast_t \varphi_\ell)\ast_x\phi_\ell, \qquad \mathring{R}_\ell:= (\mathring{R}_q \ast_t \varphi_\ell)\ast_x\phi_\ell, \qquad z_\ell:=(z_q \ast_t \varphi_\ell)\ast_x\phi_\ell.
\end{align*}
The mollification has a slight impact on the velocity field in the relevant Hölder and Sobolev spaces, as detailed in the subsequent lemma. It relies primarily on standard mollification estimates, which is why we will not elaborate further.

\begin{lemma} \thlabel{Lemma Properties v}
The mollified velocity field $v_\ell$, defined above, enjoys the following bounds
\begin{subequations}
\begin{align}
&\|v_q-v_\ell\|_{C_I^N W_x^{k,p}}\lesssim \ell \|v_q\|_{C^{N+k+1}_{\Il,x}},\label{property va}\\
&\|v_\ell\|_{ C_I W_x^{s,p}}\leq \|v_q\|_{  C_{I_\ell} W_x^{s,p}},\label{property vb}\\
&\|v_\ell\|_{C_{I,x}^N}\lesssim \ell^{-N+n}\|v_q\|_{C_{I_\ell,x}^n},\label{property vc}\\
\begin{split}
&\|v_q+z_q-v_\ell-z_\ell\|_{C_IW_x^{s,p}}\lesssim \ell\Big[ \|\partial_t v_q\|_{C_\Il W_x^{s,p}}+ \|\nabla v_q\|_{C_\Il W_x^{s,p}}+ \|\nabla z_q\|_{C_\Il W_x^{s,p}}\Big]\\
&\hspace{4.5cm}+ \ell^{\sigma/2-\varepsilon} \|z_q\|_{C^{0,\sigma/2-\varepsilon}_\Il W_x^{s,p}}\label{property vd}
\end{split}
\end{align}
for any $1\leq p \leq \infty$, $s\geq 0,\, k\in \mathbb{N}_0,\, 0 \leq n\leq N$, all intervals $I\subseteq [o_{q+1},\infty)$ of length $1$ and $\Il$ as in \eqref{Interval}.
\end{subequations}
\end{lemma}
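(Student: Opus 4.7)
The plan is to derive all four bounds from Young's convolution inequality combined with a first-order Taylor expansion of the integrand, exploiting the compact support properties of the mollifiers. Since $\phi_\ell$ is supported in $B_\ell^{\mathbb{R}^d}(0)$ and $\varphi_\ell$ in $(0,\ell)$, for any $(t,x)\in I\times\mathbb{T}^d$ only values $v_q(t-s,x-y)$ with $s\in(0,\ell)$ and $|y|\leq \ell$ contribute to $v_\ell(t,x)$, which is why every right-hand side may be taken over $\Il\times\mathbb{T}^d$.

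For (b), I would simply apply Minkowski's integral inequality to the two successive convolutions, using $\|\phi_\ell\|_{L^1}=\|\varphi_\ell\|_{L^1}=1$ and the commutation of Bessel derivatives with convolution. For (c), one distributes $N-n$ derivatives (in any mix of $\partial_t$ and $\nabla$) onto the mollifier via integration by parts; the scalings $\phi_\ell(y)=\ell^{-d}\phi(y/\ell)$ and $\varphi_\ell(s)=\ell^{-1}\varphi(s/\ell)$ each contribute a factor $\ell^{-1}$ per derivative, while the remaining $n$ derivatives acting on $v_q$ are absorbed into $\|v_q\|_{C^n_{\Il,x}}$. For (a), I would use
\begin{align*}
(v_q-v_\ell)(t,x)=\int\!\!\int\big[v_q(t,x)-v_q(t-s,x-y)\big]\varphi_\ell(s)\phi_\ell(y)\,ds\,dy
\end{align*}
together with the fundamental theorem of calculus
\begin{align*}
v_q(t,x)-v_q(t-s,x-y)=\int_0^1\big[s\,\partial_t v_q+y\cdot\nabla v_q\big](t-\theta s,x-\theta y)\,d\theta
\end{align*}
to produce one extra derivative on $v_q$ together with a factor $|s|+|y|\lesssim \ell$. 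Applying $N$ time derivatives and $k$ spatial derivatives to both sides increases the total order by at most $N+k+1$, giving the stated $C^{N+k+1}_{\Il,x}$ norm on the right-hand side.

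For (d), I would split $v_q+z_q-v_\ell-z_\ell=(v_q-v_\ell)+(z_q-z_\ell)$. The first summand is treated as in (a), but now keeping the two directions separate so as to produce the $\ell\big[\|\partial_t v_q\|_{C_\Il W^{s,p}}+\|\nabla v_q\|_{C_\Il W^{s,p}}\big]$ contribution. The main obstacle sits in the second summand: by \thref{Proposition SHE}, $z_q$ is only Hölder continuous in time of order $\sigma/2-\varepsilon$, so the fundamental theorem of calculus in time is unavailable. The remedy is to split the mixed increment as
\begin{align*}
z_q(t,x)-z_q(t-s,x-y)=\big[z_q(t,x)-z_q(t-s,x)\big]+\big[z_q(t-s,x)-z_q(t-s,x-y)\big],
\end{align*}
bounding the first bracket directly by $|s|^{\sigma/2-\varepsilon}\|z_q\|_{C^{0,\sigma/2-\varepsilon}_\Il W^{s,p}}$ and the second via the spatial fundamental theorem of calculus exactly as before. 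Taking $W^{s,p}$ norms and integrating against the mollifiers then yields precisely the two-term estimate in (d).
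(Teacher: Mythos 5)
Your proposal is correct and is precisely the ``standard mollification estimates'' argument that the paper invokes without elaboration: Minkowski/Young for \eqref{property vb}, shifting derivatives onto the rescaled mollifiers for \eqref{property vc}, the fundamental theorem of calculus on the increment for \eqref{property va}, and the time/space splitting of the increment (using only the $C^{0,\sigma/2-\varepsilon}$ regularity of $z_q$ in time) for \eqref{property vd}. The supports of $\varphi_\ell$ and $\phi_\ell$ indeed localize everything to $\Il\times\mathbb{T}^d$, so no further comment is needed.
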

\noindent

\subsection{Perturbation} \label{Perturbation}
We now devote ourselves to the construction of the perturbation $w_{q+1}$, whose principal part is a superposition of the high oscillatory intermittent jets with lower frequent amplitudes.\\
The ensuing geometric lemma from \cite{BV19a} is indispensable for the definition of these amplitude functions as well as of the intermittent jets themselves. 
\begin{lemma} [Geometric Lemma]\thlabel{Geometric Lemma}
For a set of finite directions $\Lambda \subseteq \mathbb{S}^{d-1}\cap\mathbb{Q}^d$ there exists a family of smooth real-valued functions $\left(\gamma_\xi\right)_{\xi \in \Lambda}$ complying with 
\begin{align*}
R=\sum_{\xi\in\Lambda}\gamma_\xi^2(R)(\xi \otimes \xi)
\end{align*}
for every $R \in \mathbb{R}_{\text{symm}}^{d\times d} \cap \overline{B_\frac{1}{2}} (\Id)$.\footnote{Here $\mathbb{S}^{d-1}$ denotes the unit sphere in $d$-dimensions, whereas $\overline{B_\frac{1}{2}} (\Id)$ denotes the closed ball of radius $\frac{1}{2}$ around the identity matrix $\Id$ in the space of $d\times d$-matrices.}
\end{lemma}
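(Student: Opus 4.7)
The plan is to proceed in three stages: first construct a sufficiently rich finite set $\Lambda \subseteq \mathbb{S}^{d-1}\cap \mathbb{Q}^d$, then decompose $\Id$ as a positive combination of the rank-one matrices $\xi\otimes\xi$, and finally extend the decomposition smoothly to a ball of radius $1/2$ around $\Id$.

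\textbf{Step 1 — Choice of $\Lambda$.} The space $\mathbb{R}^{d\times d}_{\rm symm}$ has dimension $d(d+1)/2$. I would select a finite set $\Lambda \subseteq \mathbb{S}^{d-1}\cap \mathbb{Q}^d$ large enough that the family $\{\xi\otimes\xi : \xi \in \Lambda\}$ spans $\mathbb{R}^{d\times d}_{\rm symm}$. Because $(e_i \pm e_j)/\sqrt{2}$ is not rational, one must work harder than in the non-rational case: I would use Pythagorean-type vectors (e.g.\ $(3,4,0,\dots,0)/5$ and its coordinate permutations) or rational orthogonal matrices applied to the standard basis to produce enough rational unit vectors. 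By enlarging $\Lambda$ symmetrically (noting that $\xi\otimes\xi = (-\xi)\otimes(-\xi)$) and including the standard basis directions $e_i$, one can moreover guarantee that $\Id$ lies in the interior of the open convex cone generated by $\{\xi\otimes\xi\}_{\xi\in\Lambda}$.

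\textbf{Step 2 — Decomposition at $\Id$.} Since each $\xi\otimes\xi$ is positive semidefinite and by construction $\Id$ is an interior point of the cone they span, there exist strictly positive coefficients $c_\xi^\ast > 0$ with
\[
\Id \;=\; \sum_{\xi\in\Lambda} c_\xi^\ast\,(\xi\otimes\xi).
\]
Here the magnitude of the $c_\xi^\ast$ can be boosted (by replacing $\Lambda$ with a larger symmetric set if necessary) so that the ball $\overline{B_{1/2}}(\Id)$ stays well inside the positivity region considered in the next step.

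\textbf{Step 3 — Smooth extension.} Since $\{\xi\otimes\xi\}_{\xi\in\Lambda}$ spans $\mathbb{R}^{d\times d}_{\rm symm}$, I would fix a linear right-inverse $a : \mathbb{R}^{d\times d}_{\rm symm} \to \mathbb{R}^{|\Lambda|}$ of the surjection $c \mapsto \sum_\xi c_\xi (\xi\otimes\xi)$, normalised so that $a_\xi(\Id) = c_\xi^\ast$; then
\[
R \;=\; \sum_{\xi\in\Lambda} a_\xi(R)\,(\xi\otimes\xi) \qquad \text{for every } R \in \mathbb{R}^{d\times d}_{\rm symm}.
\]
Each $a_\xi$ is linear, hence smooth. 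By continuity and the strict positivity $a_\xi(\Id) = c_\xi^\ast > 0$, the inequality $a_\xi(R) > c_\xi^\ast/2 > 0$ holds on a neighbourhood of $\Id$; arranging the $c_\xi^\ast$ large enough compared to the operator norm of $a$, this positivity extends to all of $\overline{B_{1/2}}(\Id)$. Setting
\[
\gamma_\xi(R) \;:=\; \sqrt{a_\xi(R)}
\]
then yields the desired smooth family, since the radicand is uniformly bounded away from zero.

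\textbf{Main obstacle.} The genuinely delicate point is the simultaneous fulfilment of three requirements on $\Lambda$: rationality, the spanning property of $\{\xi\otimes\xi\}$, and a positive decomposition of $\Id$ with coefficients large enough to survive perturbations of size $1/2$. Rationality rules out the naive orthonormal choices and forces the use of Pythagorean-type constructions in arbitrary dimension $d\geq 3$; once an admissible $\Lambda$ is in hand, the linear-algebra content of Steps 2--3 is essentially routine.
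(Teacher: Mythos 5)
Your overall strategy (rational directions whose tensor squares span $\mathbb{R}^{d\times d}_{\text{symm}}$, a strictly positive decomposition of $\Id$, a linear right inverse for the coefficients, then square roots) is the natural first attempt, and Step 1 is fine; but Step 3 contains a genuine gap, and in fact the linear-functional route provably cannot reach the radius $\frac12$ required by the statement when $d\geq 3$. First, the coefficients $c^\ast_\xi$ cannot be ``boosted'': taking the trace of $\Id=\sum_\xi c^\ast_\xi\,\xi\otimes\xi$ gives $\sum_\xi c^\ast_\xi=d$, so enlarging $\Lambda$ makes the individual coefficients smaller, not larger. Second, and decisively, write $a_\xi(R)=\langle A_\xi,R\rangle_F$ with $A_\xi$ symmetric. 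The identity $R=\sum_\xi a_\xi(R)\,\xi\otimes\xi$ for all symmetric $R$ says the operator $R\mapsto\sum_\xi\langle A_\xi,R\rangle_F\,\xi\otimes\xi$ is the identity on $\mathbb{R}^{d\times d}_{\text{symm}}$; its trace gives $\sum_\xi \xi^T A_\xi\,\xi=\frac{d(d+1)}{2}$, while evaluating at $\Id$ and taking traces gives $\sum_\xi \tr A_\xi=d$. Positivity of $a_\xi$ on the closed Frobenius ball $\overline{B_\frac{1}{2}}(\Id)$ (this is the norm the paper needs, since $\rho\geq 2\|\mathring{R}_\ell\|_F$; the same argument works for the operator norm) forces $\frac12\|A_\xi\|_F\leq \tr A_\xi$, and Cauchy--Schwarz gives $\xi^T A_\xi\,\xi\leq\|A_\xi\|_F$. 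Summing over $\xi$ yields $\frac{d(d+1)}{2}\leq 2d$, i.e. $d\leq 3$; and in the borderline case $d=3$ equality must hold everywhere, which forces $A_\xi=\|A_\xi\|_F\,\xi\otimes\xi$ and then $\|A_\xi\|_F=2\tr A_\xi=2\|A_\xi\|_F$, i.e. $A_\xi=0$, a contradiction. So for every $d\geq3$ no linear right inverse has all components nonnegative on $\overline{B_\frac{1}{2}}(\Id)$; your continuity argument only ever yields positivity on a small ball $B_\epsilon(\Id)$ with $\epsilon<\frac{2}{d+1}\leq\frac12$, no matter how $\Lambda$ and $a$ are chosen.

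This is exactly why the radius-$\frac12$ version requires genuinely nonlinear coefficient functions $\gamma_\xi^2$. The paper itself does not prove the lemma but recalls it from \cite{BV19a} (see also \cite{BCV21}); the standard proof exploits that $\overline{B_\frac{1}{2}}(\Id)$ is a compact subset of the open cone of positive definite matrices, produces around each $R_0$ in this set a strictly positive decomposition into rational rank-one directions (positivity being an open condition, this is where a local linear inverse as in your Step 3 legitimately enters), and then glues the local coefficient maps by a smooth partition of unity, producing smooth positive $\gamma_\xi^2$ which are not linear in $R$. Alternatively, your construction proves the lemma verbatim only on $B_\epsilon(\Id)$ for small $\epsilon$, which would then force a different normalisation $\rho\gtrsim\epsilon^{-1}\|\mathring{R}_\ell\|_F$ in the convex integration scheme; as stated, for the closed ball of radius $\frac12$ and $d\geq3$, the proof does not go through.
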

\vspace{.5cm}

\noindent In the further course, the constant
\begin{align}
M:= 8 |\Lambda|(1+(2\pi)^d)^{1/2}\Big( \sup_{\xi \in \Lambda}\|\gamma_\xi\|_{C^{N}}+1\Big) \label{M}
\end{align}
with sufficiently large $N\geq 2N^\ast$ and $N^\ast \geq 2$ as in \eqref{Nast} becomes more important, especially when we estimate the perturbation and thus the next iteration step $v_{q+1}$.\\
 For a fixed $\xi\in \Lambda$ we choose further vectors $(A^i_\xi)_{i\in\{1,\ldots,d-1\}} \subseteq \mathbb{S}^{d-1} \cap \mathbb{Q}^d$ and a non-negative integer $n_\ast\in \mathbb{N}$, so that $\big\{n_\ast \xi, n_\ast A^1_\xi,\ldots, n_\ast A_\xi^{d-1}\big\}\subseteq \mathbb{Z}^d$ forms an orthogonal basis of $\mathbb{R}^d$.\\

\subsubsection*{Amplitude Functions}
The amplitude functions are defined in the spirit of \cite{HZZ25}. The main difference lies in the inclusion of the non-Newtonian part to ensure that not only the mean kinetic energy but even the auxiliary energy \eqref{help energy} of the iterated velocity approximates the prescribed energy $e$. More precisely, let the amplitude functions $a_{(\xi)}:=a_{\xi,q+1}$ be defined as 
\begin{align*}
a_{(\xi)}(\omega,t,x):=(2\pi)^{d/2} \rho^{1/2}(\omega,t,x)\gamma_\xi\left(\Id-\frac{\mathring{R_\ell}(\omega,t,x)}{\rho(\omega,t,x)}\right)
\end{align*}
with 
\begin{align*}
\rho(\omega,t,x)&:=2\sqrt{\ell^2+\|\mathring{R_\ell}(\omega,t,x)\|_F^2}+\eta_{\ell}(t)
\end{align*}
and
\begin{align*}
\eta_q(t)&:=  \frac{1}{3 (2\pi)^d}\Big[ e(t)\big(1-4\delta_{q+2}\big)- \mathscr{H}\{v_q+z_q\}(t)\Big]
\end{align*} 
for almost every $\omega \in \Omega$, any time $t \in [o_{q+1},\infty)$ and all $x \in \mathbb{T}^d$. The indexed $\ell$ in $\eta_\ell$ denotes as usual the mollification of $\eta_q$ in time. \\
In the light of \eqref{approximation energy} and of the required $a^{\beta b^2}\geq 3$ and $b \geq 2\varsigma_d$ (cf. Section \ref{Choice of Parameters}) we obtain 
\begin{align*}
3(2\pi)^d \eta_q\geq  e (3\delta_{q+1}-4 \delta_{q+2})\geq  0,
\end{align*}
which entails
\begin{align} \label{rho bound}
\rho \geq 2 \|\mathring{R}_\ell\|_\text{F} \qquad \text{and} \qquad \rho \geq 2\ell 
\end{align}
on $\Omega \times  [o_{q+1},\infty)\times \mathbb{T}^d$ for all $q\geq 0$.\\
These facts confirm that $\Id-\frac{\mathring{R}_\ell}{\rho}$ is actually in the domain of $\gamma_\xi$ and additionally permit to state the following lemma.

 \begin{lemma} \thlabel{Lemma Amplitude Functions}
The amplitude functions enjoy the following bounds
\begin{subequations}
\begin{align}
&\|a_{(\xi)}\|_{C_I L_x^2}\lesssim \frac{M}{|\Lambda|} \left( \|\oRq\|^{1/2}_{C_\Il L_x^1}+\bar{e}^{1/2}\delta_{q+1}^{1/2}\right) \label{amplitude function a} ,\\
&\|a_{(\xi)}\|_{C_{I,x}^{N}}\lesssim \left\{\begin{array}{ll} \frac{M}{|\Lambda|} \ell^{-\frac{d+1}{2}} \left( 1+\|\oRq\|_{C_\Il L_x^1}^{\frac{1}{2}}\right), & \text{if } N=0, \\
 \frac{M}{|\Lambda|} \ell^{-(d+3)N-\frac{3d+5}{2}} \left( 1+ \|\oRq\|_{C_\Il L_x^1}^{N+\frac{3}{2}} \right)   , & \text{if } N\geq 1,   \end{array}\right. \label{amplitude function b}  
\end{align}
for any interval $I\subseteq [o_{q+1},\infty)$ of length $1$ and $N\in \mathbb{N}_0$.
\end{subequations}
\end{lemma}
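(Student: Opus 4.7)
The plan is to reduce both inequalities to pointwise control of $\rho$ and its derivatives. The key structural observations are the two-sided bound
\[
2\ell \leq \rho \leq 2\ell + 2\|\mathring{R}_\ell\|_F + \eta_\ell,
\]
where the lower bound is \eqref{rho bound} and the upper bound uses $\sqrt{\ell^2 + \|\mathring{R}_\ell\|_F^2} \leq \ell + \|\mathring{R}_\ell\|_F$. The lower bound will control every inverse power of $\rho$ produced by the chain rule, since $\rho^{-k} \leq (2\ell)^{-k}$. The upper bound, combined with Young's convolution inequality and the iterative hypothesis \eqref{approximation energy} (which, together with $\underline{e} \leq \bar{e}$ and the fact that $\delta_{q+2}/\delta_{q+1}$ is small, gives $0 \leq \eta_\ell \lesssim \bar{e}\,\delta_{q+1}$), will yield the right-hand sides in terms of $\|\mathring{R}_q\|_{C_{I_\ell}L^1}$ and $\bar{e}\,\delta_{q+1}$.

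For \eqref{amplitude function a}, I would first estimate pointwise $|a_{(\xi)}|^2 \leq (2\pi)^d \rho \,\|\gamma_\xi\|_{C^0}^2$, integrate over $\mathbb{T}^d$, and split the resulting $\int \rho\,dx$ into its three natural pieces. Young's inequality gives $\|\mathring{R}_\ell\|_{C_IL^1} \leq \|\mathring{R}_q\|_{C_{I_\ell}L^1}$, the $\ell$-piece is a constant, and the bound on $\eta_\ell$ is as above. Taking square roots and absorbing $\ell^{1/2}$ into $\bar{e}^{1/2}\delta_{q+1}^{1/2}$ (legitimate since the parameter choices of Section \ref{Choice of Parameters} enforce $\ell \leq \lambda_{q+1}^{-2\alpha} \ll \delta_{q+1}$) yields \eqref{amplitude function a}, with the constant $(2\pi)^{d/2}\|\gamma_\xi\|_{C^0}$ absorbed into $M/|\Lambda|$ via \eqref{M}.

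For \eqref{amplitude function b}, the case $N=0$ is analogous: $|a_{(\xi)}| \lesssim \rho^{1/2}$, and the standard mollification estimate
\[
\|\mathring{R}_\ell\|_{C_I L^\infty} \leq \|\phi_\ell\|_{L^\infty}\|\mathring{R}_q\|_{C_{I_\ell}L^1} \lesssim \ell^{-d}\|\mathring{R}_q\|_{C_{I_\ell}L^1}
\]
produces the claimed $\ell^{-(d+1)/2}$ prefactor after taking the square root (with room to spare). For $N \geq 1$, I would write
\[
a_{(\xi)} = (2\pi)^{d/2}\,\rho^{1/2}\,(\gamma_\xi \circ F), \qquad F := \mathrm{Id} - \mathring{R}_\ell/\rho,
\]
and apply the product rule combined with Faà di Bruno's formula to both $\rho^{1/2}$ and $\gamma_\xi \circ F$. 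Each derivative falling on $\rho^{1/2}$ or on the ratio $\mathring{R}_\ell/\rho$ produces negative powers of $\rho$, bounded by powers of $\ell^{-1}$ through $\rho \geq 2\ell$; each space-time derivative of $\mathring{R}_\ell$ of total order $k$ contributes $\ell^{-d-k}\|\mathring{R}_q\|_{C_{I_\ell}L^1}$ via mollification, while each time derivative of $\eta_\ell$ of order $k$ contributes $\ell^{-k}\,\bar{e}\,\delta_{q+1}$. Collecting the worst-case contribution $\rho^{-N-1/2}$ together with $\ell^{-d-N}$ from derivatives of $\mathring{R}_\ell$ and estimating lower powers of $\|\mathring{R}_q\|_{C_{I_\ell}L^1}$ by the higher power $N+3/2$ (at the cost of harmless constants, since this quantity may exceed $1$) produces the advertised exponent $-(d+3)N-(3d+5)/2$ and the prefactor $1+\|\mathring{R}_q\|_{C_{I_\ell}L^1}^{N+3/2}$.

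The main obstacle is the bookkeeping in the Faà di Bruno expansion of $\gamma_\xi \circ F$, which produces a sum indexed by partitions of $N$, each summand a product of derivatives of $F$ whose orders sum to $N$. One must verify that \emph{every} such summand respects the claimed exponent of $\ell^{-1}$; this is patient but routine counting, aided by the fact that $F$ is uniformly bounded and its $k$-th derivatives inherit the $\ell^{-d-k}$ behaviour of $\mathring{R}_\ell$ plus $\rho^{-1} \lesssim \ell^{-1}$ from the quotient. All factors of the form $\|\gamma_\xi\|_{C^k}$ for $k \leq N \leq 2N^\ast$ are absorbed into $M/|\Lambda|$ via \eqref{M}, which is precisely why $M$ was defined to involve the $C^{N}$-norms of $\gamma_\xi$ for $N \geq 2N^\ast$.
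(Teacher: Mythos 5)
Your proposal is correct and follows essentially the same route as the paper: the pointwise bound $|a_{(\xi)}|\lesssim \rho^{1/2}$ combined with $2\ell\leq\rho\lesssim\ell+\|\mathring{R}_\ell\|_F+\bar{e}\,\delta_{q+1}$, mollification estimates for derivatives of $\mathring{R}_\ell$ and $\eta_\ell$, the lower bound $\rho\geq 2\ell$ to control inverse powers of $\rho$, and absorption of $\|\gamma_\xi\|_{C^k}$ into $M$ via \eqref{M}. The only difference is organizational: you run one Faà di Bruno/Leibniz expansion of $\rho^{1/2}\,\gamma_\xi(\Id-\mathring{R}_\ell/\rho)$, whereas the paper splits the same bookkeeping into a ladder of intermediate claims ($\mathring{R}_\ell$, $\rho$, $\rho^{1/2}$, $1/\rho$, $\mathring{R}_\ell/\rho$, $\gamma_\xi(\cdot)$) using the chain-rule estimate of \thref{Proposition Chain Rule}; the deferred ``routine counting'' indeed closes with room to spare, as in the paper.
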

\noindent 
We refrain from providing a proof of this lemma as it closely follows the arguments in \cite{HZZ25}. Instead, we prefer to offer a brief summary of the construction of intermittent jets now, which, as previously mentioned, were originally introduced in \cite{BCV21} and later generalized in \cite{LZ23}.

 \subsubsection*{Intermittent Jets}
Let us start with three smooth functions
\begin{align*}
\Phi \colon \mathbb{R}^{d-1} \to \mathbb{R},\quad  \quad \phi \colon \mathbb{R}^{d-1} \to \mathbb{R}, \quad  \quad \psi \colon \mathbb{R} \to \mathbb{R}
\end{align*}
which satisfy the following properties\footnote{$B^{\mathbb{R}^{d-1}}_1(0)$ and $B^{\mathbb{R}}_1(0)$ denote the open balls of radius $1$, centered at the origin in $\mathbb{R}^{d-1}$ and $\mathbb{R}$, respectively.} 
  
\begin{subequations}
 \begin{flalign}
 & \bullet\, \Delta \Phi=-\phi, \label{building block a}\\[5pt]
&\bullet \, \supp \Phi,\, \supp \phi \subset B^{\mathbb{R}^{d-1}}_1(0), \quad \supp \psi \subset B^{\mathbb{R}}_1(0), \label{building block b}\\[5pt]
&\bullet\, \int_{B_1^{\mathbb{R}^{d-1}}(0)} \phi^2(x_1,\ldots,x_{d-1}) \,d(x_1,\ldots,x_{d-1})=(2 \pi)^{d-1}, \,  \int_{B_1^{\mathbb{R}}(0)} \psi^2(x_d) \,dx_d=2\pi,  \label{building block c}\\[5pt]
&\bullet \, \int_{\mathbb{T}^{d-1}} \phi (x_1,\ldots, x_{d-1})\, d(x_1,\ldots,x_{d-1})=0,\quad  \int_{\mathbb{T}} \psi (x_d)\, dx_d=0. \label{building block d}
\end{flalign}
\end{subequations}

\noindent 
These functions form the basic structure of the intermittent jets. We want the intermittent jets themselves to be supported in tubes of short length and thin radius. Moreover, they should oscillate with high frequency, depending on the parameters specified in Section \ref{Choice of Parameters}, which satisfy
\begin{align*}
0<\mu^{-1}\ll  \lambda^{-1} \ll r_\perp \ll r_\parallel \ll 1,\quad \quad \quad \lambda r_\perp \in \mathbb{N}. 
\end{align*}

\noindent To obtain the aforementioned intermittency we first rescale these functions as  
\begin{align*}
\phi_{r_\perp}(x_1,\ldots, x_{d-1})&:=\frac{1}{r_\perp^{(d-1)/2}}\phi\left(\frac{x_1}{r_\perp},\ldots, \frac{x_{d-1}}{r_\perp} \right),\\
\Phi_{r_\perp}(x_1,\ldots, x_{d-1})&:=\frac{1}{r_\perp^{(d-1)/2}}\Phi\left(\frac{x_1}{r_\perp},\ldots, \frac{x_{d-1}}{r_\perp} \right),\\
 \psi_{r_\parallel}(x_d)&:= \frac{1}{r_\parallel^{1/2}}\psi\left(\frac{x_d}{r_\parallel}\right)
\end{align*}
and periodize them on $\mathbb{T}^{d-1}, \mathbb{T}^{d-1}$ and $\mathbb{T}$, respectively. \\
In the light of \thref{Lemma A.1} we finally shift the functions by $\left(\alpha_\xi\right)_{\xi \in \Lambda} \subseteq \mathbb{R}^{d-1}$ in such a way that the families $\left(\phixi\right)_{\xi \in \Lambda}$ and $\left(\Phixi\right)_{\xi \in \Lambda}$ given by 
\begin{subequations}\label{rescaled building blocks}
\begin{align} 
\phixi(x)&:=\phi_{\xi,r_\perp,\lambda}(x):= \phi_{r_\perp}(n_\ast r_\perp \lambda(x-\alpha_\xi)\cdot A^1_\xi,\ldots,n_\ast r_\perp\lambda(x-\alpha_\xi)\cdot A_\xi^{d-1}),\label{rescaled building blocks a}\\
\Phixi(x)&:=\Phi_{\xi,r_\perp,\lambda}(x):= \Phi_{r_\perp}(n_\ast r_\perp \lambda(x-\alpha_\xi)\cdot A^1_\xi,\ldots,n_\ast r_\perp\lambda(x-\alpha_\xi)\cdot  A^{d-1}_\xi),\label{rescaled building blocks b}\\
\intertext{have mutually disjoint support, whereas $\psixi$ should additionally depend on time. For this reason we set}
\psixi(t,x)&:=\psi_{\xi,r_\perp,r_\parallel,\lambda,\mu}(t,x):=\psi_{r_\parallel}(n_\ast r_\perp \lambda (x\cdot \xi+\mu t)) \label{rescaled building blocks c}
\end{align}
\end{subequations}
for any $t\in \mathbb{R}$.\\
That is all we need to define an intermittent jet as
\begin{align*}
W_{(\xi)}(t,x):=W_{\xi,r_\perp,r_\parallel,\lambda, \mu}(t,x):=\xi \psixi(t,x)\phixi(x)
\end{align*}
and its incompressibility corrector as\footnote{To obtain the expression in \eqref{incompressibility corrector} we use \eqref{building block a} and the fact that $\xi, A_\xi^1,\ldots, A_\xi^{d-1}$ are orthogonal.}
\begin{align} \label{incompressibility corrector}
W_{(\xi)}^{(c)}(t,x)&:=\frac{1}{(n_\ast \lambda)^2} \nabla \Phixi (\xi \cdot \nabla) \psixi= \divs{V_{(\xi)}(t,x)}-W_{(\xi)}(t,x),
\end{align}
with $V_{(\xi)}(t,x):=\frac{1}{(n_\ast\lambda)^2}\Big(\nabla \Phixi (x) \otimes \xi -\xi \otimes \nabla \Phixi(x)\Big)\psixi(t,x)$ skew-symmetric, so that $W_{(\xi)}+W_{(\xi)}^{(c)}$ becomes divergence free.\\
\thref{Lemma A.1} then guarantees indeed that their supports remain mutually disjoint; we might just need to shift $\alpha_{\xi}$ within a hyperplane, spanned by $A^1_\xi, \ldots, A^{d-1}_\xi$. The orthogonal directions of oscillations entail the following important equality from \cite{Be23}, p. 15.
\begin{lemma}  \thlabel{Lemma Building Blocks}
The building blocks $\psixi$ and $\phixi$ obey
\begin{align*}
\|D^\alpha \partial_t^N \psi_{(\xi)}^n D^\beta  \phi_{(\xi)}^m\|_{L_x^p}= (2\pi)^{-d/p}\|D^\alpha \partial_t^N \psi_{(\xi)}^n\|_{L_x^p} \|D^\beta \phi_{(\xi)}^m\|_{L_x^p} 
\end{align*}
for each $n,m \in \mathbb{N}$, $p \in [1,\infty)$, $N \in \mathbb{N}_0$ and all multi-indices $\alpha,\beta \in \mathbb{N}_0^d$.
\end{lemma}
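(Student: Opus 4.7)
The plan is to exploit the product structure of the integrand in the orthogonal basis $\{\xi, A^1_\xi, \ldots, A^{d-1}_\xi\}$. By construction $\psi^n_{(\xi)}(t,x)$ depends on $x$ only through the coordinate $y_1 := \xi\cdot x$, while $\phi^m_{(\xi)}(x)$ depends on $x$ only through $(y_2,\ldots,y_d) := (A^1_\xi\cdot x,\ldots, A^{d-1}_\xi\cdot x)$; the chain rule then ensures that this separation is preserved under arbitrary spatial and temporal derivatives, so $D^\alpha\partial_t^N\psi^n_{(\xi)}$ remains a function of $(t,y_1)$ alone and $D^\beta\phi^m_{(\xi)}$ a function of $(y_2,\ldots,y_d)$ alone. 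Denoting these factors by $\widetilde\psi(t,y_1)$ and $\widetilde\phi(y_2,\ldots,y_d)$, it suffices to prove the product identity
\[
\|\widetilde\psi\widetilde\phi\|_{L^p(\mathbb{T}^d)} = (2\pi)^{-d/p}\,\|\widetilde\psi\|_{L^p(\mathbb{T}^d)}\,\|\widetilde\phi\|_{L^p(\mathbb{T}^d)}.
\]

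To reach this product structure I will substitute $y = Yx$, where $Y$ is the orthogonal matrix with rows $\xi^\top, (A^1_\xi)^\top, \ldots, (A^{d-1}_\xi)^\top$. Each integral on $\mathbb{T}^d$ becomes an integral on the rotated cube $Y(\mathbb{T}^d)$, and the heart of the argument is to show that this rotated cube can be replaced by $[-\pi,\pi]^d$ without changing the value. Viewed on $\mathbb{R}^d$, the integrand is periodic on the fine lattice $\Gamma_f := (\tfrac{2\pi}{L}\mathbb{Z})^d$ with $L := n_\ast r_\perp\lambda \in \mathbb{N}$, since $\psi_{r_\parallel}$ and $\phi_{r_\perp}$ are $2\pi$-periodic after the explicit rescalings. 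For each Fourier mode $k\in L\mathbb{Z}^d$ one computes
\[
\int_{Y(\mathbb{T}^d)} e^{ik\cdot y}\,dy = \int_{[-\pi,\pi]^d} e^{i(Y^\top k)\cdot x}\,dx = \prod_{j=1}^d \frac{2\sin\bigl(\pi (Y^\top k)_j\bigr)}{(Y^\top k)_j};
\]
the integrality conditions $n_\ast\xi, n_\ast A^i_\xi \in \mathbb{Z}^d$ together with $\lambda r_\perp\in\mathbb{N}$ force $(Y^\top k)_j \in \mathbb{Z}$ for every $j$, so each factor vanishes unless $(Y^\top k)_j=0$, and invertibility of $Y$ makes this occur for all $j$ only at $k=0$. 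Consequently $\int_{Y(\mathbb{T}^d)}e^{ik\cdot y}dy = (2\pi)^d\delta_{k,0} = \int_{[-\pi,\pi]^d}e^{ik\cdot y}dy$, and term-wise summation of the Fourier series transfers this equality to every $\Gamma_f$-periodic integrand.

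Once both integrals may be taken over $[-\pi,\pi]^d$, Fubini separates them into a product of a one-dimensional and a $(d-1)$-dimensional integral in $y$. Performing the identical change of variables on the single-factor norms, and noting that those integrands are constant in the missing directions, contributes an extra factor $(2\pi)^{d-1}$ to $\|\widetilde\psi\|^p_{L^p(\mathbb{T}^d)}$ and $2\pi$ to $\|\widetilde\phi\|^p_{L^p(\mathbb{T}^d)}$. Multiplying and comparing with the Fubini-split expression yields $\|\widetilde\psi\|_{L^p}^p\|\widetilde\phi\|_{L^p}^p = (2\pi)^d\|\widetilde\psi\widetilde\phi\|_{L^p}^p$, and a $p$-th root completes the proof.

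The main obstacle is the domain exchange $Y(\mathbb{T}^d) \leftrightarrow [-\pi,\pi]^d$, which would be wrong for a generic periodic integrand. What rescues it here is exactly the number-theoretic structure built into the construction of the intermittent jets: although the rotation $Y$ does not stabilize the coarse lattice $(2\pi\mathbb{Z})^d$, the conditions $n_\ast\xi, n_\ast A^i_\xi\in\mathbb{Z}^d$ and $\lambda r_\perp\in\mathbb{N}$ make $LY$ an integer matrix, which causes the boundary contributions at every non-trivial Fourier mode to vanish identically.
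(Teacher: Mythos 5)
Your proof is correct, and although it opens the same way as the paper's argument --- separating the two factors in the orthonormal frame $\{\xi, A^1_\xi,\dots,A^{d-1}_\xi\}$ and changing variables by the orthogonal matrix $Y$ built from these vectors --- the decisive step is handled by a genuinely different mechanism. The paper justifies trading the rotated torus for a standard fundamental cell by using the smallness of the supports of the periodized $\psi_{r_\parallel}$ and $\phi_{r_\perp}$: after an extra dilation by $n_\ast$ it counts how many copies of the tiny periodized support the transformed torus contains, so the argument leans on the parameter choices that make $r_\parallel, r_\perp$ small. You instead observe that the rotated integrand is periodic with respect to the fine lattice $\bigl(\tfrac{2\pi}{L}\mathbb{Z}\bigr)^d$ with $L=n_\ast r_\perp\lambda\in\mathbb{N}$, and that $LY$ is an integer matrix, so that $\int_{Y(\mathbb{T}^d)}e^{ik\cdot y}\,dy=\int_{[-\pi,\pi]^d}e^{ik\cdot y}\,dy$ for every admissible mode $k\in L\mathbb{Z}^d$; this exchange of domains is valid for \emph{every} fine-lattice-periodic integrand and requires no support information at all, which makes your route more robust and arguably cleaner, while both proofs ultimately rest on the same arithmetic input ($n_\ast\xi,\,n_\ast A^i_\xi\in\mathbb{Z}^d$ and $\lambda r_\perp\in\mathbb{N}$).

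The one loose point is the phrase ``term-wise summation of the Fourier series'': the integrand $|\cdot|^p$ of a smooth function is only Lipschitz, and pointwise convergence of a multi-dimensional Fourier series of a merely continuous function is not guaranteed, so the mode-by-mode identity does not literally sum up by itself. This is easily repaired in either of two standard ways: approximate the integrand uniformly by Fej\'er means (trigonometric polynomials with frequencies in $L\mathbb{Z}^d$, to which your computation applies verbatim), or argue directly with lattices, as your closing remark already suggests --- since $LY\in\mathbb{Z}^{d\times d}$, the lattice $2\pi Y\mathbb{Z}^d$ is a sublattice of $\bigl(\tfrac{2\pi}{L}\mathbb{Z}\bigr)^d$ of index $L^d$, $Y(\mathbb{T}^d)$ is a fundamental domain of $2\pi Y\mathbb{Z}^d$, and integrating a fine-lattice-periodic function over any fundamental domain of this sublattice gives $L^d$ times the integral over one fine cell, which is exactly the value obtained from $[-\pi,\pi]^d$. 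With that patch, the remainder of your argument (Fubini on the cube, and the bookkeeping of the factors $(2\pi)^{d-1}$ and $2\pi$ for the single-factor norms before taking $p$-th roots) is exactly right and reproduces the stated constant $(2\pi)^{-d/p}$.
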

\noindent 
This leads in turn to the fundamental bounds (cf. \cite{LZ23}, p. 7).

\begin{lemma} \thlabel{Lemma Building Blocks Bounds}
For any $N,M \in \mathbb{N}_0 $ and $p \in [1,\infty]$ it holds

\begin{subequations}\label{fundamental bounds}
\begin{align}
&\sum_{|\alpha|\leq N}\|D^\alpha\partial_t^M\psixi\|_{CL_x^p} \lesssim r_{\parallel}^{\frac{1}{p}-\frac{1}{2}}\left(\frac{r_\perp \lambda}{r_\parallel}\right)^N \left(\frac{r_\perp \lambda \mu}{r_\parallel }\right)^M,\label{fundamental bounds a}\\
&\sum_{|\alpha|\leq N}\|D^\alpha\phixi\|_{L_x^p}+\sum_{|\beta|\leq N}\|D^\beta\Phixi\|_{L_x^p}\lesssim r_\perp^{\frac{d-1}{p}-\frac{d-1}{2}}\lambda^N, \label{fundamental bounds b}\\
&\sum_{|\alpha|\leq N} \|D^\alpha\partial_t^M W_{(\xi)}\|_{CL_x^p}+\frac{r_\parallel}{r_\perp}\sum_{|\beta|\leq N} \|D^\beta\partial_t^M W_{(\xi)}^{(c)}\|_{CL_x^p}+\lambda \sum_{|\gamma|\leq N} \|D^\gamma \partial_t^M V_{(\xi)}\|_{CL_x^p} \label{fundamental bounds c}\\  &\hspace{8.8cm}\lesssim r_\perp^{\frac{d-1}{p}-\frac{d-1}{2}}r_\parallel^{\frac{1}{p}-\frac{1}{2}}\lambda^N \left(\frac{r_\perp \lambda \mu}{r_\parallel}\right)^M\notag.
\end{align}
\end{subequations}
In the specific case $N=M=0$ and $p=2$, an intermittent jet even admits $\|\Wxi\|_{L_x^2}= 1$.
\end{lemma}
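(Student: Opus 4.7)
My plan rests on careful bookkeeping of how each rescaling parameter enters under differentiation and $L^p$-normalization, combined with the separation-of-variables identity of \thref{Lemma Building Blocks}. First I would establish the baseline scalings
\begin{align*}
\|D^k\phi_{r_\perp}\|_{L^p(\mathbb{R}^{d-1})}\lesssim r_\perp^{\frac{d-1}{p}-\frac{d-1}{2}-k},\qquad \|D^k\psi_{r_\parallel}\|_{L^p(\mathbb{R})}\lesssim r_\parallel^{\frac{1}{p}-\frac{1}{2}-k}
\end{align*}
by a straightforward change of variables, using that $\phi,\Phi,\psi$ are supported in the unit ball. Because $r_\perp,r_\parallel<1$, the periodizations on $\mathbb{T}^{d-1}$ and $\mathbb{T}$ inherit the same $L^p$-norms up to universal constants.

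The second step is to transfer these bounds to $\phixi,\Phixi,\psixi$ through the definitions in \eqref{rescaled building blocks}. The key observation is that $\{n_\ast \xi,n_\ast A^1_\xi,\ldots,n_\ast A^{d-1}_\xi\}$ is an orthogonal basis of $\mathbb{Z}^d$ and $\lambda r_\perp\in\mathbb{N}$, so the linear map $x\mapsto n_\ast r_\perp\lambda(x-\alpha_\xi)\cdot A^i_\xi$ preserves $\mathbb{T}^d$-periodicity. An orthogonal change of variables (with the $\xi$-direction integrating to a harmless factor $2\pi$) then allows me to lift the scaling directly to the torus. For $\phixi$ and $\Phixi$, the chain rule turns each spatial derivative into the product of a factor $n_\ast r_\perp\lambda$ from the argument and a factor $r_\perp^{-1}$ from the intrinsic rescaling, netting a clean $\lambda$ per derivative and yielding \eqref{fundamental bounds b}. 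For $\psixi$, spatial derivatives produce $(n_\ast r_\perp\lambda)\cdot r_\parallel^{-1}=r_\perp\lambda/r_\parallel$ per application while time derivatives produce $(n_\ast r_\perp\lambda\mu)\cdot r_\parallel^{-1}=r_\perp\lambda\mu/r_\parallel$, giving \eqref{fundamental bounds a}.

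For \eqref{fundamental bounds c} I would first invoke \thref{Lemma Building Blocks} to factor the $L^p$-norms: since $\phixi$ depends only on the directions $A^i_\xi$ perpendicular to $\xi$ while $\psixi$ depends only on $\xi$ and $t$, one obtains $\|D^\alpha\partial_t^M(\psixi\phixi)\|_{L^p}$ as a finite linear combination of products of the norms bounded in steps (a) and (b). Multiplying these estimates produces the bound on $W_{(\xi)}=\xi\psixi\phixi$. For the corrector $W^{(c)}_{(\xi)}=(n_\ast\lambda)^{-2}\nabla\Phixi(\xi\cdot\nabla)\psixi$, one spatial derivative hits $\Phixi$ (contributing $\lambda$) and one hits $\psixi$ (contributing $r_\perp\lambda/r_\parallel$); dividing by $\lambda^2$ yields an extra factor $r_\perp/r_\parallel$ compared to $W_{(\xi)}$, which is precisely the weight $r_\parallel/r_\perp$ that appears in front of $W^{(c)}_{(\xi)}$ in the statement. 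Similarly, $V_{(\xi)}$ carries only one spatial derivative of $\Phixi$ divided by $\lambda^2$, giving an extra $\lambda^{-1}$ relative to $W_{(\xi)}$ and hence the $\lambda$ prefactor. The final assertion $\|\Wxi\|_{L^2}=1$ follows by applying the separation of variables once more and inserting the unit-mass normalizations \eqref{building block c}, which are designed exactly so that the $2\pi$ factor from integrating over the $\xi$-direction cancels the constants collected from the two normalizations.

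The main technical obstacle is the routine but delicate verification that the shifts $\alpha_\xi$ and the factor $n_\ast$ do not disturb the $L^p$-identities; this is disposed of by the integrality of $n_\ast r_\perp\lambda$ and the translation invariance of torus integration, once the orthogonal change of variables has been carried out.
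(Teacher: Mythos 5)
Your proposal is correct and follows essentially the same route as the paper: baseline scaling estimates for $\phi_{r_\perp},\Phi_{r_\perp},\psi_{r_\parallel}$, transferred to the torus via the orthogonal change of variables and the periodicity guaranteed by $n_\ast$ and $\lambda r_\perp\in\mathbb{N}$, then Leibniz together with \thref{Lemma Building Blocks} for \eqref{fundamental bounds c} and the normalizations \eqref{building block c} for $\|\Wxi\|_{L^2}=1$. The derivative bookkeeping for $W_{(\xi)}$, $W^{(c)}_{(\xi)}$ and $V_{(\xi)}$ matches the paper's computation, so no gap remains.
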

~\medskip\\
Now we have everything in place to give a precise definition of the perturbation, which consists of the principal part
\begin{align*}
\wpr:= \sum_{\xi \in \Lambda} \axi\Wxi ,
\end{align*}
the corresponding incompressibility corrector
\begin{align*}
\wc:&= \sum_{\xi \in \Lambda} \Big( \Vxi \nabla \axi+\axi\Wcxi\Big)
\end{align*}
and the temporal corrector
\begin{align*}
\wt:&= - \mu^{-1} \sum_{\xi \in \Lambda} \mathbb{P}\mathbb{P}_{\neq 0} \left(a^2_{(\xi)}  \psi^2_{(\xi)} \phi^2_{(\xi)} \xi \right).
\end{align*}
As we will see in Section \ref{Decomposition of the Reynolds Stress} the temporal corrector behaves well under time derivatives. The reason for introducing this corrector is, in fact, to control the highly oscillatory part of the Reynolds stress cf. \eqref{divRosc}. The purpose of the incompressibility corrector is already apparent. Since $V_{(\xi)}$ is by construction skew-symmetric, it is easy to see that
\begin{align} 
\wpr+\wc=  \sum_{\xi \in \Lambda} \divs{ \left(\axi \Vxi \right)}  \label{wp+wc}
\end{align}
becomes solenoidal for big $q \in \mathbb{N}_0$ and has zero mean. Since also the temporal corrector shares these characteristics, they carry over to the total perturbation
\begin{align*}
w_{q+1}:= \wpr+\wc+\wt.
\end{align*} 
Furthermore, we state the following lemma.

\begin{lemma}\thlabel{Lemma Perturbation}
For an arbitrary interval $I\subseteq [o_{q+1},\infty)$ of length $1$ each component of the perturbation $w_{q+1}$ is bounded
\begin{itemize}
\item[a)] in $C_{I} L^p$ for any $p \in (1,\infty)$ as
\begin{subequations}
\begin{align}
&\|\wpr\|_{C_{I} L_x^p }\lesssim M \ell^{-\frac{d+1}{2}} \left(1+ \|\oRq\|^{\frac{1}{2}}_{C_\Il L_x^1}\right)\, r_\perp^{\frac{d-1}{p}-\frac{d-1}{2}} r_\parallel^{\frac{1}{p}-\frac{1}{2}}, \label{wpL} \\
&\|\wc\|_{C_{I} L_x^p }\lesssim M \ell^{-\frac{5d+11}{2}}  \left(1+ \|\oRq\|^{\frac{5}{2}}_{C_\Il L_x^1}\right)\, r_\perp^{\frac{d-1}{p}-\frac{d-3}{2}} r_\parallel^{\frac{1}{p}-\frac{3}{2}}  , \label{wcL}\\
&\|\wt\|_{C_{I} L_x^p }\lesssim  \frac{M^2}{|\Lambda|} \ell^{-(d+1)} \left(1+\|\oRq\|_{C_\Il L_x^1}\right)\, r_\perp^{\frac{d-1}{p}-(d-1)} r_\parallel^{\frac{1}{p}-1} \mu^{-1}. \label{wtL} 
\end{align}
In the specific case $p=2$, the principal part admits the stronger bound
\begin{align}
\begin{split}
\|\wpr\|_{C_{I}L_x^2 }&\lesssim M\left( \|\oRq\|_{C_\Il L_x^1}^{\frac{1}{2}}+\bar{e}^{1/2}\delta_{q+1}^{1/2}\right)+ M \ell^{-\frac{5d+11}{2}} \left(1+ \|\oRq\|^{\frac{5}{2}}_{C_\Il L_x^1}\right)\, r_\perp^{-\frac{1}{2}}\lambda^{-\frac{1}{2}} . \label{wpL2}
\end{split}
\end{align}

\end{subequations}

\item[b)] in $C^N_{I,x}$ for $N\geq 1 $ as
\begin{subequations} 
\begin{align}
&\|\wpr\|_{C_{I,x}^N}\lesssim M \ell^{-(d+3)N-\frac{3d+5}{2}}  \left(1+ \|\oRq\|^{N+\frac{3}{2}}_{C_\Il L_x^1}\right)\, r_\perp^{-\frac{d-(2N+1)}{2}}r_\parallel^{-\frac{2N+1}{2}}  \lambda^{N} \mu^N , \label{wpC} \\
&\|\wc\|_{C_{I,x}^N}\lesssim M \ell^{-(d+3)N-\frac{5d+11}{2}}  \left(1+ \|\oRq\|^{N+\frac{5}{2}}_{C_\Il L_x^1}\right)\, r_\perp^{-\frac{d -(2N+3)}{2}}r_\parallel^{-\frac{2N+3}{2}} \lambda^{N} \mu^N  , \label{wcC}\\
&\|\wt\|_{C_{I,x}^N}\lesssim  \frac{M^2}{|\Lambda|} \ell^{-(d+3)N-(4d+8)}  \left(1+\|\oRq\|^{N+4}_{C_\Il L_x^1}\right)\, r_\perp^{-d+N+1}r_\parallel^{-(N+1)}  \lambda^{N+1} \mu^{N-1}  \label{wtC} .
\end{align}
\end{subequations}

\item[c)] in $C_{I}W^{k,p}$ for any $p\in (1,\infty)$ and $k\geq 1$ as 
\begin{subequations} 
\begin{align}
&\|\wpr+\wc\|_{C_{I}W_x^{k,p}} \lesssim M \ell^{-(d+3)k-\frac{5d+11}{2}}  \left(1+\|\oRq\|_{C_\Il L_x^1}^{k+\frac{5}{2}}\right) r_\perp^{\frac{d-1}{p}-\frac{d-1}{2}}r_\parallel^{\frac{1}{p}-\frac{1}{2}}\lambda^k ,\label{wpwcW} \\
&\|\wt\|_{C_{I}W_x^{k,p}} \lesssim \left\{\begin{array}{ll} \frac{M^2}{|\Lambda|} \ell^{-(3d+6)}  \left(1+\|\oRq\|_{C_{I_\ell} L_x^1}^{3}\right)  r_\perp^{\frac{d-1}{p}-(d-1)}r_\parallel^{\frac{1}{p}-1}\lambda \mu^{-1}, & k=1, \\
        \frac{M^2}{|\Lambda|} \ell^{-(d+3)k-(3d+5)}  \left(1+\|\oRq\|_{C_{I_\ell} L_x^1}^{k+3}\right)  r_\perp^{\frac{d-1}{p}-(d-1)}r_\parallel^{\frac{1}{p}-1}\lambda^k \mu^{-1} \label{wtW}, & k>1.\end{array}\right. 
\end{align}
\end{subequations}
\end{itemize}
\end{lemma}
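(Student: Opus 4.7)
The overall plan is to reduce each bound to a combination of the amplitude estimates from \thref{Lemma Amplitude Functions} and the building-block estimates from \thref{Lemma Building Blocks Bounds}, using Hölder's inequality and the Leibniz rule as the only general tools. For part (a), I would substitute the definitions of $\wpr$, $\wc$, $\wt$, pull the $C_I L^p$-norm inside the finite sum over $\xi \in \Lambda$, and on each summand apply Hölder to separate amplitude and jet: the factor $\axi$ (and $\nabla\axi$, for $\wc$) is estimated in $C_{I,x}$ by \eqref{amplitude function b} at $N=0,1$, while the building-block factors $\Wxi, \Wcxi, \Vxi, \psixi^2\phixi^2$ are bounded in $CL^p$ by \eqref{fundamental bounds a}--\eqref{fundamental bounds c} at $N=M=0$. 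For $\wt$ the $L^p$-boundedness of $\mathbb{P}\Pm$ from \eqref{OPa} removes the projection, after which \thref{Lemma Building Blocks} splits $\|\psixi^2\phixi^2\|_{L^p} = (2\pi)^{-d/p}\|\psixi\|_{L^{2p}}^2\|\phixi\|_{L^{2p}}^2$, each factor then being handled by \eqref{fundamental bounds a}--\eqref{fundamental bounds b}.

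The refined $L^2$ bound \eqref{wpL2} is the one step that requires a genuine idea, because a naïve Hölder estimate at $p=2$ reproduces only the weaker \eqref{wpL}, carrying an extra $\ell^{-(d+1)/2}$ that would destroy the iterative constraint \eqref{key bound a} at level $q+1$. To avoid this loss I would exploit the frequency separation $\ell^{-1} \ll \lambda r_\perp$: using the mutually disjoint supports of $\{\Wxi\}_{\xi\in\Lambda}$ (inherited from those of the $\phixi$), together with the identity $\fint_{\mathbb{T}^d}\psixi^2\phixi^2 = (2\pi)^{-d}$ — a consequence of $\|\Wxi\|_{L^2}=1$ in \thref{Lemma Building Blocks Bounds} — the square of the norm decomposes as
\begin{align*}
\|\wpr\|_{L^2}^2 = (2\pi)^{-d}\sum_{\xi\in\Lambda}\|\axi\|_{L^2}^2 + \sum_{\xi\in\Lambda}\int_{\mathbb{T}^d}\axi^2\,\Pm\bigl(\psixi^2\phixi^2\bigr)\,dx.
\end{align*}
The first sum, controlled by \eqref{amplitude function a}, yields the leading contribution $\lesssim M^2\bigl(\|\oRq\|_{C_\Il L^1}+\bar{e}\delta_{q+1}\bigr)$. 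For the second, $\Pm(\psixi^2\phixi^2)$ has Fourier support at frequencies $\gtrsim \lambda r_\perp$, so integration by parts against $\axi^2$ (equivalently, one application of the antidivergence from \thref{Lemma 2.3}) produces a gain of $(\lambda r_\perp)^{-1}$; combined with \eqref{amplitude function b} at $N=0,1$ and the $L^2$ estimates \eqref{fundamental bounds a}--\eqref{fundamental bounds b}, this reproduces the stated error after taking the square root.

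Parts (b) and (c) proceed by Leibniz expansion. Writing $D^\alpha(\axi\Wxi) = \sum_\beta \binom{\alpha}{\beta}D^\beta\axi\,D^{\alpha-\beta}\Wxi$ and analogously for the other products, I would observe that each spatial derivative costs $\lambda\mu$ on the jet but only $\ell^{-(d+3)}$ on the amplitude, so the dominant summand in \eqref{wpC}--\eqref{wtC} places all $N$ derivatives on the high-frequency factor. For \eqref{wpwcW} I would invoke identity \eqref{wp+wc} to rewrite $\wpr+\wc = \sum_\xi \divs(\axi\Vxi)$; since $\Vxi$ carries the prefactor $(n_\ast\lambda)^{-2}$ from its definition, applying Leibniz and \eqref{fundamental bounds c} produces the scaling $\lambda^k$ rather than $\lambda^{k+1}$. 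For \eqref{wtW} the projection $\mathbb{P}\Pm$ is again absorbed via \eqref{OPa}; the case split in $k$ arises because, at $k=1$, the single derivative may fall on $\axi^2$ (saving a factor of $\lambda$ at the cost of one $\ell^{-(d+3)}$), contracting the overall $\ell$-exponent from $-(4d+8)$ to $-(3d+6)$, whereas for $k>1$ the dominant contribution places all derivatives on the building block and produces the $\lambda^k$ bound. The only delicate bookkeeping throughout is to track carefully the factors $M/|\Lambda|$ and $M^2/|\Lambda|$ so that they can later be absorbed when verifying \eqref{key bound a}--\eqref{key bound d} at the next iteration level.
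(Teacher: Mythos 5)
Your plan for parts (a) and (c) and for \eqref{wpL}--\eqref{wtL}, \eqref{wpC}--\eqref{wcC}, \eqref{wpwcW}, \eqref{wtW} is essentially the paper's proof: Leibniz plus \thref{Lemma Amplitude Functions}, \thref{Lemma Building Blocks}, \thref{Lemma Building Blocks Bounds}, and the identity \eqref{wp+wc} for $\wpr+\wc$. The one place where you genuinely diverge is \eqref{wpL2}: the paper simply invokes the improved H\"older inequality (\thref{Lemma A.2}, from [MS18]) applied to $\axi$ and $\Wxi(\cdot/(r_\perp\lambda))$, together with $\|\Wxi\|_{L^2}=1$, whereas you unpack the same mechanism by hand — disjointness of the supports of the $\phixi$ to kill cross terms, the normalization $\fint\psixi^2\phixi^2=(2\pi)^{-d}$ to isolate $(2\pi)^{-d}\sum_\xi\|\axi\|_{L^2}^2$, and one integration by parts against the mean-free remainder, whose Fourier support at frequencies $\gtrsim r_\perp\lambda$ (via \eqref{OPd} or \thref{Lemma 2.3}) yields the $(r_\perp\lambda)^{-1}$ gain. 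This is precisely the proof of the improved H\"older inequality specialized to the present setting; it is self-contained, and your bookkeeping ($\|\axi\|_{C^0}\|\axi\|_{C^1}(r_\perp\lambda)^{-1}$ before taking the square root) in fact gives a slightly better $\ell$-power than the stated $\ell^{-\frac{5d+11}{2}}$, so it certainly implies \eqref{wpL2}.

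Two technical points need repair before the argument closes. First, in part (b) for \eqref{wtC} you cannot ``absorb $\mathbb{P}\Pm$ via \eqref{OPa}'': the $C^N_{I,x}$ norm is an $L^\infty$-in-space norm, \eqref{OPa} covers $\mathbb{P}_{\neq 0}$, $\mathbb{P}_{\leq\kappa}$, $\mathbb{P}_{\geq\kappa}$ but not the Leray projection $\mathbb{P}$, which is not bounded on $L^\infty$. The paper's fix is the Sobolev embedding $W^{1,p}(\mathbb{T}^d)\subset L^\infty(\mathbb{T}^d)$ for $p>d$, estimating $\mathbb{P}\Pm$ on $L^p$ after paying one extra derivative — this is exactly where the additional power $\lambda^{N+1}\mu^{N-1}$ (rather than $\lambda^N\mu^{N}\cdot\mu^{-1}$ with the $L^\infty$ exponents of \eqref{wtL}) in \eqref{wtC} comes from, together with the commutation of $\partial_t$ with $\Pm$. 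Second, a phrasing slip in part (b): in the $C^N_{I,x}$ norm the dominant per-derivative cost on the jet is $\frac{r_\perp\lambda\mu}{r_\parallel}$, arising from time derivatives of $\psixi$ via \eqref{fundamental bounds c}, not ``$\lambda\mu$ per spatial derivative''; this does not affect your conclusion that derivatives are cheapest on the amplitude, but it is the factor one must track to land exactly on $r_\perp^{-\frac{d-(2N+1)}{2}}r_\parallel^{-\frac{2N+1}{2}}\lambda^N\mu^N$ in \eqref{wpC}.
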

\noindent The proof of this lemma essentially adheres to the methodology employed in \cite{HZZ23c} and \cite{Be23}.
\begin{proof}
 \underline{\eqref{wpL}:} \eqref{wpL} is a direct consequence of \eqref{amplitude function b} and \eqref{fundamental bounds c}.\\
 \underline{\eqref{wcL}:}  \eqref{wcL} follows also from \eqref{amplitude function b} and \eqref{fundamental bounds c}
 \begin{align*}
 \|\wc\|_{C_I L_x^p} &\lesssim \sum_{\xi \in \Lambda
 } \left(\|\axi\|_{C_{I,x}^1}\|\Vxi\|_{C_I L_x^p}+ \|\axi\|_{C_{I,x}^0}\|\Wcxi\|_{C_I L_x^p} \right)\\
 &\lesssim M \ell^{-\frac{5d+11}{2}} \left(1+\|\oRq\|^\frac{5}{2}_{C_\Il L_x^1}\right) r_\perp^{\frac{d-1}{p}-\frac{d-3}{2}}r_\parallel^{\frac{1}{p}-\frac{3}{2}}.
 \end{align*}
 \underline{\eqref{wtL}:} Keeping in mind that the Leray projection $\mathbb{P}$ and the projection onto mean free functions $\mathbb{P}_{\neq 0}$ are bounded on $L^p$, \thref{Lemma Building Blocks}, \eqref{amplitude function b}, \eqref{fundamental bounds a}, \eqref{fundamental bounds b} permit to deduce
 \begin{align*}
 \|\wt\|_{C_I L_x^p} &\lesssim \mu^{-1} \sum_{\xi \in \Lambda} \|\axi\|_{C_{I,x}^0}^2 \|\psixi\|^2_{C_I L_x^{2p}} \|\phixi\|_{ L_x^{2p}}^2\\
 &\lesssim  \frac{M^2}{|\Lambda|} \ell^{-(d+1)}\left(1+\|\oRq\|_{C_\Il L_x^1} \right)r_\perp^{\frac{d-1}{p}-(d-1)}r_\parallel^{\frac{1}{p}-1}\mu^{-1}.
 \end{align*}
 \underline{\eqref{wpL2}:} If $p=2$ we make use of the improved version of Hölder's inequality \thref{Lemma A.2} to obtain
 \begin{align*}
 \|\wpr\|_{C_I L_x^2}& \lesssim \sum_{\xi \in \Lambda} \left(\|\axi\|_{C_I L_x^2}+(r_\perp \lambda)^{-1/2}\|\axi\|_{C_{I,x}^1}\right)\bigg\|W_{(\xi)}\Big(\frac{\cdot}{ r_\perp \lambda}\Big)\bigg\|_{C_I L_x^2} \\
 & \lesssim M\left( \|\oRq\|_{C_\Il L_x^1}^{\frac{1}{2}}+\bar{e}^{1/2}\delta_{q+1}^{1/2}\right)+ M \ell^{-\frac{5d+11}{2}} \left(1+ \|\oRq\|^{\frac{5}{2}}_{C_\Il L_x^1}\right)\, r_\perp^{-\frac{1}{2}}\lambda^{-\frac{1}{2}},
 \end{align*}
 where the second step relies once again on \thref{Lemma Building Blocks Bounds} and \thref{Lemma Amplitude Functions}.\\
  \underline{\eqref{wpC}:} 
  We invoke Leibniz's formula, \eqref{amplitude function b} and \eqref{fundamental bounds c} to derive
  \begin{align*}
  \|\wpr\|_{C_{I,x}^N} &\lesssim \sum_{\xi \in \Lambda} \sum_{0 \leq  n\leq  N} \|\axi\|_{C_{I,x}^n} \|\Wxi\|_{C_{I,x}^{N-n}} \\
  & \lesssim M \ell^{-\frac{d+1}{2}} \left(1+\|\oRq\|_{C_\Il L_x^1}^{1/2}\right) r_\perp^{-\frac{d-1}{2}} r_\parallel^{-\frac{1}{2}} \left(\frac{r_\perp \lambda \mu }{r_\parallel} \right)^N\\
  &\hspace{.5cm}+ \sum_{0<n\leq N} M \ell^{-(d+3)n-\frac{3d+5}{2}}\left(1+\|\oRq\|_{C_\Il L_x^1}^{n+\frac{3}{2}}\right) r_\perp^{-\frac{d-1}{2}}r_\parallel^{-\frac{1}{2}} \left(\frac{r_\perp \lambda \mu}{r_\parallel} \right)^{N-n}\\
  &\lesssim M \ell^{-(d+3)N-\frac{3d+5}{2}}  \left(1+ \|\oRq\|^{N+\frac{3}{2}}_{C_\Il L_x^1}\right)\, r_\perp^{-\frac{d-(2N+1)}{2}}r_\parallel^{-\frac{2N+1}{2}}  \lambda^{N} \mu^N.
  \end{align*}
  \underline{\eqref{wcC}:} Owning to Leibniz's formula, \eqref{amplitude function b} and \eqref{fundamental bounds c} again we have
  \begin{align*}
  &\|\wc\|_{C_{I,x}^N}\\
   &\hspace{.5cm}\lesssim \sum_{\xi \in \Lambda}\sum_{0\leq n\leq N}\left( \|\axi\|_{C_{I,x}^{n+1}}\|\Vxi\|_{C_{I,x}^{N-n} }+  \|\axi\|_{C_{I,x}^n} \|\Wcxi\|_{C_{I,x}^{N-n}}\right)\\
 &\hspace{.5cm} \lesssim \sum_{0\leq n \leq N} M \ell^{-(d+3)n-\frac{5d+11}{2}} \left( 1+\|\oRq\|_{C_\Il L_x^1}^{n+\frac{5}{2}}\right) \left(\frac{r_\perp \lambda \mu}{r_\parallel} \right)^{N-n}  r_\perp^{-\frac{d-1}{2}}r_\parallel^{-\frac{1}{2}} \bigg(\lambda^{-1}+\frac{r_\perp}{r_\parallel}\bigg)\\
 &\hspace{.5cm}\lesssim M \ell^{-(d+3)N-\frac{5d+11}{2}}  \left(1+ \|\oRq\|^{N+\frac{5}{2}}_{C_\Il L_x^1}\right)\, r_\perp^{-\frac{d -(2N+3)}{2}}r_\parallel^{-\frac{2N+3}{2}} \lambda^{N} \mu^N.
  \end{align*}

\noindent  \underline{\eqref{wtC}:} In order to control $\wt$ in $C^N$ let us first mention that 
\begin{align*}
\partial_t \mathbb{P}_{\neq 0} \big(\axi^2 \psixi^2 \phixi^2\big)=\mathbb{P}_{\neq 0}\partial_t  \big(\axi^2 \psixi^2 \phixi^2\big)
\end{align*}
holds true since $ x\mapsto \big(\axi^2(x) \psixi^2(x) \phixi^2(x)\big)$ and $ x\mapsto \partial_t\big(\axi^2(x) \psixi^2(x) \phixi^2(x)\big)$ are as smooth functions on the torus bounded. Invoking additionally the Sobolev embedding $W^{1,p}(\mathbb{T}^d)\subset L^\infty(\mathbb{T}^d)$ for any $p\in (d, \infty) $, we find thanks to \eqref{OPa}, Leibniz's formula, \eqref{amplitude function b}, \eqref{fundamental bounds a} and \eqref{fundamental bounds b}
\begin{align*}
& \|\wt\|_{C_{I,x}^N}\\
  &\hspace{.5cm}\lesssim \mu^{-1} \sum_{0\leq |\alpha|\leq N} \sum_{\xi \in \Lambda} \left( \| \mathbb{P} \Pm  D_t^\alpha(\axi^2 \psixi^2 \phixi^2)\|_{C_I L_x^{p} }+\| \mathbb{P} \Pm \nabla D_t^\alpha(\axi^2 \psixi^2 \phixi^2)\|_{C_I L_x^{p} }\right)\\
  & \hspace{.5cm} \lesssim  \mu^{-1}\sum_{\substack{0\leq |\alpha|\leq N\\ \beta \leq \alpha \\ \gamma \leq \beta}} \sum_{\xi \in \Lambda}\| D_t^{\alpha-\beta}  \axi^2 D_t^{\beta-\gamma }\psixi^2 D_t^\gamma \phixi^2\|_{C_I W_x^{1,\infty }}\\
  &\hspace{.5cm}\lesssim  \frac{M^2}{|\Lambda|} \ell^{-(d+3)N-(3d+6)} \left(1+\|\oRq\|_{C_\Il L_x^1}^{N+3} \right)  \left(\frac{r_\perp \lambda \mu}{r_\parallel}\right)^N r_\perp^{-d+1}r_\parallel^{-1}\lambda \mu^{-1}\\
  & \hspace{1cm} +\frac{M^2}{|\Lambda|} \ell^{-(d+3)N-(4d+8)}\left(1+\|\oRq\|_{C_\Il L_x^1}^{N+4} \right) \left(\frac{r_\perp \lambda \mu}{r_\parallel}\right)^N r_\perp^{-d+1}r_\parallel^{-1}\lambda \mu^{-1}\\
  &\hspace{.5cm}\lesssim \frac{M^2}{|\Lambda|} \ell^{-(d+3)N-(4d+8)}\left(1+\|\oRq\|_{C_\Il L_x^1}^{N+4} \right)  r_\perp^{-d+N+1}r_\parallel^{-(N+1)}\lambda^{N+1} \mu^{N-1}.
 \end{align*}  
  \underline{\eqref{wpwcW}:} A direct computation gives
  \begin{align*}
 & \|\wpr+ \wc\|_{C_I W_x^{k,p}} \\
  &\hspace{.5cm} \lesssim \sum_{\xi \in \Lambda } \sum_{ 0\leq n\leq k+1} \|\axi\|_{C_{I,x}^{k+1-n}}\|\Vxi\|_{C_I W_x^{n,p}}\\
  &\hspace{.5cm}\lesssim M \ell^{-\frac{d+1}{2}} \left(1+\|\oRq\|_{C_\Il L_x^1}^{\frac{1}{2}} \right)r_\perp^{\frac{d-1}{p}-\frac{d-1}{2}}r_\parallel^{\frac{1}{p}-\frac{1}{2}} \lambda^k\\
  &\hspace{1cm}+ \sum_{ 0\leq n\leq k} M \ell^{-(d+3)(k+1-n)-\frac{3d+5}{2}} \left(1+\|\oRq\|_{C_\Il L_x^1}^{k+1-n+\frac{3}{2}} \right)r_\perp^{\frac{d-1}{p}-\frac{d-1}{2}}r_\parallel^{\frac{1}{p}-\frac{1}{2}} \lambda^{n-1}\\
  &\hspace{.5cm}\lesssim M \ell^{-(d+3)k-\frac{5d+11}{2}} \left(1+\|\oRq\|_{C_\Il L_x^1}^{k+\frac{5}{2}} \right)r_\perp^{\frac{d-1}{p}-\frac{d-1}{2}}r_\parallel^{\frac{1}{p}-\frac{1}{2}} \lambda^k,
  \end{align*}
  where we took \eqref{wp+wc}, \eqref{amplitude function b} and \eqref{fundamental bounds c} into account.\\
  \underline{\eqref{wtW}:} Proceeding in the same way but involving additionally \eqref{OPa} and \thref{Lemma Building Blocks} furnishes 
  \begin{align*}
  \|\wt\|_{C_I W_x^{1,p}} &\lesssim \mu^{-1} \sum_{\xi \in \Lambda}\|\axi\|_{C_{I,x}^0}^2 \|\psixi\|_{CL_x^{2p}}^2 \|\phixi\|_{L_x^{2p}}^2\\
  &\hspace{.5cm}+\mu^{-1} \sum_{\xi \in \Lambda}\|\axi\|_{C_{I,x}^1} \|\axi\|_{C_{I,x}^0} \|\psixi\|_{CL_x^{2p}}^2 \|\phixi\|_{L_x^{2p}}^2\\
  &\hspace{.5cm}+\mu^{-1} \sum_{\xi \in \Lambda}\|\axi\|_{C_{I,x}^0}^2 \|\psixi\|_{CW_x^{1,2p}}\|\psixi\|_{CL_x^{2p}} \|\phixi\|_{L_x^{2p}}^2\\
   &\hspace{.5cm}+\mu^{-1} \sum_{\xi \in \Lambda}\|\axi\|_{C_{I,x}^0}^2 \|\psixi\|_{CL_x^{2p}}^2 \|\phixi\|_{W_x^{1,2p}} \|\phixi\|_{L_x^{2p}} \\
&\lesssim   \frac{M^2}{|\Lambda|} \ell^{-(3d+6)}  \left(1+\|\oRq\|_{C_{I_\ell} L_x^1}^{3}\right)  r_\perp^{\frac{d-1}{p}-(d-1)}r_\parallel^{\frac{1}{p}-1}\lambda \mu^{-1} ,
  \end{align*}
  as well as \pagebreak
  \begin{align*}
   \|\wt\|_{C_I W_x^{k,p}}&\lesssim   \mu^{-1} \sum_{\substack{0\leq |\alpha|\leq k\\ \beta \leq \alpha \\ \gamma \leq \beta}} \sum_{\xi \in \Lambda}\| D^{\alpha-\beta}  \axi^2\|_{C_{I,x}^0} \|D^{\beta-\gamma }\psixi^2 \|_{CL_x^p} \|D^\gamma \phixi^2\|_{ L_x^p}\\ 
  &\lesssim  \frac{M^2}{|\Lambda|} \ell^{-(d+3)k-(3d+5)}  \left(1+\|\oRq\|_{C_{I_\ell} L_x^1}^{k+3}\right)  r_\perp^{\frac{d-1}{p}-(d-1)}r_\parallel^{\frac{1}{p}-1}\lambda^k \mu^{-1}
  \end{align*}
  for any $k>1$.
\end{proof}

\section{Inductive Estimates of the Velocity $v_{q+1}$}
\thref{Lemma Perturbation} provides the necessary bounds to verify the key inequalities \eqref{key bound a} through \eqref{key bound f} at the level $ q+1$, as detailed in Section \ref{Verifying the Key Bounds v} and Section \ref{Verifying the Key Bounds R}. Additionally, it is crucial for establishing the convergence of the sequences $(v_q)_{q \in \mathbb{N}_0}$ and $(\mathring{R}_q)_{q \in \mathbb{N}_0}$ to the limits $v$ and $0$, respectively, as discussed in Section \ref{Convergence of the Sequence v} and Section \ref{Convergence of the Sequence R}.

\subsection{Verifying the Key Bounds on the Level $q+1$} \label{Verifying the Key Bounds v}
Hereinafter, we are dedicated to verifying the key bounds \eqref{key bound a} -- \eqref{key bound d}, which pertain solely to the velocity. To this purpose we make use of the requirements
\begin{align*}
 \alpha b>2d+4N^\ast , \qquad  (\iota-1)\sigma \alpha >2\beta b^3 \qquad \text{and} \qquad a \geq \Big((8N^\ast+32) JLA\Big)^c, 
\end{align*} 
taken in Section \ref{Choice of Parameters}.

\subsubsection*{First Key Bound (\ref{key bound a})} 
First we appeal to \eqref{wpL2}, \eqref{wcL} and \eqref{wtL} to deduce 
\begin{align*}
\|w_{q+1}\|_{C_IL_x^2}&\lesssim M \left( \|\oRq\|_{C_\Il L_x^1}^{\frac{1}{2}}+ \bar{e}^{1/2} \delta_{q+1}^{1/2}\right)\\
&\hspace{.5cm}+ \left( M+\frac{M^2}{|\Lambda|}\right) \ell^{-\frac{5d+11}{2}} \left(1+\|\oRq\|_{C_\Il L_x^1}^{\frac{5}{2}}\right)\\
&\hspace{1cm}\cdot\left(r_\perp^{-1/2} \lambda^{-1/2}+r_\perp  r_\parallel^{-1} +r_\perp^{-\frac{d-1}{2}}r_\parallel^{-\frac{1}{2}}\mu^{-1}\right),
\end{align*}
which acquires thanks to \eqref{essential bound a}, \eqref{essential bound g}, \eqref{essential bound h}, \eqref{key bound e} and \eqref{key bound f} the form
\begin{align} \label{wL2a}
\|w_{q+1}\|_{L_\Omega^{2J} C_I L_x^2}&\lesssim  M \left( \|\oRq\|_{L_\Omega^J C_\Il L_x^1}^{\frac{1}{2}}+ \bar{e}^{1/2} \delta_{q+1}^{1/2}\right)\\
&\hspace{.5cm}+\left( M+\frac{M^2}{|\Lambda|}\right)  \left(1+\|\oRq\|_{L_\Omega^{5J}C_\Il L_x^1}^{\frac{5}{2}}\right) \lambda_{q+1}^{-2\alpha}\notag\\
&\lesssim M \bar{e}^{1/2} \delta_{q+1}^{1/2}+\left(M+\frac{M^2}{|\Lambda|}\right) (10JLA^q)^{5A^q}\lambda_{q+1}^{-2\alpha}\notag\\
& \lesssim M \bar{e}^{1/2} \delta_{q+1}^{1/2}+\left(M+\frac{M^2}{|\Lambda|}\right) \lambda_q^{5-\alpha b}\lambda_{q+1}^{-\beta } \notag.
\end{align}
Thus, by \eqref{property va} and \eqref{key bound c} we end up with \pagebreak
\begin{align} \label{vL2a}
\|v_{q+1}\|_{L_\Omega^{2J}C_IL_x^2}
&\leq  \sum_{r=0}^q\Big(\|v_\ell-v_r\|_{L_\Omega^{2J}C_I L_x^2} +\|w_{r+1}\|_{L_\Omega^{2J}C_IL_x^2}\Big) \\
&\lesssim  \sum_{r=0}^q\Big(\ell_r(20JLA^{r-1})^{10A^{r-1}} \lambda_r^{d+3} +\|w_{r+1}\|_{L_\Omega^{2J}C_IL_x^2}\Big)\notag \\
&\lesssim  \sum_{r=0}^q\Big(\lambda_{r+1}^{-2\alpha}\lambda_{r-1}^{10} \lambda_r^{d+3} +\|w_{r+1}\|_{L_\Omega^{2J}C_IL_x^2}\Big)\notag \\
&\lesssim  \sum_{r=0}^q\Big(\lambda_{r}^{d+4-\alpha b} \lambda_{r+1}^{-\beta }+\|w_{r+1}\|_{L_\Omega^{2J}C_IL_x^2}\Big)\notag \\
&\lesssim  \sum_{r=0}^q\left(M+\frac{M^2}{|\Lambda|}\right)  \bar{e}^{1/2} \delta_{r+1}^{1/2}\notag.
\end{align}
So stipulating $M_0$ to admit 
\begin{align}
M_0\gtrsim M+\frac{M^2}{|\Lambda|}, \label{M_0}
\end{align}
justifies the first key bound at level $q+1$. 
\subsubsection*{Second Key Bound (\ref{key bound b})}
In the same manner we obtain

\begin{align} \label{wL2b}
\|w_{q+1}\|_{L_\Omega^{j} C_I L_x^2}
&\lesssim  M \left( \|\oRq\|_{L_\Omega^{\frac{j}{2}} C_\Il L_x^1}^{\frac{1}{2}}+ \bar{e}^{1/2} \delta_{q+1}^{1/2}\right)\\
&\hspace{.5cm} +\left( M+\frac{M^2}{|\Lambda|}\right)  \left(1+\|\oRq\|_{L_\Omega^{\frac{5}{2} j}C_\Il L_x^1}^{\frac{5}{2}}\right) \lambda_{q+1}^{-2\alpha}\notag\\
& \lesssim M(jLA^q)^{A^q}+ M \bar{e}^{1/2} \delta_{q+1}^{1/2}+\left(M+\frac{M^2}{|\Lambda|}\right) (5jLA^q)^{5A^q}\lambda_{q+1}^{-2\alpha} \notag.
\end{align}
As a result 
\begin{align*}
\|v_{q+1}\|_{L_\Omega^jC_IL_x^2}&\lesssim  \sum_{r=0}^q\left(\ell_r(10jLA^{r-1})^{10A^{r-1}} \lambda_r^{d+3} +M(jLA^r)^{A^r}+M \bar{e}^{1/2} \delta_{r+1}^{1/2}\right)\\
&\hspace{.5cm}+\sum_{r=0}^q\left(M+\frac{M^2}{|\Lambda|}\right) (5jLA^r)^{5A^r}\lambda_{r+1}^{-2\alpha}\\
&\lesssim  \sum_{r=0}^q\left(\left(M+\frac{M^2}{|\Lambda|}\right)\left[ \lambda_{r+1}^{-\alpha}+\frac{1}{5^r}\right] (5jLA^{r})^{5A^{r}} +M \bar{e}^{1/2} \delta_{r+1}^{1/2}\right)\\
&\lesssim  \left(M+\frac{M^2}{|\Lambda|}\right) \sum_{r=0}^q\left[\delta_{r+1}^{1/2}+\frac{1}{5^r}\right](5jLA^{q})^{5A^{q}}  +M \bar{e}^{1/2}\sum_{r=0}^q \delta_{r+1}^{1/2}.
\end{align*}
In view of \eqref{delta series bound}, we might increase $M_0$ by multiplying the right hand side of \eqref{M_0} by some constant to confirm the second key bound.
\pagebreak
\subsubsection*{Third Key Bound (\ref{key bound c})}
Taking \eqref{wpC}, \eqref{wcC} and \eqref{wtC} into account, we infer 
\begin{align*}
\|w_{q+1}\|_{L_\Omega^jC_{I,x}^N}&\lesssim \left( M+\frac{M^2}{|\Lambda|}\right) \ell^{-(d+3)N-(4d+8)} \left(1+\|\oRq\|_{L_\Omega^{(N+4)j}C_\Il L_x^1}^{N+4} \right) \lambda^N \mu^N\\
&\hspace{1.2cm}\cdot\left( r_\perp^{-\frac{d-(2N+1)}{2}}r_\parallel^{-\frac{2N+1}{2}}+ r_\perp^{-\frac{d-(2N+3)}{2}} r_\parallel^{-\frac{2N+3}{2}}+r_\perp^{-d+N+1}r_\parallel^{-(N+1)}\lambda\mu^{-1}\right)
\end{align*}
for $N=1,2,3,4$. Bearing $\tau \leq 2\varsigma$ and \eqref{key bound f} in mind permits to furnish 
\begin{align}
\|w_{q+1}\|_{L_\Omega^jC_{I,x}^N}&\lesssim \left( M+\frac{M^2}{|\Lambda|}\right)\Big[2(N+4)jLA^q\Big]^{2(N+4)A^q}\lambda_{q+1}^{\big((4d+12)N+16d+32\big)\frac{\alpha}{\sigma-2\varepsilon}}\lambda_{q+1}^{\frac{N+1}{2}d+N+1}. \label{wC}
\end{align}
So, by possibly increasing $a,\, b$ and $A$ and \eqref{key bound c}, this results in
\begin{align*}
\|v_{q+1}\|_{L_\Omega^jC_{I,x}^N} &\lesssim \|v_q\|_{L_\Omega^jC_{\Il,x}^N}+ \|w_{q+1}\|_{L_\Omega^j C_{I,x}^N}\\
&\lesssim \left(\left( M+\frac{M^2}{|\Lambda|}\right)\lambda_{q+1}^{-\alpha}+A^{-2(N+4)}\right)\Big[2(N+4)jLA^{q}\Big]^{2(N+4)A^{q}} \lambda_{q+1}^{\frac{N+1}{2}d+N+2}\\
&\leq\Big[2(N+4)jLA^{q}\Big]^{2(N+4)A^{q}} \lambda_{q+1}^{\frac{N+1}{2}d+N+2}.
\end{align*}

\subsubsection*{Fourth Key Bound (\ref{key bound d})}   
Combining \eqref{wpwcW}, \eqref{wtW} and \eqref{essential bound b} with \eqref{key bound f} furnishes
\begin{align} \label{wWiota} 
\|w_{q+1}\|_{L_\Omega^{2J} C_I W_x^{1,\iota}} &\lesssim \left(M+\frac{M^2}{|\Lambda|}\right) \ell^{-\frac{7d+17}{2}} \left(1+\|\oRq\|_{L_\Omega^{7J} C_\Il L_x^1}^{7/2}\right)r_\perp^{\frac{d-1}{\iota}-\frac{d-1}{2}} r_\parallel^{\frac{1}{\iota}-\frac{1}{2}} \lambda \\
&\lesssim \left(M+\frac{M^2}{|\Lambda|}\right) (14JLA^q)^{7A^q}\lambda_{q+1}^{-2\alpha} \notag. 
\end{align}
\eqref{property va} and \eqref{key bound c} permit finally to conclude 
\begin{align}\label{vWiota}
\|v_{q+1}\|_{L_\Omega^{2J}C_IW_x^{1,\iota}}
&\leq  \sum_{r=0}^q\Big(\|v_\ell-v_r\|_{L_\Omega^{2J}C_I W_x^{1,\iota}} +\|w_{r+1}\|_{L_\Omega^{2J}C_I W_x^{1,\iota}}\Big)  \\
&\lesssim  \sum_{r=0}^q\Big(\ell_r(24JLA^{r-1})^{12A^{r-1}} \lambda_r^{\frac{3}{2}d+4} +\|w_{r+1}\|_{L_\Omega^{2J}C_IW_x^{1,\iota}}\Big)\notag \\
&\lesssim  \sum_{r=0}^q\bigg(\lambda_{r+1}^{-2\alpha}\lambda_{r-1}^{12} \lambda_r^{\frac{3}{2}d+4} +\left(M+\frac{M^2}{|\Lambda|}\right)\lambda_r^{7}\lambda_{r+1}^{-2\alpha}\bigg)\notag \\
&\lesssim  \sum_{r=0}^q \left(M+\frac{M^2}{|\Lambda|}\right)\Big( \lambda_{r}^{\frac{3}{2}d+5-\alpha b}+\lambda_{r}^{7-\alpha b} \Big) \lambda_{r+1}^{-\beta } \notag\\
&\leq  \sum_{r=1}^{q+1}  \delta_r^{1/2} \notag, 
\end{align}
where the absorption of the implicit constant in the last step relies again on the choice of $a$ and~$b$.

\subsection{Control of the Energy} \label{Control of the Energy}
As a next step, we will turn our attention to the verification of \eqref{approximation energy} at level $q+1$. For this purpose, let us fix some $t \in [o_{q+1}, \infty)$ that belongs to some interval $I\subseteq [o_{q+1}, \infty)$ of length $1$. Thus, we derive
\begin{align} \label{Control Energy Equation}
&\Big|e(t)\big(1-4\delta_{q+2}\big)-\mathscr{H}\{v_{q+1}+z_{q+1}\}(t) \Big|  \\
&\hspace{.2cm} \leq \underbrace{\E \Big| 3(2\pi)^d\eta_q(t)-\|\wpr(t)\|_{L_x^2}^2 \Big|}_{:=\text{I}}+\underbrace{2\E\|\wpr \cdot (\wc+\wt)\|_{ C_I L_x^1}}_{:=\text{II}}\notag\\
&\hspace{.5cm}+\underbrace{\E \|\wc+\wt\|_{C_I L_x^2}^2}_{:=\text{III}} +\underbrace{2\E\|\wpr\cdot (v_\ell+z_{q+1})\|_{C_I L_x^1}}_{:=\text{IV}}  \notag \\
&\hspace{.5cm}+\underbrace{2\E \|(\wc+\wt) \cdot (v_\ell+z_{q+1})\|_{C_I L_x^1}}_{:=\text{V}}+\underbrace{\E\|v_q+z_q-v_\ell-z_{q+1}\|_{C_I L_x^2}^2}_{\text{:=VI}}\notag\\
&\hspace{.5cm}+\underbrace{2\E\|(v_q+z_q-v_\ell-z_{q+1})(v_\ell+z_{q+1})\|_{ C_I L_x^1}}_{\text{:=VII}} \notag
\\
&\hspace{.5cm}\underbrace{
\begin{aligned}
&+\frac{2H}{t+H}\E\bigg| \int_0^{t} \int_{\mathbb{T}^d} \big|\mathcal{A}D(v_{q}+z_{q})(s,x)\colon D(v_{q}+z_{q})(s,x) \notag \\
&\hspace{4.5cm} - \mathcal{A}D(v_{q+1}+z_{q+1})(s,x) \colon D(v_{q+1}+z_{q+1})(s,x)\big| \,dx \, ds \bigg| \notag.
\end{aligned}
}_{:=\text{VIII}}
\end{align}
As in the previous section, the required conditions
\begin{align*}
\alpha b>2d+4N^\ast, \qquad  (\iota-1)\sigma \alpha>2\beta b^3 \qquad \text{and} \qquad a \geq \Big((8N^\ast+32) JLA\Big)^c 
\end{align*} 
turn out to be indispensable in the upcoming computations. We will also successively increase $a$, $b$ and $A$ again, as needed, to absorb all implicit constants that appear therein.
 
\noindent \begin{tikzpicture}[baseline=(char.base)]
\node(char)[draw,fill=white,
  shape=rounded rectangle,
  drop shadow={opacity=.5,shadow xshift=0pt},
  minimum width=.8cm]
  {\Large I};
\end{tikzpicture} 
The following statement is used to bound the first term and will also be crucial for finding a representation of the oscillation error, which becomes easier to handle (cf. \eqref{divRosc}). 
\begin{flalign} 
\text{\underline{1.Claim:}}\ \  & \wpr \otimes \wpr=\sum_{\xi \in \Lambda} \axi^2 \mathbb{P}_{\neq 0}\big(\Wxi \otimes \Wxi \big)+\rho \Id-\mathring{R}_\ell .& \label{wpxwp}
\end{flalign}
\begin{proof}
\thref{Lemma Building Blocks}, \eqref{building block c} and the Geometric \thref{Geometric Lemma} teach us 
\begin{align*}
\wpr \otimes  \wpr&= \sum_{\xi \in \Lambda} \axi^2 \big(\Wxi \otimes \Wxi\big)\\
&= \sum_{\xi \in \Lambda} \axi^2 \mathbb{P}_{\neq 0}\big(\Wxi \otimes \Wxi\big)+\sum_{\xi \in \Lambda} \axi^2  (2\pi)^{-d}\|\psi_{(\xi)}^2\phi_{(\xi)}^2\|_{L_x^1}\xi \otimes \xi\\
&= \sum_{\xi \in \Lambda} \axi^2 \mathbb{P}_{\neq 0}\big(\Wxi \otimes \Wxi\big)+\sum_{\xi \in \Lambda} \rho  \gamma^2_\xi\left(\Id-\frac{\mathring{R_\ell}}{\rho}\right)\xi \otimes \xi \\
&=\sum_{\xi \in \Lambda} \axi^2 \mathbb{P}_{\neq 0}\big(\Wxi \otimes \Wxi\big)+  \rho  \Id-\mathring{R_\ell}.
\end{align*}
\end{proof} 
\noindent Thus, the first term in \eqref{Control Energy Equation} amounts to  
\begin{align} \label{specific value of the key bound}
\text{I}&=\E\Big\|3\eta_q(t)-\tr\Big(\wpr \otimes \wpr\Big)(t)\Big\|_{L_x^1}  \\
&=\E\Big\|3 \eta_q(t)- \sum_{\xi \in \Lambda} \axi^2(t) \mathbb{P}_{\neq 0} \tr\Big(\Wxi \otimes \Wxi\Big)(t) - 3\rho(t)\Big\|_{L_x^1}\notag \\
&\leq 3(2\pi)^d |\eta_q(t)-\eta_\ell(t)|+\sum_{\xi \in \Lambda}\E\Big\|\axi^2 \mathbb{P}_{\neq 0} |\Wxi|^2\Big\|_{C_IL_x^1}+ 6(2\pi)^d \ell+6\,\E\|\mathring{R}_\ell\|_{C_I L_x^1}\notag.
\end{align}

We additionally need
\begin{adjustwidth}{20pt}{20pt}
\begin{flalign*} 
\text{\underline{2.Claim:}}\ \  &3(2\pi)^d |\eta_q(t)-\eta_\ell(t)| \leq \frac{1}{24} \underline{e} \delta_{q+2}.& 
\end{flalign*}
\begin{proof}
Each term of 
\begin{align} \label{etaq-etal}
\begin{split}
&3(2\pi)^d |\eta_q(t)-\eta_\ell(t)|\\ 
&\hspace{.2cm}\leq  |e(t)-(e\ast_t \varphi_\ell)(t)|+ \Big| \E\|(v_q+z_q)(t)\|^2_{L_x^2}-\big(\E\|v_q+z_q\|_{L_x^2}^2\ast_t \varphi_\ell \big)(t)\Big| \\
&\hspace{.5cm}+2H \bigg|\frac{1}{t+H}\E\left[\int_0^t \int_{\mathbb{T}^d} \mathcal{A}D(v_{q}+z_{q})(s,x)\colon D(v_{q}+z_{q})(s,x)\, dx \, ds \right] \\
&\hspace{.7cm}-\left\{\left(\frac{1}{\cdot+H}\E\left[\int_0^{\cdot } \int_{\mathbb{T}^d} \mathcal{A}D(v_{q}+z_{q})(s,x)\colon D(v_{q}+z_{q})(s,x)\, dx \, ds \right] \right) \ast_t \varphi_\ell \right\} (t) \bigg|
\end{split}
\end{align}
will be estimated separately. The first term is controlled as\footnote{At this point, it becomes obvious that it is insufficient to simply require the prescribed energy to be bounded; we additionally need the boundedness of its first derivative as well.} 
\begin{align*}
|e(t)-(e\ast \varphi_\ell)(t)|\leq \ell \widetilde{e}\leq \widetilde{e} \lambda_{q+1}^{-\alpha} \lambda_{q+1}^{-2\beta b}\leq \frac{1}{72 } \underline{e} \delta_{q+2},
\end{align*}
where we increase $a$ and $b$, so that $\widetilde{e}\lambda_{q+1}^{-\alpha}16 \lambda_2^{-2\beta}\leq \frac{1}{72 } \underline{e}$ holds.\\
Estimating the second term of \eqref{etaq-etal} follows by standard mollification estimates, \eqref{key bound a}, \eqref{delta series bound}, \eqref{OPa}, \thref{Proposition SHE} and \eqref{key bound c} 
\begin{align*}
&\Big|\E\|(v_q+z_q)(t)\|_{L_x^2}^2-\Big(\E\|v_q+z_q\|_{L_x^2}^2\ast_t \varphi_\ell\Big)(t) \Big|\\
&\lesssim \ell^{\sigma/2-\varepsilon} \|v_q+z_q\|_{L_\Omega^2C_\Il L_x^2} \|v_q+z_q\|_{L_\Omega^2 C_\Il^{0,\sigma/2-\varepsilon}L_x^2} \\
& \lesssim \ell^{\sigma/2-\varepsilon} \Big(\|v_q\|_{L_\Omega^2 C_\Il L_x^2}+\|z_q\|_{L_\Omega^2 C_\Il L_x^2} \Big)\Big(\|v_q\|_{L_\Omega^2 C_{\Il,x}^1}+\|z_q\|_{L_\Omega^2 C_{\Il}^{0,\sigma/2-\varepsilon}L_x^2}\Big) \\
&\lesssim \lambda_{q+1}^{-\alpha} \lambda_{q}^{-d-5} (M_0\bar{e}^{1/2}+K_G)\Big[(20LA^{q-1})^{10A^{q-1}} \lambda_q^{d+3}+K_G\Big]\\
&\lesssim K_G^2 M_0\bar{e}^{1/2} \lambda_{q-1}^{10} \lambda_q^{d+3} \lambda_{q+1}^{-\alpha} \lambda_{q}^{-d-5}\\
&\lesssim K_G^2 M_0\bar{e}^{1/2} \lambda_q^{-1} \lambda_{q+1}^{-2\beta b}\\
&\leq \frac{1}{72} \underline{e} \delta_{q+2}.
\end{align*}
Involving \thref{Lemma 2.4}, \eqref{key bound d}, \eqref{delta series bound}, \eqref{OPb}, \eqref{OPa} and \thref{Proposition SHE} permits to control the last term as  \pagebreak
\begin{align} \label{Reason of H}
&\bigg|\frac{1}{t+H}\E\left[\int_0^t \int_{\mathbb{T}^d} \mathcal{A}D(v_{q}+z_{q})(s,x)\colon D(v_{q}+z_{q})(s,x)\, dx \, ds \right]\\
&\hspace{.5cm}-\bigg\{\left(\frac{1}{\cdot+H}\E\left[\int_0^\cdot \int_{\mathbb{T}^d} \mathcal{A}D(v_{q}+z_{q})(s,x)\colon D(v_{q}+z_{q})(s,x)\, dx \, ds\right]\right) \ast_t \varphi_\ell \bigg\} (t) \bigg|\notag\\
&\lesssim \frac{1}{t+H}  \E\left[\int_0^{\ell }\int_{t-u}^t \int_{\mathbb{T}^d} \max_{k=1,\iota} \|D(v_{q}+z_{q})(s,x)\|_F^k\, dx \, ds  \, \varphi_\ell(u) \, du\right]\notag\\
&\hspace{.1cm}+ \int_0^{\ell }\Big| \frac{u}{(t+H)(t-u+H)}\Big|\E\left|\int_0^{t-u} \int_{\mathbb{T}^d} \max_{k=1,\iota}\|D(v_{q}+z_{q})(s,x)\|^k_F\, dx \, ds\right|\, \varphi_\ell(u)\,du  \notag\\
&\lesssim \frac{1}{t+H}  \int_0^{\ell }u \max_{k=1,\iota}\|v_{q}+z_{q}\|_{L_\Omega^\iota C_\Il W_x^{1,\iota}}^{k } \,\varphi_\ell(u) \, du \notag\\
&\hspace{.1cm}+ \int_0^{\ell }\Big| \frac{u}{(t+H)(t-u+H)}\Big|\E\left|\int_0^{t-u} \max_{k=1,\iota}\|(v_{q}+z_{q})(s)\|_{W_x^{1,\iota}}^{k } \, ds\right|\, \varphi_\ell(u)\,du  \notag \\
 &\lesssim \ell\left\{1+ \int_0^{\ell }\Big| \frac{ |t-u| }{(t+H)(t-u+H)}\Big|\, \varphi_\ell(u)\,du \right\}\notag\\
 &\hspace{4.5cm}\cdot \sup_{n\geq o_q}\max_{k=1,\iota} \Big( \|v_{q}\|_{L_\Omega^\iota C_{[n,n+1]} W_x^{1,\iota}}+\|z_{q}\|_{L_\Omega^\iota C_{[n,n+1]} W_x^{1,\iota}} \Big)^k \notag \\
&\lesssim \lambda_{q+1}^{-\alpha}\lambda_{q+1}^{-2\beta b} (1+ K_G^\iota) \notag \\
&\leq \frac{1}{144 H} \underline{e} \delta_{q+2},\notag
\end{align}
which finally establishes the assertion.
\end{proof}

\end{adjustwidth}
\vspace{0.5cm}

\noindent 
\begin{adjustwidth}{20pt}{20pt}
\begin{flalign*} 
\text{\underline{3.Claim:}}\ \  &\sum_{\xi \in \Lambda}\E\Big\|\axi^2 \mathbb{P}_{\neq 0} |\Wxi|^2\Big\|_{C_IL_x^1} \leq \frac{1}{24} \underline{e} \delta_{q+2}.& 
\end{flalign*}
\begin{proof}
We intend to apply Green's formula $N^\ast \in 2\mathbb{N}$ times with 
\begin{align} \label{Nast}
 N^\ast\geq  \frac{d(\sigma-2\varepsilon)-(30d^2+84d+48) \alpha}{(104d+224)\alpha} .
\end{align}
Together with Leibniz's formula, \eqref{OPg}, \eqref{amplitude function b}, \eqref{OPd}, \eqref{key bound f} and \eqref{fundamental bounds c} this actually entails   
\begin{align*}
\sum_{\xi \in \Lambda}&\E\Big\|\axi^2 \mathbb{P}_{\neq 0} |\Wxi|^2\Big\|_{C_IL_x^1}\\
 &\leq \sum_{\xi \in \Lambda} \E\|\Delta^{N^\ast} \axi^2\|_{C_{I,x}^0} \|(-\Delta)^{-N^\ast} \mathbb{P}_{\neq 0}|\Wxi|^2\|_{C_I L_x^1}\\
& \lesssim  \sum_{\xi \in \Lambda} \sum_{k=0}^{2N^\ast} \|\axi\|_{L_\Omega^2 C_{I,x}^{2N^\ast-k}} \|\axi\|_{L_\Omega^2 C^k_{I,x}}\|(-\Delta)^{-N^\ast} \mathbb{P}_{\geq \frac{r_\perp \lambda_{q+1}}{2}}|\Wxi|^2\|_{C_I L_x^2}\\
&\lesssim \frac{M^2}{|\Lambda|^2}  \ell^{-(2d+6)N^\ast- (2d+3)}\left(1+\|\oRq\|^{2N^\ast+2}_{L_\Omega^{4N^\ast+3}C_\Il L_x^1}\right)(r_\perp \lambda_{q+1})^{-2N^\ast} \sum_{\xi \in \Lambda}\|\Wxi\|^2_{C_I L_x^4}\\
&\hspace{.2cm} + \frac{M^2}{|\Lambda|^2}  \ell^{-(2d+6)N^\ast-(3d+5)}\left(1+\|\oRq\|^{2N^\ast+3}_{L_\Omega^{4N^\ast+1}C_\Il L_x^1}\right) (r_\perp \lambda_{q+1})^{-2N^\ast} \sum_{\xi \in \Lambda}\|\Wxi\|^2_{C_I L_x^4}\\
& \lesssim  \frac{M^2}{|\Lambda|}  \lambda_{q+1}^{\big((8d+24)N^\ast+12d+20\big)\frac{\alpha}{\sigma-2\varepsilon}} \big( (8N^\ast+6)LA^q\big)^{(4N^\ast+6)A^q}r_\perp^{-\frac{d-1}{2}-2N^\ast}r_\parallel^{-\frac{1}{2}}\lambda_{q+1}^{-2N^\ast}\\
& \lesssim  \frac{M^2}{|\Lambda|}   \lambda_q^{4N^\ast+6-\alpha b} \lambda_{q+1}^{-2\beta b}\\
&\leq \frac{1}{24}\underline{e} \delta_{q+2}.
\end{align*}

\end{proof}
\end{adjustwidth}
\vspace{0.5cm}
Together with \eqref{key bound e} the above considerations result in
\begin{align*}
\text{I}\leq \frac{1}{6}\underline{e} \delta_{q+2}.
\end{align*}

\noindent As a next step we aim to control the second and fifth term.\\
\noindent 
\begin{tikzpicture}[baseline=(char.base)]
\node(char)[draw,fill=white,
  shape=rounded rectangle,
  drop shadow={opacity=.5,shadow xshift=0pt},
  minimum width=.8cm]
  {\Large II \& V};
\end{tikzpicture} 
For this reason we appeal to \eqref{wpL2}, combine \thref{Proposition SHE} with \eqref{OPa} and invoke \eqref{property vb}, \eqref{key bound a}, \eqref{delta series bound}, \eqref{wcL}, \eqref{wtL}, \eqref{key bound e}, \eqref{key bound f}, \eqref{essential bound a}, \eqref{essential bound g}, \eqref{essential bound h} and \eqref{M_0} to obtain 

\begin{align*}
\text{II \& V} &\lesssim \|\wpr\|_{L_\Omega^2C_I L_x^2} \left(\|\wc\|_{L_\Omega^2C_I L_x^2}+\|\wt\|_{L_\Omega^2C_I L_x^2}\right)\\
&\hspace{.2cm}+ \left(\|z_{q+1}\|_{L_\Omega^2C_I L_x^2}+\|v_\ell\|_{L_\Omega^2C_I L_x^2} \right) \left(\|\wc\|_{L_\Omega^2C_I L_x^2}+\|\wt\|_{L_\Omega^2C_I L_x^2}\right)\\
&\lesssim \left\{ M\|\oRq\|_{L_\Omega^1C_\Il L_x^1}^{1/2}+M\bar{e}^{1/2} \delta_{q+1}^{1/2}+M \ell^{-\frac{5d+11}{2}} (1+\|\oRq\|_{L_\Omega^5C_\Il L_x^1}^{5/2})r_\perp^{-1/2}\lambda^{-1/2} \right\}\\
&\hspace{.5cm}\cdot \left(M+\frac{M^2}{|\Lambda|}\right)\ell^{-\frac{5d+11}{2}} \big(1+\|\oRq\|^{5/2}_{L_\Omega^5C_\Il L_x^1}\big)\big(r_\perp r_\parallel^{-1}+r_\perp^{-\frac{d-1}{2}}r_\parallel^{-\frac{1}{2}}\mu^{-1}\big)\\
&\hspace{.2cm}+ \left\{K_G+M_0\bar{e}^{1/2} \right\}\\
&\hspace{.5cm}\cdot\left(M+\frac{M^2}{|\Lambda|}\right)\ell^{-\frac{5d+11}{2}} \big(1+\|\oRq\|^{5/2}_{L_\Omega^5C_\Il L_x^1}\big)\big(r_\perp r_\parallel^{-1}+r_\perp^{-\frac{d-1}{2}}r_\parallel^{-\frac{1}{2}}\mu^{-1}\big)\\
&\lesssim  M_0^2K_G \bar{e}^{1/2}  (10LA^q)^{10A^q}\lambda_{q+1}^{-2\alpha}\\
&\lesssim  M_0^2K_G \bar{e}^{1/2}  \lambda_q^{10-\alpha b}\lambda_{q+1}^{-2\beta b}\\
&\leq \frac{1}{6} \underline{e}\delta_{q+2}.
\end{align*}

\noindent 
\begin{tikzpicture}[baseline=(char.base)]
\node(char)[draw,fill=white,
  shape=rounded rectangle,
  drop shadow={opacity=.5,shadow xshift=0pt},
  minimum width=.8cm]
  {\Large III};
\end{tikzpicture} 
We proceed with the estimation of III, which is done in a very similar fashion. As a result of \eqref{wcL}, \eqref{wtL}, \eqref{key bound f}, \eqref{essential bound g} and \eqref{essential bound h} we get 
\begin{align*}
\text{III}& \lesssim \left(\|\wc\|_{L_\Omega^2 C_I L_x^2}+\|\wt\|_{L_\Omega^2 C_I L_x^2} \right)^2\\
&\lesssim \left(M+\frac{M^2}{|\Lambda|}\right)^2\ell^{-(5d+11)} \big(1+\|\oRq\|^{5}_{L_\Omega^5C_\Il L_x^1}\big)\big(r_\perp r_\parallel^{-1}+r_\perp^{-\frac{d-1}{2}}r_\parallel^{-\frac{1}{2}}\mu^{-1}\big)^2\\
&\lesssim \left(M+\frac{M^2}{|\Lambda|}\right)^2 (10LA^q)^{10A^q} \lambda_{q+1}^{-2 \alpha}\\
&\lesssim \left(M+\frac{M^2}{|\Lambda|}\right)^2 \lambda_q^{10-\alpha b}\lambda_{q+1}^{-2\beta b}\\
&\leq \frac{1}{6} \underline{e} \delta_{q+2}.
\end{align*}

\noindent 
\begin{tikzpicture}[baseline=(char.base)]
\node(char)[draw,fill=white,
  shape=rounded rectangle,
  drop shadow={opacity=.5,shadow xshift=0pt},
  minimum width=.8cm]
  {\Large IV};
\end{tikzpicture}
Bearing \eqref{property vc}, \eqref{property za}, \eqref{wpL}, \eqref{key bound c}, \thref{Proposition SHE}, \eqref{key bound e} in mind and remembering that $p$ belongs to 
\begin{align*}
\left(1, \frac{2(d-1)\varsigma+\tau}{2d\varsigma+\tau-1+(20d+46)\frac{\alpha}{\sigma-2\varepsilon}} \wedge \frac{4(d-1)\varsigma +2\tau}{2(d-1)\varsigma+\tau+(d-2)\frac{\ln(f(q+1))}{\ln(\lambda_{q+1})}+(20d+48)  \frac{\alpha}{\sigma-2\varepsilon}}\right]
\end{align*}
 we appeal to \eqref{essential bound d} to control the fourth term as 
\begin{align*}
\text{IV}&\lesssim\|\wpr\|_{L_\Omega^2 C_I L_x^p} \left(\|v_q\|_{L_\Omega^2 C_{\Il,x}^1}+ f^{\frac{d-2}{2}}(q+1)\|z\|_{L_\Omega^2 C_I H_x^{1+\sigma}}\right)\\
&\lesssim M \ell^{-\frac{d+1}{2}} \big(1+\|\oRq\|_{L_\Omega^1 C_\Il L_x^1}^{1/2}\big)r_\perp^{\frac{d-1}{p}-\frac{d-1}{2}} r_\parallel^{\frac{1}{p}-\frac{1}{2}}\\
&\hspace{.5cm} \cdot \Big( (20LA^{q-1})^{10A^{q-1}}\ \lambda_q^{d+3}+f^{\frac{d-2}{2}}(q+1)K_G \Big)\\
&\lesssim M  \bar{e}^{1/2}  K_G \lambda_{q+1}^{-2 \alpha}\lambda_{q-1}^{10}\lambda_{q}^{d+3}\\
&\lesssim M  \bar{e}^{1/2} K_G\lambda_{q}^{d+4-\alpha b}\lambda_{q+1}^{-2\beta b}\\
&\leq \frac{1}{6} \underline{e} \delta_{q+2}.
\end{align*}

\noindent 
\begin{tikzpicture}[baseline=(char.base)]
\node(char)[draw,fill=white,
  shape=rounded rectangle,
  drop shadow={opacity=.5,shadow xshift=0pt},
  minimum width=.8cm]
  {\Large VI \& VII};
\end{tikzpicture} 
Combining \eqref{property va}, \eqref{property zb}, \eqref{OPa}, \eqref{key bound c}, \thref{Proposition SHE}, \eqref{key bound a} and \eqref{delta series bound} with \eqref{frequency z} results in 
\begin{align*}
\text{VI \& VII} &\lesssim \Big(\|v_q-v_\ell\|_{L_\Omega^2 C_I L_x^2}+\|z_q-z_{q+1}\|_{L_\Omega^2 C_I L_x^2} \Big)^2\\
&\hspace{.5cm}+\Big(\|v_q-v_\ell\|_{L_\Omega^2 C_I L_x^2}+\|z_q-z_{q+1}\|_{L_\Omega^2 C_I L_x^2} \Big) \Big(\|v_q\|_{L_\Omega^2 C_\Il L_x^2}+\|z_{q+1}\|_{L_\Omega^2 C_I L_x^2} \Big) \\
&\lesssim \Big(\ell(20LA^{q-1})^{10A^{q-1}}\lambda_q^{d+3}+f^{-(1+\sigma)}(q)K_G\Big)^2\\ 
&\hspace{.5cm}+\Big(\ell(20LA^{q-1})^{10A^{q-1}}\lambda_q^{d+3}+f^{-(1+\sigma)}(q)K_G\Big)\Big(M_0 \bar{e}^{1/2}+K_G\Big)\\
&\lesssim M_0\bar{e}^{1/2}K_G^2 \max_{k=1,2}\Big(\lambda_{q+1}^{-2\alpha}\lambda_{q-1}^{10}\lambda_q^{d+3}+f^{-(1+\sigma)}(q)\Big)^k\\
&\lesssim M_0\bar{e}^{1/2}K_G^2 \Big(\lambda_q^{d+4-\alpha b}\lambda_{q+1}^{-2\beta b}+\lambda_q^{-2\beta b^3\frac{1+\sigma}{(\iota-1)\sigma}}\Big)\\
&\leq \frac{1}{6}  \underline{e} \delta_{q+2}.
\end{align*}

\noindent 
\begin{tikzpicture}[baseline=(char.base)]
\node(char)[draw,fill=white,
  shape=rounded rectangle,
  drop shadow={opacity=.5,shadow xshift=0pt},
  minimum width=.8cm]
  {\Large VIII};
\end{tikzpicture} 
To estimate VIII we employ \thref{Lemma 2.4} and Hölder's inequality with $\iota, \frac{\iota}{\iota-1}$ twice to achieve  
\begin{align*}
\text{VIII}
&\lesssim \frac{1}{t+H} \E\bigg|\int_0^{t} \|D(v_{q+1}+z_{q+1}-v_q-z_q)(s)\|_{L_x^{\iota}} \Big(1+ \|D(v_{q+1}+z_{q+1})(s)\|^{\iota-1}_{L_x^{\iota}}\Big)  \, ds\bigg|\\
&\hspace{.5cm}+ \frac{1}{t+H} \E\bigg|\int_0^{t} \|D(v_{q+1}+z_{q+1}-v_q-z_q)(s)\|_{L_x^{\iota}} \|D(v_q+z_q)(s)\|^{\iota-1}_{L_x^{\iota}} \, ds\bigg|\\
&\lesssim  \frac{|t|}{t+H}  \sup_{s \in [o_{q+1},\infty)}  \|v_{q+1}+z_{q+1}-v_q-z_q\|_{L_\Omega^{\iota}C_{[s,s+1]}W_x^{1,\iota}} \\
&\hspace{.5cm}\cdot \frac{|t|}{t+H}    \sup_{s \in [o_{q+1},\infty)} \Big(1+ \|v_{q+1}+z_{q+1}\|^{\iota-1}_{L_\Omega^\iota C_{[s,s+1]}W_x^{1,\iota}}+ \|v_q+z_q\|^{\iota-1}_{L_\Omega^\iota C_{[s,s+1]}W_x^{1,\iota}}\Big)  . 
\end{align*}
Since $t \mapsto \frac{|t|}{t+H}$ is for every $t\in [o_{q+1},\infty)$ bounded from above by  $1$, the additional consideration of \eqref{property va}, \eqref{key bound c}, \eqref{wWiota}, \eqref{property zb}, \eqref{key bound d}, \eqref{delta series bound}, \eqref{OPb}, \eqref{OPa}, \thref{Proposition SHE} and \eqref{frequency z} yield \pagebreak
\begin{align*}
\text{VIII}&\lesssim \sup_{s \in [o_{q+1},\infty)} \bigg\{  \Big(\|v_q-v_\ell\|_{L_\Omega^{\iota}C_{[s,s+1]}W_x^{1,\iota}}+\| w_{q+1}\|_{L_\Omega^{\iota}C_{[s,s+1]}W_x^{1,\iota}} \Big)K_G^{\iota-1} \bigg\}\\
&\hspace{.5cm}+\sup_{s \in [o_{q+1},\infty)}   \|z_{q+1}-z_q\|_{L_\Omega^{\iota}C_{[s,s+1]}W_x^{1,\iota}} K_G^{\iota-1} \\
&\lesssim  K_G^{\iota-1}  \Big[ \ell (12\iota L A^{q-1})^{12A^{q-1}} \lambda_q^{\frac{3}{2}d+4}+ \left(M+\frac{M^2}{|\Lambda|}\right) (14J LA^q)^{7A^q} \lambda_{q+1}^{-2\alpha}+f^{-\sigma}(q)K_G \Big]\\
&\lesssim  K_G^{\iota-1}  \Big[  \lambda_{q-1}^{12} \lambda_q^{\frac{3}{2}d+4-\alpha b}\lambda_{q+1}^{-2\beta b }+ \left(M+\frac{M^2}{|\Lambda|}\right) \lambda_q^{7-\alpha b} \lambda_{q+1}^{-2\beta b }+\lambda_{q}^{\frac{-2\beta b^3}{\iota-1}} K_G \Big] \\
&\leq \frac{1}{6} \underline{e} \delta_{q+2}.
\end{align*}
\medskip~\\
Altogether, \eqref{approximation energy} remains valid on the level $q+1$.
 
\subsection{Convergence of the Sequence} \label{Convergence of the Sequence v}
Moreover, we have all the necessary components to establish the convergence of $\left(v_q\right)_{q \geq 0}$ to some limit function $v$ in
\begin{align*}
\left( L^{2J}\big(\Omega;C\big([0,\infty);L^2\big( \mathbb{T}^d;\mathbb{R}^d\big)\cap W^{1,\iota}\big( \mathbb{T}^d;\mathbb{R}^d\big)\big)\big),\sup_{|I|=1}\|\cdot\|_{L_\Omega^{2J}C_I L_x^2}+\sup_{|I|=1}\|\cdot\|_{L_\Omega^{2J}C_I W_x^{1,\iota}} \right).
\end{align*}
To be more precise we follow the approach in \eqref{vL2a} and \eqref{vWiota} and use \eqref{delta series bound} to achieve
\begin{align*}
\sum_{q \geq 0}\Big(\|v_{q+1}-v_q\|_{L_\Omega^{2J}C_{I}L_x^2} +\|v_{q+1}-v_q\|_{L_\Omega^{2J}C_{I}W_x^{1,\iota}} \Big)&\lesssim M_0 \bar{e}^{1/2}.
\end{align*}
That means the sequence $\left(\sum\limits_{q=0}^n \Big( \|v_{q+1}-v_q\|_{L_\Omega^{2J}C_{I}L_x^2} +\|v_{q+1}-v_q\|_{L_\Omega^{2J}C_{I}W_x^{1,\iota}} \Big)\right)_{n \in \mathbb{N}_0}$ \hspace{-0.4cm} converges along a subsequence in $\mathbb{R}$ and is therefore particularly Cauchy. Strictly speaking, for every $\varepsilon>0$, we find some $N>0$, such that 
\begin{align*}
&\|v_{n}-v_k\|_{L_\Omega^{2J}C_{I}  L_x^2} +\|v_{n}-v_k\|_{L_\Omega^{2J}C_{I}  W_x^{1,\iota}}\\
 &\hspace{.2cm}\leq \sum_{q=k}^{n-1}\Big( \|v_{q+1}-v_q\|_{L_\Omega^{2J}C_{I}  L_x^2}+\|v_{q+1}-v_q\|_{L_\Omega^{2J}C_{I}W_x^{1,\iota}} \Big)\\
 &\hspace{.2cm}\leq \varepsilon
\end{align*}
holds for any $n>k\geq N$, which shows that even $(v_q)_{q \geq 0}$ is Cauchy in 
\begin{align*}
\left(L^{2J}\big(\Omega;C\big([0,\infty);L^2\big( \mathbb{T}^d;\mathbb{R}^d\big)\cap W^{1,\iota}\big( \mathbb{T}^d;\mathbb{R}^d\big)\big)\big),\sup_{|I|=1}\|\cdot\|_{L_\Omega^{2J}C_I L_x^2}+\sup_{|I|=1}\|\cdot\|_{L_\Omega^{2J}C_I W_x^{1,\iota}} \right).
\end{align*}

\pagebreak
\boldmath
\section{Decomposition of the Reynolds Stress $\mathring{R}_{q+1}$}\label{Decomposition of the Reynolds Stress}
\unboldmath
To verify the key bounds \eqref{key bound e} and \eqref{key bound f}, we need to determine the precise form of the Reynolds stress at level $q+1$. Upon plugging the velocity $v_{q+1}$ and $z_{q+1}$ into \eqref{Iterative Equation}, we obtain the divergence of the new Reynolds stress and the gradient of the new pressure term as  
\begin{align*}
&\divs(\mathring{R}_{q+1})-\nabla p_{q+1}\\
&:=\underbrace{\left\{\partial_tv_q+\divs\big((v_q+z_q)\otimes (v_q+z_q)\big)-\divs \big(\mathcal{A}\left(Dv_q+Dz_q \right)\big)+\Delta z_q-z_q \right\}\ast_t \varphi_\ell \ast_x \phi_\ell}_{=: \divs(\mathring{R}_\ell)-\nabla p_\ell}\\
&\hspace{.25cm}-\divs\big((v_q+z_q)\otimes (v_q+z_q)\big)\ast_t \varphi_\ell \ast_x \phi_\ell+\divs \big(\mathcal{A}\left(Dv_q+Dz_q \right)\big)\ast_t \varphi_\ell \ast_x \phi_\ell-\Delta z_\ell +z_\ell\\
&\hspace{.25cm}+\partial_t w_{q+1}+\divs\big((v_{q+1}+z_{q+1})\otimes (v_{q+1}+z_{q+1})\big)-\divs \big(\mathcal{A}\left(Dv_{q+1}+Dz_{q+1} \right)\big)+\Delta z_{q+1}\\
&\hspace{.25cm}-z_{q+1} .
\end{align*}
For better handling of the new stress term, we will split it into seven parts: the linear error, the nonlinear errors $1$ and $2$, the oscillation error, the corrector error and the commutator errors $1$ and $2$. Each of them remains traceless, which is achieved using the formulas $\divs \big(A \big)= \divs\big(\mathring{A} \big)+\frac{1}{d} \nabla \tr\big(A\big)$ and $\tr(a \otimes b)=a \cdot b $ for arbitrary quadratic matrices $A$ and vectors $a$ and $b$. Thus, the above expression amounts to
\begin{align*}
&\divs(\mathring{R}_{q+1})-\nabla p_{q+1}\\
&=\underbrace{ \partial_t(\wpr+\wc)+\divs\big((v_\ell+z_\ell) \ootimes w_{q+1}+w_{q+1}\ootimes (v_\ell+z_\ell)\big)+\Delta (z_{q+1}-z_\ell)-(z_{q+1}-z_\ell)}_{=:\divs(\oRlin)}\\
&\hspace{.5cm}+\nabla \underbrace{\frac{2}{d} \big((v_\ell+z_\ell) \cdot w_{q+1}\big)}_{=: p_\text{lin}}+\divs\big(\underbrace{\mathcal{A}(Dv_q+Dz_q)\ast_t \varphi_\ell \ast_x \phi_\ell-\mathcal{A}(Dv_\ell+Dz_\ell)}_{=:\oRnonlin}\big)\\
&\hspace{.5cm}+\divs\big(\underbrace{\mathcal{A}(Dv_\ell+Dz_{\ell}) - \mathcal{A}(Dv_{q+1}+Dz_{q+1}) }_{=:\oRnonlinn}\big)\\
&\hspace{.5cm}+\underbrace{\divs(\wpr \otimes \wpr+\mathring{R}_\ell)+\partial_t \wt-\nabla p_\ell}_{=:\divs(\oRosc)+\nabla p_\text{osc} }\\
&\hspace{.5cm}+\divs(\underbrace{\wpr \ootimes (\wc +\wt)+ (\wc + \wt)\ootimes w_{q+1} }_{=: \oRcor})\\
&\hspace{.5cm}+\nabla \underbrace{\frac{1}{d}\Big((\wpr+w_{q+1})\cdot(\wc+\wt)\Big)}_{=:p_\text{cor}}\\
&\hspace{.5cm}+\divs\big(\underbrace{(v_\ell+z_\ell) \ootimes (v_\ell+z_\ell)- \big[(v_q+z_q) \ootimes (v_q+z_q)\big]\ast_t \varphi_\ell \ast_x \phi_\ell}_{=:\oRcom}\big)\\
 &\hspace{.5cm}+ \nabla \underbrace{\frac{1}{d} \big(|v_\ell+z_\ell|^2-|v_q+z_q|^2 \ast_t \varphi_\ell \ast_x \phi_\ell\big)}_{=:p_\text{com1}}\\
&\hspace{.5cm}+\divs\big(\underbrace{v_{q+1}\ootimes (z_{q+1}-z_\ell) + (z_{q+1}-z_\ell)\ootimes v_{q+1}+ z_{q+1}\ootimes z_{q+1}-z_\ell \ootimes z_\ell}_{=:\oRcomm}\big)\\
&\hspace{.5cm}+ \nabla \underbrace{\frac{1}{d} \big(2 v_{q+1}\cdot(z_{q+1}-z_\ell)+|z_{q+1}|^2 -|z_\ell|^2\big)}_{=:p_\text{com2}}.
\end{align*}

\noindent
In view of \thref{Lemma 2.1} and \thref{Lemma 2.2}, this gives rise to the definition of
\begin{align*}
\oRlin&:=\invdiv \partial_t(\wpr+\wc)+ (v_\ell+z_\ell) \ootimes w_{q+1}+w_{q+1}\ootimes (v_\ell+z_\ell)+\invdiv\Delta (z_{q+1}-z_\ell)\\
&\hspace{.5cm}-\invdiv(z_{q+1}-z_\ell), \\
\oRnonlin&:=\mathcal{A}(Dv_q+Dz_q)\ast_t \varphi_\ell \ast_x \phi_\ell-\mathcal{A}(Dv_\ell+Dz_\ell),\\
\oRnonlinn&:= \mathcal{A}(Dv_\ell+Dz_{\ell}) - \mathcal{A}(Dv_{q+1}+Dz_{q+1}), \\
\oRosc&:= \sum_{\xi \in \Lambda} \mathcal{B}\left(  \mathbb{P}_{\geq \frac{r_\perp \lambda_{q+1}}{2} }\big(\Wxi \otimes \Wxi\big),\nabla \axi^2 \right)-\mu^{-1} \sum_{\xi \in \Lambda} \invdiv\mathbb{P}_{\neq 0} \Big[ \big(\partial_t \axi^2\big)\psi_{(\xi)}^2\phi_{(\xi)}^2 \xi \Big],\\
\oRcor&:=\wpr \ootimes (\wc +\wt)+ (\wc + \wt)\ootimes w_{q+1}, \\
\oRcom&:=(v_\ell+z_\ell) \ootimes (v_\ell+z_\ell)- \big[(v_q+z_q) \ootimes (v_q+z_q)\big]\ast_t \varphi_\ell \ast_x \phi_\ell,\\
\oRcomm&:=v_{q+1}\ootimes (z_{q+1}-z_\ell) + (z_{q+1}-z_\ell)\ootimes v_{q+1}+ z_{q+1}\ootimes z_{q+1}-z_\ell \ootimes z_\ell
\end{align*}
and the corresponding pressures as 
\begin{align*}
p_\text{lin}&:=\frac{2}{d} \Big((v_\ell+z_\ell) \cdot   w_{q+1}\Big),\\
p_\text{osc}&:=\mu^{-1}\sum_{\xi \in \Lambda}\Delta^{-1} \divs\Big( \mathbb{P}_{\neq 0} \big[\partial_t (\axi^2\psi_{(\xi)}^2\phi_{(\xi)}^2 \xi ) \big]\Big)+\rho-p_\ell,\\
p_\text{cor}&:=\frac{1}{d}\Big((\wpr+w_{q+1})\cdot(\wc+\wt)\Big),\\
p_\text{com1}&:= \frac{1}{d} \Big(|v_\ell+z_\ell|^2-|v_q+z_q|^2 \ast_t \varphi_\ell \ast_x \phi_\ell\Big),\\
p_\text{com2}&:=\frac{1}{d} \Big(2 v_{q+1}\cdot(z_{q+1}-z_\ell)+|z_{q+1}|^2 -|z_\ell|^2\Big).
\end{align*}
The representation of the oscillation error and oscillation pressure rely on the subsequent computation:\\
Since $\axi^2 \mathbb{P}_{\neq 0} (\Wxi \otimes \Wxi)$ is a smooth function with periodic boundary conditions, \linebreak $\divs\big(\axi^2 \mathbb{P}_{\neq 0} (\Wxi \otimes \Wxi)\big)$ has zero mean, which allows us, together with \eqref{wpxwp}, to rewrite the oscillation error as 
\begin{align}  \label{divRosc}
\begin{split}
\divs(\mathring{R}_{\text{osc}})+\nabla p_{\text{osc}} &=\sum_{\xi \in \Lambda} \mathbb{P}_{\neq 0}\Big[  \mathbb{P}_{\neq 0}(\Wxi \otimes \Wxi)\nabla\axi^2\Big]+\partial_t \wt\\
&\hspace{.5cm}+\sum_{\xi \in \Lambda} \mathbb{P}_{\neq 0}\bigg[ \axi^2 \divs\Big(\mathbb{P}_{\neq 0}(\Wxi \otimes \Wxi)\Big) \bigg]+\nabla \big(\rho-p_\ell\big).
\end{split}
\end{align}
The fact that $\{\xi,A^1_\xi,A^2_\xi,\ldots,  A^{d-1}_\xi\}$ forms an orthonormal basis of $\mathbb{R}^d$ entails
\begin{align*}
&\big(\divs( \Wxi \otimes \Wxi)\big)_i
\\&\hspace{.5cm}=\sum_{j=1}^d  2n_\ast r_\perp \lambda \xi_j \psi_{r_\parallel}^{\prime} \psixi\phi_{(\xi)}^2\xi_i\xi_j+\sum_{j=1}^d 2 \, \psi_{(\xi)}^2 n_\ast r_\perp \lambda \sum_{k=1}^{d-1}(A^k_\xi)_j \partial_{y_k}\phi_{r_\perp} \phixi \xi_i\xi_j\\
&\hspace{.5cm}=\mu^{-1} \partial_t(\psi_{(\xi)}^2\phi_{(\xi)}^2 \xi_i),
\end{align*}
with $y_k:=n_\ast r_\perp \lambda (x-\alpha_\xi)\cdot A^k_\xi,\, k=1,\ldots, d-1$, leading to 
\begin{align*}
&\divs(\mathring{R}_{\text{osc}})+\nabla p_{\text{osc}}\\
&\hspace{.5cm}= \sum_{\xi \in \Lambda} \mathbb{P}_{\neq 0} \Big[ \mathbb{P}_{\neq 0}\big(\Wxi \otimes \Wxi\big)\nabla \axi^2 \Big]+ \mu^{-1} \sum_{\xi \in \Lambda} \mathbb{P}_{\neq 0} \Big[\partial_t \big(\axi^2\psi_{(\xi)}^2\phi_{(\xi)}^2 \xi\big) \Big]\\
&\hspace{1cm}-\mu^{-1} \sum_{\xi \in \Lambda} \mathbb{P}_{\neq 0} \Big[\big(\partial_t \axi^2\big)\psi_{(\xi)}^2\phi_{(\xi)}^2 \xi \Big]-\mu^{-1} \sum_{\xi \in \Lambda} \mathbb{P}\mathbb{P}_{\neq 0} \Big[ \partial_t \big(\axi^2\psi_{(\xi)}^2\phi_{(\xi)}^2 \xi \big) \Big]\\
&\hspace{1cm}+\nabla \big(\rho-p_\ell\big).
\end{align*}
Recalling that the Leray projection is given by $\Id- \mathbb{P}= \nabla \Delta^{-1} \divs$, and that the projection $\mathbb{P}_{\neq 0}$ onto functions with zero mean is, thanks to \eqref{OPg}, equal to $\mathbb{P}_{\geq \frac{r_\perp \lambda_{q+1}}{2}}$, we deduce
\begin{align*}
\divs(\mathring{R}_{\text{osc}})+\nabla p_{\text{osc}}
&=\nabla \bigg[\Delta^{-1} \divs\Big(\mu^{-1}\sum_{\xi \in \Lambda} \mathbb{P}_{\neq 0} \big[\partial_t (\axi^2\psi_{(\xi)}^2\phi_{(\xi)}^2 \xi ) \big]\Big)+\rho-p_\ell \bigg]\\
&\hspace{0.5cm}+ \sum_{\xi \in \Lambda} \mathbb{P}_{\neq 0} \Big[ \mathbb{P}_{\geq \frac{r_\perp \lambda_{q+1}}{2} }\big(\Wxi \otimes \Wxi\big)\nabla \axi^2 \Big]\\
&\hspace{.5cm}-\mu^{-1} \sum_{\xi \in \Lambda} \mathbb{P}_{\neq 0} \Big[ \big(\partial_t \axi^2\big)\psi_{(\xi)}^2\phi_{(\xi)}^2 \xi \Big].
\end{align*}
The desired representation for the oscillation error is finally obtained by using the antidivergence operator $\invdiv$ once again, along with its bilinear version $\mathcal{B}$.\\
So, in short, the Reynolds stress at level $q+1$ can be expressed as
\begin{align*}
\oRqq=\oRlin+\oRnonlin+\oRnonlinn+\oRosc+\oRcor+\oRcom+\oRcomm.
\end{align*}
This formulation clearly reveals that it retains symmetry and tracelessness. The terms $\oRlin,\,  \oRcor$, $\oRcom,\, \oRcomm$ are constructed in such a way that the corresponding pressures absorb the previous trace parts, whereas $\oRnonlin$ and $\oRnonlinn$ have zero trace, because $v_{q+1}$ and $z_{q+1}$ are divergence free. $\oRosc$ is by definition of the antidivergence operators $\invdiv$ and $\mathcal{B}$ traceless.\\
  Also the symmetry of each part, and consequently of $\oRqq$, follows from the construction of the tensor product as well of the symmetry of $\invdiv$, $\mathcal{B}$ and $D$.

\section{Inductive Estimates of the Reynolds Stress $\mathring{R}_{q+1}$} \label{Inductive Estimates for the Reynolds Stress}
To affirm the remaining key bounds \eqref{key bound e} and \eqref{key bound f} we will proceed in a very similar fashion as in Section~\ref{Verifying the Key Bounds v} and Section \ref{Control of the Energy}. That means we will possibly enlarge the parameters $a$ and $b$ at each step to absorb several implicit constants. We will also frequently use the relations
\begin{align*}
  \alpha b>2d+4N^\ast , \qquad (\iota-1)\sigma \alpha> 2\beta b^3 \qquad \text{and} \qquad a \geq \Big((8N^\ast+32) JLA\Big)^c
\end{align*} 
again. It is, however, necessary to involve the additional constraints \eqref{essential bound c} -- \eqref{essential bound f}, which require choosing $p$ within
\begin{gather*}
  \left(1, \frac{2(d-1)\varsigma+\tau}{2d\varsigma+\tau-1+(20d+46)\frac{\alpha}{\sigma-2\varepsilon}} \wedge \frac{4(d-1)\varsigma +2\tau}{2(d-1)\varsigma+\tau+(d-2)\frac{\ln(f(q+1))}{\ln(\lambda_{q+1})}+(20d+48) \frac{\alpha}{\sigma-2\varepsilon}}\right].
\end{gather*}

\subsection{Verifying the Key Bounds on the Level $q+1$} \label{Verifying the Key Bounds R}
We begin by verifying \eqref{key bound e} through a thorough investigation of each component of the Reynolds stress separately.
\subsubsection*{Fifth Key Bound (\ref{key bound e})} 
\textbf{Linear Error} ~\\
 All terms of 
 
 \begin{align*}
 \|\oRlin\|_{L_\Omega^J C_{I}L_x^1} &\lesssim \underbrace{\|\invdiv\partial_t (\wpr+\wc)\|_{L_\Omega^J C_IL_x^p}}_{=:\text{I}}+\underbrace{\|v_\ell+z_\ell\|_{L_\Omega^{2J} C_I L_x^\infty} \|w_{q+1}\|_{L_\Omega^{2J}C_I L_x^p}}_{=:\text{II}}\\
 & \hspace{.5cm}+\underbrace{\|\invdiv \Delta\big(z_{q+1}-z_\ell\big)\|_{L_\Omega^J C_I L_x^1}+\|\invdiv \big(z_{q+1}-z_\ell\big)\|_{L_\Omega^J C_I L_x^1}}_{=:\text{III}}
 \end{align*}
 
will be estimated in order of their appearance.
 
\noindent \begin{tikzpicture}[baseline=(char.base)]
\node(char)[draw,fill=white,
  shape=rounded rectangle,
  drop shadow={opacity=.5,shadow xshift=0pt},
  minimum width=.8cm]
  {\Large I};
\end{tikzpicture}
Thanks to \eqref{wp+wc}, \thref{Lemma 2.3}, \eqref{amplitude function b}, \eqref{fundamental bounds c}, \eqref{key bound f} and \eqref{essential bound c}, the first term is estimated as 
\begin{align*}
\text{I} &\lesssim \sum_{\xi \in \Lambda} \left( \|a_\xi\|_{L_\Omega^J C_{I,x}^1}\|\Vxi\|_{ C_IL_x^p}+\|a_\xi\|_{L_\Omega^J C^0_{I,x}}\|\partial_t\Vxi\|_{ C_I L_x^p}\right) \\
&\lesssim M \ell^{-\frac{5d+11}{2}}\left(1+\|\oRq\|^{\frac{5}{2}}_{L_\Omega^{\frac{5}{2}J} C_\Il L_x^1}\right)r_\perp^{\frac{d-1}{p}-\frac{d-3}{2}}r_\parallel^{\frac{1}{p}-\frac{3}{2}} \mu\\
&\lesssim M \left( 5JLA^q\right)^{5A^q} \lambda_{q+1}^{-2\alpha}\\
&\lesssim M \lambda_q^{5-\alpha b} \delta_{q+3}\\
&\leq \frac{1}{3024} \underline{e} \delta_{q+3}.
\end{align*}

 \noindent \begin{tikzpicture}[baseline=(char.base)]
\node(char)[draw,fill=white,
  shape=rounded rectangle,
  drop shadow={opacity=.5,shadow xshift=0pt},
  minimum width=.8cm]
  {\Large II};
\end{tikzpicture}
To bound the second term we combine \eqref{property vc}, \eqref{key bound c}, \eqref{property za}, \thref{Proposition SHE}, \eqref{wpL}, \eqref{wcL}, \eqref{wtL}, \eqref{key bound f}, \eqref{essential bound d} and \eqref{essential bound e} to deduce
\begin{align*}
\text{II}&\lesssim \left( \|v_q\|_{L_\Omega^{2J}C^1_{\Il,x}}+ \|z_q\|_{L_\Omega^{2J}C_\Il L_x^\infty} \right)\|w_{q+1}\|_{L_\Omega^{2J}C_I L_x^p}\\
&\lesssim \left( (20JLA^{q-1})^{10A^{q-1}} \lambda_q^{d+3}+ f^\frac{d-2}{2}(q)K_G J^{1/2}\right)   \left(M+\frac{M^2}{|\Lambda|}\right)\ell^{-\frac{5d+11}{2}}\\
&\hspace{.5cm}\cdot \left(1+\|\oRq\|^{\frac{5}{2}}_{L_\Omega^{5J}C_\Il L_x^1} \right)\left(r_\perp^{\frac{d-1}{p}-\frac{d-1}{2}}r_\parallel^{\frac{1}{p}-\frac{1}{2}}+r_\perp^{\frac{d-1}{p}-\frac{d-3}{2}}r_\parallel^{\frac{1}{p}-\frac{3}{2}}+r_\perp^{\frac{d-1}{p}-(d-1)}r_\parallel^{\frac{1}{p}-1}\mu^{-1}\right)\\
&\lesssim\left(M+\frac{M^2}{|\Lambda|}\right) \left( \lambda_{q-1}^{10} \lambda_q^{d+3}+ f^{\frac{d-2}{2}}(q) K_G J^{1/2}\right) \lambda_{q+1}^{(10d+22)\frac{\alpha}{\sigma-2\varepsilon}}   (10JLA^q)^{5A^q}\\
&\hspace{.5cm}\cdot \left(r_\perp^{\frac{d-1}{p}-\frac{d-1}{2}}r_\parallel^{\frac{1}{p}-\frac{1}{2}}+r_\perp^{\frac{d-1}{p}-(d-1)}r_\parallel^{\frac{1}{p}-1}\mu^{-1}\right)\\
&\lesssim\left(M+\frac{M^2}{|\Lambda|}\right) \left(  \lambda_q^{d+9-\alpha b}+ K_GJ^{1/2} \lambda_q^{5-\alpha b}\right) \lambda_{q+1}^{-2\beta b^2}\\
&\leq \frac{1}{3024} \underline{e} \delta_{q+3}.
\end{align*}

 \noindent \begin{tikzpicture}[baseline=(char.base)]
\node(char)[draw,fill=white,
  shape=rounded rectangle,
  drop shadow={opacity=.5,shadow xshift=0pt},
  minimum width=.8cm]
  {\Large III};
\end{tikzpicture}
Furthermore, \thref{Lemma 2.3}, \thref{Lemma 2.1}, \eqref{property zb}, \eqref{property zc}, \thref{Proposition SHE} and \eqref{frequency z} imply 
\begin{align*}
\text{III} & \lesssim \|z_{q+1}-z_q\|_{L_\Omega^J C_IH_x^1}+\|z_q-z_\ell\|_{L_\Omega^J C_IH_x^1}\\
&\lesssim \left( f^{-\sigma}(q) +\ell^{\sigma/2-\varepsilon}+ \ell f^{1-\sigma}(q)\right)K_G J^{1/2}\\
&\lesssim \big(\lambda_{q}^{-\frac{2\beta b^3}{\iota-1}}+\lambda_q^{-d-5}\lambda_{q+1}^{-\alpha}+\lambda_{q}^{-2d-10} \lambda_q^{-\frac{2\beta b^3}{\iota-1}}\big)K_G J^{1/2}\\
&\leq \frac{1}{3024} \underline{e} \delta_{q+3}.
\end{align*}

\noindent 
\textbf{Nonlinear Error 1}~\\
Now, we turn our attention to a meticulous analysis of $\oRnonlin$. It holds 
 \begin{align*}
\|\oRnonlin\|_{L_\Omega^J C_{I}L_x^1}&\lesssim \underbrace{ \|\mathcal{A}(Dv_q+Dz_q)\ast_t\varphi_\ell \ast_x \phi_\ell -\mathcal{A}(Dv_q+Dz_q)\|_{L_\Omega^J C_IL_x^1}}_{=:\text{I}}\\
&\hspace{.5cm} +\underbrace{\|\mathcal{A}(Dv_q+Dz_q)-\mathcal{A}(Dv_\ell+Dz_\ell)\|_{L_\Omega^J C_IL_x^1}}_{\text{:= II}}.
\end{align*}

\noindent \begin{tikzpicture}[baseline=(char.base)]
\node(char)[draw,fill=white,
  shape=rounded rectangle,
  drop shadow={opacity=.5,shadow xshift=0pt},
  minimum width=.8cm]
  {\Large I};
\end{tikzpicture}
Owing to \thref{Lemma 2.4},  \eqref{key bound c}, \eqref{OPb}, \eqref{OPa}, \eqref{OPc}, \thref{Proposition SHE} and \eqref{frequency z}, the first term admits
\begin{align*}
\text{I} &\lesssim  \max_{k=1,\iota-1}\left\{ \ell^k\left( \|v_q\|^k_{L_\Omega^J C_{\Il,x}^2}+ \|z_q\|^k_{L_\Omega^J C_\Il W_x^{2,1}} \right)+\ell^{(\sigma/2-\varepsilon)k}\|z_q\|^k_{L_\Omega^J C_\Il^{0,\sigma/2-\varepsilon}W_x^{1,1}}\right\}\\
&\lesssim   \max_{k=1,\iota-1}\Big\{ \ell \left((12JLA^{q-1})^{12A^{q-1}}\lambda_q^{\frac{3}{2}d+4}+\|(\Id-\Delta)^{\frac{1-\sigma}{2}}\mathbb{P}_{\leq f(q)} (\Id-\Delta)^{\frac{1+\sigma}{2}}z\|_{L_\Omega^J C_\Il L_x^2}\right)\Big\}^k\\
&\hspace{.5cm}+  \max_{k=1,\iota-1}\ell^{(\sigma/2-\varepsilon)k} \|z\|^k_{L_\Omega^J C_\Il^{0,\sigma/2-\varepsilon}H_x^{1+\sigma}}  \\
&\lesssim  \max_{k=1,\iota-1}\left\{ \ell \lambda_{q-1}^{12}\lambda_q^{\frac{3}{2}d+4}+\ell f^{1-\sigma}(q)K_G J^{1/2}+\ell^{\sigma/2-\varepsilon} K_G J^{1/2} \right\}^k\\
&\lesssim \left\{\lambda_{q+1}^{-2\alpha} \lambda_q^{-\frac{1}{2}d-5} +\lambda_q^{-2d-10}\lambda_{q}^{-\frac{2\beta b^3}{\iota-1}}K_G J^{1/2}+\lambda_{q+1}^{-\alpha}\lambda_q^{-d-5}K_G J^{1/2}\right\}^{\iota-1}\\
&\lesssim \left\{\lambda_{q+1}^{-\alpha} \lambda_q^{-\frac{1}{2}d-5} +\lambda_q^{-2d-10}K_G J^{1/2}+\lambda_q^{-d-5}K_G J^{1/2}\right\}^{\iota-1} \lambda_{q+3}^{-2\beta }\\
&\leq \frac{1}{2016}  \underline{e} \delta_{q+3}.
\end{align*}

 \noindent \begin{tikzpicture}[baseline=(char.base)]
\node(char)[draw,fill=white,
  shape=rounded rectangle,
  drop shadow={opacity=.5,shadow xshift=0pt},
  minimum width=.8cm]
  {\Large II};
\end{tikzpicture}

\noindent Similarly to the previous calculation, we deduce 
\begin{align*}
\text{II} \leq \frac{1}{2016} \underline{e}\delta_{q+3}
\end{align*}
 as well.

\noindent  \textbf{Nonlinear Error 2}~\\  
\thref{Lemma 2.4}, \eqref{wWiota}, \eqref{property zc}, \eqref{property zb}, \eqref{frequency z} and \thref{Proposition SHE} additionally yield 
\begin{align*}
 &\|\oRnonlinn\|_{L_\Omega^J C_{I}L_x^1}\\
 &\hspace{.3cm}\lesssim \max_{k=1,\iota-1} \|D(v_\ell -v_{q+1}+z_\ell- z_{q+1})\|^k_{L_\Omega^J C_I L_x^1}\\
 &\hspace{.3cm}\lesssim \max_{k=1,\iota-1} \left\{ \|w_{q+1}\|_{L_\Omega^JC_I W_x^{1,1}}+ \|z_\ell-z_q\|_{L_\Omega^J C_I W_x^{1,1}}+\|z_q-z_{q+1}\|_{L_\Omega^J C_I W_x^{1,1}}\right\}^k \\
 &\hspace{.3cm}\lesssim \max_{k=1,\iota-1} \left\{\left(M+\frac{M^2}{|\Lambda|}\right) (14JLA^q)^{7A^q}\lambda_{q+1}^{-2\alpha}\right\}^k\\&\hspace{.5cm}+\max_{k=1,\iota-1} \left\{\left( \ell^{\sigma/2-\varepsilon}+\ell f^{1-\sigma}(q) +f^{-\sigma}(q)\right)\|z\|_{L_\Omega^J C^{0,\sigma/2-\varepsilon}_\Il H_x^{1+\sigma}}\right\}^k\\
 &\hspace{.3cm}\lesssim \max_{k=1,\iota-1} \left\{\left( M+\frac{M^2}{|\Lambda|}\right) \lambda_q^{7-\alpha b}\lambda_{q+1}^{-\alpha}\right\}^k\\
 &\hspace{.5cm}+\max_{k=1,\iota-1} \left\{\left( \lambda_{q+1}^{-\alpha}\lambda_q^{-d-5}+ \lambda_q^{-2d-10} \lambda_{q+1}^{-\frac{2\beta b^3}{\iota-1}}+f^{-\sigma}(q) \right)K_G J^{1/2}\right\}^k\\
  &\hspace{.3cm}\leq \frac{1}{32256} \lambda_2^{2\beta} \underline{e} \left\{\lambda_{q+1}^{-\alpha}+\lambda_{q}^{-\frac{2\beta b^3}{\iota-1}}\right\}^{\iota-1}\\
  & \hspace{.3cm}\leq \frac{1}{1008} \underline{e} \delta_{q+3}.
 \end{align*}

\noindent \textbf{Oscillation Error} ~\\
To control the oscillation error we involve \thref{Lemma 2.2}, \thref{Lemma 2.1}, \eqref{OPa}, \thref{Lemma Building Blocks}, \eqref{amplitude function b}, \eqref{fundamental bounds c}, \eqref{fundamental bounds a}, \eqref{fundamental bounds b}, \eqref{key bound f}, \eqref{essential bound f} and \eqref{essential bound e} to infer
\begin{align*}
&\|\oRosc\|_{L_\Omega^J C_{I}L_x^1}\\
&\hspace{.5cm}\lesssim \sum_{\xi \in \Lambda} (r_\perp \lambda_{q+1})^{-1} \|\nabla \axi^2\|_{L_\Omega^J C_{I,x}^1}\|\Wxi \ootimes \Wxi \|_{C_I L_x^p}\\
&\hspace{1cm}+ \sum_{\xi \in \Lambda}  \mu^{-1} \|\partial_t \axi^2 \|_{L_\Omega^J C_{I,x}^0}\|\psixi^2 \phixi^2\|_{C_I L_x^p} \\
&\hspace{.5cm}\lesssim \sum_{\xi \in \Lambda}(r_\perp \lambda_{q+1})^{-1} \left(\|\axi\|_{L_\Omega^{2J}C_{I,x}^2}\|\axi\|_{L_\Omega^{2J}C_{I,x}^0}+\|\axi\|_{L_\Omega^{2J}C_{I,x}^1}^2 \right) \|\Wxi\|_{C_IL_x^{2p}}^2\\
&\hspace{1cm}+ \sum_{\xi \in \Lambda}\mu^{-1} \|\axi\|_{L_\Omega^{2J}C_{I,x}^1}\|\axi\|_{L_\Omega^{2J}C_{I,x}^0} \|\psixi\|_{C_I L_x^{2p}}^2 \|\phixi\|_{L_x^{2p}}^2  \\
&\hspace{.5cm}\lesssim \frac{M^2}{|\Lambda|} \ell^{-(5d+11)}\left(1+\|\oRq\|_{L_\Omega^{7J}C_\Il L_x^1}^5\right) r_\perp^{\frac{d-1}{p}-d}r_\parallel^{\frac{1}{p}-1} \lambda_{q+1}^{-1}\\
&\hspace{1cm}+ \frac{M^2}{|\Lambda|} \ell^{-(3d+6)}(1+\|\oRq\|_{L_\Omega^{5J}C_\Il L_x^1}^3)r_\perp^{\frac{d-1}{p}-(d-1)}r_\parallel^{\frac{1}{p}-1} \mu^{-1}\\
&\hspace{.5cm}\lesssim \frac{M^2}{|\Lambda|} (14JLA^q)^{10A^q}\lambda_{q+1}^{-2\alpha}\\
&\hspace{.5cm}\lesssim \frac{M^2}{|\Lambda|}\lambda_q^{10-\alpha b} \lambda_{q+1}^{-\alpha}\\
&\hspace{.5cm}\leq \frac{1}{1008}  \underline{e} \delta_{q+3}.
\end{align*}

\noindent \textbf{Corrector Error} ~\\
 We continue by estimating the corrector error as
  \begin{align*}
 &\|\oRcor\|_{L_\Omega^J C_{I}L_x^1}\\
 &\hspace{.5cm} \lesssim \left(\|\wpr\|_{L_\Omega^{2J}C_I L_x^2}+ \|w_{q+1}\|_{L_\Omega^{2J}C_I L_x^2} \right)\|\wc+\wt\|_{L_\Omega^{2J}C_I L_x^2}\\
&\hspace{.5cm}\lesssim \|\wpr\|_{L_\Omega^{2J}C_I L_x^2} \|\wc+\wt\|_{L_\Omega^{2J}C_IL_x^2}+ \|\wc+\wt\|^2_{L_\Omega^{2J}C_I L_x^2}\\
 &\hspace{.5cm}\lesssim M\left\{ \|\oRq\|_{L_\Omega^J C_\Il L_x^1}^{1/2}+\bar{e}^{1/2} \delta^{1/2}_{q+1}+\ell^{-\frac{5d+11}{2}}\left(1+\|\oRq\|_{L_\Omega^{5J}C_\Il L_x^1}^{5/2}\right) r_\perp^{-1/2} \lambda_{q+1}^{-1/2}\right\}\\
 &\hspace{1cm}\cdot\left(M+\frac{M^2}{|\Lambda|}\right)\ell^{-\frac{5d+11}{2}} \left(1+\|\oRq\|_{L_\Omega^{5J}C_\Il L_x^1}^{5/2}\right)  \left( r_\perp r_\parallel^{-1}+r_\perp^{-\frac{d-1}{2}}r_\parallel^{-1/2}\mu^{-1}\right)\\
  &\hspace{1cm}+\left(M+\frac{M^2}{|\Lambda|}\right)^2\ell^{-(5d+11)} \left(1+\|\oRq\|_{L_\Omega^{5J}C_\Il L_x^1}^{5}\right)  \left( r_\perp r_\parallel^{-1}+r_\perp^{-\frac{d-1}{2}}r_\parallel^{-1/2}\mu^{-1}\right)^2 \\
  &\hspace{.5cm}\lesssim \left(M+\frac{M^2}{|\Lambda|}\right)^2\left\{ \bar{e}^{1/2} \delta_{q+1}^{1/2}+ (10JLA^q)^{5A^q}\lambda_{q+1}^{-2\alpha}\right\} (10JLA^q)^{5A^q}\lambda_{q+1}^{-2\alpha}\\
  &\hspace{1cm}+\left(M+\frac{M^2}{|\Lambda|}\right)^2(10JLA^q)^{10A^q}\lambda_{q+1}^{-2\alpha}\\
  &\hspace{.5cm}\lesssim \left(M+\frac{M^2}{|\Lambda|}\right)^2 \left(\bar{e}^{1/2}\delta_{q+1}^{1/2}+ \lambda_q^{5-\alpha b}\lambda_{q+1}^{-\alpha}\right)\lambda_{q}^{5-\alpha b} \lambda_{q+1}^{-\alpha}+\left(M+\frac{M^2}{|\Lambda|}\right)^2\lambda_q^{10-\alpha b}\lambda_{q+1}^{-\alpha}\\
  &\hspace{.5cm}\leq \frac{1}{1008} \underline{e} \delta_{q+3},
 \end{align*}
 relying basically on \eqref{wpL2}, \eqref{wcL}, \eqref{wtL}, \eqref{key bound e}, \eqref{key bound f}, \eqref{essential bound a}, \eqref{essential bound g} and \eqref{essential bound h}.

 \noindent
 \textbf{Commutator Error 1}~\\
 Next, we investigate the behavior of $\oRcom$. It holds
 \begin{align*}
\|\oRcom\|_{L_\Omega^J C_I L_x^1}&\lesssim \underbrace{\|(v_\ell+z_\ell)\ootimes (v_\ell+z_\ell)- (v_\ell+z_\ell) \ootimes (v_q+z_q)\|_{L_\Omega^J C_I L_x^1}}_{=:\text{I}}\\
&\hspace{.5cm}+\underbrace{\|(v_\ell+z_\ell)\ootimes (v_q+z_q)- (v_q+z_q) \ootimes (v_q+z_q)\|_{L_\Omega^J  C_I L_x^1}}_{=:\text{II}}\\
&\hspace{.5cm}+\underbrace{\|(v_q+z_q)\ootimes (v_q+z_q)- \Big[(v_q+z_q) \ootimes (v_q+z_q)\Big]\ast_t\varphi_\ell \ast_x \phi_\ell\|_{L_\Omega^J C_I L_x^1}.}_{=:\text{III}}
\end{align*}

\noindent \begin{tikzpicture}[baseline=(char.base)]
\node(char)[draw,fill=white,
  shape=rounded rectangle,
  drop shadow={opacity=.5,shadow xshift=0pt},
  minimum width=.8cm]
  {\Large I \& II };
\end{tikzpicture} 
~\\ 
Owing to \eqref{property vd} we obtain 
\begin{align*}
\text{I}+ \text{II} &\lesssim\|v_q+z_q\|_{L_\Omega^{2J}C_\Il L_x^2}\|v_\ell+z_\ell-v_q-z_q\|_{L_\Omega^{2J}C_I L_x^2}\\
&\lesssim \left( \|v_q\|_{L_\Omega^{2J}C_\Il L_x^2}+\|z_q\|_{L_\Omega^{2J}C_\Il L_x^2}\right) \ell \Big[\|v_q\|_{L_\Omega^{2J}C_{\Il,x}^1}+\| z_q\|_{L_\Omega^{2J }C_\Il H_x^{1+\sigma}}\Big]\\
&\hspace{.5cm} +\left( \|v_q\|_{L_\Omega^{2J}C_\Il L_x^2}+\|z_q\|_{L_\Omega^{2J}C_\Il L_x^2}\right)\ell^{\sigma/2-\varepsilon} \|z_q\|_{L_\Omega^{2J}C_\Il^{0,\sigma/2-\varepsilon}H_x^{1+\sigma}}.
\end{align*}

 \noindent \begin{tikzpicture}[baseline=(char.base)]
\node(char)[draw,fill=white,
  shape=rounded rectangle,
  drop shadow={opacity=.5,shadow xshift=0pt},
  minimum width=.8cm]
  {\Large III};
\end{tikzpicture}
~\\
Moreover, one has
\begin{align*}
\text{III} &\lesssim \|v_q+z_q\|_{L_\Omega^{2J} C_\Il L_x^2}\bigg\{ \ell \|\partial_t v_q\|_{L_\Omega^{2J}C_\Il L_x^2}+\ell \|\nabla v_q\|_{L_\Omega^{2J}C_\Il L_x^2}\bigg\}\\
&\hspace{.2cm}+\|v_q+z_q\|_{L_\Omega^{2J} C_\Il L_x^2}\bigg\{  \ell^{\sigma/2-\varepsilon} \|z_q\|_{L_\Omega^{2J}C^{0,\sigma/2-\varepsilon}_\Il L_x^2}+\ell \|\nabla z_q\|_{L_\Omega^{2J}C_\Il L_x^2}\bigg\}\\
&\lesssim \left( \|v_q\|_{L_\Omega^{2J}C_\Il L_x^2}+\|z_q\|_{L_\Omega^{2J}C_\Il L_x^2}\right)\\
&\hspace{.5cm} \cdot \left\{\ell \Big[\|v_q\|_{L_\Omega^{2J}C_{\Il,x}^1}+\| z_q\|_{L_\Omega^{2J }C_\Il H_x^{1+\sigma}}\Big] +\ell^{\sigma/2-\varepsilon} \|z_q\|_{L_\Omega^{2J}C_\Il^{0,\sigma/2-\varepsilon}H_x^{1+\sigma}}\right\}.
\end{align*}
Thanks to \eqref{key bound a}, \eqref{delta series bound}, \eqref{OPb}, \eqref{OPa}, \thref{Proposition SHE} and \eqref{key bound c} we conclude 
\begin{align*}
\|\oRcom\|_{L_\Omega^J C_I L_x^1}
&\lesssim  (M_0 \bar{e}^{1/2}+K_G J^{1/2})\\&\hspace{.5cm}\cdot\left\{ \ell \Big[(20JLA^{q-1})^{10A^{q-1}}\lambda_q^{d+3}+K_G J^{1/2} \Big]+ \ell^{\sigma/2-\varepsilon}K_G J^{1/2}\right\}\\
&\lesssim (M_0 \bar{e}^{1/2}+K_G J^{1/2})K_G J^{1/2} \left\{ \ell \lambda_{q-1}^{10}\lambda_q^{d+3} + \ell^{\sigma/2-\varepsilon}\right\}\\
&\lesssim (M_0 \bar{e}^{1/2}+K_G J^{1/2})K_G J^{1/2} \left\{\lambda_q^{d+4-\alpha b}+ \lambda_{q}^{-d-5}\right\}\lambda_{q+1}^{-\alpha}\\
&\leq \frac{1}{1008}  \underline{e} \delta_{q+3}. 
\end{align*}
 \textbf{Commutator Error 2}~\\
To estimate the last term of the Reynolds stress, we appeal to \eqref{key bound a}, \eqref{delta series bound}, \eqref{property zb}, \eqref{property zc}, \eqref{OPa}, \thref{Proposition SHE} and \eqref{frequency z} to deduce 
\begin{align*}
\|\oRcomm\|_{L_\Omega^J C_I L_x^1}& \lesssim \|v_{q+1} \ootimes (z_{q+1}-z_\ell)\|_{L_\Omega^J C_I L_x^1} + \|  (z_{q+1}-z_\ell) \ootimes v_{q+1}\|_{L_\Omega^J C_I L_x^1}\\
&\hspace{.5cm}+ \|z_{q+1}\ootimes (z_{q+1}-  z_q)\|_{L_\Omega^J C_I L_x^1}  + \| (z_{q+1}-z_q)\ootimes z_q\|_{L_\Omega^J C_I L_x^1} \\
&\hspace{.5cm}+ \| z_q \ootimes (z_q- z_\ell) \|_{L_\Omega^J C_I L_x^1}+\|(z_q  -z_\ell) \ootimes z_\ell \|_{L_\Omega^J C_I L_x^1}  \\
& \lesssim \|v_{q+1}\|_{L_\Omega^{2J} C_I L_x^2} \Big( \|z_{q+1}-z_q\|_{L_\Omega^{2J} C_I L_x^2}+ \|z_q-z_\ell \|_{L_\Omega^{2J} C_I L_x^2} \Big)\\
& \hspace{.5cm} +\Big(\|z_{q+1}\|_{L_\Omega^{2J} C_I L_x^2}+\|z_{q}\|_{L_\Omega^{2J} C_I L_x^2} \Big) \|z_{q+1}-z_q\|_{L_\Omega^{2J} C_I L_x^2} \\
&\hspace{.5cm} +\|z_q\|_{L_\Omega^{2J} C_\Il L_x^2}  \|z_q-z_\ell\|_{L_\Omega^{2J} C_I L_x^2} \\
& \lesssim M_0\bar{e}^{1/2} K_G^2 J\left(f^{-(1+\sigma)}(q)+ \ell^{\sigma/2-\varepsilon}+ \ell f^{1-\sigma}(q) \right)\\
& \lesssim M_0\bar{e}^{1/2} K_G^2 J\left(\lambda_{q}^{-2\beta b^3 \frac{1+\sigma}{(\iota-1)\sigma}}+\lambda_{q+1}^{-\alpha} \lambda_q^{-d-5}+ \lambda_q^{-2d-10} \lambda_{q+1}^{-\frac{2\beta b^3}{\iota-1}} \right)\\
& \leq\frac{1}{1008} \underline{e} \delta_{q+3}.
\end{align*} 
\noindent 
Summing up all these bounds verifies \eqref{key bound e} at the level $q+1$. 
\subsubsection*{Sixth Key Bound (\ref{key bound f})} 
The last key bound is estimated in a very similar fashion. \\
\textbf{Linear Error} ~\\
We compute
 \begin{align*}
 \|\oRlin\|_{L_\Omega^j C_{I}L_x^1}& \lesssim M \left(  5j LA^{q} \right)^{5A^{q}}\lambda_{q+1}^{-2\alpha}+\left(M+\frac{M^2}{|\Lambda|}\right) K_G (20jLA^q)^{15A^q} \lambda_q^{d+3} \lambda_{q+1}^{-2 \alpha}\\
 &\hspace{.5cm}+ \left( f^{-\sigma}(q) +\ell^{\sigma/2-\varepsilon}+ \ell f^{1-\sigma}(q)\right)K_G j^{1/2}\\
 &\lesssim  \left(M+\frac{M^2}{|\Lambda|}\right) K_G \left( \lambda_q^{d+3- \alpha b} \lambda_{q+1}^{- \alpha}+\lambda_{q}^{-\frac{2\beta b^3}{\iota-1}}+ \lambda_{q+1}^{-\alpha}\right)\left(  2j LA^{q+1} \right)^{2A^{q+1}}\\
 &\leq \frac{1}{7} \left(  2j LA^{q+1} \right)^{2A^{q+1}}.
  \end{align*}
  \noindent
\textbf{Nonlinear Error 1} \\
We also get 
 \begin{align*}
&\|\oRnonlin\|_{L_\Omega^j C_{I}L_x^1}\\
&\hspace{.2cm}\lesssim  \max_{k=1,\iota-1}\left\{ \ell \left((12jLA^{q-1})^{12A^{q-1}}\lambda_q^{\frac{3}{2}d+4}+f^{1-\sigma}(q)K_G j^{1/2}\right)+\ell^{\sigma/2-\varepsilon}K_G j^{1/2} \right\}^k\\
&\hspace{.2cm}\lesssim \max_{k=1,\iota-1} \left\{  \lambda_q^{\frac{3}{2}d+4-\alpha b}\lambda_{q+1}^{-\alpha}+ \lambda_{q}^{-\frac{2\beta b^3}{\iota-1}} + \lambda_{q+1}^{-\alpha}  \right\}^k  K_G (2jLA^{q+1})^{2A^{q+1}}\\
&\hspace{.2cm}\leq \frac{1}{7} (2jLA^{q+1})^{2A^{q+1}}.
\end{align*}
\noindent 
\textbf{Nonlinear Error 2}~\\  
Additionally we find \pagebreak
\begin{align*}
&\|\oRnonlinn\|_{L_\Omega^j C_{I}L_x^1}\\
&\lesssim \max_{k=1,\iota-1} \left\{\left(M+\frac{M^2}{|\Lambda|}\right) (7jLA^q)^{7A^q}\lambda_{q+1}^{-2\alpha}+\left( \ell^{\sigma/2-\varepsilon}+\ell f^{1-\sigma}(q) +f^{-\sigma}(q)\right)K_Gj^{1/2}\right\}^k\\
 &\lesssim  \max_{k=1,\iota-1} \left\{   \lambda_{q+1}^{-\alpha}+\lambda_q^{-\frac{2\beta b^3}{\iota-1}}\right\}^k  \left(M+\frac{M^2}{|\Lambda|}\right)K_G (2jLA^{q+1})^{2A^{q+1}}\\
&\leq \frac{1}{7}(2jLA^{q+1})^{2A^{q+1}}.
\end{align*}

 \noindent
\textbf{Oscillation Error} \\
Moreover, it holds
\begin{align*}
\|\oRosc\|_{L_\Omega^j C_{I}L_x^1} &\lesssim \frac{M^2}{|\Lambda|} (14jLA^q)^{10A^q}\lambda_{q+1}^{-2\alpha} \leq \frac{1}{7}(2jLA^{q+1})^{2A^{q+1}}.
\end{align*}

\noindent 
\textbf{Corrector Error} ~\\
The corrector error is estimated as 
 \begin{align*}
 &\|\oRcor\|_{L_\Omega^j C_{I}L_x^1}\\
 & \lesssim M\Big\{ (2jLA^q)^{A^q}+\bar{e}^{1/2}\delta_{q+1}^{1/2}+ (10jLA^q)^{5A^q}\lambda_{q+1}^{-2 \alpha}\Big\} \left(M+\frac{M^2}{|\Lambda|}\right) (10jLA^q)^{5A^q}\lambda_{q+1}^{-2 \alpha}\\
 &\hspace{.5cm}+ \left(M+\frac{M}{|\Lambda|}\right)^2  (10jLA^q)^{10A^q} \lambda_{q+1}^{-2 \alpha}\\
 &\lesssim \left(M+\frac{M^2}{|\Lambda|} \right)^2 \bar{e}^{1/2} \lambda_{q+1}^{-2 \alpha}(10jLA^q)^{10A^q} \\
 &\leq \frac{1}{7} (2jLA^{q+1})^{2A^{q+1}}
 \end{align*}
either.

\noindent 
 \textbf{Commutator Error 1}~\\
 The same bound is derived for the Commutator Error $1$ as
\begin{align*}
\|\oRcom\|_{L_\Omega^j C_I L_x^1}
&\lesssim  \big(M_0(10jLA^{q-1})^{5A^{q-1}}+M_0 \bar{e}^{1/2}+K_G j^{1/2}\big)\\
&\hspace{.5cm} \cdot\left\{ \ell \Big[(20jLA^{q-1})^{10A^{q-1}}\lambda_q^{d+3}+K_G j^{1/2} \Big]+ \ell^{\sigma/2-\varepsilon}K_G j^{1/2}\right\}\\
&\lesssim  M_0K^2_G \bar{e}^{1/2}\Big( \lambda_q^{d+3-\alpha b} \lambda_{q+1}^{-\alpha} +\lambda_{q+1}^{-\alpha}\Big) (20jLA^{q-1})^{15A^{q-1}} \\
&\leq \frac{1}{7} (2jLA^{q+1})^{2A^{q+1}}.
\end{align*}
\noindent 
 \textbf{Commutator Error 2}\\
 Also the Commutator Error $2$ admits this bound;
\begin{align*}
\|\oRcomm\|_{L_\Omega^j C_I L_x^1}&\lesssim \Big(M_0(10jLA^{q-1})^{5A^{q-1}}+M_0\bar{e}^{1/2}+ K_G j^{1/2}\Big)\\
&\hspace{.5cm}\cdot\left(f^{-(1+\sigma)}(q)+\ell^{\sigma/2-\varepsilon}+ \ell f^{1-\sigma}(q)\right)K_Gj^{1/2}\\
&\lesssim M_0K_G^2\bar{e}^{1/2}\left(\lambda_{q}^{-2\beta b^3\frac{1+\sigma}{(\iota-1)\sigma}}+ \lambda_{q+1}^{-\alpha}+ \lambda_{q}^{-\frac{2\beta b^3}{\iota-1}}\right)(10jLA^{q-1})^{6A^{q-1}}\\
&\leq \frac{1}{7}(2jLA^{q+1})^{2A^{q+1}}.
\end{align*}
~\\
Altogether this confirms the sixth key bound \eqref{key bound f} at the level $q+1$.

\subsection{Convergence of the Sequence} \label{Convergence of the Sequence R}
Moreover, it follows immediately from \eqref{key bound e} that $\big(\oRq\big)_{q \geq 0}$ converges in
 \begin{align*}
\left( L^{J}\big(\Omega;C\big([0,\infty);L^1\big( \mathbb{T}^d;\mathbb{R}^{d\times d}\big)\big)\big),\sup_{|I|=1}\|\cdot\|_{L_\Omega^JC_I L_x^1} \right)
\end{align*}
 to $0$.

\section{Adaptedness of the Iterates \& Deterministic Initial Values} \label{Conclusion of the Convex Integration Scheme}
To complete the proof of \thref{Proposition Main Iteration}, it remains to establish the adaptedness of the iterates $\big(v_q(t),\oRq(t)\big)_{t\geq 0}$ for any $q\in \mathbb{N}_0$, as well as the deterministic nature of their initial values $\big(v_q(0),\oRq(0)\big)$. The verification of these properties relies on the same arguments as those in \cite{Be23}. More precisely, the starting point (cf. Section \ref{Start of the Iteration}) and hence, by induction, each component of the velocity and Reynolds stress at level $q+1$ exhibits both of these characteristics. Therefore, this must also be true for the entire process at level $q+1$. The relationships among the components of the velocity $v_{q+1}$ are illustrated in Figure \ref{Implication scheme}.\\
It is important to emphasize that the time mollifier must be compactly supported in the interval $(0,1)$ (see Section \ref{Mollification}). This choice is essential to ensure the validity of adaptedness when applied to mollified fields.
\end{proof}

\chapter{ Existence \& Non-Uniqueness of Ergodic Leray--Hopf Solutions} \label{Existence and Non-Uniqueness of ergodic Leray--Hopf Solutions}
 In what follows we will frequently denote the Borel $\sigma$-algebra on $\mathcal{T}$ by  $\mathcal{B\left(\mathcal{T}\right)}$.
\section{Existence \& Non-Uniqueness of Stationary Leray--Hopf Solutions} \label{Existence and Non-Uniqueness of stationary Leray--Hopf Solutions}
Before ultimately proving the existence and non-uniqueness of ergodic Leray--Hopf solutions, we will first focus on establishing the existence of stationary Leray--Hopf solutions as a preliminary step.\\
The proof is based on a Krylov--Bogoliubov-type argument, following the methodology in \cite{HZZ25}. Specifically, we aim to identify a convergent subsequence of the sequence of ergodic averages, whose limit is, according to Skorokhod's representation theorem, the law of the desired solution.
\begin{samepage}
\begin{theorem}\thlabel{Theorem Stationary}
There exist infinitely many stationary Leray--Hopf solutions to the power-law system of shear-thinning fluids \eqref{PLF} with power index $\iota \in \left(1,\frac{2d}{d+2} \right)$.\\ 
More precisely, there exists a stationary Leray--Hopf solution $\widetilde{u}$ on $\Big( (\widetilde{\Omega},\widetilde{\mathcal{F}},\widetilde{\mathcal{P}}), (\widetilde{\mathcal{F}_t})_{t\geq 0}, \widetilde{B}\Big)$ to \eqref{PLF}, belonging $\mathcal{P}$-a.s. to
\begin{align*} 
C\big([0,\infty);H^\gamma\big( \mathbb{T}^d;\mathbb{R}^d\big) \big)\cap C_{\rm{loc}}^{0,\gamma}\big([0,\infty); L^{2}\big( \mathbb{T}^d;\mathbb{R}^d\big)\cap W^{1+\gamma,\iota}\big( \mathbb{T}^d;\mathbb{R}^d\big)\big)
\end{align*}
for some $\gamma \in (0,1)$. For fixed $J\geq 1,\, N\in \mathbb{N}_0$, arbitrary small $\epsilon>0$ and some constant $C>0$ this solution particularly admits
\begin{align} \label{Stationary Property 1}
\mathbf{\widetilde{E}}\|\widetilde{u}\|_{C_{[0,N]}H_x^\gamma }^{2J}+\mathbf{\widetilde{E}}\|\widetilde{u}\|_{C^{0,\gamma}_{[0,N]}L_x^2 }^{2J} +\mathbf{\widetilde{E}}\|\widetilde{u}\|_{C^{0,\gamma}_{[0,N]}W_x^{1+\gamma,\iota} }^{2J} \leq C N
\end{align}
as well as
\begin{align} \label{Stationary Property 2}
\sup_{|I|=1}\mathbf{\widetilde{E}}\|\widetilde{u}-\widetilde{z}\|_{C_I W_x^{1,\iota} }^{2J} \leq \epsilon,
\end{align}
where $\widetilde{z}$ is the mild solution \eqref{SHE Solution} to the modified stochastic heat equation \eqref{Stochastic Heat Equation} with $B$ replaced by $\widetilde{B}$.
 \end{theorem}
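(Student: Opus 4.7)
The plan is to invoke a Krylov--Bogoliubov procedure on the ergodic averages of a Leray--Hopf solution constructed via \thref{Theorem Weak Solution}, following the strategy of \cite{HZZ22} but adapted to the non-Newtonian stress $\mathcal{A}$ and to the target regularity \eqref{Target Space Main Result}. First, I fix a Leray--Hopf solution $u$ together with its driving noise $B$ produced by \thref{Theorem Weak Solution} for an energy profile $e$ with $t \mapsto \frac{t+3}{3}e(t)$ non-increasing. By \thref{Remark Weak Solution} the crucial time-uniform bounds
\begin{align*}
\sup_{|I|=1}\Big(\|u\|_{L^{2J}C_IH^\gamma} + \|u\|_{L^{2J}C^{0,\gamma}_IL^2} + \|u\|_{L^{2J}C^{0,\gamma}_IW^{1+\gamma,\iota}}\Big) < \infty
\end{align*}
hold, together with $\sup_{|I|=1}\|u-z\|_{L^{2J}C_IW^{1,\iota}} \leq \epsilon$ for any prescribed $\epsilon>0$.

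Next, I define on $\mathcal{T}$ from \thref{Definition Shift} the ergodic averages
\begin{align*}
\nu_N := \frac{1}{N}\int_0^N \mathcal{P}\circ\big(\mathcal{S}_t(u,B)\big)^{-1}\,dt, \qquad N \in \mathbb{N}.
\end{align*}
The uniform-in-time Hölder estimates above, combined with the compact embeddings $H^\gamma \hookrightarrow H^{\gamma'}$ and $W^{1+\gamma,\iota} \hookrightarrow W^{1+\gamma',\iota}$ for $\gamma'<\gamma$ and an Aubin--Lions argument, give tightness of $(\nu_N)_{N\geq 1}$ on $\mathcal{T}$. Prokhorov's theorem then extracts a subsequence $\nu_{N_k}$ converging weakly to some probability measure $\nu$ on $\mathcal{T}$, and Skorokhod's representation realizes $(\nu_{N_k})_k$ and $\nu$ on a new probability space $(\widetilde\Omega, \widetilde{\mathcal{F}}, \widetilde{\mathcal{P}})$ as almost surely convergent random variables $(\widetilde u^k,\widetilde B^k) \to (\widetilde u, \widetilde B)$. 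The filtration $(\widetilde{\mathcal{F}}_t)_{t\geq 0}$ is the normal filtration generated by $\widetilde B$.

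I then verify that $(\widetilde u, \widetilde B)$ is an analytically weak, probabilistically strong Leray--Hopf solution to \eqref{PLF}. Each $(\widetilde u^k, \widetilde B^k)$ shares the law of a time-shifted copy of $(u,B)$, hence by shift invariance of \eqref{PLF} it satisfies \eqref{Weak Formulation} in its own right. Passing to the almost sure limit is immediate for the linear contributions; the convective term is controlled by strong convergence of $\widetilde u^k$ in $L^2_tL^2_x$ inherited from the $C_{[0,N]}H^\gamma \cap C^{0,\gamma}_{[0,N]}L^2$ bound, whereas the non-Newtonian term is handled through \thref{Lemma 2.4}: the $C^{0,\gamma}W^{1+\gamma,\iota}$ uniform bound yields strong convergence of $D\widetilde u^k$ in $L^\iota_tL^\iota_x$, so the sublinear continuity of $\mathcal{A}$ for $\iota\leq 2$ delivers $\mathcal{A}(D\widetilde u^k)\to \mathcal{A}(D\widetilde u)$ in $L^1_{t,x}$. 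The energy inequality transfers to the limit via lower semicontinuity in $L^2$ and convexity of the dissipation functional. Stationarity is the Krylov--Bogoliubov core: for every $s\geq 0$ and $\Phi\in C_b(\mathcal{T})$,
\begin{align*}
\bigg|\int_{\mathcal{T}} \Phi\circ\mathcal{S}_s\,d\nu_N - \int_{\mathcal{T}}\Phi\,d\nu_N\bigg| \leq \frac{2s}{N}\|\Phi\|_\infty \xrightarrow{N\to\infty} 0,
\end{align*}
whence $(\mathcal{S}_s)_*\nu = \nu$. The bounds \eqref{Stationary Property 1} and \eqref{Stationary Property 2} follow from \thref{Remark Weak Solution} by Fatou's lemma together with the lower semicontinuity of the relevant norms under weak convergence, using that by stationarity the expectation on $[0,N]$ reduces to $N$ copies of the expectation on the unit interval. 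Infinitely many distinct stationary solutions are finally obtained by running the construction with distinct energy profiles $e_1 \neq e_2$: their solutions have, by \eqref{Energy Equation}, quantitatively different $\E\|\cdot\|_{L^2}^2$ at generic times, a distinction that survives ergodic averaging and weak limits.

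The main obstacle will be the identification of the non-Newtonian term in the limit, since $\iota < \frac{2d}{d+2}<2$ excludes any global $L^2$-monotonicity argument and only the sublinear continuity from \thref{Lemma 2.4} is available. The uniform higher-order regularity $C^{0,\gamma}W^{1+\gamma,\iota}$ built into \thref{Proposition Main Iteration} is precisely what rescues the compactness argument by providing strong $L^\iota$-convergence of $D\widetilde u^k$; this explains the inclusion of higher-order derivatives in the iterative estimates. A secondary subtlety is preserving the energy inequality through time-averaging and weak limit, where the \emph{averaged} form with factor $\frac{3}{t+3}$ in \eqref{help energy} plays a role, an issue already foreseen in \thref{Remark Leray--Hopf}.
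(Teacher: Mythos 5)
Your overall architecture --- ergodic averages of the solution from \thref{Theorem Weak Solution}, tightness from the uniform bounds of \thref{Remark Weak Solution}, Prokhorov and Skorokhod, identification of the limit as a solution in $\mathcal{T}$, the standard Krylov--Bogoliubov stationarity estimate, and transfer of the moment bounds --- coincides with the paper's proof. The genuine gap is the energy inequality. You claim it ``transfers to the limit via lower semicontinuity in $L^2$ and convexity of the dissipation functional'', but the laws you average are those of the time shifts $\mathcal{S}_s(u,B)$, and these do \emph{not} satisfy the energy inequality relative to their own initial time: as stressed in \thref{Remark Leray--Hopf} and at the end of Section \ref{Energy Results}, the construction only yields $\mathscr{E}\{u\}(t)\leq\mathscr{E}\{u\}(0)$, not $\mathscr{E}\{u\}(t)\leq\mathscr{E}\{u\}(s)$ for general $s\leq t$, precisely because of the factor $\frac{3}{t+3}$ in the auxiliary energy \eqref{help energy}. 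Hence there is no energy inequality at the level of the approximating measures $\nu_{T_n}$ that lower semicontinuity could pass to the limit. The paper's mechanism is essentially different: it first exploits the already-proved stationarity of $\widetilde{u}$ (applied to the truncated, bounded functionals $\mathcal{E}^K_t$ and then monotone convergence) to show that $t\mapsto\widetilde{\mathscr{E}}\{\widetilde{u}\}(t)$ is affine with slope $2\widetilde{\E}\big[\int_{\mathbb{T}^d}\mathcal{A}(D\widetilde{u}(0,x)):D\widetilde{u}(0,x)\,dx\big]-\tr(G)$, and then controls this slope by rewriting the dissipation at time $0$ as the Ces\`aro average $\frac{2}{T_n}\E\big[\int_0^{T_n}\int_{\mathbb{T}^d}\mathcal{A}(Du):Du\,dx\,ds\big]$ (via Fatou) and invoking the energy inequality of $u$ from time $0$ over the whole horizon $[0,T_n]$, so that the slope is nonpositive in the limit and $\widetilde{\mathscr{E}}\{\widetilde{u}\}(t)\leq\widetilde{\E}\|\widetilde{u}(0)\|_{L^2}^2\leq\widetilde{\mathscr{E}}\{\widetilde{u}\}(0)$. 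Without this stationarity-plus-long-time-averaging argument your limit is not shown to be Leray--Hopf.

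Two secondary points are underspecified. For ``infinitely many'' solutions it is not enough that $e_1\neq e_2$: after averaging, only $\widetilde{\E}\|\widetilde{u}(t)\|_{L^2}^2\in[\underline{e}-N_o,\bar{e}]$ is available, where $N_o$ is a uniform bound on the time-averaged dissipation, so the energy profiles must be chosen with ranges that remain disjoint after this correction (Section \ref{Non-Uniqueness via Help Energy}), and passing $\widetilde{\E}\|\widetilde{u}_n(t)\|_{L^2}^2$ to the limit uses uniform integrability (Vitali), not weak convergence alone. Likewise \eqref{Stationary Property 2} requires the $\widetilde{\mathcal{P}}$-a.s. convergence $\widetilde{z}_n\to\widetilde{z}$ in $C W^{1,\iota}$, i.e. continuity of the map $B\mapsto z$ along the Skorokhod convergence of $\widetilde{B}_n$, which the paper verifies and you tacitly assume.
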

\end{samepage}

\begin{remark}
Let $u$ be a Leray--Hopf solution to \eqref{PLF} on $\big((\Omega, \mathcal{F}, \mathcal{P}), B\big)$, as given in \thref{Theorem Weak Solution}, and let $\widetilde{u}$ be a stationary Leray--Hopf solution on $\big((\widetilde{\Omega}, \widetilde{\mathcal{F}}, \widetilde{\mathcal{P}}), \widetilde{B}\big)$, obtained in \thref{Theorem Stationary}. Then, as a consequence of a Krylov--Bogoliubov argument, we even  find a subsequence of the sequence of ergodic averages
\begin{align*}
\left(\frac{1}{T} \int_0^{T} \mathcal{P}\Big(S_s(u,B) \in \cdot\Big) \,ds \right)_{T> 0}
\end{align*}
on $\mathcal{B}\left(\mathcal{T}\right)$, which converges weakly to the law of $\big(\widetilde{u}, \widetilde{B} \big)$.
\end{remark}
\medskip
 \begin{proof}[Proof of \thref{Theorem Stationary}]~\\
 Since we intend to apply the Krylov--Bogoliubov procedure, we initiate the proof by introducing the standard sequence of ergodic averages.
 \subsection{Existence \& Limit of the Subsequence} \label{Tightness}
Let $u:=v+z$ be a Leray--Hopf solution to \eqref{PLF} on $\big((\Omega, \mathcal{F}, \mathcal{P}), B\big)$, constructed as in \thref{Theorem Weak Solution}.\\
For $T>0$ let us denote the ergodic averages by $\nu_T$, that is let us define
\begin{gather*}
\nu_T:= \frac{1}{T}\int_0^T \mathcal{P}\Big(S_s (u,B)\in \cdot \Big) \,ds 
\end{gather*}
on $\mathcal{B}\left(\mathcal{T} \right)$.\\
  We will use Prokhorov's theorem in order to show that there exists a subsequence of $\left(\nu_T\right)_{T>0}$, which converges weakly to some probability measure $\nu$ on $\mathcal{B}\left(\mathcal{T} \right)$. It turns out that this measure serves, by Skorokhod's representation theorem, the law of a stationary solution $(\widetilde{u},\widetilde{B})\in \mathcal{T}$.\\
Prokhorov's theorem requires:

\subsubsection*{Tightness of $\left(S^1_s u\right)_{s\geq 0}$ on $ \mathcal{T}_1$} \label{Tightness u}
Taking Hölder's inequality and Remark \ref{Remark Weak Solution} into account gives
\begin{align} 
\E\|S_s^1u&\|_{ C_{[0,N]} H_x^\gamma}+\E\|S_s^1u\|_{ C^{0,\gamma}_{[0,N]} L_x^2}+\E\|S_s^1u\|_{C^{0,\gamma}_{[0,N]} W_x^{1+\gamma,\iota}}\label{Tightness Bound}\\
&\hspace{.5cm}\leq  \sum_{i=s}^{N+s-1}\left(\E\|u\|_{ C_{[i,i+1]} H_x^\gamma}+\E\|u\|_{ C^{0,\gamma}_{[i,i+1]} L_x^2}+\E\|u\|_{C^{0,\gamma}_{[i,i+1]} W_x^{1+\gamma,\iota}}\right)\notag \\
&\hspace{.5cm}\leq RN \notag
\end{align}
for an arbitrary but fixed $  N \in \mathbb{N}$, all $s\geq 0$ and some $R>0$. In the light of this, let us consider the set
\begin{align*}
K_1^N:=\left\{f \in \mathcal{T}_1^N\; \middle| \; 
 \|f\|_{C_{[0,N]}H_x^\gamma}+\|f\|_{C^{0,\gamma}_{[0,N]}L_x^2}+\|f\|_{C^{0,\gamma}_{[0,N]} W_x^{1+\gamma,\iota}}\leq  \frac{\pi^2RN^{3}}{6\epsilon_1}
\right\}
\end{align*}
for some arbitrary small $\epsilon_1>0$ and with
\begin{align*}
\mathcal{T}_1^N:=C\big([0,N];L_\text{free}^2\big( \mathbb{T}^d;\mathbb{R}^d\big)\cap W^{1,\iota}\big( \mathbb{T}^d;\mathbb{R}^d\big)\big).
\end{align*}
 Due to Arzelà--Ascoli's theorem, $K_1^N$ is compact in 
 \begin{align*}
 \left(\mathcal{T}_1^N,\|\cdot\|_{ C_{[0,N]}L_x^2}+\|\cdot\|_{ C_{[0,N]}W_x^{1,\iota}}\right),
 \end{align*}
 provided $K_1^N$ is equicontinuous and $K_1^N(t):=\{f(t): f\in K_1^N\}$ is for all $t\in [0,N]$ relatively compact in $\left(L^2\left(\mathbb{T}^d; \mathbb{R}^d\right)\cap W^{1,\iota}\big( \mathbb{T}^d;\mathbb{R}^d\big),\|\cdot\|_{ L_x^2}+\|\cdot\|_{ W_x^{1,\iota}} \right)$. 
This, however, follows immediately from the Hölder condition and Rellich--Kondrachov's theorem. At this point, we want to emphasize that it is of great importance that the solution belongs to $W^{1+\gamma,\iota}$ in space and is Hölder continuous in time, necessitating the use of the $C_{I,x}^4$-norm during the convex integration scheme (cf. \eqref{key bound c}, \eqref{target series}).\\
A standard diagonal argument (details are postponed to Appendix \ref{Appendix Tightness}) implies that  
\begin{align*} 
K_1:=\bigcap\limits_{N=1}^\infty K_1^N
\end{align*}
is compact in $\mathcal{T}_1$.\\
Finally, making use of Markov's inequality and \eqref{Tightness Bound} we obtain
\begin{align*} 
& \mathcal{P}\left(S^1_s u \in K_1 \right)\\
&\hspace{.5cm}\geq 1-\sum_{N=1}^\infty \mathcal{P}\left( \|S_s^1 u\|_{C_{[0,N]} H_x^\gamma}+\|S_s^1 u\|_{C^{0,\gamma}_{[0,N]}L_x^2}+\|S_s^1 u\|_{C^{0,\gamma}_{[0,N]}W_x^{1+\gamma,\iota}} >  \frac{\pi^2RN^{3}}{6\epsilon_1} \right)\\
 &\hspace{.5cm}\geq 1-\sum_{N=1}^\infty \frac{6\epsilon_1}{\pi^2 RN^{3}} \E \left[  \|S_s^1 u\|_{C_{[0,N]} H_x^\gamma}+\|S_s^1 u\|_{C^{0,\gamma}_{[0,N]}L_x^2}+\|S_s^1 u\|_{C^{0,\gamma}_{[0,N]} W_x^{1+\gamma,\iota}}\right]\\
 &\hspace{.5cm}\geq  1-\epsilon_1
\end{align*}
for any $s\geq 0$, which establishes the tightness of $\left(S^1_s u\right)_{s\geq 0}$ on $\mathcal{T}_1$.

\subsubsection*{Tightness of $\left(S_s^2 B\right)_{s\geq 0}$ on $\mathcal{T}_2$}\label{Subsection 6.1.2}
The tightness of $\left(S_s^2 B\right)_{s\geq 0}$ on $\mathcal{T}_2$ follows from the fact, that the law of $t\mapsto S_s^2 B_t$ remains the same for all $s\geq 0$. That means, for every $\epsilon_2>0$ we find some compact set $K_2$, satisfying
\begin{align} \label{Tightness B}
\inf_{s \geq 0} \mathcal{P}\left(S^2_s B \in K_2 \right)\geq 1-\epsilon_2. 
\end{align}

\noindent Keeping in mind that $K:= K_1 \times K_2$ retains the compactness of $K_1$ and $K_2$, we conclude, as a consequence of the previous tightness results, that the ergodic average is tight as well: 

\begin{align*}
&\inf_{T>0}\nu_T(K)\\
&\hspace{.5cm}=\inf_{T>0}\frac{1}{T}\int_0^T \mathcal{P}\left(S_s (u, B)\in K \right)\, ds\\
&\hspace{.5cm}\geq \inf_{T>0}\frac{1}{T}\int_0^T \bigg( 1-\mathcal{P}\big(S_s (u, B)\in (K_1\times \mathcal{T}_2)^c \big)-\mathcal{P}\big(S_s (u, B)\in (\mathcal{T}_1\times K_2)^c \big)\bigg)\, ds\\
&\hspace{.5cm}= \inf_{T>0}\frac{1}{T}\int_0^T  \bigg(\mathcal{P}\big(S_s (u, B)\in K_1\times \mathcal{T}_2 \big)+\mathcal{P}\big(S_s (u, B)\in \mathcal{T}_1\times K_2 \big)-1 \bigg) \,ds\\
&\hspace{.5cm}\geq 1-(\epsilon_1+\epsilon_2).
\end{align*}
 This allows to apply Prokhorov's theorem, to obtain a subsequence $\left(\nu_{T_n} \right)_{n>0}$ of $\left(\nu_T \right)_{T>0}$, that converges weakly to some limit $\nu$ on $\mathcal{B}\left( \mathcal{T}\right)$.  By Skorokhod's representation theorem, there exists a sequence of random variables $(\widetilde{u}_n,  \widetilde{B}_n)_{n \in \mathbb{N}}$, possibly defined on a different probability space $(\widetilde{\Omega}, \widetilde{\mathcal{F}}, \widetilde{\mathcal{P}})$ with values in $ \mathcal{T}$, which converge $\widetilde{\mathcal{P}}$-a.s. to some random variable $(\widetilde{u},\widetilde{B})$ in $\mathcal{T}$ and whose laws correspond to $ \left(\nu_{T_n}\right)_{ n\in \mathbb{N}}$ and $\nu$, respectively.\\
Let us introduce the set
\begin{align*} 
M:=\left\{(u,B)\in \mathcal{T}\;  \middle|  \; \begin{array}{l}
   \int_{\mathbb{T}^d}  \varphi(x) B_t(x) \,dx 
 = \int_{\mathbb{T}^d}   \Big(u(t,x)-u(0,x)\Big)\cdot\varphi(x)\,dx\\
\hspace{0.2cm} -\int_0^t \int_{\mathbb{T}^d} (u \otimes u)( s,x): \nabla \varphi^T(x)\,dx \,ds\\
\hspace{0.2cm}+ \int_0^t \int_{\mathbb{T}^d}  \mathcal{A}(Du(s,x)): \nabla \varphi^T(x)\,dx\,ds \\
\mathcal{P}-\text{a.s.},\,  \forall \varphi \in C^\infty (\mathbb{T}^d; \mathbb{R}^d); \divs (\varphi)=0, \forall \,t \geq 0
\end{array} \right\},
\end{align*}
facilitating the derivation that $\widetilde{u}$ is, in fact, an analytically weak solution to \eqref{PLF} on $\big((\widetilde{\Omega}, \widetilde{\mathcal{F}}, \widetilde{\mathcal{P}}), \widetilde{B}\big)$.
Since $(u,B)$ is an analytically weak solution to \eqref{PLF}, the same applies to $(\widetilde{u}_n,\widetilde{B}_n)$, as we have
\begin{align*}
\widetilde{\mathcal{P}}\Big( (\widetilde{u}_n,\widetilde{B}_n)\in M \Big)=\nu_{T_n}(M)=\frac{1}{T_n} \int_0^{T_n} \mathcal{P}(S_s(u,B)\in M)\,ds=1.
\end{align*}
Passing to the limit in $\mathcal{T}$ (cf. Section \ref{Existence}), confirms that $(\widetilde{u}, \widetilde{B})$ also solves \eqref{PLF} in an analytically weak sense, as desired. 
 \subsection{Stationarity} \label{Stationarity}
This solution is even stationary, a property that naturally arises from the weak convergence of $(\nu_{T_n})_{n>0}$ to $\nu$. Indeed, for every $\mathscr{F}\in C_b(\mathcal{T};\mathbb{R})$ and $r>0$ we have
\begin{align*}
 \int_{\mathcal{T} } \mathscr{F} (\tau) \,d\widetilde{\mathcal{P}}_{S_r( \widetilde{u}, \widetilde{B})}(\tau)&= \int_{\widetilde{\Omega} }\big[(\mathscr{F}\circ S_r)(\widetilde{u},\widetilde{B})\big](\widetilde{\omega})\, d\widetilde{\mathcal{P}}(\widetilde{\omega})\\
  &= \int_{\mathcal{T} } (\mathscr{F} \circ S_r)(\tau) \,d\nu(\tau) \\
&=\lim_{n\to \infty}\frac{1}{T_n} \int_0^{T_n}\int_{\mathcal{T} } (\mathscr{F} \circ S_r)(\tau)\,d\mathcal{P}_{S_s(u,B)}(\tau)\, ds\\
&=\lim_{n\to \infty}\frac{1}{T_n} \int_0^{T_n}\int_\Omega \big[(\mathscr{F} \circ S_{r+s})(u,B)\big](\omega)\,d\mathcal{P}(\omega)\, ds\\
&=\lim_{n\to \infty}\frac{1}{T_n} \int_r^{T_n+r}\int_\Omega \big[(\mathscr{F} \circ S_{s})(u,B)\big](\omega)\,d\mathcal{P}(\omega)\, ds \\
&= \lim_{n\to \infty}\frac{1}{T_n} \int_0^{T_n}\int_\Omega \big[(\mathscr{F} \circ S_{s})(u,B)\big](\omega)\,d\mathcal{P}(\omega)\, ds\\
&\hspace{.5cm}-\lim_{n\to \infty}\frac{1}{T_n} \int_0^r\int_\Omega \big[(\mathscr{F} \circ S_{s})(u,B)\big](\omega)\,d\mathcal{P}(\omega)\, ds\\
&\hspace{.5cm}+\lim_{n\to \infty}\frac{1}{T_n} \int_{T_n}^{T_n+r}\int_\Omega \big[(\mathscr{F} \circ S_{s})(u,B)\big](\omega)\,d\mathcal{P}(\omega)\, ds\\
&=\int_{\mathcal{T} }\mathscr{F}(\tau)\,d\nu(\tau).
\end{align*}
Remembering that the weak limit of Borel measures is unique establishes the assertion. 

\subsection{Regularity \& Bounds} \label{Stationary Regularity and Bound}
In order to prove \eqref{Stationary Property 1} and \eqref{Stationary Property 2}, let us first define $\widetilde{z}_n$ as in \thref{Proposition SHE} with $B$ replaced by $\widetilde{B}_n$ for all $n\in \mathbb{N}$. Thanks to Itô's formula applied to the functional $(r,x)\mapsto e^{-(\Id-\Delta)(t-r)}x$ and $\widetilde{B}_n$, we know that $\widetilde{z}_n$ admits the expression 
\begin{align*}
\widetilde{z}_n(\omega,t,x)=\widetilde{B}_n(\omega,t,x)-\int_0^t (\Id-\Delta)e^{-(\Id-\Delta) (t-r)}\widetilde{B}_n(\omega,r,x)\,dr,
\end{align*}
which compels us to introduce the functional
\begin{align*}
\mathcal{Z}\colon \mathcal{T} \to \mathbb{R},\qquad \mathcal{Z}(u,B)=\Big\|u-B+\int_0^\cdot (\Id-\Delta)e^{-(\Id-\Delta)(\cdot-r)}B(r) \, dr\Big\|_{C_I W_x^{1,\iota}}^{2J}
\end{align*}
for $J\geq 1$ and some interval $I \subseteq [0,\infty)$ of length $1$. It facilitates the derivation of 
\begin{align} 
\widetilde{E}\|\widetilde{u}_n-\widetilde{z}_n\|^{2J}_{C_I W_x^{1,\iota}}&= \int_{\mathcal{T}} \mathcal{Z}(\tau)\, d\widetilde{\mathcal{P}}_{(\widetilde{u}_n,\widetilde{B}_n)}(\tau)\label{from u to statioanry}\\
&= \frac{1}{T_n} \int_0^{T_n}\int_{\mathcal{T}} \mathcal{Z}(\tau)\, d\mathcal{P}_{S_s(u,B)}(\tau) \, ds\notag\\
&=\frac{1}{T_n} \int_0^{T_n}\int_{\Omega} \mathcal{Z}\big(S_s(u,B)(\omega)\big)\, d\mathcal{P}(\omega) \, ds\notag\\
&=\frac{1}{T_n}\int_0^{T_n}\E\big[(\mathcal{Z}\circ S_s) (u,B)\big]\, ds \notag.
\end{align}
As a consequence of \thref{Remark Weak Solution} we conclude
\begin{align*}
\widetilde{E}\|\widetilde{u}_n-\widetilde{z}_n\|^{2J}_{C_I W_x^{1,\iota}}&\leq \epsilon^{2J},
\end{align*}
with $\epsilon$ as defined therein.\\
Proceeding in a similar way with $\mathcal{Z}$ replaced by
\begin{align*}
(u,B) \mapsto \|u\|^{2J}_{C_{[0,N]}H_x^{\gamma}}+\|u\|^{2J}_{C_{[0,N]}^{0,\gamma}L_x^2}+\|u\|^{2J}_{C_{[0,N]}^{0,\gamma}W_x^{1+\gamma,\iota}}
\end{align*}
 additionally yields 
\begin{align} \label{Banach--Alaoglu}
&\widetilde{\E}\|\widetilde{u}_n\|^{2J}_{C_{[0,N]}H_x^{\gamma}}+\widetilde{\E}\|\widetilde{u}_n\|^{2J}_{C_{[0,N]}^{0,\gamma}L_x^2}+\widetilde{\E}\|\widetilde{u}_n\|^{2J}_{C_{[0,N]}^{0,\gamma}W_x^{1+\gamma,\iota}}\\
&\hspace{.5cm}= \frac{1}{T_n} \int_0^{T_n}\left(\E\|S^1_su\|^{2J}_{C_{[0,N]}H_x^{\gamma}}+\E\|S^1_su\|^{2J}_{C_{[0,N]}^{0,\gamma}L_x^2}+\E\|S_s^1u\|^{2J}_{C_{[0,N]}^{0,\gamma}W_x^{1+\gamma,\iota}}\right)\, ds\notag\\
&\hspace{.5cm}\leq \frac{1}{T_n} \int_0^{T_n} \sum_{i=s}^{N+s-1}\left(\|u\|^{2J}_{L_\Omega^{2J} C_{[i,i+1]}H_x^{\gamma}}+\|u\|^{2J}_{L_\Omega^{2J} C_{[i,i+1]}^{0,\gamma}L_x^2}+\|u\|^{2J}_{L_\Omega^{2J}C_{[i,i+1]}^{0,\gamma}W_x^{1+\gamma,\iota}}\right)\, ds\notag \\
&\hspace{.5cm}\lesssim N \notag.
\end{align}
While \eqref{Stationary Property 1} follows finally from a weak$^\ast$ lower semicontinuity argument (details are postponed to Appendix~\ref{Weak Star Lower Semicontinuity}), \eqref{Stationary Property 2} relies on Fatou's lemma. Strictly speaking, Fatou's lemma would directly imply \eqref{Stationary Property 2} if $(\widetilde{z}_n)_{n\in \mathbb{N}}$ converges $\widetilde{\mathcal{P}}$-almost surely to $\widetilde{z}$ in $C\big([0,\infty); W^{1,\iota}\big( \mathbb{T}^d;\mathbb{R}^d\big)\big)$. In fact, their corresponding functionals, fixed in time, converge, thanks to the embeddings \linebreak $L^2 \cong \left(L^2\right)^\prime \subseteq \left(H^2\right)^\prime\cong H^{-2}$ and (5.23) on p. 135 in \cite{DPZ14}, in  $H^{-2}\left(\mathbb{T}^d;\mathbb{R}^d\right) $ $ \widetilde{\mathcal{P}}$-almost surely. Since we have $H^{-2} \cong \left(H^2\right)^\prime \cong H^2 \subseteq W^{1,\iota}$, the convergence of $(\widetilde{z}_n)_{n\in \mathbb{N}}$ takes actually place in the desired space. 
\subsection{Energy Inequality} 
The final step in confirming that $\widetilde{u}$ qualifies as a Leray--Hopf solution is to ensure that the energy inequality holds. For this purpose, we intend to use the stationarity of $\widetilde{u}$. The energy-related functional
\begin{align*}
u\mapsto \|u(t)\|_{L_x^2}^2+2\int_0^{t} \int_{\mathbb{T}^d} \mathcal{A}(Du(r,x)) : Du(r,x)\,dx \,dr-\tr(G)t
\end{align*}
is for fixed times $t\geq 0$, however, not bounded in $\mathbb{R}$, so that we shall obliged stop the process when it takes larges values. Strictly speaking, for some arbitrary large $K>0$ and $t\geq 0$, let us introduce the functional 
\begin{gather*}
\mathcal{E}^K_t\colon \mathcal{T}\to \mathbb{R},\\  \mathcal{E}^K_t\{(u,B)\}:=\|u(t)\|_{L_x^2}^2\wedge K+2\int_0^{t} \int_{\mathbb{T}^d} \mathcal{A}(Du(r,x)) : Du(r,x)\,dx \,dr \wedge K-\tr(G)t,
\end{gather*}
which is continuous and bounded in $\mathbb{R}$. If we extend all functions in $\mathcal{T}$ to negative times by taking their values at time $t=0$, the same arguments as in Section \ref{Stationarity} yield
\begin{align*}
\widetilde{\E}[\mathcal{E}^K_t\{(\widetilde{u},\widetilde{B})\}]=\widetilde{\E}\Big[\|u(0)\|_{L_x^2}^2\wedge K\Big]+2\widetilde{\E}\bigg[\int_{\mathbb{T}^d} \mathcal{A}(Du(0,x)) : Du(0,x)\,dx\, t \wedge K\bigg]-\tr(G)t,
\end{align*} 
which amounts thanks to the theorem of monotone convergence to
\begin{align*}
\widetilde{\mathscr{E}}\{\widetilde{u}\}(t)&:=\widetilde{\E}\|\widetilde{u}(t)\|_{L_x^2}^2+2\widetilde{\E}\bigg[\int_0^{t} \int_{\mathbb{T}^d} \mathcal{A}(D\widetilde{u}(r,x)) \colon D\widetilde{u} (r,x)\,dx \,dr\bigg]-\tr(G)t\\
&=\widetilde{\E}\|\widetilde{u}(0)\|_{L_x^2}^2+2\widetilde{\E}\bigg[ \int_{\mathbb{T}^d} \mathcal{A}(D\widetilde{u}(0,x)) \colon D\widetilde{u} (0,x)\,dx \bigg]t-\tr(G)t.
\end{align*}
Furthermore, Fatou's lemma teaches us
\begin{align*}
\widetilde{\mathscr{E}}\{\widetilde{u}\}(t)
\leq \widetilde{\E}\|\widetilde{u}(0)\|_{L_x^2}^2+\left(\liminf_{n \to \infty}2\widetilde{\E}\bigg[ \int_{\mathbb{T}^d} \mathcal{A}(D\widetilde{u}_n(0,x)) \colon D\widetilde{u}_n (0,x)\,dx \bigg]-\tr(G)\right)t.
\end{align*}
In order to control the nonlinear term, we use similar arguments as in \eqref{from u to statioanry} to obtain
\begin{align*}
2\widetilde{\E}\left[\int_{\mathbb{T}^d} \mathcal{A}(D\widetilde{u}_n(0,x)) \colon D\widetilde{u}_n (0,x)\,dx \right]&= \frac{2}{T_n}\E\left[\int_0^{T_n}\int_{\mathbb{T}^d} \mathcal{A}(Du(s,x)) \colon Du(s,x)\,dx \, ds \right].
\end{align*}
Remembering that the energy inequality at the level of $u$ is satisfied, allows us to derive 
\begin{align*}
&2\widetilde{\E}\left[\int_{\mathbb{T}^d} \mathcal{A}(D\widetilde{u}_n(0,x)) \colon D\widetilde{u}_n (0,x)\,dx \right] \\
&\hspace{.5cm}\leq \frac{1}{T_n} \E\|u(0)\|_{L_x^2}^2- \frac{1}{T_n} \E\|u(T_n)\|_{L_x^2}^2 +\frac{2}{T_n}\E\left[\int_{\mathbb{T}^d} \mathcal{A}(Du(0,x)) \colon Du(0,x)\,dx \right]+ \tr(G),
\end{align*}
resulting in
\begin{align*}
\widetilde{\mathscr{E}}\{\widetilde{u}\}(t)
\leq \widetilde{\E}\|\widetilde{u}(0)\|_{L_x^2}^2\leq \widetilde{\mathscr{E}}\{\widetilde{u}\}(0). 
\end{align*}

\subsection{Non-Uniqueness } \label{Non-Uniqueness via Help Energy}
Non-uniqueness of solutions relies on different choices of the underlying energy $e$. To be more precise, let us first follow the approach as in \eqref{from u to statioanry} to derive 
\begin{align*} 
\widetilde{\E}\|\widetilde{u}_n(t)\|_{L_x^2}^2&=\frac{1}{T_n} \int_0^{T_n}\E\|S_s^1u(t)\|_{L_x^2}^2\,ds\\
&=  \frac{1}{T_n} \int_0^{T_n} \mathscr{H}\{u\}(t+s)\,ds\\
&\hspace{.5cm} - \frac{1}{T_n} \int_0^{T_n}\frac{2H}{t+s+H}\E\bigg[\int_0^{t+s} \int_{\mathbb{T}^d} \mathcal{A}(Du(r,x)) \colon Du (r,x)\,dx \,dr\bigg]\,ds 
\end{align*}
for any fixed $t\geq 0$ and $n \in \mathbb{N}$. In view of \thref{Lemma 2.4}, Hölder's inequality, \thref{Remark Weak Solution} and \thref{Proposition SHE} the nonlinear term is controlled as
\begin{align*} 
\begin{split}
&\frac{2H}{t+s+H}\E\bigg[\int_0^{t+s} \int_{\mathbb{T}^d} \mathcal{A}(Du(r,x)) \colon Du(r,x)\,dx \,dr\bigg] \\
&\lesssim \frac{2H}{t+s+H}\E\bigg[\int_0^{t+s} \int_{\mathbb{T}^d} \|Du(r,x)\|_F \left(1+\|Du(r,x)\|^{\iota-1}_F\right)\,dx \,dr\bigg]\\
&\lesssim  \sup_{|I|=1}\, \left(\E\|u\|_{C_I W_x^{1,\iota}}+\E\|u\|_{C_I W_x^{1,\iota}}^{\iota}\right)\\
& \leq N_o,
\end{split}
\end{align*}
for some $N_o>0$, which is independent of the underlying energy $e$.\\
Moreover, we know from \thref{Theorem Weak Solution} that the auxiliary energy $\mathscr{H}\{u\}$ coincides with the prescribed energy $e$ therein. Let us stipulate that the energy is bounded from below by $N_o$. Strictly speaking, let us require $e(t)\geq \underline{e}>N_0$, which allows to infer
\begin{align}\label{Stationary Non-uniqueness}
\widetilde{\E}\|\widetilde{u}_n(t)\|_{L_x^2}^2\in [\underline{e}-N_o,\bar{e}].
\end{align}
Suppose for the moment that \eqref{Stationary Non-uniqueness} is stable under limitation, i.e. that
\begin{align} \label{Help Energy in Interval}
\widetilde{\E}\|\widetilde{u}(t)\|_{L_x^2}^2\in [\underline{e}-N_o,\bar{e}]
\end{align}
holds as well, and take another stationary solution $\widetilde{u}^\prime$ to the power-law system with underlying energy $e^\prime $, complying with $e^\prime(t)\in [\underline{e}^\prime,\bar{e}^\prime] $ for some $0<\underline{e}^\prime \leq \bar{e}^\prime$ with $\bar{e}<\underline{e}^\prime-N_o$ and all $t\geq 0$.\footnote{In particular, in the case of stationary solutions, where the energy takes the form $e(t)= H \mathrm{tr}(G)$, one must choose $H^\prime>0$ large enough to ensure that $e^{\prime}(t)=H^{\prime}\mathrm{tr}(G)$ satisfies $0 < \bar{e} < \underline{e}^{\prime}-N_0.$} In other words, we chose the auxiliary energies of the corresponding solutions in a way that their mean kinetic energies lie within disjoint intervals. Consequently, the velocities $\widetilde{u}$ and $\widetilde{u}^\prime$ distinguish, meaning that we get infinitely many stationary solutions to~\eqref{PLF}. \\
It remains to show that it is indeed possible to pass to the limit in \eqref{Stationary Non-uniqueness}. Towards this goal, we proceed again as in \eqref{from u to statioanry} and involve \thref{Remark Weak Solution} to get
\begin{align*} 
\widetilde{\mathbf{E}}\|\widetilde{u}_n(t)\|^{2J}_{L_x^2}= \frac{1}{T_n} \int_0^{T_n} \E\|S_s^1u(t)\|_{L_x^2}^{2J} \, ds \leq \sup_{|I|=1} \E\|u\|_{C_IL_x^2}^{2J}<\infty
\end{align*}
for $J\geq 1$. Accordingly, Vitali's theorem of convergence permits to establish \eqref{Help Energy in Interval}. The proof of \thref{Theorem Stationary} is therefore complete.

\end{proof}

\section{End of the Proof of \thref{Main Result}} \label{Ergodicity of Solutions}
To conclude the existence and non-uniqueness of ergodic Leray--Hopf solutions, stated in \thref{Main Result}, we follow a very standard approach via Krein--Milman's theorem (cf. \cite{HZZ25} for instance).
\begin{proof}[Proof of \thref{Main Result}]
 We already know that the set of all stationary Leray--Hopf solutions $(u,B)$, that satisfy the conditions \eqref{Stationary Property 1}, \eqref{Stationary Property 2} as well as \eqref{Help Energy in Interval}, and hence the set, consisting of corresponding laws $\mathscr{L}$, are non-empty.  Particularly, $\mathscr{L}$ is convex and owing to similar arguments as in Section \ref{Tightness}, it is also compact with respect to the weak topology.
Therefore, there exist, thanks to the Krein--Milman theorem, an extreme point $\mu \in \mathscr{L}$, which is by a standard contradiction argument (cf. Proposition 12.4 in \cite{Ph01}) even ergodic. In other words, $\mu$ is the law of an ergodic Leray--Hopf solution.\\
Non-uniqueness of these solutions follows for reasons similar to those presented in Section \ref{Non-Uniqueness via Help Energy}, which concludes the proof of \thref{Main Result}.
\end{proof}


\appendix
\chapter{Appendix} \label{Appendix A.1} 
\noindent The appendix is organized as follows: While Appendix \ref{Appendix A.1} provides additional information about the choice of parameters discussed in Section \ref{Choice of Parameters}, Appendix \ref{Appendix A.2} recalls Proposition 3 and Lemma 3 from \cite{BMS21} and \cite{MS18}, respectively, which were frequently applied throughout the paper. Finally, Appendix \ref{Appendix A.3} presents selected proofs.\\
\section{Choice of the Parameters}
We begin by collecting the strongest conditions that have to be satisfied, along with noting their respective appearances in the relevant sections.
\subsection{Sufficient Conditions}

\renewcommand*{\arraystretch}{1.5}
\begin{table}[H]
\begin{tabularx}{\textwidth}{|l|l|X|}
\hline
\hspace{0.8cm}  \textbf{1. Key Bound}  & \hspace{0.4cm}  \textbf{3. Key Bound}  &  \hspace{1.6cm}  \textbf{4. Key Bound}  \\
\hhline{===}
$r_\perp^{-\frac{1}{2}}\lambda_{q+1}^{-\frac{1}{2}}\leq \lambda_{q+1}^{-(10d+24)\frac{\alpha}{\sigma-2\varepsilon} } $  & $(32d+80)\alpha<\sigma-2\varepsilon$ &$r_\perp^{\frac{d-1}{\iota}-\frac{d-1}{2}} r_\parallel^{\frac{1}{\iota}-\frac{1}{2}}\lambda_{q+1} \leq \lambda_{q+1}^{-(14d+36)\frac{\alpha}{\sigma-2\varepsilon} } $ \\
 & &$\alpha b > \frac{3}{2} d+5$ \\
\hline
\end{tabularx}
\caption{Essential conditions for verifying the first four key bounds.}
\end{table}
\renewcommand*{\arraystretch}{1.3}

\begin{table}[H]
\begin{tabularx}{\textwidth}{|l|l|X|}
\hline
&\hspace{2.3cm}\textbf{5. Key Bound} & \hspace{-0.1cm} \textbf{6. Key Bound}  \\
\hhline{===}
\multirow{5}{*}{Linear Error}& $r_\perp^{\frac{d-1}{p}-\frac{d-3}{2}}r_\parallel^{\frac{1}{p}-\frac{3}{2}}\mu \leq \lambda_{q+1}^{-(10d+24)\frac{\alpha}{\sigma-2\varepsilon} } $ & $A\geq 10$ \\
&$f^{\frac{d-2}{2}}(q)r_\perp^{\frac{d-1}{p}-\frac{d-1}{2}}r_\parallel^{\frac{1}{p}-\frac{1}{2}} \leq \lambda_{q+1}^{-(10d+24)\frac{\alpha}{\sigma-2\varepsilon} } $  & \\
& $f^{\frac{d-2}{2}}(q)r_\perp^{\frac{d-1}{p}-(d-1)}r_\parallel^{\frac{1}{p}-1}\mu^{-1} \leq \lambda_{q+1}^{-(10d+24)\frac{\alpha}{\sigma-2\varepsilon} } $& \\
&$\alpha b> d+ 9 $ & \\
&$\ell^{\sigma/2-\varepsilon}< \lambda_{q+1}^{-\alpha}$ & \\
\hline
\multirow{2}{*}{Nonlinear Error $1$}
& $\lambda_{q+1}^{-2\alpha} f^{1-\sigma}(q)<\lambda_{q+1}^{-\frac{2\beta b^2}{\iota-1}}$ & \\
&$(\iota-1) \alpha>2\beta b^2 $ & \\
\hline
Nonlinear Error $2$ & $f^{-\sigma}(q) < \lambda_{q+1}^{-\frac{2\beta b^2}{\iota-1}}$ & \\
\hline
\multirow{2}{*}{Oscillation Error } &$r_\perp^{\frac{d-1}{p}-d} r_\parallel^{\frac{1}{p}-1} \lambda_{q+1}^{-1} \leq \lambda_{q+1}^{-(20d+46)\frac{\alpha}{\sigma-2\varepsilon}  } $ &   \\
& $r_\perp^{\frac{d-1}{p}-(d-1)}r_\parallel^{\frac{1}{p}-1}\mu^{-1} \leq \lambda_{q+1}^{-(12d+26)\frac{\alpha}{\sigma-2\varepsilon} } $& \\
\hline
\end{tabularx}
\caption{Essential conditions for verifying the fifth and sixth key bound.}
\end{table}
\vspace{1cm}
\renewcommand*{\arraystretch}{1.3}
\begin{table}[H]
\begin{tabularx}{\textwidth}{|l|l|X|}
\hline 
& \hspace{4cm}\textbf{Control of Energy}\\
\hhline{==}
\multirow{4}{*}{I }& \eqref{Reason of H} gives rise to add the factor $\frac{H}{t+H}$ in the definition of the auxiliary energy $\mathscr{H}$  \\
& $\ell^{\sigma/2-\varepsilon} \lambda_q^{d+4}<\lambda_{q+1}^{-\alpha}$ \\
& $a\geq\big( (8N^\ast+6)LA\big)^c$ \\
& $4N^\ast+6<\alpha b$\\
\hline
\multirow{2}{*}{II \& V }& $r_\perp r_\parallel^{-1}\leq \lambda_{q+1}^{-(20d+46)\frac{\alpha}{\sigma-2\varepsilon} }$\\
&$r_\perp^{-\frac{d-1}{2}}r_\parallel^{-\frac{1}{2}} \mu^{-1} \leq \lambda_{q+1}^{-(20d+46)\frac{\alpha}{\sigma-2\varepsilon} } $ \\
\hline
IV& $f^{\frac{d-2}{2}}(q+1)r_\perp^{\frac{d-1}{p}-\frac{d-1}{2}}r_\parallel^{\frac{1}{p}-\frac{1}{2}} \leq \lambda_{q+1}^{-(2d+4)\frac{\alpha}{\sigma-2\varepsilon} } $\\
\hline
\end{tabularx}
\caption{Essential conditions for verifying the energy estimate \eqref{approximation energy}.}
\end{table}
\vspace{1cm}

\renewcommand*{\arraystretch}{1.3}
\begin{table}[H]
\begin{tabularx}{\textwidth}{|l|X|}
\hline 
&\hspace{0cm} \textbf{ Regularity of Solutions }  \\
\hhline{==}
\eqref{target space}&$\beta \leq \frac{(2d+6)(\sigma -2\varepsilon)(4-\sigma+2\varepsilon)}{(2-\sigma+2\varepsilon)^2}$\\
\hline
\eqref{target series}& $a\geq (32JLA)^c$\\
\hline
\end{tabularx}
\caption{Essential conditions for increasing the regularity of the constructed solution in Section \ref{Regularity and Bound}.}
\end{table}

\subsection{Condition Cascade}
The above considerations result in requiring the constraint \eqref{essential bound a} -- \eqref{essential bound h}. To determine the precise parameters $\varsigma, \, \tau$ and $p> 1$ let us first set 
\begin{align*}
r_\perp:= \lambda_{q+1}^{-s_1}, \qquad r_\parallel:=\lambda_{q+1}^{-s_2}, \qquad \mu:=\lambda_{q+1}^{s_3}, \qquad f(q+1):= \lambda_{q+1}^{s_4}
\end{align*}
for some $s_1,s_2\in(0,1),\, s_3>0$ and $s_4\in\Big( \frac{2\beta b^3}{(\iota-1)\sigma},\frac{2\alpha b (\iota -1 )-2\beta b^3}{(\iota -1) (1-\sigma)} \Big)$. The choice of $s_4$ additionally requires $(\iota-1)\sigma\alpha  > \beta b^2$.\\
These parameters satisfy \eqref{essential bound a} -- \eqref{essential bound h} if and only if they adhere to the bounds listed in the first generation of the table below. To determine the precise values, we proceed by merging conditions incrementally.\\
In the second generation, we merge \eqref{essential bound c} with \eqref{essential bound f} and \eqref{essential bound d} with \eqref{essential bound e}, which necessitates imposing additional constraints on $s_3$, as detailed in the corresponding row. The remaining conditions are preserved.\\
In the third generation, we retain the conditions \eqref{essential bound a}, \eqref{essential bound b} and \eqref{essential bound g}, while merging \eqref{essential bound c}, \eqref{essential bound d}, \eqref{essential bound e} and \eqref{essential bound f} with \eqref{essential bound h}. This leads to further restrictions on the parameters. To simplify the process, we impose the specific choice $s_3:= \frac{d}{2}s_1$, which affects the remaining constraints, as shown in the corresponding row. Continuing in this manner, we reach the final set of parameter values in the fifth generation.
\medskip
\renewcommand*{\arraystretch}{2}
\begingroup
\setlength{\LTleft}{-20cm plus -1fill}
\setlength{\LTright}{\LTleft}
\begin{longtable}{|l|l|l|}
\hline
\large \textbf{Generation} &\large \textbf{Equation} & \large\textbf{Condition}\\
 \hhline{===}
\multirow{8}{*}{\large\textbf{1. Generation}}&\eqref{essential bound b}&$\iota\leq \frac{2\big[(d-1)s_1+s_2 \big]}{(d-1)s_1+s_2+2+(28d+72)\frac{\alpha}{\sigma-2\varepsilon} }$ \\ 
\cline{2-3}
\noalign{\vskip-3.5\tabcolsep  }
\\ \cline{2-3}
&\eqref{essential bound a} &  $s_1\leq 1-(20d+48)\frac{\alpha}{\sigma-2\varepsilon} $ \\
\cline{2-3}
\noalign{\vskip-3.5\tabcolsep  }
\\ \cline{2-3}
&\eqref{essential bound g} &  $s_2\leq s_1-(20d+46)\frac{\alpha}{\sigma-2\varepsilon} $ \\
\cline{2-3}
\noalign{\vskip-3.5\tabcolsep  }
\\ \cline{2-3}
&\eqref{essential bound h} &  $s_3\geq \frac{d-1}{2}s_1+\frac{1}{2} s_2+(20d+46)\frac{\alpha}{\sigma-2\varepsilon} $ \\
\cline{2-3}
\noalign{\vskip-3.5\tabcolsep  }
\\ \cline{2-3}
&\eqref{essential bound f} & $p \leq \frac{(d-1)s_1+s_2}{ds_1+s_2-1+(20d+46)\frac{\alpha}{\sigma-2\varepsilon} }$  \\
\cline{2-3}
&\eqref{essential bound c} & $p \leq \frac{2\big[(d-1)s_1+s_2 \big]}{(d-3)s_1+3s_2+2s_3+(20d+48)\frac{\alpha}{\sigma-2\varepsilon} }$  \\
\cline{2-3}
&\eqref{essential bound d} & $p \leq \frac{2 \big[(d-1)s_1 +s_2\big]}{(d-1)s_1+s_2+(d-2)s_4+(20d+48) \frac{\alpha}{\sigma-2\varepsilon} }$  \\
\cline{2-3}
&\eqref{essential bound e} & $p \leq \frac{2 \big[(d-1)s_1 +s_2\big]}{2\big[(d-1)s_1+s_2\big]-2s_3+(d-2)s_4+(24d+52) \frac{\alpha}{\sigma-2\varepsilon} }$  \\
 \cline{1-3}

\cline{1-3}
\multicolumn{3}{c}{\includegraphics[scale=1.3]{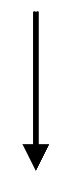}} \\
\hline
\multirow{6}{*}{\large\textbf{2. Generation}}
&\eqref{essential bound b}&$\iota\leq \frac{2\big[(d-1)s_1+s_2 \big]}{(d-1)s_1+s_2+2+(28d+72)\frac{\alpha}{\sigma-2\varepsilon} }$ \\ 
\cline{2-3}
\noalign{\vskip-3.5\tabcolsep  }
\\ \cline{2-3}
&\eqref{essential bound a} &  $s_1\leq 1-(20d+48)\frac{\alpha}{\sigma-2\varepsilon} $ \\
\cline{2-3}
\noalign{\vskip-3.5\tabcolsep  }
\\ \cline{2-3}
&\eqref{essential bound g} &  $s_2\leq s_1-(20d+46) \frac{\alpha}{\sigma-2\varepsilon} $ \\
\cline{2-3}
\noalign{\vskip-3.5\tabcolsep  }
\\ \cline{2-3}
&\eqref{essential bound h} &  $s_3\geq \frac{d-1}{2}s_1+\frac{1}{2} s_2+(20d+46) \frac{\alpha}{\sigma-2\varepsilon} $ \\
\cline{2-3}
\noalign{\vskip-3.5\tabcolsep  }
\\ \cline{2-3}
&\eqref{essential bound c}, \eqref{essential bound f}& $p \leq \frac{(d-1)s_1+s_2}{ds_1+s_2-1+(20d+46)\frac{\alpha}{\sigma-2\varepsilon} }, \quad s_3\leq \frac{d+3}{2}s_1-\frac{1}{2}s_2-1+(10d+22)\frac{\alpha}{\sigma-2\varepsilon} $  \\
\cline{2-3}
&\eqref{essential bound d}, \eqref{essential bound e} & $p \leq \frac{2 \big[(d-1)s_1 +s_2\big]}{(d-1)s_1+s_2+(d-2)s_4+(20d+48) \frac{\alpha}{\sigma-2\varepsilon} }, \quad s_3\geq \frac{d-1}{2}s_1+\frac{1}{2}s_2+(2d+2)\frac{\alpha}{\sigma-2\varepsilon}$  \\
 \cline{1-3}
\multicolumn{3}{c}{\raisebox{-.1cm}[3.2cm][.6cm]{\includegraphics[scale=1.3]{Pfeil.png}}} \\ 

\hline
\multirow{6}{*}{\large\textbf{3. Generation}}&\eqref{essential bound b}&$\iota\leq \frac{2\big[(d-1)s_1+s_2 \big]}{(d-1)s_1+s_2+2+(28d+72)\frac{\alpha}{\sigma-2\varepsilon} }$ \\ 
\cline{2-3}
\noalign{\vskip-3.5\tabcolsep  }
\\ \cline{2-3}
&\eqref{essential bound a} &  $s_1\leq 1-(20d+48)\frac{\alpha}{\sigma-2\varepsilon} $ \\
\cline{2-3}
\noalign{\vskip-3.5\tabcolsep  }
\\ \cline{2-3}
&\eqref{essential bound g} &  $s_2\leq s_1-(20d+46) \frac{\alpha}{\sigma-2\varepsilon} $ \\
\cline{2-3}
\noalign{\vskip-3.5\tabcolsep  }
\\ \cline{2-3}
&\eqref{essential bound c}, \eqref{essential bound d},& $  s_1\leq 1-(30d+68)\frac{\alpha}{\sigma-2\varepsilon}, \quad s_2\leq 3s_1-2+(20d+44) \frac{\alpha}{\sigma-2\varepsilon},$  \\
& \eqref{essential bound e}, \eqref{essential bound f},  &$ s_3:=\frac{d}{2}s_1,  $  \\
&\eqref{essential bound h}& $p \leq \min \left\{\frac{(d-1)s_1+s_2}{ds_1+s_2-1+(20d+46)\frac{\alpha}{\sigma-2\varepsilon} }, \frac{2 \big[(d-1)s_1 +s_2\big]}{(d-1)s_1+s_2+(d-2)s_4+(20d+48) \frac{\alpha}{\sigma-2\varepsilon} }\right\}$  \\
 \cline{1-3}
\multicolumn{3}{c}{\raisebox{-.1cm}[3.2cm][.6cm]{\includegraphics[scale=1.3]{Pfeil.png}}} \\ 
 
 \hline

  \multirow{5}{*}{\large\textbf{4. Generation}}&\eqref{essential bound b}&$\iota\leq \frac{2\big[(d-1)s_1+s_2 \big]}{(d-1)s_1+s_2+2+(28d+72)\frac{\alpha}{\sigma-2\varepsilon} }$ \\ 
\cline{2-3}
\noalign{\vskip-3.5\tabcolsep  }
\\ \cline{2-3}
&\eqref{essential bound a}, \eqref{essential bound c},  &  $\frac{2}{3}-\frac{20d+44}{3}\frac{\alpha}{\sigma-2\varepsilon} <s_1\leq 1-(30d+68)\frac{\alpha}{\sigma-2\varepsilon}$,\\
&\eqref{essential bound d}, \eqref{essential bound e}, & $ 0<s_2\leq 3s_1-2+(20d+44) \frac{\alpha}{\sigma-2\varepsilon}, $ \\
& \eqref{essential bound f}, \eqref{essential bound g},& $ s_3:=\frac{d}{2}s_1, \qquad (70d+160)\alpha<\sigma-2\varepsilon,$  \\
&\eqref{essential bound h}& $p \leq \min \left\{\frac{(d-1)s_1+s_2}{ds_1+s_2-1+(20d+46)\frac{\alpha}{\sigma-2\varepsilon} }, \frac{2 \big[(d-1)s_1 +s_2\big]}{(d-1)s_1+s_2+(d-2)s_4+(20d+48) \frac{\alpha}{\sigma-2\varepsilon} }\right\}$  \\
 \cline{1-3}
\multicolumn{3}{c}{\raisebox{-.1cm}[3.2cm][.6cm]{\includegraphics[scale=1.3]{Pfeil.png}}}  \\ 
\hline

  \multirow{5}{*}{\large\textbf{5. Generation}}&\eqref{essential bound b}&$\iota\leq \frac{2\big[(d-1)s_1+s_2 \big]}{(d-1)s_1+s_2+2+(28d+72)\frac{\alpha}{\sigma-2\varepsilon} }$ \\ 
\cline{2-3}
\noalign{\vskip-3.5\tabcolsep  }
\\ \cline{2-3}
&\eqref{essential bound a}, \eqref{essential bound c},  &  $\frac{2}{3}+\frac{4}{3}\frac{\alpha}{\sigma-2\varepsilon}  \leq  s_1\leq 1-(30d+68)\frac{\alpha}{\sigma-2\varepsilon} ,$ \\
&\eqref{essential bound d}, \eqref{essential bound e}, & $  (20d+48)\frac{\alpha}{\sigma-2\varepsilon}  \leq s_2\leq 3s_1-2+(20d+44)\frac{\alpha}{\sigma-2\varepsilon}, $ \\
&\eqref{essential bound f}, \eqref{essential bound g}, & $ s_3:=\frac{d}{2}s_1,\qquad s_4<s_1 ,\qquad (90d+208)\alpha  \leq \sigma-2\varepsilon,$  \\
&\eqref{essential bound h}& $1<p \leq \min\left\{ \frac{(d-1)s_1+s_2}{ds_1+s_2-1+(20d+46)\frac{\alpha}{\sigma-2\varepsilon} }, \frac{2 \big[(d-1)s_1 +s_2\big]}{(d-1)s_1+s_2+(d-2)s_4+(20d+48) \frac{\alpha}{\sigma-2\varepsilon} }\right\}$  \\
 \cline{1-3}
\end{longtable} 
\endgroup
\noindent Observe that the condition $s_4<s_1$ necessitates a final decrease in $\beta$, as $(\iota-1)\sigma \alpha>2\beta b^3$ (cf. \eqref{frequency z}).

\section{Essential Lemmata} \label{Appendix A.2}
In this section, we recall two lemmata from \cite{BMS21} and \cite{MS18}, which play a crucial role in the development of this paper.
\pagebreak
\begin{lemma}\thlabel{Lemma A.1} 
For $d\geq 3$ and a finite set of directions $\Lambda \subseteq \mathbb{R}^d $ there exists some $(\alpha_\xi)_{\xi \in \Lambda}$ and $\rho>0$ such that the periodic tubes 
\begin{align*}
\left(B_\rho (0)+\alpha_\xi+\{s \xi\}_{s\in \mathbb{R}}+2\pi\mathbb{Z}^d\right)_{\xi \in \Lambda}
\end{align*}
are mutually disjoint.
\end{lemma}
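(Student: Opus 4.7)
The plan is to first arrange the bare centerlines of the tubes, i.e.\ the translated lines $\alpha_\xi + \mathbb{R}\xi + 2\pi\mathbb{Z}^d$, to be pairwise disjoint in $\mathbb{R}^d$, and only afterwards to thicken each of them by a small uniform ball of radius $\rho$. The codimension constraint $d\geq 3$ is precisely what gives enough room to separate any two of these one-dimensional objects by a translation.

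For two fixed distinct directions $\xi,\xi' \in \Lambda$, the lines $\alpha_\xi+\mathbb{R}\xi+2\pi\mathbb{Z}^d$ and $\alpha_{\xi'}+\mathbb{R}\xi'+2\pi\mathbb{Z}^d$ intersect exactly when the difference $\alpha_\xi - \alpha_{\xi'}$ belongs to the set
\[
S_{\xi,\xi'} := \mathbb{R}\xi + \mathbb{R}\xi' + 2\pi\mathbb{Z}^d,
\]
which is a countable union of affine subspaces of dimension at most $2$ and therefore of vanishing $d$-dimensional Lebesgue measure as soon as $d\geq 3$. The first step will thus be to enumerate $\Lambda=\{\xi_1,\dots,\xi_N\}$, set $\alpha_{\xi_1}:=0$ and then pick $\alpha_{\xi_k}$ inductively in the complement of the finite union of bad sets $\bigcup_{j<k}(\alpha_{\xi_j}+S_{\xi_j,\xi_k})$. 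Being a finite union of Lebesgue-null sets, this complement has full measure, so such a choice always exists.

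Once the construction produces pairwise disjoint centerlines, the second step will be to extract a uniform thickening radius $\rho>0$. The projection of each centerline to the torus $\mathbb{T}^d$ is a closed embedded circle --- this is where the rationality of the directions (implicit in the application of the lemma through $\Lambda\subseteq \mathbb{S}^{d-1}\cap\mathbb{Q}^d$, cf.\ the construction of the intermittent jets in Section~\ref{Perturbation}) enters, ensuring that each geodesic closes up after finitely many windings. The finite family of projected centerlines then consists of pairwise disjoint compact subsets of the compact torus and admits a strictly positive minimum mutual distance $2\rho^\ast>0$. Setting $\rho:=\rho^\ast$, the triangle inequality guarantees that the open $\rho$-neighborhoods remain pairwise disjoint on $\mathbb{T}^d$.

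I expect the main subtlety to lie in the passage from disjointness of the centerlines to disjointness of their thickenings: it genuinely relies on the compactness of the projected geodesics, which is why the rational-direction assumption built into the applications of this lemma cannot be dispensed with. For an irrational direction the geodesic would be dense in some positive-dimensional subtorus and no uniform $\rho>0$ could yield a disjoint tube. Apart from this point, the argument is purely measure-theoretic and geometric in flavour, and does not require any further input beyond $d\geq 3$ and the finiteness of $\Lambda$.
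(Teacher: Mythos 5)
Your proof is correct: the paper itself gives no argument for this lemma and simply refers to \cite{BMS21}, Lemma 3, and your two-step argument — a generic choice of the shifts $\alpha_\xi$ avoiding the Lebesgue-null sets $\mathbb{R}\xi+\mathbb{R}\xi'+2\pi\mathbb{Z}^d$ (possible exactly because $d\geq 3$), followed by extracting a uniform $\rho>0$ from the positive mutual distance of the finitely many disjoint \emph{compact} projected centerlines in $\mathbb{T}^d$ — is precisely the standard proof behind that reference, including the correct observation that the compactness step uses the rationality of the directions, which is implicit in the application through $\Lambda\subseteq\mathbb{S}^{d-1}\cap\mathbb{Q}^d$. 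Only your closing sharpness remark is slightly overstated: for a partially irrational direction whose orbit closure is a low-dimensional subtorus, disjoint thickened tubes may still be arranged when $d$ is large enough, and failure is only forced in cases such as a totally irrational direction (dense line), but this aside plays no role in the proof itself.
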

\begin{proof}
See \cite{BMS21}, p. 9, Lemma 3.
\end{proof}

\begin{lemma}[Improved Hölder inequality] \thlabel{Lemma A.2}
For any $f,g \in C^\infty\left( \mathbb{T}^d;\mathbb{R}\right)$, all $\kappa \in \mathbb{N}$ and $1\leq p \leq \infty$ it holds
\begin{align*}
\|fg\|_{L_x^p}\lesssim \left(\|f\|_{L_x^p}+ \kappa^{-1/p}\|f\|_{C_x^1} \right) \bigg\|g\Big( \frac{\cdot}{\kappa}\Big)\bigg\|_{L_x^p}.
\end{align*}
\end{lemma}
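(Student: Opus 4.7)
The plan is to run the classical discretization argument that underlies all versions of the improved Hölder inequality in convex-integration papers. The case $p=\infty$ is immediate from $\|fg\|_{L^\infty}\leq\|f\|_{L^\infty}\|g\|_{L^\infty}$, so fix $1\leq p<\infty$. The key observation is that since $\kappa\in\mathbb{N}$, one may partition $\mathbb{T}^d$ into $\kappa^d$ essentially disjoint closed cubes $\{Q_k\}_{k}$ of side length $2\pi/\kappa$, centered at points $x_k$, in such a way that each $Q_k$ corresponds, after the substitution $y=\kappa x$, to a full copy of the base torus on which $g(\cdot/\kappa)$ is naturally defined. A change of variables then yields the cube-wise identity
\begin{align*}
\int_{Q_k}|g(x)|^p\,dx=\kappa^{-d}\,\|g(\cdot/\kappa)\|_{L^p(\mathbb{T}^d)}^p,
\end{align*}
which is the engine of the whole argument.

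Next I would decompose $f=\bar{f}+(f-\bar{f})$, where $\bar{f}:=\sum_k f(x_k)\mathbf{1}_{Q_k}$ is the piecewise-constant projection onto the grid. The remainder is trivial: since $|f(x)-f(x_k)|\leq\operatorname{diam}(Q_k)\,\|f\|_{C^1}\lesssim\kappa^{-1}\|f\|_{C^1}$ on $Q_k$, one obtains
\begin{align*}
\|(f-\bar{f})g\|_{L^p}\lesssim \kappa^{-1}\|f\|_{C^1}\,\|g\|_{L^p}=\kappa^{-1}\|f\|_{C^1}\bigl(\textstyle\sum_k\int_{Q_k}|g|^p\bigr)^{1/p}=\kappa^{-1}\|f\|_{C^1}\,\|g(\cdot/\kappa)\|_{L^p},
\end{align*}
using the cube-wise identity in the last step; this is already better than the claimed $\kappa^{-1/p}\|f\|_{C^1}$-factor.

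For the main term, the cube-wise identity gives
\begin{align*}
\|\bar{f}g\|_{L^p}^p=\sum_k|f(x_k)|^p\int_{Q_k}|g|^p\,dx=\kappa^{-d}\|g(\cdot/\kappa)\|_{L^p}^p\sum_k|f(x_k)|^p.
\end{align*}
The quantity $(2\pi)^d\kappa^{-d}\sum_k|f(x_k)|^p$ is a Riemann sum for $\int_{\mathbb{T}^d}|f|^p\,dx$ on a grid of mesh $\kappa^{-1}$, so it differs from $\|f\|_{L^p}^p$ by at most $C\kappa^{-1}\|\,|f|^p\|_{C^1}\leq C\kappa^{-1}\|f\|_{L^\infty}^{p-1}\|f\|_{C^1}$. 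Taking $p$-th roots, using $(a+b)^{1/p}\leq a^{1/p}+b^{1/p}$, and then the trivial interpolation $\|f\|_{L^\infty}^{(p-1)/p}\|f\|_{C^1}^{1/p}\leq\|f\|_{C^1}$, one gets
\begin{align*}
\|\bar{f}g\|_{L^p}\leq\bigl(\|f\|_{L^p}+C\kappa^{-1/p}\|f\|_{C^1}\bigr)\|g(\cdot/\kappa)\|_{L^p}.
\end{align*}
Combining this with the remainder estimate (and noting $\kappa^{-1}\leq\kappa^{-1/p}$) produces the claimed bound.

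The only real technical point is the integrality assumption $\kappa\in\mathbb{N}$, without which the cube-wise identity fails and one picks up uncontrolled boundary contributions; everything else is bookkeeping and elementary interpolation. No analytic difficulty is expected, so the proposal is essentially this discretization-and-Riemann-sum scheme, invoked verbatim from \cite{MS18}.
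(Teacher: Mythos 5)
Your proof is correct (granting the reading, implicit in the statement and in its application to $\wpr$, that $g$ is $\left(\mathbb{T}/\kappa\right)^d$-periodic, so that the cube-wise identity and $\|g\|_{L^p}=\|g(\cdot/\kappa)\|_{L^p}$ hold), and it is precisely the discretization/Riemann-sum argument of Lemma 2.1 in \cite{MS18}, which is exactly what the paper relies on: Appendix \ref{Appendix A.2} gives no independent proof but simply cites that reference. So your proposal coincides with the paper's (cited) proof and nothing further is needed.
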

\begin{proof}
See \cite{MS18}, p. 11, Lemma 2.1.
\end{proof}

 \section{Proofs and Calculations} \label{Appendix A.3} 
 \subsection{Tightness Result} \label{Appendix Tightness}

\begin{proof}[Tightness Result in Section \ref{Tightness u}]~\\
As a consequence of the considerations taken in Section \ref{Tightness u}, we find for an arbitrary sequence $(f_n)_{n \in \mathbb{N}}$ in $K_1$ a subsequence $(f_n^{(1)})_{n \in \mathbb{N}}$, so that $(f_{n_{|_{[0,1]}}}^{(1)})_{n \in \mathbb{N}}\subseteq K_1^1$ converges in $\mathcal{T}_1^1$. From $(f_n^{(1)})_{n \in \mathbb{N}}$ we collect a further subsequence $(f_n^{(2)})_{n \in \mathbb{N}}$ so that also $(f_{n_{|_{[0,2]}}}^{(2)})_{n \in \mathbb{N}}\subseteq K_1^2$ converges in $\mathcal{T}_1^2$. Continuing in this way, we construct a diagonal sequence $(d_n)_{n \in \mathbb{N}}$, that is given by $d_n:=f_n^{(n)}$ for all $n \in \mathbb{N}$ (cf. Figure \ref{Diagonal}).\\
 \begin{figure}[H]
\begin{center}
\includegraphics[scale=0.13]{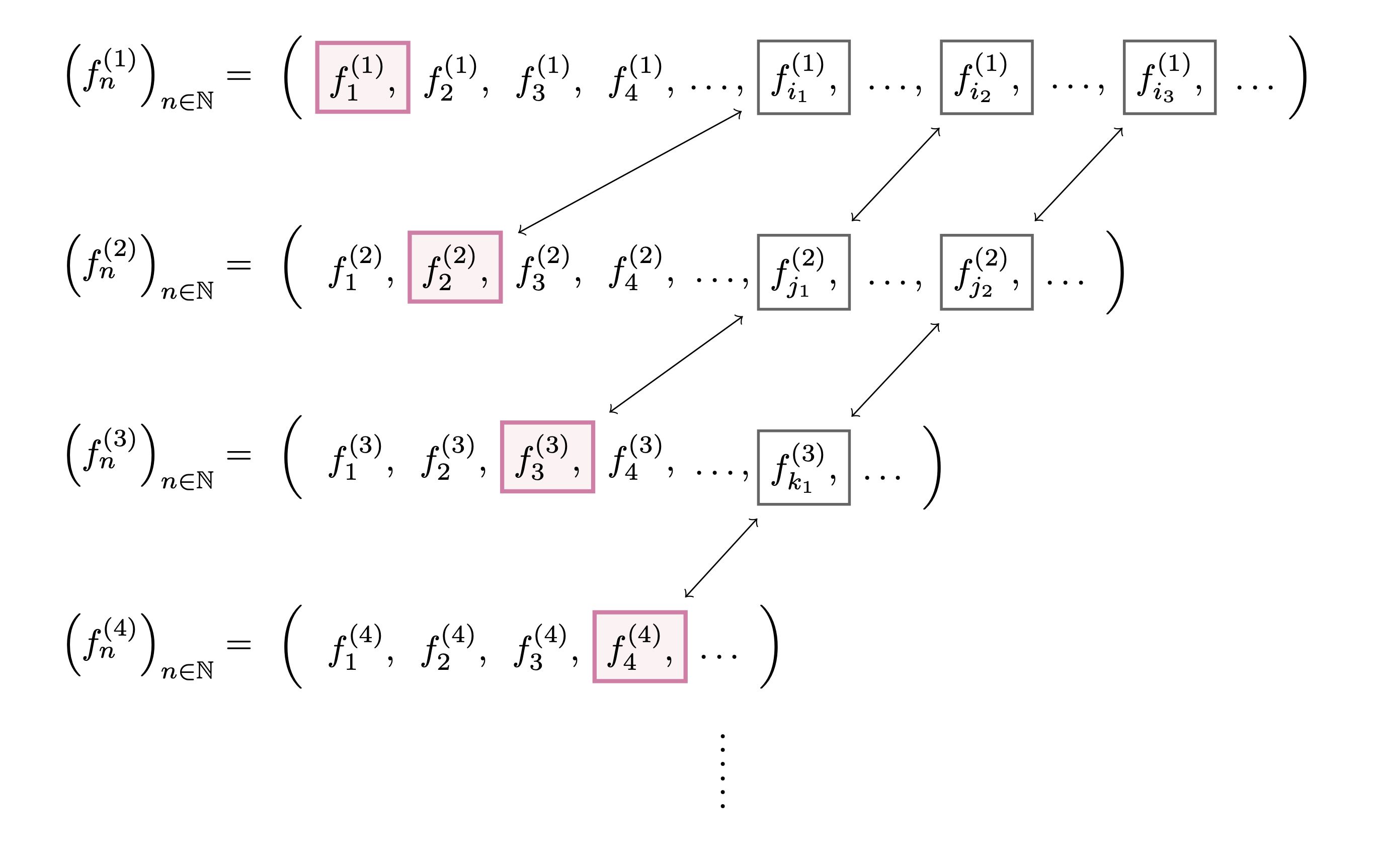}
\caption{Construction of the diagonal sequence $\left( d_n\right)_{n \in \mathbb{N}}$, which consists of the elements in the pink boxes.}
\label{Diagonal}
\end{center}
\end{figure}
\pagebreak
 \noindent That means for every $N \in \mathbb{N}$ and $\epsilon>0$ there exists some $\widetilde{N}\in \mathbb{N}$, so that 
 \begin{align*}
 \|f_{i_1}^{(N)}-f_{i_2}^{(N)}\|_{C_{[0,N]}L_x^2}+ \|f_{i_1}^{(N)}-f_{i_2}^{(N)}\|_{C_{[0,N]}W_x^{1,\iota}} < \epsilon
 \end{align*}
 holds true for all $i_2\geq i_1 \geq \widetilde{N}$. By possibly increasing $ i_1$ and $i_2$, $f_{i_1}^{(N)}$ and $f_{i_2}^{(N)}$ especially belong to $(d_n)_{n \in \mathbb{N}}$, say they are the $\ell$-th and $m$-th elements, i.e $f_{i_1}^{(N)}:=d_\ell$ and $f_{i_2}^{(N)}=d_m$ for some $\ell, m \in \mathbb{N}$. Hence we even have
 \begin{align*}
 \|d_\ell-d_m\|_{C_{[0,N]}L_x^2}+ \|d_\ell-d_m\|_{C_{[0,N]}W_x^{1,\iota}}  < \epsilon.
 \end{align*}
 Remembering that $N$ was chosen arbitrary large and that $\mathcal{T}_1$ is complete, the diagonal sequence $(d_n)_{n \in \mathbb{N}}$ converges to some limit in $\mathcal{T}_1$, which ascertains that $K_1$ is compact in $\mathcal{T}_1$.

\noindent 
\end{proof}

\subsection{Weak$^\ast$ Lower Semicontinuity} \label{Weak Star Lower Semicontinuity}

\begin{proof}[Weak$^\ast$ Lower Semicontinuity argument in Section \ref{Stationary Regularity and Bound}]
To establish \eqref{Stationary Property 1}, we invoked a weak$^\ast$ lower semicontinuity argument in Section \ref{Stationary Regularity and Bound}, with a more rigorous treatment provided in the subsequent discussion. \\
Let $\langle\widetilde{u}_n,\cdot\rangle$ be the corresponding functional to $\widetilde{u}_n$ on
\begin{align*} 
Y:=L^{2J}\big(\widetilde{\Omega};C\big([0,N];H^\gamma\big( \mathbb{T}^d;\mathbb{R}^d\big) \big)\big)L^{2J}\big(\widetilde{\Omega};C^{0,\gamma}\big([0,N];L^{2}\big( \mathbb{T}^d;\mathbb{R}^d\big)\cap W^{1+\gamma,\iota}\big( \mathbb{T}^d;\mathbb{R}^d\big)\big),
\end{align*}
in a sense that $\|\langle\widetilde{u}_n,\cdot \rangle\|_{Y^\prime}=\|\widetilde{u}_n\|_Y$ holds with $\|\cdot\|_Y$ being the natural norm associated to $Y$. In view of \eqref{Banach--Alaoglu}, Banach--Alaoglu's theorem provides therefore the existence of a subsequence $\langle\widetilde{u}_{n_k},\cdot\rangle$ that converges weakly$^\ast$ to some limit $\langle \widetilde{v}, \cdot \rangle \in Y^\prime$ on the one hand, whereas Vitali's theorem of convergence allows to conclude that $\widetilde{u}_{n_k}$ converges to $\widetilde{u}$ even in $L^{2J}\big(\widetilde{\Omega};C\big([0,N];L^2\big( \mathbb{T}^d;\mathbb{R}^d\big) \big)\big)$ on the other hand. That means their corresponding functional $\langle \widetilde{u}_{n_k}, \cdot \rangle$ converges in particular weakly$^\ast$ to $\langle \widetilde{u}, \cdot \rangle$, implying $\widetilde{v}=\widetilde{u}$.\\
 Thus taking \eqref{Banach--Alaoglu} into account once again permits to deduce
\begin{align*}
\|\widetilde{u}\|_{Y}&=\sup_{\|y\|_Y=1} |\langle\widetilde{u} ,y\rangle|=\sup_{\|y\|=1} \lim_{k \to \infty} |\langle\widetilde{u}_{n_k},y\rangle| \leq \liminf_{k\to \infty} \|\langle\widetilde{u}_{n_k},\cdot \rangle\|_{Y^\prime} =\liminf_{k\to \infty} \|\widetilde{u}_{n_k}\|_{Y}\lesssim N.
\end{align*}
\end{proof}

\end{document}